\documentclass{article}
\usepackage[a4paper]{geometry}
\usepackage{amsmath,amssymb,amsthm,enumitem,environ,graphicx,tikz}
\usepackage[initials]{amsrefs}

\newtheorem{theorem}{Theorem}[section]
\newtheorem{lemma}[theorem]{Lemma}
\newtheorem{corollary}[theorem]{Corollary}

\theoremstyle{definition}
\newtheorem{definition}[theorem]{Definition}
\newtheorem{problem}[theorem]{Problem}

\numberwithin{equation}{section}
\numberwithin{figure}{section}

\DeclareMathOperator{\tr}{\textnormal{tr}}
\DeclareMathOperator{\Tr}{\textnormal{Tr}}
\let\Im\relax
\DeclareMathOperator{\Im}{Im}
\newtheorem{question}[theorem]{Question}
\makeatletter
\newtheorem*{rep@theorem}{\rep@title}
\newcommand{\newreptheorem}[2]{%
\newenvironment{rep#1}[1]{%
 \def\rep@title{#2 \ref{##1}}%
 \begin{rep@theorem}}%
 {\end{rep@theorem}}}
\makeatother
\newreptheorem{theorem}{Theorem}

\definecolor{col1}{HTML}{D7E3F4} % blue

\setlist[enumerate]{leftmargin=20pt,itemsep=0pt,topsep=0pt}
\setlist[enumerate,1]{label=(\roman*)}
\setlist[enumerate,2]{label=(\alph*)}

\numberwithin{equation}{section}
\numberwithin{figure}{section}

%% arrows %%%%%%%%%%%%%%%%%%%%%%%%%%%%%%%%%%%%%%%%%%%%

\usetikzlibrary{calc}
\usetikzlibrary{decorations}
\usepgflibrary{decorations.fractals}
\usetikzlibrary{decorations.fractals}
\usetikzlibrary{decorations.markings}
\usetikzlibrary{arrows.meta}
\tikzstyle{every node}=[circle, draw=black, fill=white, inner sep=0, minimum size=0.2cm] 	
\usetikzlibrary{arrows,shapes,positioning}
\usetikzlibrary{decorations.markings}
\tikzstyle arrowstyle=[scale=1]
\tikzstyle directed=[postaction={decorate,decoration={markings,
		mark=at position 0.5 with {\arrow[arrowstyle]{{latex}}}}}]
\tikzstyle{pdirected}[0.7]=[postaction={decorate,decoration={markings,
  mark=at position #1 with {\arrow[arrowstyle]{{latex}}}}}]

%% hyperbolic geometry  %%%%%%%%%%%%%%%%%%%%%%%%%%%%%%%%%%

\pgfmathsetmacro{\rad}{1.1}
\pgfmathsetmacro{\Rad}{2}

\def\hgline[#1](#2)(#3:#4:#5)%[draw options] (initial angle:final angle:radius)
{
     \pgfmathsetmacro{\thetaone}{#3}
	\pgfmathsetmacro{\thetatwo}{#4}
	\pgfmathsetmacro{\theta}{(\thetaone+\thetatwo)/2}
	\pgfmathsetmacro{\phi}{abs(\thetaone-\thetatwo)/2}
	\pgfmathsetmacro{\close}{less(abs(\phi-90),0.0001)}
	\ifdim \close pt = 1pt
    		\draw[#1] ($(#2)+({#5*cos(\thetaone)},{#5*sin(\thetaone)})$)   --  ($(#2)+({#5*cos(\thetatwo)},{#5*sin(\thetatwo)})$);
	\else
    		\pgfmathsetmacro{\R}{#5*tan(\phi)}
		\pgfmathsetmacro{\startangle}{\thetaone-90}
	     \pgfmathsetmacro{\finishangle}{\startangle+2*\phi-180}
		\draw[#1]($(#2)+({#5*cos(\thetaone)},{#5*sin(\thetaone)})$) arc (\startangle:\finishangle:\R);
	\fi
}

\title{ \vspace{-7ex}\bf \Large Semigroups of isometries of the hyperbolic plane}

\author{Matthew Jacques and Ian Short}

\date{\vspace{-5ex}}

\begin{document}

\maketitle

\begin{abstract}
Motivated by a problem on the dynamics of compositions of plane hyperbolic isometries, we prove several fundamental results on semigroups of isometries, thought of as real M\"obius transformations. We define a semigroup $S$ of M\"obius transformations to be \emph{semidiscrete} if the identity map is not an accumulation point of $S$. We say that $S$ is \emph{inverse free} if it does not contain the identity element. One of our main results states that if $S$ is a semigroup generated by some finite collection $\mathcal{F}$ of M\"obius transformations, then $S$ is semidiscrete and inverse free if and only if every sequence of the form $F_n=f_1\dotsb f_n$, where $f_n\in\mathcal{F}$, converges pointwise on the upper half-plane to a point on the ideal boundary, where convergence is with respect to the chordal metric on the extended complex plane. We fully classify all two-generator semidiscrete semigroups, and include a version of J{\o}rgensen's inequality for semigroups. 

We also prove theorems that have familiar counterparts in the theory of Fuchsian groups. For instance, we prove that every semigroup is one of four standard types: elementary, semidiscrete, dense in the M\"obius group, or composed of transformations that fix some nontrivial subinterval of the extended real line. As a consequence of this theorem, we prove that, with certain minor exceptions, a finitely-generated semigroup $S$ is semidiscrete if and only if every two-generator semigroup contained in $S$ is semidiscrete.

After this we examine the relationship between the size of the `group part' of a semigroup and the intersection of its forward and backward limit sets. In particular, we prove that if $S$ is a finitely-generated nonelementary semigroup, then $S$ is a group if and only if its two limit sets are equal. 

We finish by applying some of our methods to address an open question of Yoccoz.
\end{abstract}

%%%%%%%%%%
\section{Introduction}
%%%%%%%%%%

The Denjoy--Wolff theorem in complex analysis states that if $f$ is a holomorphic map of the unit disc $\mathbb{D}$ to itself, then either $f$ is conjugate by an automorphism of $\mathbb{D}$ to a rotation about $0$, or else the sequence of iterates $f^1,f^2,\dotsc$ converges uniformly on compact subsets of $\mathbb{D}$ to a point in $\overline{\mathbb{D}}$, using the Euclidean metric. This paper is motivated by the goal of generalising the Denjoy--Wolff theorem and related results to allow multiple maps. For simplicity, we restrict our attention to automorphisms of $\mathbb{D}$ -- or, equivalently, to automorphisms of the upper half-plane $\mathbb{H}$ -- which are the conformal isometries of $\mathbb{H}$ when $\mathbb{H}$ is endowed with the hyperbolic metric $|dz|/y$, where $z=x+iy$. We find that the resulting dynamical systems are intimately related to certain semigroups of hyperbolic isometries, which we can analyse using methods borrowed from the theory of  Fuchsian groups. In studying semigroups of isometries, we develop material of Fried, Marotta, and Stankewitz \cite{FrMaSt2012} on semigroups of M\"obius transformations, which is one strand among many in the theory of semigroups of rational maps initiated by Hinkkanen and Martin \cite{HiMa1996}.

Our work is also closely related to the theory of $\text{SL}(2,\mathbb{R})$-valued cocycles (see, for example, \cite{Av2011,AvBo2007,BaKaMo2019,BoNa2014,BoNa2015}). In particular, the study of finitely-generated semigroups of hyperbolic isometries is essentially equivalent to the study of locally constant $\text{SL}(2,\mathbb{R})$-valued cocycles over a full shift; see \cite{AvBoYo2010,Yo2004}. The primary focus of those two papers was on classifying uniformly hyperbolic cocycles, and it turns out that this task is closely coupled to our own objectives of analysing semigroups of isometries that satisfy certain discreteness properties. At the end of the paper, we apply the techniques that we have developed to address an open question of Yoccoz from \cite{Yo2004} that was repeated in \cite{AvBoYo2010} (and is stated again in Question~\ref{ccw}, to follow shortly). We answer Yoccoz's question in the negative, suggesting that a reformulation of that question is necessary.  

Let us now set out the notation and terminology necessary to explain our results precisely. The group of conformal isometries of $\mathbb{H}$ is the group $\mathcal{M}$ of real M\"obius transformations $z\mapsto (az+b)/(cz+d)$, where $a,b,c,d\in\mathbb{R}$ and $ad-bc>0$. (Henceforth we refer to such maps simply as \emph{M\"obius transformations}, unless we explicitly state otherwise.) The group $\mathcal{M}$ also acts on the extended real line $\overline{\mathbb{R}}=\mathbb{R}\cup\{\infty\}$ and on the  extended complex plane $\overline{\mathbb{C}}=\mathbb{C}\cup\{\infty\}$. Given a subset $\mathcal{F}$ of $\mathcal{M}$, we define a \emph{composition sequence} generated by $\mathcal{F}$ to be a sequence of composed functions $F_n=f_1\dotsb f_n$, where $f_i\in\mathcal{F}$ for each $i$. For convenience, we define $F_0$ to be the identity map (here and elsewhere). Let us use the notation $(x_n)$ to represent the sequence $x_1,x_2,\dotsc$.  We say that a sequence  $(F_n)$ in $\mathcal{M}$  (not necessarily a composition sequence) is an \emph{escaping sequence} if, for some point $w$ in $\mathbb{H}$, the orbit $(F_n(w))$ does not accumulate in $\mathbb{H}$. This definition does not depend on the choice of the point $w$. An escaping sequence $(F_n)$ is said to \emph{converge ideally} to a point $p$ in $\overline{\mathbb{R}}$ if $(F_n(w))$  converges to $p$ in the chordal metric on  $\overline{\mathbb{C}}$. Once again, the choice of the point $w$ does not affect the definition, in the sense that $(F_n(w))$ converges ideally to $p$ if and only if $(F_n(z))$ converges ideally to $p$, for any point $z$ in $\mathbb{H}$. The following problem encapsulates our chief objective.
 
\begin{problem}\label{kqn}
Classify those finite subsets $\mathcal{F}$ of $\mathcal{M}$ with the property that every composition sequence generated by $\mathcal{F}$ converges ideally. 
\end{problem}

We shall see that there is a close connection between Problem~\ref{kqn} and the theory of semigroups of M\"obius transformations.  Henceforth, for convenience, we use the term \emph{semigroup} to refer to any nonempty subset of $\mathcal{M}$ that is closed under composition. Of course, `semigroup' has a more general meaning in other contexts. There are many similarities, but also some important differences, between the theory of groups of M\"obius transformations and the theory of semigroups. In the former theory it is common to distinguish between discrete groups (Fuchsian groups) and groups that are not discrete. In semigroup theory, discreteness is not the right notion to use (for our purposes); instead we make the following definition.

\begin{definition}
A semigroup $S$ is \emph{semidiscrete} if the identity map is not an accumulation point of $S$ in the topological group $\mathcal{M}$. We say that $S$ is \emph{inverse free} if the inverse of any member of $S$ is not contained in $S$.
\end{definition}

Notice that $S$ is inverse free if and only if it does not contain the identity map $I$, and $S$ is both semidiscrete \emph{and} inverse free if and only if $I$ does not belong to the closure of $S$ in $\mathcal{M}$. Finally, we say that a semigroup $S$  is \emph{generated} by a subset $T$ of $S$ if each element of $S$ can be expressed as a (finite) composition of elements of $T$. If $S$ is generated by a finite set, then we call $S$ a \emph{finitely-generated} semigroup.

Our first theorem relates Problem~\ref{kqn} to the classification of finitely-generated semidiscrete and inverse free semigroups.

\begin{theorem}\label{cof}
Let $S$ be a semigroup generated by a finite collection $\mathcal{F}$ of M\"obius transformations. The following statements are equivalent: 
\begin{enumerate}
\item\label{aaa} every composition sequence generated by $\mathcal{F}$ is an escaping sequence
\item\label{aab} every composition sequence generated by $\mathcal{F}$ converges ideally
\item\label{aac} $S$ is semidiscrete and inverse free.\qedhere
\end{enumerate}
\end{theorem}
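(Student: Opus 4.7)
The plan is to establish the cycle of implications \emph{(ii)}$\Rightarrow$\emph{(i)}$\Rightarrow$\emph{(iii)}$\Rightarrow$\emph{(ii)}. The first is immediate from the definitions: ideal convergence to a point of $\overline{\mathbb{R}}$ means $(F_n(w))$ eventually leaves every compact subset of $\mathbb{H}$ in the chordal metric, so the sequence is escaping.

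For \emph{(i)}$\Rightarrow$\emph{(iii)} I argue the contrapositive. Suppose $I$ lies in the closure of $S$ in $\mathcal{M}$. If $I\in S$, factorise $I=h_1\dotsb h_k$ with each $h_i\in\mathcal{F}$ and cyclically repeat this word to obtain a composition sequence with $F_{nk}=I$, which plainly fails to escape. Otherwise there exist $g_n\in S\setminus\{I\}$ with $g_n\to I$; after extracting a subsequence assume $d_{\mathbb{H}}(g_n(w),w)<2^{-n}$. Writing each $g_n$ as a word in $\mathcal{F}$ and concatenating produces a composition sequence $(F_n)$ whose block-endings $G_m=g_1\dotsb g_m$ satisfy $d_{\mathbb{H}}(G_m(w),G_{m-1}(w))=d_{\mathbb{H}}(g_m(w),w)<2^{-m}$ by the isometry property. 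Hence $(G_m(w))$ is Cauchy in the complete metric space $(\mathbb{H},d_{\mathbb{H}})$ with limit in $\mathbb{H}$, and $(F_n)$ is not escaping.

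For \emph{(iii)}$\Rightarrow$\emph{(ii)}, assume $S$ is semidiscrete and inverse free. First I show every composition sequence escapes, by a dual normal-families argument: the map $\mathcal{M}\to\mathbb{H}$, $f\mapsto f(w)$, is proper because its fibres are homeomorphic to the compact rotation group stabilising $w$. So if $F_{n_k}(w)$ were bounded in $\mathbb{H}$, a subsequence $(F_{n_{k_j}})$ would converge in $\mathcal{M}$, and the elements $F_{n_{k_j}}^{-1}F_{n_{k_{j+1}}}=f_{n_{k_j}+1}\dotsb f_{n_{k_{j+1}}}$ of $S$ would converge to $I$; this violates either inverse-freeness (if they are eventually equal to $I$) or semidiscreteness.

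The substantive step is to deduce ideal convergence from escaping. The key geometric input is that $d_{\mathbb{H}}(F_n(w),F_{n+1}(w))=d_{\mathbb{H}}(w,f_{n+1}(w))\leq D$ with $D:=\max_{f\in\mathcal{F}}d_{\mathbb{H}}(w,f(w))$, combined with the elementary fact that a hyperbolic ball of fixed radius $D$ has chordal diameter tending to $0$ as its centre approaches $\overline{\mathbb{R}}$ in the chordal metric. Hence $d_{\text{chord}}(F_n(w),F_{n+1}(w))\to 0$, so the set of accumulation points of $(F_n(w))$ in $\overline{\mathbb{C}}$ is a non-empty closed connected subset of $\overline{\mathbb{R}}$ (vanishing chordal steps in a compact metric space force a connected limit set), and is therefore a singleton, a proper arc, or the whole circle. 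The main obstacle is ruling out the last two cases. My plan is to assume two distinct accumulation points $p_1\neq p_2$ and extract subsequences on which $F_{n_k^i}$ degenerates, in the classical M\"obius normal-families sense, to the constant $p_i$ uniformly on $\overline{\mathbb{C}}\setminus\{r_i\}$ for some $r_i\in\overline{\mathbb{R}}$. Forming products $F_{n_k^1}^{-1}F_{n_l^2}\in S$ for suitably interleaved indices and analysing how they degenerate---the interaction between the forward limits $p_i$ and the backward limits $r_i$---should, with a careful choice of indices, yield a sequence in $S$ tending to $I$, contradicting semidiscreteness. I anticipate that pushing this last step through in full generality will rest on auxiliary structural results developed elsewhere in the paper concerning limit sets of semidiscrete semigroups.
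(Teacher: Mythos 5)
Your treatment of \emph{(ii)}$\Rightarrow$\emph{(i)} and of the equivalence \emph{(i)}$\Leftrightarrow$\emph{(iii)} is sound and essentially the paper's Theorem~\ref{acj} (you run the telescoping estimate on the orbit in $(\mathbb{H},\rho)$ where the paper uses right-invariance of $\chi_0$; both work). The bounded-chordal-step observation and the resulting connectedness of the accumulation set also match the opening of the paper's Theorem~\ref{cja}. The problem is the final step of \emph{(iii)}$\Rightarrow$\emph{(ii)}, which is not merely unfinished but aimed at the wrong target. First, the products $F_{n_k^1}^{-1}F_{n_l^2}$ cannot tend to $I$: since $p_1\neq p_2$, the map $F_{n_l^2}$ sends all of $\overline{\mathbb{C}}\setminus\{r_2\}$ near $p_2$, which lies away from the exceptional point $p_1$ of $F_{n_k^1}^{-1}$, so the composition degenerates to the \emph{constant} $r_1$ rather than approaching the identity. (Equivalently: by right-invariance, $\chi_0(F_m^{-1}F_n,I)=\chi_0(F_m^{-1},F_n^{-1})$, and an escaping sequence admits no subsequence that is uniformly Cauchy.) Second, and more fundamentally, no argument that contradicts \emph{semidiscreteness} can close this gap, because semidiscreteness is not the hypothesis that fails: a cocompact Fuchsian group is discrete, hence semidiscrete, and over a finite generating set it admits escaping composition sequences accumulating at two boundary points --- the paper constructs exactly such sequences by walking through the tessellation by a compact fundamental polygon in the first half of the proof of Theorem~\ref{kkl}. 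The hypothesis your contradiction must hit is \emph{inverse-freeness}.

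The paper's route to that contradiction is the part your sketch is missing, and it is the technical heart of the theorem. From two accumulation points one shows (Theorem~\ref{cja}) that the bounded-step property forces every other boundary point into the forward \emph{conical} limit set of $(F_n)$, and then, pulling back the interval $(a,b)$ by the degenerating inverses $F_{n_i}^{-1}$ (Lemmas~\ref{kka} and~\ref{zji}), that $\Lambda_c^+(S)$ is all of $\overline{\mathbb{R}}$ with at most one exceptional point, handled separately when $\Lambda^-(S)$ is a singleton. Since $\Lambda_c^+(S)\subset\Lambda_h^+(S)$, every Dirichlet region of $S$ is then bounded (Theorem~\ref{aaw}), and a semidiscrete semigroup with a bounded covering region must be a cocompact Fuchsian group (Theorem~\ref{poq}, whose proof again uses composition sequences); this is a group, contradicting inverse-freeness. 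The residual elementary case $|\Lambda^-(S)|=1$ is disposed of via the classification in Theorem~\ref{mme}. So the ``auxiliary structural results'' you defer to are not a routine appeal to limit-set facts: they are the covering-region and horocyclic-limit-set machinery, and without them (or a substitute) the implication \emph{(iii)}$\Rightarrow$\emph{(ii)} is not established.
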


In fact, we prove a stronger result than this theorem, which we state after the following definition. A semigroup $S$ is  \emph{elementary} if it has a finite orbit in its action on $\overline{\mathbb{H}}$ (where $\overline{\mathbb{H}}=\mathbb{H}\cup\overline{\mathbb{R}}$); otherwise $S$ is \emph{nonelementary}. The stronger result is that if $S$ is a  finitely-generated nonelementary semidiscrete semigroup, then the following two statements are equivalent: (a) every composition sequence generated by $S$ that is an escaping sequence converges ideally; (b) $S$ is not a cocompact Fuchsian group. The sense in which this result is stronger than Theorem~\ref{cof} will be made precise in Section~\ref{mnm}. 

Notice that assertion (ii) in Theorem~\ref{cof} is equivalent to the statement \emph{every composition sequence generated by $S$ converges ideally} (and there is a similar statement equivalent to (i)). So it is possible to state the theorem without mention of an explicit generating set.

Theorem~\ref{cof} does not solve Problem~\ref{kqn} in any practical sense; instead it shows that Problem~\ref{kqn} is equivalent to another interesting problem (that of classifying finitely-generated inverse-free semidiscrete semigroups). The next theorem solves this equivalent problem when there are only two generators. Our methods are inspired by those of Gilman and Maskit \cite{Gi1988,Gi1991,Gi1995,GiMa1991}, who described a geometric algorithm for classifying two-generator Fuchsian groups. Classifying two-generator semidiscrete semigroups is a significantly simpler task because the difficult case for Fuchsian groups of two hyperbolic generators with intersecting axes is straightforward for semigroups: such maps always generate a semidiscrete semigroup. 	

Let us denote by $\langle f, g\rangle$ the \emph{semigroup} (not the group) generated by two M\"obius transformations $f$ and $g$ (and we use similar notation for semigroups generated by larger finite sets). If $f$ is hyperbolic, then we write $\alpha_f$ and $\beta_f$ for its attracting and repelling fixed points, respectively (and similarly for $g$). If both $f$ and $g$ are hyperbolic, then we define the cross ratio
\[
C(f,g) = \frac{(\alpha_f-\alpha_g)(\beta_f-\beta_g)}{(\alpha_f-\beta_g)(\beta_f-\alpha_g)},
\]
with the usual conventions about $\infty$. We also define the commutator $[f,g]=fgf^{-1}g^{-1}$. In addition, for any M\"obius transformation $f$ we define $\tr(f)=|a+d|$, where $f(z)=(az+b)/(cz+d)$, with $ad-bc=1$. 

We have one final definition before we can state our first theorem about classifying two-generator semigroups, which is taken from the theory of discrete groups. Let $A$, $B$, $C$, and $D$ be disjoint open intervals in $\overline{\mathbb{R}}$, and let $f$ and $g$ be M\"obius transformations such that $f$ maps the complement of $\overline{A}$ to $B$, and $g$ maps the complement of $\overline{C}$ to $D$. A group generated by maps $f$ and $g$ of this type is called a \emph{Schottky group} of rank two. Such groups are discrete and free. We discuss Schottky groups in more detail later on.

\begin{theorem}\label{piv}
Let $f$ and $g$ be  M\"obius transformations such that $\tr(f)\leq \tr(g)$ and $\langle f, g\rangle$ is nonelementary. Then $\langle f, g\rangle$ is semidiscrete and inverse free if and only if exactly one of the following holds:
\begin{enumerate}
\item\label{ipa} $f$ and $g$ are parabolic, and $fg$ is not elliptic
\item\label{ipb} $f$ is parabolic and $g$ is hyperbolic, and $f^ng$ is not elliptic for any positive integer $n$, or
\item\label{ipc} $f$ and $g$ are hyperbolic, and either  $C(f,g)<1$, or $C(f,g)>1$ and $fg$ is not elliptic and either
\begin{enumerate}
\item\label{ipd} $\tr [f,g]\geq \tr(g)^2-2$, or
\item\label{ipe} $\tr [f,g]< \tr(g)^2-2$ and $\langle f, fg\rangle$ is semidiscrete and inverse free.
\end{enumerate}
\end{enumerate}
Furthermore, if $\langle f,g\rangle$ is semidiscrete and inverse free, then either $f$ and $g$ both map a closed interval strictly inside itself, or else $\langle f,g\rangle$ is contained in a Schottky group of rank two.
\end{theorem}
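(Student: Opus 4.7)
The plan is to adapt the Gilman--Maskit geometric algorithm for two-generator Fuchsian groups to the semigroup setting, exploiting the simplification highlighted in the introduction: the hard case for Fuchsian groups --- two hyperbolic elements with intersecting axes --- is the \emph{easy} case for semigroups. I would organise the argument in three stages: first rule out elliptic generators, then establish the necessity of each listed condition, and finally prove sufficiency by exhibiting either a common invariant interval or a Schottky configuration in each subcase. The concluding \emph{furthermore} statement then drops out of these constructions.

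The first observation is that a semidiscrete inverse-free semigroup contains no elliptic element: a finite-order elliptic has some positive iterate equal to the identity (violating inverse-freeness), while an infinite-order elliptic has iterates accumulating at $I$ (violating semidiscreteness). Hence $f$ and $g$ must both be parabolic or hyperbolic, and under $\tr(f)\le\tr(g)$ exactly one of (i)--(iii) applies. The same principle applied to the words $fg$ in (i), $f^ng$ in (ii), and $fg$ in (iii), together with the semigroup Jørgensen-type inequality referred to in the abstract for the trace threshold in (iii)(a), yields the necessity of each listed condition. The case $C(f,g)=1$ is excluded from (iii) because the cross-ratio computation shows that $C(f,g)=1$ forces $f$ and $g$ to share a fixed point, making $\langle f,g\rangle$ elementary.

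For sufficiency in case (iii) with $C(f,g)<1$, the cross-ratio condition places the attractors $\alpha_f,\alpha_g$ in a common arc of $\overline{\mathbb{R}}\setminus\{\beta_f,\beta_g\}$; I would produce a closed subinterval $J\subset\overline{\mathbb{R}}$ containing both attractors with $f(J),g(J)\subsetneq J$, giving the first alternative of \emph{furthermore} and, via a hyperbolic contraction argument on $J$, forcing semidiscreteness. For cases (i), (ii), and (iii) with $C(f,g)>1$ satisfying (a), I would instead construct a Schottky configuration: four pairwise disjoint open intervals $A,B,C,D\subset\overline{\mathbb{R}}$ with $f$ sending $\overline{\mathbb{R}}\setminus\overline{A}$ into $B$ and $g$ sending $\overline{\mathbb{R}}\setminus\overline{C}$ into $D$. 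The non-ellipticity of $fg$ (respectively of every $f^ng$) provides the required separation of the target intervals in (i) and (ii), while the trace threshold in (a) provides the quantitative separation in the hyperbolic case. Embedding $\langle f,g\rangle$ in the resulting rank-two Schottky group gives the second alternative of \emph{furthermore}.

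The principal obstacle is case (iii)(b), which requires a recursive argument. Here the substitution $g\mapsto fg$ replaces $\langle f,g\rangle$ by the strictly smaller semigroup $\langle f,fg\rangle$ (since $g=f^{-1}(fg)$ is not available in a semigroup), so the nontrivial direction --- that semidiscreteness of $\langle f,fg\rangle$ forces semidiscreteness of $\langle f,g\rangle$ --- must be established geometrically: I would promote a Schottky datum or common invariant interval for $\{f,fg\}$ to one for $\{f,g\}$ by tracking the action of $f$ on the relevant intervals. Termination of the recursion is ensured by a standard trace computation showing $\tr(fg)<\tr(g)$ under the hypothesis $\tr[f,g]<\tr(g)^2-2$, so the descent on $\tr(g)$ is well-founded and must eventually terminate in one of the base cases (i), (ii), (iii)(a), or (iii) with $C(f,g)<1$. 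Carrying out the interval-promotion step rigorously, and confirming that the recursion really terminates at the right base case (rather than cycling indefinitely), is the main technical challenge.
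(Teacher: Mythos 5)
Your overall architecture matches the paper's: rule out elliptics, treat the non-antiparallel hyperbolic case via a common contracted interval, treat the parabolic and ``separated'' antiparallel cases via reflection decompositions that exhibit a rank-two Schottky group, and handle case (iii)(b) by recursing on $(f,g)\mapsto(f,fg)$. But your termination argument for the recursion has a genuine gap. You propose that the descent is well-founded because $\tr(fg)<\tr(g)$; this inequality is indeed what the paper proves (it shows $\tr(g)-\tr(fg)>\tr(f)-2\geq 0$), but a strictly decreasing sequence of \emph{real} numbers bounded below need not terminate --- the decrements $\tr(f_{n+1})-2$ can shrink to zero and the sequence $\tr(g_n)$ can decrease forever towards a limit $l>2$. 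Ruling this out is the hardest step in the paper's proof: one observes that $\tr[f_n,g_n]$ is constant along the recursion, that non-termination forces $\tr(f_n)\to 2$ and $\tr(g_n),\tr(f_ng_n)\to l$, and then the $\mathrm{SL}(2,\mathbb{R})$ trace identity $\Tr[F,G]=\Tr(F)^2+\Tr(G)^2+\Tr(FG)^2-\Tr(F)\Tr(G)\Tr(FG)-2$ (with a sign check on the lifts, which requires its own lemma) forces $\pm\tr[f_n,g_n]\to 2$, contradicting $\tr[f_n,g_n]>2$. Without some argument of this kind your recursion is not known to terminate, and the algorithm, the equivalence in (iii)(b), and the \emph{furthermore} clause all collapse. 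Note also that you have misidentified the danger as ``cycling'': the issue is an infinite strictly decreasing orbit, not a periodic one.

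A second, smaller problem is your appeal to ``the semigroup J{\o}rgensen-type inequality'' to justify the trace threshold in (iii)(a). In this paper that inequality (Theorem~\ref{fff}) is \emph{deduced from} Theorems~\ref{piv} and~\ref{kad}, so using it here is circular. Moreover there is nothing to prove ``necessary'' about the threshold: (a) and (b) form an exhaustive dichotomy on $\tr[f,g]$ versus $\tr(g)^2-2$, and the real content is that the inequality $\tr[f,g]\geq\tr(g)^2-2$ is equivalent to a geometric separation condition (the reflection line $\ell_f$ missing the axis of $g$, and, using $\tr(f)\leq\tr(g)$, symmetrically for $\ell_g$ and the axis of $f$), under which ``$fg$ not elliptic'' alone already yields the Schottky configuration. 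You need an explicit computation tying the trace inequality to that geometry; the Schottky construction does not follow from the threshold by itself.
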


In fact, we will prove that in case~{\ref{ipb}} we have $\tr[f,g]>2$, and it suffices to check that $f^ng$ is not elliptic for $1\leq n\leq \tr(g)/\sqrt{\tr[f,g]-2}$ (see Theorem~\ref{acd}).

We will see in Section~\ref{ibj} that the conditions in statement~{\ref{ipc}} correspond to precise geometric configurations of the axes of the hyperbolic transformations $f$ and $g$. In that section we will also show that if the pair $\{f,g\}$ is of class~{\ref{ipc}}{\ref{ipe}}, and then we apply the algorithm again to the pair $\{f,fg\}$, and carry on in this fashion, then we will eventually reach a pair that is not of class~{\ref{ipc}}{\ref{ipe}}. Theorem~\ref{piv} can thereby be used as the basis of an algorithm for deciding in finitely many steps whether two M\"obius transformations generate an inverse-free semidiscrete semigroup.

Results of a similar type to those of Theorem~\ref{piv} have been considered from a different perspective by Avila, Bochi, and Yoccoz \cite{AvBoYo2010} in their study of  $\text{SL}(2,\mathbb{R})$-valued linear cocycles. We briefly summarise the connection. Following earlier work of Yoccoz \cite{Yo2004}, the authors of \cite{AvBoYo2010} examine the locus $\mathcal{H}$ of parameters $(A_1,\dots,A_N)$ in $\text{SL}(2,\mathbb{R})^N$ that give rise to cocycles that are uniformly hyperbolic. In the terminology of this paper,  $(A_1,\dots,A_N)$ gives rise to a uniformly hyperbolic cocycle if for any composition sequence $(F_n)$ generated by  $\{A_1,\dots,A_N\}$ (with $A_1,\dots,A_N$ acting as M\"obius transformations), the sequence $\rho(F_n(i),i)$ tends to infinity at least linearly in $n$ (cf. \cite[Proposition~2]{Yo2004}).

In \cite{AvBoYo2010} it is shown  that  $(A_1,\dots,A_N)\in\mathcal{H}$ if and only if there is a nonempty finitely-connected open subset $X$ of $\overline{\mathbb{R}}$ other than $\overline{\mathbb{R}}$ itself such that, if we allow the matrices $A_i$ to act on $\overline{\mathbb{R}}$, then $\overline{A_i(X)}\subset X$ for $i=1,\dots,N$. In the language of Section~\ref{ooq}, to follow, this says that the transformations corresponding to the maps $A_i$ generate a \emph{Schottky semigroup} with a particularly strict contractive property. The authors of \cite{AvBoYo2010} discuss the structure of $\mathcal{H}$, obtaining a  near-complete understanding when $N=2$. Part of their analysis is similar in spirit to the proof of Theorem~\ref{piv}; in particular, they describe an algorithm for classifying uniformly hyperbolic pairs that is close to the algorithm summarised in the preceding paragraph (see \cite[Remark~3.14]{AvBoYo2010}).% 

The set $\mathcal{H}$ is open in the topological space $\text{SL}(2,\mathbb{R})^N$, as is the set $\mathcal{E}$ of $N$-tuples $(A_1,\dots,A_N)$ for which the semigroup $\langle A_1,\dots,A_N\rangle$ contains an elliptic element. In \cite[Proposition~6]{Yo2004}, credited to Avila, it is proven that $\overline{\mathcal{E}}=\mathcal{H}^c$ (where $\overline{\mathcal{E}}$ is the closure of $\mathcal{E}$ in $\text{SL}(2,\mathbb{R})^N$ and $\mathcal{H}^c$ is the complement of $\mathcal{H}$ in $\text{SL}(2,\mathbb{R})^N$). To accompany this proposition, Yoccoz asks the following question, repeated in \cite[Question~4]{AvBoYo2010}

\begin{question}\label{ccw}
Is $\overline{\mathcal{H}}=\mathcal{E}^c$?
\end{question}

Question~\ref{ccw} is answered affirmatively when $N=2$ in \cite[Theorem~3.3]{AvBoYo2010}. However, in Section~\ref{conclusion} we answer the question in the negative when $N=4$ by providing a counterexample; in fact, our example can be adjusted to give a negative answer for all values of $N>2$. 
	
Closely related to Theorem~\ref{piv} is the following result, which classifies the two-generator semidiscrete semigroups. Let $\text{ord}(f)$ denote the order of an elliptic M\"obius transformation $f$ of finite order.

\begin{theorem}\label{kad}
Let $f$ and $g$ be M\"obius transformations such that $\tr(f)\leq \tr(g)$ and $\langle f, g\rangle$ is nonelementary.  Then $\langle f, g\rangle$ is semidiscrete if and only if one of the following holds:
\begin{enumerate}
\item\label{rua} $\langle f, g\rangle$ is semidiscrete and inverse free
\item\label{rub} $\langle f, g\rangle$ is a Fuchsian group, or
\item\label{ruc} $f$ is elliptic of finite order,  and $f^ng$ is not elliptic, for $n=0,1,\dots,\textnormal{ord}(f)-1$.\qedhere
\end{enumerate}
\end{theorem}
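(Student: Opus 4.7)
I prove the equivalence in two directions, the forward one by a combinatorial case analysis on the positive word expressing the identity, and the backward one by using a coset decomposition of $\langle f,g\rangle$ over $\langle f\rangle$.

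For the forward direction, suppose $S=\langle f,g\rangle$ is semidiscrete. If $I\notin S$, then $S$ is inverse free, giving~\emphref{rua}. Otherwise $I=a_1a_2\cdots a_n$ is a positive word in $\{f,g\}$, and for each $i$ the cyclic-shift identity $a_i^{-1}=a_{i+1}\cdots a_na_1\cdots a_{i-1}$ places $a_i^{-1}$ in $S$. If both letters appear in the word, then $f^{-1},g^{-1}\in S$, so $S$ is a group; semidiscreteness upgrades this to discreteness (Fuchsian), giving~\emphref{rub}. If only $g$ appears, then $g$ has finite order, the trace inequality $\tr(f)\le\tr(g)<2$ forces $f$ elliptic, and semidiscreteness of $\langle f\rangle$ makes $f$ of finite order as well, so $f^{-1},g^{-1}\in S$ and we again land in~\emphref{rub}. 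If only $f$ appears, then $f$ is elliptic of order $m:=\text{ord}(f)$; conclusion~\emphref{ruc} follows unless some $f^kg$ with $0\le k<m$ is elliptic, in which case semidiscreteness makes $f^kg$ of finite order, so $(f^kg)^j=I$ is a word using both letters (when $k\ge 1$) or collapses into the previous sub-case (when $k=0$), landing once more in~\emphref{rub}.

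For the backward direction, \emphref{rua} is tautological and \emphref{rub} follows from the fact that a Fuchsian group is discrete, hence semidiscrete. The substantive implication is \emphref{ruc}$\Rightarrow$semidiscrete. Set $m=\text{ord}(f)$, $h_k=f^kg$ for $0\le k<m$, and $T=\langle h_0,\dots,h_{m-1}\rangle$. Grouping consecutive $f$'s and reducing their exponents modulo $m$ in any positive word, one obtains the decomposition
\[
S=\langle f\rangle\,\cup\,T\cdot\langle f\rangle.
\]
A hypothetical sequence of distinct $s_n\to I$ in $S$, after discarding the finitely many elements of $\langle f\rangle$ and passing to a subsequence, must take the form $s_n=t_nf^r$ with distinct $t_n\in T$ and $t_n\to f^{-r}$. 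If $r=0$, this directly exhibits $I$ as an accumulation point of $T$; if $r\ne 0$, the elliptic element $f^{-r}$ has order $k=m/\gcd(m,r)$ and $t_n^k\to I$ in $T$, reducing to the same conclusion once one disposes of the exceptional case where $t_n^k=I$ cofinitely (in which case the $t_n$ would all be elliptic in $T$ clustering inside a single finite-order conjugacy class, whereas the leading letter of each $t_n$ is one of the non-elliptic $h_i$).

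The crux is therefore to show that $I$ is not an accumulation point of the $m$-generator semigroup $T$ whose generators are all non-elliptic. My plan is to apply Theorem~\ref{piv} to each two-generator pair $\langle h_i,h_j\rangle$---the non-ellipticity of each $h_k$ supplied by hypothesis~\emphref{ruc} is exactly what is needed to feed into that classification---and then propagate semidiscreteness to the multi-generator $T$ via the two-generator reduction result advertised in the abstract, or alternatively by using the furthermore-clause of Theorem~\ref{piv} to exhibit a common invariant closed subinterval or rank-two Schottky configuration for all the $h_k$. I expect this propagation step---lifting the classified two-generator behaviour to control over an $m$-generator semigroup, together with the elliptic-accumulation reduction above---to be the main obstacle of the proof.
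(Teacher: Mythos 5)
Your forward direction is sound and is essentially the paper's own argument (Lemma~\ref{qop} combined with the observation that an elliptic element of a semidiscrete semigroup has finite order): a positive relator yields inverses of its letters by cyclic shifting, and your three sub-cases land in~\emphref{rub} or~\emphref{ruc} much as the paper's Theorem~\ref{cba} does. The reduction of the backward direction to the semigroup $T=\langle g,fg,\dots,f^{m-1}g\rangle$ via $S=\langle f\rangle\cup T\langle f\rangle$ is also a reasonable framing, although your disposal of the case $t_n^k=I$ cofinitely is not justified as written---a product whose leading letter is non-elliptic can perfectly well be elliptic; what actually saves you there is that $t_nt_{n+1}^{k-1}\to I$ with $t_nt_{n+1}^{k-1}\neq I$ by distinctness of the $t_n$.

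The genuine gap is the crux you yourself flag: proving that $T$ is semidiscrete. You cannot feed the pairs $\langle h_i,h_j\rangle$ into Theorem~\ref{piv}, because the hypotheses of that theorem concern ellipticity of \emph{products} such as $h_ih_j=f^igf^jg$ and cross-ratio conditions on the fixed points of $h_i$ and $h_j$, none of which is supplied by hypothesis~\emphref{ruc}, which only controls the single elements $f^ng$. Nor would pairwise control suffice: Theorem~\ref{abd} requires \emph{every} two-generator subsemigroup of $T$ (not just generator pairs) to be semidiscrete, together with nonexceptionality and nonelementarity of $T$, and extracting a \emph{common} invariant interval or Schottky configuration from the furthermore-clause of Theorem~\ref{piv} is exactly the propagation step you leave open. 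The paper avoids all of this with a direct geometric argument in Theorem~\ref{cba}: writing $f^k=\sigma_k\sigma_0$ and $g=\sigma_0\sigma$ as products of reflections in lines $\ell_k$ through the fixed point of $f$ and a line $\ell$, the hypothesis that no $f^kg=\sigma_k\sigma$ is elliptic forces $\ell$ to meet no $\ell_k$ and hence to lie in a single sector between consecutive lines $\ell_k$ and $\ell_{k+1}$; the region bounded by $\ell$, $\ell_k$ and $\ell_{k+1}$ is then a fundamental region for the discrete reflection group generated by $\sigma$, $\sigma_k$ and $\sigma_{k+1}$, which contains $\langle f,g\rangle$, so $\langle f,g\rangle$ is discrete. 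You should replace your propagation plan with an argument of this kind.
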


Theorem~\ref{kad} leads to the following algorithm for deciding whether a pair of M\"obius transformations $f$ and $g$ generate a semidiscrete semigroup. First check whether  $\langle f,g\rangle$ is semidiscrete and inverse free using the algorithm of Theorem~\ref{piv}. If not, then apply Gilman and Maskit's algorithm \cite{GiMa1991} to check whether $f$ and $g$ generate \emph{as a group} a Fuchsian group. We will see later that any pair of maps of type~{\ref{ruc}} are contained in a Fuchsian group, so if our check reveals that $f$ and $g$ do not generate as a group a Fuchsian group, then Theorem~\ref{kad} tells us that $\langle f,g\rangle$ is not semidiscrete.

As a consequence of Theorems~\ref{piv} and~\ref{kad}, we also establish a version of J{\o}rgensen's inequality for semigroups (see Section~\ref{ibj}).

The case in Theorem~\ref{piv} when both transformations are parabolic is particularly relevant to the theory of continued fractions. This case has been isolated from the theorem in the following corollary.

\begin{corollary}\label{cia}
 Let $f(z)=z+\lambda$ and $g(z) = z/(\mu z+1)$. The semigroup generated by $f$ and $g$ is semidiscrete and inverse free if and only if  $\lambda \mu \notin (-4,0]$. 
\end{corollary}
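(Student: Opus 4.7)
The plan is to reduce the statement directly to case~\emphref{ipa} of Theorem~\ref{piv}. First I would dispose of the degenerate case $\lambda\mu=0$: if $\lambda=0$ then $f=I$, and if $\mu=0$ then $g=I$, so in either event $I\in\langle f,g\rangle$ and the semigroup fails to be inverse free; this matches the right-hand side because $0\in(-4,0]$.

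Henceforth assume $\lambda,\mu\neq 0$. Then $f$ is parabolic with unique fixed point $\infty$, and $g$ is parabolic with unique fixed point $0$. Since these fixed points are distinct, no point of $\overline{\mathbb{H}}$ can have a finite orbit under both maps, so $\langle f,g\rangle$ is nonelementary. Moreover $\tr(f)=\tr(g)=2$, so Theorem~\ref{piv} applies and case~\emphref{ipa} is the relevant one: the semigroup is semidiscrete and inverse free if and only if $fg$ is not elliptic.

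To decide when $fg$ is elliptic, I would normalise $f$ and $g$ as the unit-determinant matrices
\[
\begin{pmatrix} 1 & \lambda \\ 0 & 1 \end{pmatrix}, \qquad \begin{pmatrix} 1 & 0 \\ \mu & 1 \end{pmatrix},
\]
multiply them, and read off $\tr(fg)=|2+\lambda\mu|$. Ellipticity is equivalent to $\tr(fg)<2$, that is, $-4<\lambda\mu<0$. Hence $fg$ fails to be elliptic precisely when $\lambda\mu\notin(-4,0)$, and combining this with the excluded case $\lambda\mu=0$ yields the claimed condition $\lambda\mu\notin(-4,0]$.

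There is no real obstacle here; the only subtlety worth noting is the boundary value $\lambda\mu=-4$, where $\tr(fg)=2$ so that $fg$ is parabolic rather than elliptic, and Theorem~\ref{piv}\emphref{ipa} therefore confirms that the semigroup is still semidiscrete and inverse free, in agreement with $-4\notin(-4,0]$.
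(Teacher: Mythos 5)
Your proposal is correct and follows exactly the route the paper intends: the corollary is the specialisation of Theorem~\ref{piv}\emphref{ipa} (equivalently Theorem~\ref{acu}) to two parabolics with distinct fixed points, and the computation $\tr(fg)=|2+\lambda\mu|$ together with the degenerate case $\lambda\mu=0$ gives precisely the condition $\lambda\mu\notin(-4,0]$. Your handling of the boundary value $\lambda\mu=-4$ and the nonelementarity check are both sound.
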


To see how this corollary relates to continued fractions, write $g(z)=1/(\mu+1/z)$, so  
\[
f^{b_1}g^{b_2}f^{b_3}g^{b_4}\dotsb =\lambda b_1+ \cfrac{1}{\mu b_2+\cfrac{1}{\lambda b_3+\cfrac{1}{\raisebox{-1ex}{$\mu b_4+\dotsb$}}}}\, ,
\]
for positive integers $b_1,b_2,\dotsc$. The corollary together with Theorem~\ref{cof} tell us that if $\lambda\mu\notin (-4,0]$, then all continued fractions of the above type converge ideally.

Let us now turn to other results on the structure of semigroups, drawing inspiration from the theory of Fuchsian groups. These results advance our understanding of composition sequences, but perhaps they are of more interest independently, for their own sake.

Any group of M\"obius transformations is elementary, discrete, or dense in $\mathcal{M}$. Our next theorem is a counterpart of this familiar result, for semigroups. It has a similar statement, but with an additional category. Let $J$ be a closed interval in  $\overline{\mathbb{R}}$ (thinking of $\overline{\mathbb{R}}$ metrically as a circle). We do not consider singletons or $\overline{\mathbb{R}}$ itself to be closed intervals (this convention will be retained throughout). We define $\mathcal{M}(J)$ to be the collection of M\"obius transformations that map $J$ within itself. Clearly $\mathcal{M}(J)$ is a semigroup, which is not semidiscrete.

\begin{theorem}\label{ppd}
Let $S$ be a semigroup. Then $S$ is 
\begin{enumerate}
\item\label{zya} elementary
\item\label{zyb} semidiscrete
\item\label{zyc} contained in $\mathcal{M}(J)$, for some closed interval $J$, or
\item\label{zyd} dense in $\mathcal{M}$.\qedhere
\end{enumerate}
\end{theorem}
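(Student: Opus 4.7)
The plan is to assume $S$ is neither elementary (case \emphref{zya}) nor semidiscrete (case \emphref{zyb}) and show that $S$ must lie in case \emphref{zyc} or \emphref{zyd}. Set $T:=\overline{S}$, a closed subsemigroup of $\mathcal{M}$ which, because $S$ is not semidiscrete, contains the identity $I$. My first move is to extract a one-parameter subsemigroup of $T$ from the accumulation at $I$ by the standard Lie exponential trick. Choose $h_n\in S$ with $h_n\neq I$ and $h_n\to I$, write $h_n=\exp(X_n)$ with $X_n\in\mathfrak{sl}(2,\mathbb{R})\setminus\{0\}$ and $X_n\to 0$, and pass to a subsequence in which $X_n/\|X_n\|\to Y$ with $\|Y\|=1$. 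Then $h_n^{\lfloor t/\|X_n\|\rfloor}=\exp(\lfloor t/\|X_n\|\rfloor X_n)\to\exp(tY)$ for each $t\geq 0$, so $\phi_t:=\exp(tY)$ lies in $T$ for every $t\geq 0$, and $\{\phi_t\}_{t\geq 0}$ is a one-parameter subsemigroup of elliptic, parabolic, or hyperbolic type according to $\det Y$.

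Next I would analyse the forward limit set $\Lambda:=\Lambda^+(S)\subseteq\overline{\mathbb{R}}$, namely the accumulation points in $\overline{\mathbb{R}}$ of the orbits of $S$ acting on $\mathbb{H}$. Because $S$ is nonelementary, $\Lambda$ is nonempty, closed, and perfect, and the identity $\lim_n f f_n(z_0)=f(\lim_n f_n(z_0))$ gives the forward invariance $f(\Lambda)\subseteq\Lambda$ for every $f\in S$. The argument now splits on whether some proper closed arc $J\subsetneq\overline{\mathbb{R}}$ satisfies $f(J)\subseteq J$ for every $f\in S$; in that case we are in \emphref{zyc}, so it suffices to show that if no such arc exists, then $T=\mathcal{M}$. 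The idea is that the absence of an $S$-invariant proper closed arc forces the attracting fixed points of hyperbolic elements of $S$ to be cyclically dense in $\overline{\mathbb{R}}$, and combining this spread with the infinitesimal freedom supplied by $\phi_t$ lets one approximate any target $g\in\mathcal{M}$ by elements of $T$. Concretely, for a base point $z_0\in\mathbb{H}$ pick a hyperbolic $f\in S$ whose attracting fixed point is close to $g(z_0)$, so that $f^n(z_0)$ approaches $g(z_0)$ for large $n$, and then compose with suitable members of the family $\phi_s\cdot T\cdot\phi_t\subseteq T$ to fine-tune direction and match $g$ in the full topology of $\mathcal{M}$.

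The main obstacle is the passage from the hypothesis ``no proper closed arc is $S$-invariant'' to the much stronger statement ``attracting fixed points of hyperbolic elements of $S$ are cyclically dense in $\overline{\mathbb{R}}$''. A priori $\Lambda$ might fail to meet some open arc of $\overline{\mathbb{R}}$ while $S$ still stabilises no single closed arc---for example when elements of $S$ permute the complementary arcs of $\Lambda$ nontrivially via orientation-preserving ``wrapping''. Ruling this out for non-semidiscrete nonelementary semigroups is the delicate heart of the proof, and requires invoking the near-identity elements of $T$ produced in the first step: being infinitesimally close to $I$, they cannot wrap any arc of $\overline{\mathbb{R}}$, and taking their compositions with a hypothetical wrapping element of $S$ yields (in the closure $T$) an element which flips endpoints of a shrinking arc---impossible for an orientation-preserving homeomorphism close to the identity. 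Once this rigidity is in place, the dichotomy is clean and the density argument outlined above goes through.
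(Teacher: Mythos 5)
Your opening move---extracting a one-parameter subsemigroup $\{\exp(tY):t\geq 0\}$ of $\overline{S}$ from a sequence $h_n\to I$---is correct and is essentially the paper's Lemma~\ref{ljq}, which reaches the same three families (i)--(iii) by a bare-hands power-taking argument rather than the exponential map. The trouble is concentrated in the second half of your outline, where there are two genuine gaps. The first is the step you yourself flag as the ``delicate heart'': the passage from ``no proper closed arc is mapped into itself by every element of $S$'' to ``attracting fixed points of hyperbolic elements of $S$ are dense in $\overline{\mathbb{R}}$.'' This is never proved. The ``wrapping'' discussion does not engage with the real obstruction: $\Lambda^+(S)$ is only \emph{forward} invariant, $f(\Lambda^+(S))\subseteq\Lambda^+(S)$, so elements of $S$ need not permute the complementary arcs of $\Lambda^+(S)$ at all, and there is no identifiable ``wrapping element'' for your near-identity elements to contradict. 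The paper sidesteps this entirely: in Theorem~\ref{lwk} it never establishes $\Lambda^+(S)=\overline{\mathbb{R}}$, but only that if $S\not\subseteq\mathcal{M}(J)$ for every closed interval $J$, then $\Lambda^+(S)$ and $\Lambda^-(S)$ cannot be separated by the two fixed points of the one-parameter family, whereupon Theorem~\ref{cca} supplies a \emph{single} hyperbolic $g\in S$ with $0<\alpha_g<\beta_g<+\infty$, which is all that is needed.

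The second gap is that the concluding density argument fails as written. For a hyperbolic $f$ the orbit $f^n(z_0)$ converges to the attracting fixed point of $f$ on the ideal boundary $\overline{\mathbb{R}}$; it does not approach an interior point $g(z_0)\in\mathbb{H}$, so ``$f^n(z_0)$ approaches $g(z_0)$'' is not available. Moreover, even granting dense attracting fixed points, the family $\{\phi_t\}_{t\geq0}$ contributes only one nonnegative parameter of adjustment, which cannot by itself fill out the three-dimensional group $\mathcal{M}$; some mechanism for producing recurrence is required. The paper's mechanism is to use Lemma~\ref{mkw} to manufacture an \emph{elliptic} product $fg$ with $f$ in the one-parameter family and $g$ the hyperbolic element above, then perturb (replacing $f$ by $f^{1/n}$ and approximating within $S$ by some $h_0$, with J{\o}rgensen's inequality controlling the outcome) so that either an elliptic element of infinite order appears in $S$ directly, or a nonelementary nondiscrete group appears inside $S$ and Lemma~\ref{qui} supplies such an elliptic; the B{\'a}r{\'a}ny--Beardon--Carne theorem (Theorem~\ref{qab}) then yields density. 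Nothing playing the role of this elliptic-production step appears in your outline, and without it the approximation of an arbitrary element of $\mathcal{M}$ does not go through.
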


Theorem~\ref{ppd} represents a significant generalisation of a result of B{\'a}r{\'a}ny, Beardon, and Carne \cite[Theorem~3]{BaBeCa1996}, who proved that any semigroup generated by two noncommuting transformations, one of which is elliptic of infinite order, is dense in $\mathcal{M}$. 

For any real number $a$, we  write $[a,+\infty]$ for the interval $\{x\,:\, x\geq a\}\cup\{\infty\}$, and  $[-\infty,a]$ for the interval $\{x\,:\, x\leq a\}\cup\{\infty\}$.

If $S$ is finitely generated, then all semigroups bar a few that are contained in $\mathcal{M}(J)$ for some interval $J$ (of type~{\ref{zyc}}) are semidiscrete (of type~{\ref{zyb}}). The few finitely-generated semigroups of type~{\ref{zyc}} that are not of type~{\ref{zyb}} include, for example, the semigroup generated by the maps $\sqrt{2}z$, $\tfrac12z$ and $z+1$ (using the obvious shorthand). This semigroup is contained in $\mathcal{M}([0,+\infty])$, but it is not elementary, semidiscrete, or dense in $\mathcal{M}$. In Section~\ref{gaq} we will classify the small collection of finitely-generated semigroups that are not of types~{\ref{zya}},~{\ref{zyb}}, or~{\ref{zyd}}. 

Our next theorem also has a familiar counterpart in the theory of Fuchsian groups. This counterpart theorem says that a nonelementary group of M\"obius transformations is discrete if and only if each two-generator subgroup of the group is discrete (in fact, this theorem is true for groups of complex M\"obius transformations as well). Our theorem only applies to finitely-generated semigroups (there is a comparable result for semigroups that are not finitely generated, but it is weaker, so we postpone discussing it until Section~\ref{gaq}). Unfortunately, there is also a bothersome class of semigroups that we must treat as exceptional cases, which we now describe. Let $S$ be a semigroup that lies in $\mathcal{M}(J)$, for some closed interval $J$. Suppose that the collection of elements of $S$ that fix $J$ as a set forms a nontrivial discrete group. Suppose also that one of the other members of $S$ (outside this discrete group) fixes one of the end points of $J$. In these circumstances we say that $S$ is \emph{exceptional}; otherwise it is  \emph{nonexceptional}. These terms should be treated in a similar way to how we treat the terms \emph{elementary} and \emph{nonelementary} in the theory of  groups or semigroups; that is, exceptional semigroups, like elementary groups or semigroups, are easy to handle, but do not satisfy all the same laws as their more general cousins. We emphasise that it is simple to tell whether a finitely-generated semigroup is exceptional by examining its generating set, as we explain later. 

\begin{theorem}\label{abd}
Any finitely-generated  nonexceptional nonelementary semigroup $S$ is semidiscrete if and only if every two-generator semigroup contained in $S$ is semidiscrete.
\end{theorem}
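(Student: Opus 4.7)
The forward implication is immediate, since any subset of a semidiscrete semigroup is itself semidiscrete.

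For the converse I would argue by contrapositive: assume $S$ is finitely generated, nonelementary, and nonexceptional, but \emph{not} semidiscrete, so there exists a sequence $(h_n)$ in $S\setminus\{I\}$ with $h_n\to I$, and produce a two-generator sub-semigroup of $S$ that is not semidiscrete. By Theorem~\ref{ppd}, $S$ is either dense in $\mathcal{M}$ or contained in $\mathcal{M}(J)$ for some closed interval $J$. In the dense case, density lets me fix a hyperbolic $f\in S$ with $\tr(f)^2-4$ positive but small, and refine $(h_n)$ to a subsequence whose elements share no fixed point with $f$, so that each $\langle f,h_n\rangle$ is nonelementary. Since $h_n\to I$ we have $\tr[f,h_n]\to 2$, and the semigroup version of J{\o}rgensen's inequality (derived in Section~\ref{ibj} from Theorems~\ref{piv} and~\ref{kad}) then forces $\langle f,h_n\rangle$ to be non-semidiscrete for all large $n$.

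When $S\subset\mathcal{M}(J)$, let $G$ be the collection of elements of $S$ that fix $J$ setwise. After passing to a subsequence, either all $h_n$ lie in $S\setminus G$ or all lie in $G$. In the first situation, each $h_n$ is hyperbolic with attracting fixed point in $J$ and repelling fixed point in the complementary arc; here I choose a hyperbolic $f\in S$ whose fixed points differ from those of some $h_n$ — nonelementarity of $S$, together with the nonexceptional hypothesis, ensures such an $f$ exists — and apply the semigroup J{\o}rgensen inequality exactly as in the dense case. In the second situation, $G$ lies inside the essentially one-dimensional setwise stabiliser of $J$, and finite generation of $S$ together with nonexceptionality allows one to combine a sufficiently small element of $G$ with a suitably chosen element of $S\setminus G$ (whose existence is guaranteed by the nonelementarity of $S$) to produce a two-generator sub-semigroup that accumulates at $I$.

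The main obstacle is this final sub-case. In the exceptional configuration — $G$ a nontrivial discrete group and some element of $S\setminus G$ fixing an endpoint of $J$ — every element of $S\setminus G$ could share a fixed point with every $h\in G$, so that $\langle f,h\rangle$ is elementary for all choices and blocks any direct application of J{\o}rgensen-type inequalities; the nonexceptional hypothesis is precisely what rules out this pathology, and threading the argument carefully through the interaction between $G$ and $S\setminus G$, while making essential use of nonexceptionality, is where the bulk of the work lies.
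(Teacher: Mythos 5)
The central step of your converse does not work. Theorem~\ref{fff} (J{\o}rgensen's inequality for semigroups) is a disjunction: a nonelementary semidiscrete $\langle f,g\rangle$ either satisfies the trace inequality \emph{or} has $f$ and $g$ both mapping a closed interval strictly inside itself. So producing a nonelementary pair $(f,h_n)$ violating $|\Tr(F)^2-4|+|\Tr[F,H_n]-2|\geq 1$ proves nothing unless you also rule out the second alternative, and for $h_n\to I$ there is no reason to expect this: two hyperbolic maps arbitrarily close to the identity can be ``parallel'' ($C(f,h_n)<1$) and then generate a Schottky semigroup, which is semidiscrete and inverse free. This is precisely the feature that distinguishes semigroups from groups in this paper. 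The gap is fatal in your $\mathcal{M}(J)$ case: \emph{every} element of $S$ maps $J$ into itself, so any two elements of $S\setminus G$ map the closed interval $J$ strictly inside itself and the escape clause of Theorem~\ref{fff} is triggered automatically --- no J{\o}rgensen-type argument can detect non-semidiscreteness there. (Your description of the elements of $S\setminus G$ is also inaccurate: they need not be hyperbolic with repelling fixed point outside $J$.) Your final sub-case is likewise misdirected: composing a small element of the setwise stabiliser $G$ with an element of $S\setminus G$ tends to produce a \emph{semidiscrete} semigroup (compare Theorem~\ref{jjr}); the non-semidiscrete two-generator subsemigroup must be found \emph{inside} the stabiliser, as two elements conjugate to $a_iz$, $a_jz$ generating a dense subsemigroup of $\mathcal{M}_0(J)$ (Corollary~\ref{nps}~\emphref{wec}). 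Nonexceptionality enters through Theorem~\ref{ioe}, which rules out the remaining possibility that the stabiliser part is a nontrivial discrete group while some element of $S$ fixes an endpoint of $J$ --- not through the elementarity obstruction you describe.

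The paper's route repairs exactly these defects. For the case where $S$ is not contained in any $\mathcal{M}(J)$, Theorem~\ref{lwk} does the real work: Lemmas~\ref{ljq} and~\ref{mkw} produce an element $h_0$ of $S$ and a hyperbolic $g\in S$ with $h_0^ng$ \emph{elliptic}, and only then is J{\o}rgensen's inequality (for groups) combined with Lemma~\ref{qui} and the B{\'a}r{\'a}ny--Beardon--Carne theorem (Theorem~\ref{qab}) to exhibit a two-generator subsemigroup that is dense in $\mathcal{M}$. The manufactured elliptic element is what closes the loophole you left open. The case $S\subset\mathcal{M}(J)$ is then finished by Theorem~\ref{iab} together with Corollary~\ref{nps}: a nonexceptional, non-semidiscrete $S$ in $\mathcal{M}(J)$ must be dense in $\mathcal{M}_0(J)$, and that density is already witnessed by two elements of $S$. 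If you want to salvage your outline, you must replace both applications of Theorem~\ref{fff} by an argument that actually produces an elliptic element (or a dense one-parameter subsemigroup) inside a two-generator subsemigroup.
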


Our final theorem is about limit sets of semigroups. Limit sets of semigroups of complex M\"obius transformations have been studied before, in \cite{FrMaSt2012}. We define the \emph{forward limit set} $\Lambda^+(S)$ of $S$ to be the set of accumulation points in $\overline{\mathbb{R}}$ of the set $\{g(w)\,:\, g\in S\}$, where $w\in\mathbb{H}$, using the chordal metric on $\overline{\mathbb{C}}$. The forward limit set is independent of the particular choice of the point $w$ in $\mathbb{H}$. Let $S^{-1}=\{g^{-1}\,:\, g\in S\}$, which is also a semigroup. The \emph{backward limit set} $\Lambda^-(S)$ of $S$ is defined to be $\Lambda^+(S^{-1})$. 

Our theorem is based on the intuitive idea that that there should a relationship between the size of the intersection $\Lambda^+(S)\cap\Lambda^-(S)$ and the size of the group $S\cap S^{-1}$. For example, if $\Lambda^+(S)\cap\Lambda^-(S)$ is countable, then the group $S\cap S^{-1}$ is either empty or elementary (because if it is not elementary, then its limit set, which is contained in both $\Lambda^+(S)$ and $\Lambda^-(S)$, is perfect and hence uncountable). The next theorem is about when the intersection $\Lambda^+(S)\cap\Lambda^-(S)$ is large.

\begin{theorem}\label{zkj}
Let $S$ be a finitely-generated semidiscrete semigroup such that $|\Lambda^-(S)|\neq 1$. Then $\Lambda^+(S)=\Lambda^-(S)$ if and only if $S$ is a group.
\end{theorem}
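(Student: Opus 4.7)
The easy direction is immediate: if $S$ is a group then $S = S^{-1}$, so $\Lambda^+(S) = \Lambda^+(S^{-1}) = \Lambda^-(S)$. For the converse, assume $\Lambda^+(S) = \Lambda^-(S) =: \Lambda$. A useful preliminary is that each $f \in S$ sends $\Lambda$ bijectively to itself: forward invariance of $\Lambda^+$ under $S$ gives $f(\Lambda) \subseteq \Lambda$, while forward invariance of $\Lambda^+(S^{-1}) = \Lambda^-$ under $S^{-1}$ gives $f^{-1}(\Lambda) \subseteq \Lambda$, i.e.\ $f(\Lambda) \supseteq \Lambda$, so $f(\Lambda)=\Lambda$.

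I would split by $|\Lambda|$. The hypothesis $|\Lambda^-(S)| \neq 1$ rules out $|\Lambda| = 1$, leaving $|\Lambda| = 0$, $|\Lambda| = 2$, or $|\Lambda| \geq 3$. In the case $|\Lambda| = 0$ no orbit of $S$ (or $S^{-1}$) escapes $\mathbb{H}$, and a standard center-of-mass argument produces a point of $\mathbb{H}$ fixed by every element of $S$; conjugating this point to $i$, $S$ becomes a semigroup of rotations, whereupon finite generation together with semidiscreteness forces every generator to be of finite order, and $S$ is a finite cyclic group. In the case $|\Lambda| = 2$, conjugate so that $\Lambda = \{0, \infty\}$; elements of $S$ are then either scalings $z \mapsto \lambda z$ or swaps $z \mapsto -\mu/z$. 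The requirement that both $0$ and $\infty$ be forward \emph{and} backward limit points, together with semidiscreteness and finite generation, quickly forces the set of scaling factors to form a group.

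In the main case $|\Lambda| \geq 3$, $S$ is nonelementary; set $G := S \cap S^{-1}$, which is a discrete subgroup of $\mathcal{M}$. My plan is to show $G$ is a nonelementary Fuchsian group with $\Lambda(G) = \Lambda$, and then deduce $S = G$. To show $G$ is nonelementary I would use the density of attracting fixed points of hyperbolic elements of $S$ in $\Lambda^+(S)$ (a standard fact for nonelementary semidiscrete semigroups), together with the symmetric density for $S^{-1}$ in $\Lambda^-(S)$. Under the hypothesis $\Lambda^+ = \Lambda^-$ both dense sets sit in the same $\Lambda$, which lets one locate two hyperbolic elements of $G$ with distinct axes, proving $G$ is nonelementary. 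The limit set $\Lambda(G)$ is then a perfect, $G$-invariant subset of $\Lambda$, and one argues it must fill $\Lambda$ using minimality of the $G$-action on $\Lambda(G)$.

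The crux is to deduce $S \subseteq G$. Suppose for contradiction that $f \in S \setminus G$, so $f^{-1} \notin S$. I would aim to construct a sequence $g_n \in S$ with $g_n \to f^{-1}$ in $\mathcal{M}$; then $g_n f \in S$ tends to the identity, contradicting semidiscreteness. For $f$ hyperbolic, the $g_n$ should be hyperbolic members of $S$ whose fixed-point pairs tend to $(\beta_f, \alpha_f) = (\alpha_{f^{-1}}, \beta_{f^{-1}})$ and whose traces satisfy $\tr(g_n) \to \tr(f)$. Convergence of the fixed points follows from the density statements above. The delicate piece — the chief obstacle — is matching the trace: I would build $g_n$ as ping-pong products of carefully chosen hyperbolic elements of $G$ with iterates of $f$, selecting the exponents so that the product's trace approaches $\tr(f)$ while its fixed points retain the correct limits. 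The parabolic and elliptic cases for $f$ are treated analogously, with the trace condition replaced by matching the appropriate parabolic shift or elliptic rotation angle.
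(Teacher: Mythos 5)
Your easy direction and your reduction to the nonelementary case are fine, but the nonelementary case --- which is the entire content of the theorem --- has two genuine gaps. First, you claim that the density of attracting fixed points of hyperbolic elements of $S$ in $\Lambda^+(S)$, together with the symmetric statement for $S^{-1}$, ``lets one locate two hyperbolic elements of $G$ with distinct axes''. It does not: a hyperbolic element of $S$ whose two fixed points both happen to lie in $\Lambda$ is not thereby an element of $S\cap S^{-1}$, and nothing in your sketch produces even one element of $G$. Whether $G$ is nonempty is essentially what is being proved --- \emph{a priori} $S$ could be inverse free and still satisfy $\Lambda^+(S)=\Lambda^-(S)$, and ruling that out is the theorem's content, not something you may assume.

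Second, and more seriously, your crux step --- constructing $g_n\in S$ with $g_n\to f^{-1}$ --- is only a plan. Convergence of the fixed points of $g_n$ to $(\alpha_{f^{-1}},\beta_{f^{-1}})$ is indeed available from Theorem~\ref{cca}, but you also need $\tr(g_n)\to\tr(f)$, and you give no argument that the traces of elements of $S$ with fixed points near $(\beta_f,\alpha_f)$ come anywhere near $\tr(f)$; they could \emph{a priori} all be enormous. You flag this yourself as ``the chief obstacle'', and that self-assessment is accurate: ``selecting the exponents so that the product's trace approaches $\tr(f)$'' names the difficulty rather than resolving it. The paper never approximates a specific inverse. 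Instead it fixes a generator $h$, uses the hypothesis $\Lambda^-(S)\subset\Lambda^+(S)$ (via Lemma~\ref{aif}) to build a composition sequence in which $h$ recurs as a letter and whose orbit accumulates at two distinct boundary points, and then invokes the dichotomy of Theorem~\ref{kkl} and Corollary~\ref{axo}: either $S$ is a cocompact Fuchsian group (hence a group), or the letters of any composition sequence that fails to converge ideally must eventually lie in the group part --- which forces $h\in S\cap S^{-1}$ for every generator $h$. That argument requires control only of fixed points, never of traces, which is precisely how it sidesteps your obstacle. To salvage your approach you would need a genuinely new idea for the trace-matching step.
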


In fact, we prove that if $\Lambda^-(S)\subset \Lambda^+(S)$, then $S$ is a group.

%Converse to (ii) fails because can have semigroup with elementary group part such that $\Lambda^-\cap \Lambda^+$ is finite. Can also have semigroup with elementary group part such that $\Lambda^-\cap \Lambda^+$ is infinite.

%%%%%%%%%%%%%%%%%
\section{The M\"obius group and semigroups}\label{ciq}
%%%%%%%%%%%%%%%%%

% THESE SENTIMENTS TO APPEAR ANYWHERE IN SOME FORM?
%In the next few sections we review the basic theory of semigroups. Some of this theory has been treated already in \cite{FrMaSt2012}, although there the emphasis is on complex dynamics, whereas here our focus is on the action of semigroups on the hyperbolic plane.

Let us begin by introducing  notation for the M\"obius group, and developing some of the basic theory of semigroups.

The extended complex plane $\overline{\mathbb{C}}$ is a compact metric space when endowed with the chordal metric
\[
\chi(z,w)= \frac{2|z-w|}{\sqrt{1+|z|^2}\sqrt{1+|w|^2}},\quad \chi(z,\infty) = \frac{2}{\sqrt{1+|z|^2}},\quad z,w\neq\infty,
\]
which is the metric inherited by $\overline{\mathbb{C}}$ from the restriction of the three-dimensional Euclidean metric to the unit sphere, after identifying  $\overline{\mathbb{C}}$ with the unit sphere by stereographic projection. We denote the closure of a set $X$ in $\overline{\mathbb{C}}$ by $\overline{X}$. In particular, if $X$ is a subset of the hyperbolic plane $\mathbb{H}$, then $\overline{X}$ may contain points that lie outside~$\mathbb{H}$.

We can equip the M\"obius group $\mathcal{M}$ with the uniform metric $\chi_0$ with respect to $\chi$, which is given by
\[
\chi_0(f,g) =\sup_{z\in\overline{\mathbb{C}}} \chi(f(z),g(z)),\quad f,g\in\mathcal{M}.
\]
The metric space $(\mathcal{M},\chi_0)$ is both a complete metric space and a topological group. The metric $\chi_0$ is right-invariant. We denote the identity map in $\mathcal{M}$ by $I$ throughout.

Recall that a semigroup $S$ is a subset of $\mathcal{M}$ that is closed under composition. Semigroups share a number of basic properties with subgroups of $\mathcal{M}$ -- for example, the conjugate of a semigroup is also a semigroup -- but they also have features not shared by groups. For instance, a semigroup can be inverse free. If $S$ is inverse free, then we can tack the identity map $I$ on to $S$ to form another semigroup, $S\cup\{I\}$.

As we have observed already, the set $S^{-1}$ of inverse elements of a semigroup $S$ is itself a semigroup. The semigroups $S$ and $S^{-1}$ have common characteristics: one is semidiscrete if and only if the other is semidiscrete, one is finitely generated if and only if the other is finitely generated, and so on. From $S$ and $S^{-1}$ we can create one or two new semigroups that lie inside $S$, as follows. If the intersection $S\cap S^{-1}$ is not empty, then it is a group, which we call the \emph{group part} of $S$. If the set $S\setminus S^{-1}$ is not empty, then it is an inverse-free semigroup, which we call the \emph{inverse-free part} of $S$. In fact, the following elementary lemma says that when you compose an element of the inverse-free part of $S$ with any other element of $S$, you obtain another element from the inverse-free part of $S$.

\begin{lemma}\label{kbi}
Let $S$ be a semigroup. If $g,h\in S$, and one of them belongs to $S\setminus S^{-1}$, then $gh\in S\setminus S^{-1}$.
\end{lemma}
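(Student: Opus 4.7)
The plan is to argue by contradiction. Suppose $gh \in S \cap S^{-1} = S^{-1}$ (since automatically $gh \in S$ by closure). Then $(gh)^{-1} = h^{-1} g^{-1}$ is an element of $S$, and I will combine this element with whichever of $g, h$ was assumed to lie in $S$ to force the inverse of the other factor into $S$, contradicting the hypothesis.

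More precisely, first I would handle the case $g \in S \setminus S^{-1}$, aiming to show $g^{-1} \in S$, which is the desired contradiction. Using closure of $S$ under composition together with $h \in S$ and $h^{-1} g^{-1} \in S$, I form the product
\[
h \cdot (h^{-1} g^{-1}) = g^{-1},
\]
which therefore lies in $S$. Next I would handle the symmetric case $h \in S \setminus S^{-1}$, where from $g \in S$ and $h^{-1} g^{-1} \in S$ I form
\[
(h^{-1} g^{-1}) \cdot g = h^{-1} \in S,
\]
again a contradiction. Since both cases lead to contradictions, the assumption $gh \in S^{-1}$ fails, so $gh \in S \setminus S^{-1}$.

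There is no real obstacle here: the lemma is a direct consequence of the semigroup closure property and the algebraic identity $(gh)^{-1} = h^{-1} g^{-1}$ valid in the ambient group $\mathcal{M}$. The only thing to be mindful of is to work with $\mathcal{M}$-inverses (which always exist, since $\mathcal{M}$ is a group) and to distinguish carefully between the statement ``$f \in S^{-1}$'' (meaning $f^{-1} \in S$) and ``$f$ is invertible in $S$''; the two coincide here since $S^{-1}$ is defined as the set of $\mathcal{M}$-inverses of elements of $S$.
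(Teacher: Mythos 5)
Your proof is correct and is essentially the same argument as the paper's: both set $f=(gh)^{-1}\in S$ and observe that $hf=g^{-1}$ and $fg=h^{-1}$ lie in $S$ by closure. The only cosmetic difference is that you phrase it as a contradiction in two cases, whereas the paper proves the contrapositive (if $gh\in S\cap S^{-1}$ then both $g,h\in S\cap S^{-1}$) in one stroke.
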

\begin{proof}
We prove the contrapositive assertion that if $gh\in S\cap S^{-1}$, then $g,h\in S\cap S^{-1}$. If $gh\in S\cap S^{-1}$, then $f=(gh)^{-1}$ is an element of $S$. Therefore $g^{-1}=hf$ and $h^{-1}=fg$ both belong to $S$, as required.
\end{proof}

Remember that a semigroup $S$ is said to be generated by a subset $\mathcal{F}$ of $\mathcal{M}$ if $S$ consists of finite compositions of elements of $\mathcal{F}$. We call a finitely-generated semigroup $S$ an \emph{$n$-generator semigroup} if the smallest set that generates $S$ has order $n$.

\begin{lemma}\label{can}
Suppose that $S$ is a semigroup generated by a set $\mathcal{F}$. If $S$ has nonempty group part, then that group is generated by a subset of $\mathcal{F}$.
\end{lemma}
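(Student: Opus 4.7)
The plan is to take the natural candidate subset $\mathcal{F}_0 := \mathcal{F}\cap(S\cap S^{-1})$, that is, the generators whose inverses also lie in $S$, and to show that $\mathcal{F}_0$ generates the group part $G := S\cap S^{-1}$. Since $\mathcal{F}_0\subseteq\mathcal{F}$, this will give exactly what is asked.

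I would fix an arbitrary $g\in G$ and, using $g\in S$, write $g=f_1\dotsb f_n$ with each $f_i\in\mathcal{F}$. The whole argument boils down to showing that every $f_i$ in fact belongs to $G$, i.e.\ that $f_i^{-1}\in S$. The key observation is the cyclic identity
\[
f_i^{-1}=f_{i+1}\dotsb f_n\,g^{-1}\,f_1\dotsb f_{i-1},
\]
which is immediate from $g=f_1\dotsb f_n$ (with the convention that an empty product is $I$, covering the boundary cases $i=1$ and $i=n$). Since $g\in G$ we have $g^{-1}\in S$, so $g^{-1}$ can itself be expressed as a composition of elements of $\mathcal{F}$; substituting this expression into the display exhibits $f_i^{-1}$ as a composition of elements of $\mathcal{F}$, hence $f_i^{-1}\in S$ and so $f_i\in G$. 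Combined with $f_i\in\mathcal{F}$, this yields $f_i\in\mathcal{F}_0$ for every $i$.

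Consequently the original factorisation $g=f_1\dotsb f_n$ is already a word in $\mathcal{F}_0$, showing that $G$ is generated (as a semigroup, and a fortiori as a group) by $\mathcal{F}_0\subseteq\mathcal{F}$. There is no real obstacle here: the lemma is essentially bookkeeping, and the only thing to mind is the convention about empty products when $i\in\{1,n\}$. If desired one could phrase the conclusion more symmetrically using Lemma~\ref{kbi} to observe that no member of $S\setminus S^{-1}$ can appear in any factorisation of an element of $G$, but the direct cyclic-rewrite argument above seems cleanest.
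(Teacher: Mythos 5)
Your proof is correct and follows essentially the same route as the paper: both take the candidate generating set $\mathcal{F}\cap S\cap S^{-1}$ and show that every letter of any word in $\mathcal{F}$ representing an element of the group part must itself lie in the group part. The only difference is mechanical: the paper obtains this last step by citing Lemma~\ref{kbi} (whose contrapositive says that a word containing a letter from $S\setminus S^{-1}$ represents an element of $S\setminus S^{-1}$), whereas you prove it inline via the cyclic-rewrite identity -- the two are interchangeable, as you yourself note.
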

\begin{proof}
Let $\mathcal{N}$ denote the elements of $\mathcal{F}$ that belong to the inverse-free part of $S$, and let $\mathcal{G}$ denote the elements of $\mathcal{F}$ that belong to the group part of $S$. By Lemma~\ref{kbi}, a word in the generators $\mathcal{F}$ represents an element of $S\cap S^{-1}$ if and only if all the letters in the word belong to $\mathcal{G}$. Therefore $S\cap S^{-1}$ is generated by $\mathcal{G}$.
\end{proof}

\begin{corollary}\label{aig}
If nonempty, the group part of a finitely-generated semigroup is a finitely-generated group. 
\end{corollary}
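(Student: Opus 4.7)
The corollary follows almost immediately from Lemma~\ref{can}, so my plan is really just to unpack that lemma in a single short paragraph.

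First I would fix a finite generating set $\mathcal{F}$ for the semigroup $S$, and (assuming the group part $S\cap S^{-1}$ is nonempty) apply Lemma~\ref{can} to conclude that $S\cap S^{-1}$ is generated \emph{as a semigroup} by the subset
\[
\mathcal{G}=\mathcal{F}\cap (S\cap S^{-1}),
\]
which is finite because $\mathcal{F}$ is. So every element of $S\cap S^{-1}$ can be written as a finite composition of elements of $\mathcal{G}$.

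The only remaining point is to upgrade semigroup generation to group generation. Since any word in $\mathcal{G}$ is in particular a word in $\mathcal{G}\cup\mathcal{G}^{-1}$, the set $\mathcal{G}$ also generates $S\cap S^{-1}$ as a group, so $S\cap S^{-1}$ is a finitely-generated group.

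There is essentially no obstacle here; the content of the corollary is entirely supplied by Lemma~\ref{can}, and the step from ``semigroup generated'' to ``group generated'' is trivial once one has it. The corollary is really just being recorded to be cited later.
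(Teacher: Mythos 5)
Your proof is correct and follows exactly the paper's intended route: the paper states the corollary with no separate proof, as an immediate consequence of Lemma~\ref{can} applied to a finite generating set $\mathcal{F}$, with $\mathcal{G}=\mathcal{F}\cap(S\cap S^{-1})$ finite. Your extra remark that a semigroup generating set for a group is automatically a group generating set is the (trivial) reason the corollary can be phrased in terms of finitely-generated groups, and nothing more is needed.
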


In contrast, the inverse-free part of a finitely-generated semigroup need not be finitely generated.  For example, suppose that $f$ and $g$ generate \emph{as a group} a Schottky group $G$, and let $S=\langle f,g,g^{-1}\rangle$, the semigroup generated by $f$, $g$, and $g^{-1}$. Given any integer $n$, observe that it is not possible to write $fg^n$ as a product of two elements of $S\setminus S^{-1}$ (using the facts that $G$ is a free group and words in $f$ and $g$ from $S\setminus S^{-1}$ use only positive powers of $f$). Therefore any generating set for $S\setminus S^{-1}$ must contain every element $fg^n$, so it is infinite.

%%%%%%%%%%%%%%%%%
\section{Semidiscrete semigroups}
%%%%%%%%%%%%%%%%%

In this section we discuss the relationship between being discrete and semidiscrete, and we look at some other properties of semigroups that are equivalent to being semidiscrete. It is helpful to recall that a set $T$ of M\"obius transformations is said to be discrete if it is a discrete subset of  the M\"obius group $\mathcal{M}$, and $T$ is semidiscrete if the identity map $I$ is not an accumulation point of $T$ in $\mathcal{M}$. If $S$ is a semidiscrete semigroup, then its group part and inverse-free part (if nonempty) are also semidiscrete, so, in particular, the group part is a Fuchsian group. 

First we show that semidiscrete and discrete are different for semigroups. One of the simplest examples of a semidiscrete semigroup that is not discrete is the two-generator semigroup $S$ generated by $f(z)=2z$ and $g(z)=\tfrac12 z+1$. This semigroup is not discrete because
\[
g^nf^n(z) = z+2-\frac{1}{2^{n-1}} \to z+2\quad\text{as}\quad n\to\infty.
\]
However, it is semidiscrete because one can easily check that  each element of $S$ other than a positive power of $f$ has the form $z\mapsto 2^nz+b$, where $n\in\mathbb{Z}$ and $b\geq 1$.

The semigroup $S$ is elementary as both $f$ and $g$ fix $\infty$; however, it is easy to modify $S$ to give a nonelementary semidiscrete semigroup that is not discrete. For example, $\langle f,g,h\rangle$, where $h(z)=z/(2z+1)$, is such a semigroup, as one can easily check (or use Theorem~\ref{jjr}).

Now let us examine properties of semigroups that are equivalent to being semidiscrete.  The action of a semigroup $S$ on $\mathbb{H}$ is said to be \emph{properly discontinuous} if for each point $w$ in $\mathbb{H}$ there is a neighbourhood $U$ of $w$ such that $g(U)\cap U\neq \varnothing$ for only finitely many elements $g$ of $S$. When $S$ is a group it is discrete if and only if its action on $\mathbb{H}$ is properly discontinuous, and this is so if and only if the $S$-orbit of any point $w$ in $\mathbb{H}$ does not accumulate anywhere in $\mathbb{H}$. The next theorem is a comparable result for semigroups.

\begin{theorem}\label{lla}
Let $S$ be a semigroup. The following statements are equivalent: 
\begin{enumerate}
\item\label{lka} $S$ is semidiscrete
\item\label{lkb} the action of $S$ on $\mathbb{H}$ is properly discontinuous
\item\label{lkc} the $S$-orbit of any point $w$ in $\mathbb{H}$ does not accumulate at $w$.\qedhere
\end{enumerate}
\end{theorem}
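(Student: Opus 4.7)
The plan is to prove the cycle \emphref{lkb} $\Rightarrow$ \emphref{lkc} $\Rightarrow$ \emphref{lka} $\Rightarrow$ \emphref{lkb}. The first implication is immediate: if $U$ is a neighbourhood of $w$ that meets only finitely many of its $S$-translates, then the orbit $\{g(w):g\in S\}$ contains only finitely many points in $U$, so it cannot accumulate at $w$.

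For \emphref{lkc} $\Rightarrow$ \emphref{lka}, I would argue the contrapositive. Suppose the identity $I$ is an accumulation point of $S$, so there are distinct $h_j \in S$ with $h_j \to I$. Fix any $w \in \mathbb{H}$. If infinitely many $h_j$ do not fix $w$, then $h_j(w) \to w$ with $h_j(w) \ne w$, and the orbit points $h_j(w)$ necessarily form an infinite set (a sequence in $\mathbb{H}$ converging to $w$ with no term equal to $w$ cannot take only finitely many values), so the orbit of $w$ accumulates at $w$. Otherwise, infinitely many $h_j$ lie in the stabiliser $K_w$ of $w$ in $\mathcal{M}$; being distinct with $h_j \to I$, all but at most one of them is a nontrivial rotation about $w$, and for any $w' \ne w$ in $\mathbb{H}$ the points $h_j(w')$ are distinct (using that a nontrivial element of $K_w$ fixes only $w$ in $\mathbb{H}$, so $h_j(w') = h_k(w')$ forces $h_k^{-1}h_j = I$) and converge to $w'$, showing that the orbit of $w'$ accumulates at $w'$.

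The main direction is \emphref{lka} $\Rightarrow$ \emphref{lkb}. Fix $w \in \mathbb{H}$, assume $S$ is semidiscrete, and suppose for contradiction that proper discontinuity fails at $w$. Taking the hyperbolic balls $B_n$ of radius $1/n$ about $w$, I can choose distinct $g_n \in S$ and points $z_n \in B_n$ with $g_n(z_n) \in B_n$; the triangle inequality combined with the fact that $g_n$ is an isometry yields $g_n(w) \to w$. Since the evaluation map $g \mapsto g(w)$ from $\mathcal{M}$ to $\mathbb{H}$ is proper with compact fibres, a subsequence $g_{n_k}$ converges in $\mathcal{M}$ to some $g \in K_w$.

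The case $g = I$ contradicts semidiscreteness immediately. The main obstacle is the case $g \ne I$, in which $g$ is a nontrivial rotation about $w$. If $g$ has infinite order, then $\{g^m : m \ge 1\}$ is dense in $K_w$, so for any $\epsilon > 0$ I can pick $m$ with $\chi_0(g^m, I) < \epsilon/2$ and then $k$ large enough that $\chi_0(g_{n_k}^m, g^m) < \epsilon/2$; since the elements of $\mathcal{M}$ of order dividing $m$ form a closed set avoiding $g$, we have $g_{n_k}^m \in S$ with $g_{n_k}^m \ne I$ for all sufficiently large $k$. If instead $g$ has finite order $q \ge 2$, I consider the elements $g_{n_k} g_{n_l}^{q-1} \in S$, which tend to $g\cdot g^{q-1} = I$ as $k,l \to \infty$ and, for $k \ne l$, equal $I$ only when $g_{n_k} = g_{n_l}^{-(q-1)}$, a condition met by at most one $k$ for each fixed $l$. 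In either sub-case I extract infinitely many distinct elements of $S$ converging to $I$, contradicting semidiscreteness and closing the cycle.
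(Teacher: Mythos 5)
Your proof is correct and follows essentially the same route as the paper's (omitted) argument: the same cycle of implications, with the key step being the compactness of $\{g:\rho(g(w),w)\leq C\}$ to extract a limit $g\in K_w$ and then deriving elements of $S\setminus\{I\}$ accumulating at $I$. Your explicit dichotomy on the order of the limiting elliptic $g$ (dense powers versus the products $g_{n_k}g_{n_l}^{q-1}$) is in fact a more careful treatment of a point the paper's sketch passes over quickly, namely why an elliptic accumulation point of $S$ forces $I$ to be an accumulation point as well.
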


We omit the proof, as it is elementary, and similar to proofs of comparable theorems from the theory of Fuchsian groups, such as those of \cite[Theorems~2.2.1 and~2.2.6]{Ka1992}.

Sometimes in the literature on Fuchsian groups a group $S$ is said to act properly discontinuous on $\mathbb{H}$ if for each compact subset $K$ of $\mathbb{H}$ there are only finitely many elements $g$ of $S$ that satisfy $g(K)\cap K\neq\varnothing$. For groups, this definition is equivalent to the definition we gave earlier; however, for semigroups the two definitions differ. To see this, consider the semigroup $S=\langle f,g\rangle$, where $f(z)=2z$ and $g(z)=\tfrac12 z+1$, discussed near the start of this section. Let $K=\{t+i:t\in[0,2]\}$, a compact subset of $\mathbb{H}$. Then $g^nf^n(K)\cap K\neq \varnothing$ for all positive integers $n$. On the other hand, we have seen that $S$ is semidiscrete, so the action of $S$ on $\mathbb{H}$ is properly discontinuous, by Theorem~\ref{lla}.

The action of a semigroup $S$ on $\mathbb{H}$ is said to be \emph{strongly discontinuous} if for each point $w$ in $\mathbb{H}$ there is a neighbourhood $U$ of $w$ such that $g(U)\cap U= \varnothing$ for every element $g$ of $S$. This definition is close to the definition of a discontinuous action in the theory of Fuchsian groups, but in that theory the intersection $g(U)\cap U$ is empty for every element $g$ of $S$ \emph{except} the identity element.

\begin{theorem}
Let $S$ be a semigroup. The following statements are equivalent: 
\begin{enumerate}
\item\label{rka} $S$ is semidiscrete and inverse free
\item\label{rkb} the action of $S$ on $\mathbb{H}$ is strongly discontinuous
\item\label{rkc} the $S$-orbit of any point $w$ in $\mathbb{H}$ stays a positive distance away from $w$.\qedhere
\end{enumerate}
\end{theorem}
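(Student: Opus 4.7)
The plan is to establish the cyclic implications \emph{(ii)} $\Rightarrow$ \emph{(iii)} $\Rightarrow$ \emph{(i)} $\Rightarrow$ \emph{(ii)}. The first two are nearly immediate: if every $w \in \mathbb{H}$ has a neighbourhood $U$ with $g(U)\cap U = \emptyset$ for every $g\in S$, then $g(w)\notin U$ for each $g\in S$, so the $S$-orbit of $w$ stays at least the hyperbolic inradius of $U$ away from $w$. And if every orbit stays a positive distance from its basepoint, then $I\notin S$ (otherwise $I(w)=w$ lies in the orbit at distance zero), so $S$ is inverse free; moreover, any sequence $(g_n)$ in $S$ converging to $I$ would force $g_n(w)\to w$ for each $w$, violating the positive-distance condition, so $S$ is semidiscrete.

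The substance of the theorem lies in showing \emph{(i)} $\Rightarrow$ \emph{(ii)}. Assume $S$ is semidiscrete and inverse free. The crucial preliminary step, which I expect to be the main obstacle, is to show that no element of $S$ fixes any point of $\mathbb{H}$. Since $I\notin S$, any such fixed element $g$ would be elliptic. If $g$ has finite order $n$, then $g^n = I \in S$, contradicting the inverse-free hypothesis. If $g$ has infinite order, then $g$ is conjugate to an irrational rotation, so some subsequence $(g^{n_k})$ of pairwise distinct powers converges to $I$ in $\mathcal{M}$, contradicting semidiscreteness. This is where both hypotheses of \emph{(i)} are used in earnest, and it is precisely the step that distinguishes the present theorem from Theorem~\ref{lla}, which must accommodate elliptic stabilisers and therefore only yields proper discontinuity.

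With the no-fixed-point claim in hand, I would invoke Theorem~\ref{lla} to obtain, for a given $w\in\mathbb{H}$, a neighbourhood $U$ of $w$ such that only finitely many elements $g_1,\dots,g_k$ of $S$ satisfy $g_i(U)\cap U \neq \emptyset$. Each $g_i(w)$ is distinct from $w$ by the claim, so I may choose $\delta>0$ smaller than $\tfrac{1}{2}\rho(w,g_i(w))$ for every $i$, where $\rho$ denotes hyperbolic distance, and small enough that the hyperbolic disc $V = B(w,\delta)$ lies inside $U$. The triangle inequality, combined with the fact that each $g_i$ is a hyperbolic isometry, then forces $g_i(V)\cap V = \emptyset$ for $i=1,\dots,k$, while for every other $g\in S$ we have $g(V)\cap V \subset g(U)\cap U = \emptyset$ automatically. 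Hence $S$ acts strongly discontinuously on $\mathbb{H}$, closing the cycle.
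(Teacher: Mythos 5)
Your proposal is correct and follows essentially the same route as the paper: the easy implications are handled identically, and for \emph{(i)} $\Rightarrow$ \emph{(ii)} the paper likewise invokes Theorem~\ref{lla}, shrinks the resulting neighbourhood, and observes that a semidiscrete inverse-free semigroup contains no elliptic elements (hence no elements fixing $w$). Your explicit $\delta<\tfrac12\rho(w,g_i(w))$ shrinking argument and the finite/infinite-order case split for elliptics merely spell out details the paper leaves to the reader.
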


The proof (which we omit) is similar to that of Theorem~\ref{lla} and similar to proofs of well-known results for Fuchsian groups, such as \cite[Theorems~2.2.1 and~2.2.6]{Ka1992}.

%%%%%%%%%%%%%%%%%
\section{Composition sequences}
%%%%%%%%%%%%%%%%%

Let us move now from semigroups to composition sequences and prove the first part -- the easy part -- of Theorem~\ref{cof}. We remind the reader that a sequence $(G_n)$ of M\"obius transformations is an escaping sequence if, for some point $w$ in $\mathbb{H}$, the orbit $(G_n(w))$ has no accumulation points in the metric space $(\mathbb{H},\rho)$, where $\rho$ denotes the hyperbolic metric. There are many equivalent ways of describing escaping sequences, one of which is captured in the following standard lemma (we omit the elementary proof).

\begin{lemma}\label{cii}
A sequence of M\"obius transformations is an escaping sequence if and only if it does not contain a subsequence that converges uniformly to a M\"obius transformation. 
\end{lemma}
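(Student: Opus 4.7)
The plan is to prove the two implications separately; the forward direction is an immediate contrapositive, and the reverse direction uses the compactness properties of the stabilizer of a hyperbolic point in $\mathcal{M}$.

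For the forward direction, I would argue by contraposition: suppose some subsequence $(G_{n_k})$ converges uniformly (in $\chi_0$) to a M\"obius transformation $g$. Pick any point $w\in\mathbb{H}$. Then $G_{n_k}(w)\to g(w)$ in the chordal metric, and since $g(w)\in\mathbb{H}$, the sequence $(G_n(w))$ accumulates at the interior point $g(w)$. Thus $(G_n)$ is not an escaping sequence, which is the contrapositive of the desired implication.

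For the reverse direction, suppose $(G_n)$ is not an escaping sequence. Fix a base point $w\in\mathbb{H}$ (for convenience, $w=i$). By assumption the orbit $(G_n(w))$ accumulates at some point $p\in\mathbb{H}$, so we may extract a subsequence $(G_{n_k})$ with $G_{n_k}(w)\to p$ in the hyperbolic metric. The key step, and main obstacle, is upgrading this to uniform convergence on $\overline{\mathbb{C}}$ of a further subsequence. The cleanest route is to write each $G_{n_k}=h_{n_k}\circ\sigma_{n_k}$, where $h_{n_k}$ is a hyperbolic translation sending $w$ to $G_{n_k}(w)$ (chosen continuously in its image) and $\sigma_{n_k}$ fixes $w$. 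Since $G_{n_k}(w)\to p$, the factors $h_{n_k}$ converge in $\mathcal{M}$ to the translation sending $w$ to $p$; and the factors $\sigma_{n_k}$ lie in the stabilizer of $w$ in $\mathcal{M}$, which is isomorphic to the compact group $SO(2)$, so they admit a convergent subsequence. Composing gives a sub-subsequence converging in $(\mathcal{M},\chi_0)$ to some $g\in\mathcal{M}$, which is precisely uniform convergence on $\overline{\mathbb{C}}$ to a M\"obius transformation.

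If a more computational flavour is preferred, one can replace this Lie-theoretic argument with a direct coefficient estimate: normalising each $G_{n_k}(z)=(a_kz+b_k)/(c_kz+d_k)$ with $a_kd_k-b_kc_k=1$, the identities $\mathrm{Im}\,G_{n_k}(i)=1/(c_k^2+d_k^2)$ and $(a_k^2+b_k^2)(c_k^2+d_k^2)=(a_kc_k+b_kd_k)^2+1$ show that $G_{n_k}(i)$ remaining in a hyperbolic compact set forces $(a_k,b_k,c_k,d_k)$ to lie in a bounded region of $SL(2,\mathbb{R})$. A convergent subsequence of matrices then yields the required uniform limit in $\mathcal{M}$.
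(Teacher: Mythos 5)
Your proof is correct. The paper deliberately omits a proof of this lemma, describing it as standard and elementary, so there is nothing to compare against; your argument is exactly the standard one. The forward direction is fine, and in the reverse direction both of your routes work: the decomposition $G_{n_k}=h_{n_k}\circ\sigma_{n_k}$ with $h_{n_k}$ depending continuously on $G_{n_k}(w)$ and $\sigma_{n_k}$ in the compact stabiliser of $w$ gives a convergent sub-subsequence because $(\mathcal{M},\chi_0)$ is a topological group, and the coefficient version is also sound --- note that keeping $G_{n_k}(i)$ in a compact subset of $\mathbb{H}$ bounds $c_k^2+d_k^2$ both above \emph{and} below (via $\operatorname{Im}G_{n_k}(i)=1/(c_k^2+d_k^2)$), which is what lets you bound $a_k^2+b_k^2$ from the identity $(a_k^2+b_k^2)(c_k^2+d_k^2)=(a_kc_k+b_kd_k)^2+1$; you use this implicitly and it deserves a word. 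The only remaining (routine) step in the computational version is that convergence of the normalised matrices in $\mathrm{SL}(2,\mathbb{R})$ implies convergence in the uniform chordal metric $\chi_0$.
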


Using the uniform metric $\chi_0$ introduced in Section~\ref{ciq}, we can recast the statement that a sequence $(G_n)$ of M\"obius transformations converges uniformly to another M\"obius transformation $G$ as $\chi_0(G_n,G)\to 0$ as $n\to\infty$.

A significant part of this paper is about the relationship between composition sequences and semigroups. The key to this relationship is the following simple theorem, which gives the equivalence of~{\ref{aaa}} and~{\ref{aac}} in Theorem~\ref{cof}. (Notice, however, that the theorem below applies to all semigroups, and Theorem~\ref{cof} applies only to finitely-generated semigroups.) 

\begin{theorem}\label{acj}
A semigroup $S$ generated by a collection $\mathcal{F}$ of M\"obius transformations is semidiscrete and inverse free if and only if every composition sequence generated by $\mathcal{F}$ is an escaping sequence.
\end{theorem}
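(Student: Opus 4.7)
The plan is to prove each direction by contradiction, using Lemma~\ref{cii} to translate between accumulation in $S$ and non-escape of composition sequences.

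For the forward direction I would assume $S$ is semidiscrete and inverse free, and suppose toward contradiction that some composition sequence $F_n = f_1\dotsb f_n$ fails to escape. Lemma~\ref{cii} supplies a subsequence $(F_{n_k})$ converging in $\chi_0$ to some $G \in \mathcal{M}$. The elements
\[
h_k := F_{n_k}^{-1} F_{n_{k+1}} = f_{n_k+1}\dotsb f_{n_{k+1}}
\]
are finite products of generators in $\mathcal{F}$, so they belong to $S$, and since $(\mathcal{M}, \chi_0)$ is a topological group, $h_k \to G^{-1}G = I$. If infinitely many $h_k$ equal $I$ in $\mathcal{M}$, then $I \in S$, contradicting inverse-freeness; otherwise $(h_k)$ is eventually a sequence in $S \setminus \{I\}$ converging to $I$, contradicting semidiscreteness.

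For the converse, inverse-freeness follows immediately: if $I = f_1\dotsb f_k$ with $f_i \in \mathcal{F}$, then cycling through $f_1,\dotsc,f_k,f_1,\dotsc$ yields a composition sequence with $F_{jk} = I$ for all $j$, which plainly fails to escape. For semidiscreteness, suppose $I$ is an accumulation point of $S$; by the inverse-freeness just established, $I \notin S$, so there is a sequence $(g_n)$ in $S$ with $g_n \to I$ in $\chi_0$. Fix $w \in \mathbb{H}$. Then $\chi(g_n(w), w) \leq \chi_0(g_n, I) \to 0$, and since $w$ is an interior point of $\mathbb{H}$, the chordal and hyperbolic metrics are equivalent near $w$, so also $\rho(g_n(w), w) \to 0$. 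After passing to a subsequence I may assume $\rho(g_n(w), w) < 2^{-n}$. Writing each $g_n = f_{n,1}\dotsb f_{n,m_n}$ with $f_{n,i} \in \mathcal{F}$, I concatenate these expressions into the composition sequence
\[
f_{1,1}, \dotsc, f_{1,m_1}, f_{2,1}, \dotsc, f_{2,m_2}, \dotsc,
\]
whose partial product at index $m_1 + \dotsb + m_k$ is $G_k := g_1\dotsb g_k$. Because $G_{k-1}$ acts as a hyperbolic isometry,
\[
\rho(G_k(w), G_{k-1}(w)) = \rho(G_{k-1}(g_k(w)), G_{k-1}(w)) = \rho(g_k(w), w),
\]
so telescoping gives $\rho(G_k(w), w) < \sum_{j=1}^\infty 2^{-j} = 1$ for every $k$. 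The concatenated composition sequence therefore has a bounded, hence non-escaping, subsequence, contradicting the hypothesis.

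The delicate step is the semidiscreteness half of the converse: accumulation data living in $S$ must be converted into non-escape of a composition sequence built solely from the prescribed generators in $\mathcal{F}$, rather than from arbitrary elements of $S$. The enabling observation is that each prefix $G_{k-1}$ is a hyperbolic isometry, making cumulative displacements telescope additively; choosing the $g_n$ to approach $I$ summably fast in hyperbolic displacement then forces the orbit $(G_k(w))$ to remain in a bounded hyperbolic region, which delivers the required non-escape.
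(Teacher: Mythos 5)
Your proof is correct and follows essentially the same route as the paper: both directions hinge on Lemma~\ref{cii} together with the quotient trick $F_{n_k}^{-1}F_{n_{k+1}} = f_{n_k+1}\dotsb f_{n_{k+1}} \in S$, and the harder half concatenates generator words for a summably fast sequence $g_n \to I$ and telescopes. The only cosmetic difference is that the paper telescopes in the right-invariant metric $\chi_0$ to show the partial products form a Cauchy sequence in $\mathcal{M}$, whereas you telescope the hyperbolic displacements $\rho(g_k(w),w)$ to bound the orbit directly, a slightly more hands-on variant that sidesteps the passage from $(G_n^{-1})$ Cauchy to $(G_n)$ Cauchy.
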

\begin{proof}
Suppose first that $S$ is not both semidiscrete and inverse free. Then either the identity map $I$ is an accumulation point of $S$ in $\mathcal{M}$ or else it belongs to $S$. In short, $I\in \overline{S}$. Accordingly, there is a sequence $(g_n)$ in $S$ that converges uniformly to $I$. By restricting to a subsequence of $(g_n)$, we can assume that $\sum \chi_0(g_n,I)<+\infty$. 

Define $G_n=g_1\dotsb g_n$, for $n=1,2,\dotsc$. Using right-invariance of $\chi_0$, we see that
\[
\chi_0(G_n^{-1},G_{n-1}^{-1})= \chi_0(G_n^{-1}G_n,G_{n-1}^{-1}G_n)=\chi_0(I,g_n).
\]
Therefore $\sum \chi_0(G_n^{-1},G_{n-1}^{-1})<+\infty$, which implies that $(G_n^{-1})$ is a Cauchy sequence.  Hence $(G_n)$ is a Cauchy sequence too, and Lemma~\ref{cii} tells us that it is not an escaping sequence. By writing each map $g_n$ as a composition of elements of $\mathcal{F}$ we obtain a composition sequence generated by $\mathcal{F}$ that is not an escaping sequence.

Conversely, suppose that there are maps $g_n$ in $\mathcal{F}$ such that the composition sequence $G_n=g_1\dotsb g_n$ is not an escaping sequence. Then, by Lemma~\ref{cii}, there is a subsequence $(G_{n_i})$ that converges uniformly to a M\"obius transformation $G$. It follows that $G_{n_{i-1}}^{-1}G_{n_i}\to I$ as $n_i\to \infty$. But
\[
G_{n_{i-1}}^{-1}G_{n_i} = (g_1\dotsb g_{n_{i-1}})^{-1}(g_1\dotsb g_{n_i})= g_{n_{i-1}+1}\dotsb g_{n_i},
\]
so we see that $I\in\overline{S}$. Therefore $S$ is not both semidiscrete and inverse free.
\end{proof}

We say that a sequence of M\"obius transformations is \emph{discrete} if the set of transformations that make up the sequence is a discrete subset of $\mathcal{M}$. Although escaping sequences are all discrete, by definition, the converse does not hold; for example, the trivial sequence $I,I,I,\dotsc$ is discrete, but it is not an escaping sequence.

The preceding theorem gave a condition in terms of composition sequences for a semigroup to be both semidiscrete and inverse free. The next, similar, theorem gives a condition in terms of composition sequences for a semigroup to be semidiscrete. The proof is similar, so we only sketch the details.

\begin{theorem}\label{hqj}
A semigroup $S$ generated by a collection $\mathcal{F}$ of M\"obius transformations is semidiscrete if and only if every composition sequence generated by $\mathcal{F}$ is discrete.
\end{theorem}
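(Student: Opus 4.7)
The plan is to follow the proof of Theorem~\ref{acj} almost verbatim, with ``discrete'' replacing ``escaping'' and ``semidiscrete'' replacing ``semidiscrete and inverse free''. The two formulations correspond because a sequence is discrete exactly when the set of its terms has no accumulation point in $\mathcal{M}$, and the only mechanism by which a composition sequence can fail to be discrete is for it to furnish non-identity elements of $S$ approaching~$I$.

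For the forward direction, I would suppose that $S$ is semidiscrete but some composition sequence $(F_n)$ generated by $\mathcal{F}$ is not discrete. Then $\{F_n\}$ has an accumulation point $F\in\mathcal{M}$, and by thinning I may assume $F_{n_1}, F_{n_2}, \dots$ are pairwise distinct, $n_1<n_2<\cdots$, and $F_{n_k}\to F$ in $\chi_0$. The transformations
\[
F_{n_k}^{-1} F_{n_{k+1}} \;=\; f_{n_k+1} \circ f_{n_k+2} \circ \cdots \circ f_{n_{k+1}}
\]
lie in $S$, differ from $I$ because $F_{n_k}\neq F_{n_{k+1}}$, and tend to $F^{-1}F=I$ as $k\to\infty$. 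This exhibits $I$ as an accumulation point of $S$, a contradiction.

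For the reverse direction, I would suppose $S$ is not semidiscrete and produce a non-discrete composition sequence. Choose $h_n\in S$ with $h_n\neq I$ and $h_n\to I$, and, after thinning, assume $\sum\chi_0(h_n,I)<\infty$. Expressing each $h_n$ as a word in $\mathcal{F}$ and concatenating these words produces a single composition sequence $(F_m)$ whose partial products $G_n=h_1h_2\cdots h_n$ appear as a subsequence. Exactly the right-invariance calculation from the proof of Theorem~\ref{acj} gives $\chi_0(G_n^{-1},G_{n-1}^{-1})=\chi_0(h_n,I)$, so $(G_n^{-1})$, and hence $(G_n)$, is Cauchy and converges uniformly to some $G\in\mathcal{M}$. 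Because $G_n\neq G_{n-1}$ for every $n$, the $G_n$ take infinitely many distinct values, so $G$ is a genuine set-theoretic accumulation point of $\{G_n\}\subseteq\{F_m\}$; hence $(F_m)$ is not discrete.

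The main subtlety, and what I expect to be the principal obstacle, is the bookkeeping needed to convert convergence of a subsequence into an accumulation point of the underlying \emph{set} of terms: one must enforce pairwise distinctness, and in the forward direction one must keep the indices $n_k$ strictly increasing so that $F_{n_k}^{-1}F_{n_{k+1}}$ is genuinely a positive composition of generators and hence lies in $S$ rather than in $S^{-1}$. Both points are easily handled, since any convergent sequence with $G_n\neq G_{n-1}$ for all $n$ contains infinitely many distinct values, and thinning to monotone indices is automatic.
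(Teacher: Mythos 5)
Your proof is correct and takes essentially the same route as the paper, which itself only sketches the argument as a variant of Theorem~\ref{acj}: a non-discrete composition sequence yields distinct terms $F_{n_k}\to F$ and hence non-identity elements $F_{n_k}^{-1}F_{n_{k+1}}$ of $S$ tending to $I$, while a sequence $h_n\to I$ in $S\setminus\{I\}$ with $\sum\chi_0(h_n,I)<\infty$ yields convergent partial products that are never eventually constant. Your bookkeeping on distinctness (using $G_n\neq G_{n-1}$ rather than the paper's choice of pairwise distinct $g_n$) is a harmless and equally valid variant.
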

\begin{proof}
Suppose first that $S$ is not semidiscrete. Then there is a sequence of \emph{distinct} transformations $(g_n)$ from $S$ that converges uniformly to $I$. As before, we define $G_n=g_1\dotsb g_n$, and, provided that $(g_n)$ is chosen to converge to $I$ sufficiently quickly, we know that $(G_n)$ converges uniformly to a map $G$. Because the maps $g_n$ are distinct, it follows that $G_n\neq G$ for infinitely many positive integers $n$. Hence $(G_n)$ is not discrete. By expressing each map $g_n$ in terms of the generators $\mathcal{F}$ we can obtain a composition sequence generated by $\mathcal{F}$ that is not discrete. 

Conversely, suppose there is a composition sequence $G_n=g_1\dotsb g_n$, with $g_n\in\mathcal{F}$, that is not discrete. Choose a subsequence $(G_{n_i})$ of distinct maps that converges uniformly to a map $G$. Then $G_{n_{i-1}}^{-1}G_{n_i}\to I$ as $n_i\to\infty$, where $G_{n_{i-1}}^{-1}G_{n_i}\in S\setminus\{I\}$, so $S$ is not semidiscrete.
 \end{proof}

%%%%%%%%%%%%%%%%%
\section{Covering regions}
%%%%%%%%%%%%%%%%%

In this section we introduce a concept called a covering region, which plays a role for semidiscrete semigroups somewhat similar to the role played by fundamental regions for discrete groups.

A \emph{covering region} for a semigroup $S$ is a closed subset $D$ of $\mathbb{H}$ with nonempty interior such that 
\[
\bigcup_{g\in S} g(D)=\mathbb{H}.
\]
Of course, $\mathbb{H}$ itself is a covering region for $S$. Let us denote the interior of a set $X$ by $X^\circ$. We say that a covering region $D$ is a \emph{fundamental region} for $S$ if it satisfies the additional property $D^\circ\cap g(D^\circ)=\varnothing$ whenever $g$ is a nonidentity element of $S$. This definition coincides with the usual definition of a fundamental region when $S$ is a Fuchsian group.

The next theorem due to Bell \cite[Theorem~3]{Be1997} says that if a semigroup has a fundamental region with dense interior, then it is in fact a Fuchsian group. We include this pleasing theorem and its proof (although we do not use it again) because \cite{Be1997} is difficult to obtain, and in any case the proof given there is overcomplicated.

\begin{theorem}\label{oop}
Suppose that  a semigroup $S$ has a fundamental region $D$ for which $\overline{D^\circ}=D$. Then $S$ is a Fuchsian group with fundamental region $D$.
\end{theorem}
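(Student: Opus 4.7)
The plan is to show that $S$ is closed under taking inverses (so $S$ is a group), and then to deduce discreteness from the fundamental region property, which makes $S$ a Fuchsian group with fundamental region $D$.

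To see that $S$ is a group, I would fix an arbitrary $g \in S$ and aim to show $g^{-1} \in S$. Choose any point $w \in D^\circ$. Since $g^{-1}(w) \in \mathbb{H}$ and $\bigcup_{h \in S} h(D) = \mathbb{H}$, there is some $h \in S$ with $g^{-1}(w) \in h(D)$. The hypothesis $\overline{D^\circ} = D$, together with the fact that $h$ is a homeomorphism of $\mathbb{H}$, gives $h(D) = \overline{h(D^\circ)}$, so there is a sequence $(w_n)$ in $h(D^\circ)$ with $w_n \to g^{-1}(w)$. Applying $g$, we have $g(w_n) \to w$ and $g(w_n) \in gh(D^\circ)$. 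Since $D^\circ$ is an open neighbourhood of $w$, for all sufficiently large $n$ we get $g(w_n) \in D^\circ \cap gh(D^\circ)$. The fundamental region property then forces $gh = I$, so $g^{-1} = h \in S$. Thus $S$ is closed under inverses; since it is also nonempty and closed under composition, it is a group containing the identity.

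Next I would establish discreteness. Pick any $w \in D^\circ$. Were $I$ an accumulation point of $S$, there would be distinct elements $g_n \in S \setminus \{I\}$ with $g_n \to I$ uniformly, hence $g_n(w) \to w$. For large $n$ this would give $g_n(w) \in D^\circ$, contradicting the fundamental region condition $D^\circ \cap g_n(D^\circ) = \emptyset$. Hence $S$ is semidiscrete, and since it is a group it is a discrete subgroup of $\mathcal{M}$, that is, a Fuchsian group. The two conditions defining a fundamental region for the semigroup $S$ coincide with those for the group $S$, so $D$ is a fundamental region for $S$ in the classical sense.

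The main obstacle is the group step: deducing $g^{-1} \in S$ from the tiling hypothesis. The assumption $\overline{D^\circ} = D$ is what allows the transfer between the closed and open versions of the tiling condition; without it, the containment $g^{-1}(w) \in h(D)$ would not yield a point in the interior $gh(D^\circ)$ after applying $g$, and the disjointness of interiors could not be invoked to force $gh = I$. Once $S$ is known to be a group, the discreteness and fundamental region claims are essentially immediate from the fundamental region axiom applied at an interior point.
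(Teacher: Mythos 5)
Your proposal is correct and follows essentially the same route as the paper: both use the covering property to find $h$ with $g^{-1}$ of (a point or open subset of) $D^\circ$ meeting $h(D)$, invoke $\overline{D^\circ}=D$ to pass from $h(D)$ to $h(D^\circ)$, and conclude $gh=I$ from the disjointness of interiors. Your closing paragraph on discreteness just spells out what the paper compresses into the remark that a group with a fundamental region must be Fuchsian.
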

\begin{proof}
Let $g\in S$. Choose an element $h$ of $S$ such that $g^{-1}(D^\circ)\cap h(D)\neq\varnothing$. Hence $g^{-1}(D^\circ)\cap h(D^\circ)\neq\varnothing$, so $D^\circ \cap gh(D^\circ)\neq\varnothing$. Therefore $gh=I$, and $h=g^{-1}$. It follows that $S$ is closed under taking inverses, so it is a group with fundamental region $D$ (and it must be a Fuchsian group because it has a fundamental region).
\end{proof}

The assumption that $\overline{D^\circ}=D$ cannot be dropped from this theorem, as we can see by considering the inverse-free semigroup $S=\{\lambda z\,:\,\lambda\geq 2\}$ with fundamental region 
\[
D=\{z\in\mathbb{H}\,:\, \text{$1\leq |z|\leq 2$ or $|z|=1/2^n$, $n=1,2,\dotsc$}\}.
\]
The assumption that $\overline{D^\circ}=D$ \emph{can} be dropped from Theorem~\ref{oop}, however, if we assume that $S$ is countable. To see this, let $g\in S$ and consider the set $g^{-1}(\overline{D^\circ})$. This set has nonempty interior, so it is not covered by the countable union $\bigcup_{h\in S} h(\partial D)$ of nowhere-dense sets, by the Baire category theorem. Since $\bigcup_{h\in S} h(D)=\mathbb{H}$ there must be an element $h$ of $S$ such that $g^{-1}(\overline{D^\circ})\cap h(D^\circ)\neq\varnothing$. Then $\overline{D^\circ}\cap gh(D^\circ)\neq\varnothing$, so $D^\circ\cap gh(D^\circ)\neq\varnothing$. Therefore $gh=I$, and $h=g^{-1}$. As before, we deduce that $S$ is a group with fundamental region $D$. 

Let us now focus on covering regions that are not necessarily fundamental regions. In the following lemma we refer to the inverse-free part of a semigroup $S$, which, as you may recall, is the set $S\setminus S^{-1}$, which if nonempty is itself a semigroup.

\begin{lemma}\label{kwj}
If $D$ is a covering region for a semigroup $S$ that is not a group, then $D$ is also a covering region for the inverse-free part of $S$. 
\end{lemma}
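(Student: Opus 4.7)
The plan is to show directly that every point of $\mathbb{H}$ lies in $g(D)$ for some $g$ in the inverse-free part $S\setminus S^{-1}$. Since $S$ is not a group, $S\setminus S^{-1}$ is nonempty, so we may fix some element $h\in S\setminus S^{-1}$ once and for all; this will act as a ``polluter'' that turns any element of $S$ into an element of the inverse-free part, by virtue of Lemma~\ref{kbi}.

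The key steps are as follows. First, given an arbitrary $w\in\mathbb{H}$, form the point $h^{-1}(w)$. Note that although $h^{-1}$ need not belong to $S$, it is still a real M\"obius transformation and hence maps $\mathbb{H}$ into $\mathbb{H}$, so $h^{-1}(w)\in\mathbb{H}$. Second, apply the hypothesis that $D$ is a covering region for $S$ to find some $f\in S$ with $h^{-1}(w)\in f(D)$. Third, apply $h$ to both sides to obtain $w\in hf(D)$. Finally, invoke Lemma~\ref{kbi}: since $h\in S\setminus S^{-1}$ and $f\in S$, the composition $hf$ lies in $S\setminus S^{-1}$. Hence $w$ is covered by a translate of $D$ under an element of the inverse-free part, as required.

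There is essentially no obstacle here beyond recognising which lemma to invoke: the content of the argument reduces to the absorbing property of $S\setminus S^{-1}$ established in Lemma~\ref{kbi}, combined with the observation that $h^{-1}$ still acts on $\mathbb{H}$ even when it is not a member of the semigroup. The closedness of $D$ and the nonempty interior condition are not needed for this particular statement, as they are preserved automatically: the covering equality is the only property we are transferring from $S$ to $S\setminus S^{-1}$.
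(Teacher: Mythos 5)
Your proof is correct and is essentially identical to the paper's own argument: both pull an arbitrary point back by a fixed element of $S\setminus S^{-1}$, cover the preimage using the hypothesis on $S$, and then invoke Lemma~\ref{kbi} to conclude that the resulting composition lies in the inverse-free part.
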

\begin{proof}
 Choose any point $p$ in $\mathbb{H}$. Let $f\in S\setminus S^{-1}$. Then there is an element $g$ of $S$ such that $f^{-1}(p)\in g(D)$. Hence $p\in fg(D)$. By Lemma~\ref{kbi}, $fg\in S\setminus S^{-1}$. As $p$ was chosen arbitrarily, we see that $D$ is a covering region for $S\setminus S^{-1}$. 
\end{proof}

In the next theorem, we denote by $\rho$ the hyperbolic metric on $\mathbb{H}$. Recall that a cocompact Fuchsian group is a Fuchsian group $G$ for which the quotient space $\mathbb{H}/G$ is compact.

\begin{theorem}\label{poq}
Any semidiscrete semigroup that has a bounded covering region is a cocompact Fuchsian group.
\end{theorem}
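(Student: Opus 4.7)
The plan is to show, successively, that $S$ is a group and that this group is cocompact. Cocompactness follows quickly from the group structure: a semidiscrete subgroup of $\mathcal{M}$ is discrete, hence Fuchsian, and the set $D$, being closed and bounded in $(\mathbb{H},\rho)$, is compact. The quotient $\mathbb{H}/S$ is then the continuous image of the compact set $D$ under the projection $\mathbb{H}\to\mathbb{H}/S$, so it is compact.

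The substance of the proof is establishing that $S$ is a group, which I would approach by contradiction. Suppose $S$ is not a group, so that $S'=S\setminus S^{-1}$ is nonempty. By Lemma~\ref{kwj}, $D$ is also a covering region for $S'$, and $S'$ is itself a semidiscrete, inverse-free semigroup. Theorem~\ref{acj} (applied with $S'$ as its own generating set) then tells us that every composition sequence drawn from $S'$ must be escaping. I plan to contradict this by building inductively a composition sequence $(F_n)$ with terms in $S'$ whose orbit of a fixed base point $w_0\in D$ stays inside the hyperbolic ball $B(w_0,R)$, where $R$ is the hyperbolic diameter of $D$.

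The construction runs as follows. Set $F_0=I$. Given $F_{n-1}\in\{I\}\cup S'$ with $\rho(F_{n-1}(w_0),w_0)\leq R$, the point $F_{n-1}^{-1}(w_0)$ lies in $\mathbb{H}$, so the $S'$-covering property furnishes $h_n\in S'$ with $F_{n-1}^{-1}(w_0)\in h_n(D)$. Set $F_n=F_{n-1}h_n$; by Lemma~\ref{kbi}, $F_n\in S'$. Since $h_n(w_0)$ and $F_{n-1}^{-1}(w_0)$ both lie in the set $h_n(D)$, which has hyperbolic diameter $R$, applying the isometry $F_{n-1}$ yields $\rho(F_n(w_0),w_0)=\rho(h_n(w_0),F_{n-1}^{-1}(w_0))\leq R$. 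Thus $(F_n)$ is a non-escaping composition sequence generated by $S'$, contradicting Theorem~\ref{acj}. There is no serious obstacle in the argument; the main thing to check is that $F_{n-1}^{-1}(w_0)$ always makes sense as a point of $\mathbb{H}$, which it does because $F_{n-1}\in\mathcal{M}$.
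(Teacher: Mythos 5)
Your proof is correct, and it follows the same overall strategy as the paper's — reduce to the inverse-free part via Lemma~\ref{kwj}, then contradict Theorem~\ref{acj} by exhibiting a non-escaping composition sequence — but the construction of that sequence is genuinely different and simpler. The paper first extracts a finite subset $T$ of $S$ whose translates of $D$ cover $\partial D$, and then runs a greedy argument (choosing each $g_n$ to minimise $\rho(G_n(\overline{D}),w)$, with a separate case when $w\in G_{n-1}(\overline{D})$) to keep the orbit within distance $3r$ of the base point; it applies this first to $S$ to conclude $S$ is not inverse free, and then a second time to $S\setminus S^{-1}$. You instead apply the covering property pointwise to $F_{n-1}^{-1}(w_0)$ at each step, which gives the uniform bound $\rho(F_n(w_0),w_0)\leq R$ in one line and lets you work with $S\setminus S^{-1}$ directly in a single pass; since Theorem~\ref{acj} places no finiteness requirement on the generating collection, taking $\mathcal{F}=S\setminus S^{-1}$ is legitimate. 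What the paper's version buys is a composition sequence over a \emph{finite} alphabet with controlled step size $\rho(g(w),w)\leq 2r$, which is a stronger conclusion than needed here; your version buys brevity and avoids both the compactness of $\partial D$ and the minimisation. The remaining steps (Lemma~\ref{kbi} to stay in $S\setminus S^{-1}$, and cocompactness from the quotient map restricted to the compact set $D$) match the paper.
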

\begin{proof}
Let $S$ be a semidiscrete semigroup. Choose any point $w$ in $\mathbb{H}$. As $S$ has a bounded covering region,  we can choose a suitably large open disc $D$ in the hyperbolic metric, with radius $r$ and centre $w$, such that 
\[
\bigcup_{g\in S} g(D) = \mathbb{H}. 
\]
As $\partial D$ is compact, there is  a finite subset $T$ of $S$ such that 
\[
\partial D \subset \bigcup_{g\in T} g(D). 
\]
Let us choose $T$ to be a minimal set with this property, so $g(D)\cap \partial D\neq\varnothing$ and $\rho(g(w),w)\leq 2r$ for $g \in T$. We construct a composition sequence $G_n=g_1\dotsb g_n$ generated by $T$ as follows. Choose $g_1$ arbitrarily. Now suppose that $n>1$. 
\begin{enumerate}
\item[(i)] If $w\in G_{n-1}(\overline{D})$, then choose $g_n$ arbitrarily from $T$.
\item[(ii)] Otherwise, choose $g_n$ from $T$ so that $\rho(G_n(\overline{D}),w)$ is minimised.
\end{enumerate}
Let $\rho_n=\rho(G_n(\overline{D}),w)$. If $g_n$ is chosen according to (i), then $\rho(G_{n-1}(w),w)\leq r$, so
\[
\rho(G_n(w),w) \leq \rho(G_n(w),G_{n-1}(w))+\rho(G_{n-1}(w),w)= \rho(g_n(w),w)+\rho(G_{n-1}(w),w)\leq 3r.
\]
Hence $\rho_n\leq 3r$. Suppose now that $g_n$ is chosen according to (ii). Notice that the collection $\{G_{n-1}g(D)\,:\, g\in T\}$ covers $G_{n-1}(\partial D)$. Therefore
\[
\rho(G_{n-1}(\overline{D}),w)=\rho(G_{n-1}(\partial D),w)\geq \rho(G_n(\overline{D}),w);
\]
that is, $\rho_{n-1}\geq \rho_n$. 

We deduce that the sequence $(\rho_n)$ is bounded, so $(G_n)$ is not an escaping sequence. Theorem~\ref{acj} now tells us that $S$ is not inverse free. We know from Lemma~\ref{kwj} that the inverse-free part of $S$, if nonempty, also has $D$ as a covering region. The argument above tells us that the inverse-free part of $S$, namely $S\setminus S^{-1}$, must therefore be empty. It follows that $S$ is a group, which is semidiscrete -- a Fuchsian group.

Finally, $S$ is a cocompact Fuchsian group because $\mathbb{H}/S$ is the image of the compact set $\overline{D}$ under the quotient map $\mathbb{H}\to \mathbb{H}/S$.
\end{proof}

Given a semidiscrete semigroup $S$, and a point $w$ in $\mathbb{H}$ that is not fixed by any nonidentity element of $S$, we define the \emph{Dirichlet region} for $S$ centred at $w$ to be the closed, convex set 
\[
D_w(S) = \{z\in\mathbb{H}\,:\, \rho(z,w)\leq \rho(z,g(w))\text{ for all $g$ in $S\setminus\{I\}$}\}.
\]
This is the same definition of a Dirichlet region as that used in the theory of Fuchsian groups. We can always find a point $w$ not fixed by any nonidentity element of $S$ because the set of elliptic elements in $S$ generates a discrete group, which must be countable.

\begin{theorem}
Let $S$ be a semidiscrete semigroup and let $w$ be a point in $\mathbb{H}$ that is not fixed by any nonidentity element of $S$. Then $D_w(S)$ is a covering region for $S$.
\end{theorem}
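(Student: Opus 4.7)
The statement comprises three parts: $D_w(S)$ is closed, has nonempty interior, and satisfies $\bigcup_{g \in S} g(D_w(S)) = \mathbb{H}$. Closedness is immediate since $D_w(S)$ is the intersection, over $g \in S \setminus \{I\}$, of the closed hyperbolic half-planes $\{z \in \mathbb{H} : \rho(z,w) \leq \rho(z, g(w))\}$, each bounded by the perpendicular bisector of $w$ and $g(w)$. For the nonempty interior I would invoke Theorem~\ref{lla}\emphref{lkc}: since $w$ is fixed only by $I$ and $S$ is semidiscrete, $Sw$ does not accumulate at $w$, so there exists $\varepsilon > 0$ with $\rho(w, g(w)) \geq 2\varepsilon$ for every $g \in S \setminus \{I\}$, and a brief triangle-inequality calculation places the open hyperbolic disc $B(w,\varepsilon)$ inside $D_w(S)$.

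The bulk of the work is the covering statement. Given $z \in \mathbb{H}$, I need to exhibit $g \in S$ with $g^{-1}(z) \in D_w(S)$, equivalently $z \in g(D_w(S))$. My plan is a greedy descent: starting from any $g_0 \in S$, I inductively test whether $g_n^{-1}(z) \in D_w(S)$; if not, the failure of the defining inequality produces $h_n \in S \setminus \{I\}$ with $\rho(z, g_n h_n(w)) < \rho(z, g_n(w))$, and I set $g_{n+1} = g_n h_n \in S$. Either the process terminates (giving the desired $g$), or $\Phi_n := \rho(z, g_n(w))$ strictly decreases to some limit $d_\infty \geq 0$. If $d_\infty = 0$ then $g_n^{-1}(z) \to w$, so eventually $g_n^{-1}(z)$ lies in $B(w,\varepsilon) \subset D_w(S)$, contradicting non-termination.

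The hard part will be the case $d_\infty > 0$. Here $\Phi_n - \Phi_{n+1} \to 0$, while semidiscreteness forces $\rho(w, h_n(w)) \geq 2\varepsilon$ for every $n$. My plan is to pick $h_n$ at each stage as an (approximate) minimiser of $h \mapsto \Phi(g_n h)$ over $S$; then, after passing to a subsequence along which $g_n \to g_\infty$ in $\mathcal{M}$ (available because $g_n(w)$ is bounded in $\mathbb{H}$, so the sequence is normal), the vanishing step in $\Phi$ together with the minimising choice forces $g_{n+1}$ to converge along the same subsequence to the same limit $g_\infty$, so that $h_n = g_n^{-1} g_{n+1} \to I$ in $\mathcal{M}$. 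Since $h_n \in S \setminus \{I\}$, this contradicts the semidiscreteness assumption that $I$ is not an accumulation point of $S$. Making the ``same limit'' deduction precise—extracting a subsequence in which both $g_n$ and $g_{n+1}$ converge to a common point of $\mathcal{M}$, rather than to two distinct points on the hyperbolic sphere of radius $d_\infty$ about $z$—is the main technical obstacle of the argument.
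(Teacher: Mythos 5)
Your treatment of closedness, of the nonempty interior, and the greedy-descent construction itself all match the paper's proof. The gap is exactly where you flag it, in the case $d_\infty>0$, and it is a genuine one: neither the vanishing of $\Phi_n-\Phi_{n+1}$ nor the near-minimising choice of $h_n$ forces $g_{n+1}$ to converge to the same limit as $g_n$. The first condition only says that $g_n(w)$ and $g_{n+1}(w)$ both approach the sphere $\{x:\rho(z,x)=d_\infty\}$; they may remain a definite distance apart on that sphere, i.e.\ the step from $g_n(w)$ to $g_{n+1}(w)$, of length $\rho(w,h_n(w))\geq 2\varepsilon$, can be almost tangential to the sphere while still decreasing $\rho(z,\cdot)$. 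The near-minimising condition says that $g_{n+1}(w)$ nearly minimises $\rho(z,\cdot)$ over the orbit $g_n(S(w))$, which constrains its distance to $z$ but not its position on the sphere, let alone the map $g_{n+1}$ itself. Bear in mind that for a semidiscrete semigroup the orbit $S(w)$ may accumulate in $\mathbb{H}$ away from $w$ (the paper's example $\langle 2z,\tfrac12 z+1\rangle$), so you cannot rule out the points $g_n(w)$ clustering at several distinct points of the sphere. Hence $h_n\to I$, and even the weaker $h_n(w)\to w$, is not established, and the contradiction is not reached.

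The fix --- and it is what the paper does --- is to drop the insistence on \emph{consecutive} indices. Since $\rho(z,g_n(w))<\rho(z,g_0(w))$, the maps $g_n$ lie in a compact subset of $\mathcal{M}$ (the orbit map $g\mapsto g(w)$ is proper), so some subsequence satisfies $g_{n_k}\to\gamma$. For $k<k'$ the element $g_{n_k}^{-1}g_{n_{k'}}=h_{n_k}h_{n_k+1}\dotsb h_{n_{k'}-1}$ belongs to $S$ and tends to $I$ as $k,k'\to\infty$ (by continuity of multiplication and inversion in the topological group $\mathcal{M}$), so semidiscreteness forces $g_{n_k}^{-1}g_{n_{k'}}=I$ for all sufficiently large $k<k'$; then $g_{n_{k'}}(w)=g_{n_k}(w)$, contradicting the strict decrease of $\rho(z,g_n(w))$. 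The paper packages this step as: $(g_n)$ is a composition sequence generated by $S$, hence discrete by Theorem~\ref{hqj}, and a discrete subset of a compact set is finite, so $(g_n(w))$ has a constant subsequence. With this argument the approximate-minimiser device and the case division on $d_\infty$ both become unnecessary.
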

\begin{proof}
The set $D_w(S)$ is closed because it is an intersection of closed sets. It has nonempty interior because, by Theorem~\ref{lla}, the orbit of $w$ under $S$ does not accumulate at $w$. To verify the covering property, suppose, in order to reach a contradiction, that there is a point $z$ in $\mathbb{H}$ that is not contained in $h(D_w(S))$ for any element $h$ of $S$. Then $h^{-1}(z)\notin D_w(S)$, which implies that there is a map $g$ in $S$ that satisfies $\rho(h^{-1}(z),w)>\rho(h^{-1}(z),g(w))$. That is,
\begin{equation}\label{zka}
\text{for all $h\in S$ there exists $g\in S$ such that $\rho(z,h(w))>\rho(z,hg(w))$.}
\end{equation}
We will now define a composition sequence $F_n=f_1\dotsb f_n$, where $f_i\in S$, recursively. Let $f_1=I$. If $f_1,\dots,f_{n-1}$ have been defined, then, using \eqref{zka}, we let $f_n$ be an element of $S$ that satisfies $\rho(z,F_{n-1}(w))>\rho(z,F_{n-1}f_n(w))$. The resulting composition sequence $(F_n)$ satisfies
\[
\rho(z,w) > \rho(z,F_1(w)) > \rho(z,F_2(w)) > \dotsb.
\]
As $S$ is semidiscrete, Theorem~\ref{hqj} tells us that the composition sequence $(F_n)$ is discrete. But the sequence $(F_n(w))$ is bounded, so it must have a constant subsequence. This contradicts the  strict inequalities above. Thus, contrary to our earlier assumption, 
\[
z\in \bigcup_{h\in S} h(D_w(S)),
\]
so $D_w(S)$ is a covering region for $S$.
\end{proof}

%%%%%%%%%%%%%%%%%
\section{Limit sets}
%%%%%%%%%%%%%%%%%

Every Fuchsian group has a limit set associated to it, which is a subset of $\overline{\mathbb{R}}$. In contrast, a semigroup has two limit sets, a forward and a backward limit set, as we have seen already. To recap, the forward limit set $\Lambda^+(S)$ of a semigroup $S$ is the set of accumulation points in $\overline{\mathbb{R}}$ of the orbit of a point $w$ of $\mathbb{H}$ under $S$. That is, $x\in \Lambda^+(S)$ if and only if there is a sequence $(g_n)$ in $S$ such that $g_n(w)\to x$ (in the chordal metric) as $n\to\infty$. The forward limit set is forward invariant under elements of $S$, in the sense that $h(\Lambda^+(S))\subset \Lambda^+(S)$ whenever $h\in S$. The backward limit set $\Lambda^-(S)$ of $S$ is equal to $\Lambda^+(S^{-1})$, the forward limit set of $S^{-1}$. The backward limit set is backward invariant under elements of $S$, in the sense that $h^{-1}(\Lambda^-(S))\subset \Lambda^-(S)$. Fried, Marotta and Stankewitz studied properties of limit sets in \cite{FrMaSt2012}; because of their interest in complex dynamics, they call $\Lambda^-(S)$ the \emph{Julia set} of $S$. They prove the following theorem (see \cite[Theorem~2.4, Proposition~2.6, and Remark~2.20]{FrMaSt2012}), which has a well known counterpart in the theory of Fuchsian groups.

\begin{theorem}\label{aiu}
Let $S$ be a semigroup that contains hyperbolic elements. Then $\Lambda^-(S)$ is 
\begin{enumerate}
\item\label{zfa} the smallest closed set in $\overline{\mathbb{R}}$ containing all repelling fixed points of hyperbolic elements of $S$
\item\label{zfb} the complement in $\overline{\mathbb{R}}$ of the largest open set on which $S$ is a normal family.
\end{enumerate}
Furthermore, if $S$ is nonelementary, then $\Lambda^-(S)$ is a perfect set and is therefore uncountable.
\end{theorem}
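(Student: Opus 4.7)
The plan is to establish the three claims in sequence, letting $R$ denote the closure in $\overline{\mathbb{R}}$ of the set of repelling fixed points of hyperbolic elements of $S$. For part \emphref{zfa}, the containment $R \subset \Lambda^-(S)$ is immediate: if $h \in S$ is hyperbolic with repelling fixed point $\beta$, then $h^{-n}(w) \to \beta$ for any $w \in \mathbb{H}$, so $\beta \in \Lambda^+(S^{-1}) = \Lambda^-(S)$, and the latter is closed by definition. For the reverse, I take $x \in \Lambda^-(S)$ and a sequence $(g_n)$ in $S$ with $g_n^{-1}(w) \to x$. The sequence $(g_n^{-1})$ is escaping, so by the standard classification of escaping sequences in $\mathcal{M}$, after passing to a subsequence there is a point $p \in \overline{\mathbb{R}}$ such that $g_n^{-1} \to x$ uniformly on compacta of $\overline{\mathbb{C}} \setminus \{p\}$ and $g_n \to p$ uniformly on compacta of $\overline{\mathbb{C}} \setminus \{x\}$.

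When $p \neq x$, I pick disjoint chordal neighbourhoods $U$ of $x$ and $V$ of $p$: for large $n$, $g_n(\overline{V}) \subset V$ and $g_n^{-1}(\overline{U}) \subset U$, so $g_n$ is hyperbolic with attracting fixed point in $V$ and repelling fixed point in $U$, and letting $U$ shrink exhibits $x$ as a limit of repelling fixed points. When $p = x$, I perturb the sequence: choose a hyperbolic $h \in S$ with $h(x) \neq x$ and consider $(hg_n) \subset S$. From the uniform convergence of $(g_n)$ and $(g_n^{-1})$ it follows that $hg_n \to h(x)$ away from $x$ and $(hg_n)^{-1} = g_n^{-1}h^{-1} \to x$ away from $h(x)$, reducing to the previous case. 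The residual situation where every hyperbolic element of $S$ fixes $x$ either yields $x$ directly as a repelling fixed point or forces $S$ to be elementary. The main obstacle is this parabolic case $p = x$, which requires precisely the composition trick with $h$ to break the degeneracy.

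For part \emphref{zfb}, set $\Omega = \overline{\mathbb{C}} \setminus \Lambda^-(S)$. If $x \in \Lambda^-(S)$, then by \emphref{zfa} every neighbourhood of $x$ contains a repelling fixed point $\beta$ of some hyperbolic $h \in S$, and the iterates $h^n \in S$ fail to be normal on any neighbourhood of $\beta$, so $S$ is not normal at $x$. For the converse I first record that $\Lambda^-(S)$ is backward invariant under $S$: if $x = \lim g_n^{-1}(w) \in \Lambda^-(S)$ and $h \in S$, then $h^{-1}(x) = \lim (g_nh)^{-1}(w) \in \Lambda^-(S)$. Under the nonelementary hypothesis (which, as noted below, forces $|\Lambda^-(S)| \geq 3$), pick three distinct points $a,b,c \in \Lambda^-(S)$; backward invariance makes every $g \in S$ send $\Omega$ into $\overline{\mathbb{C}} \setminus \{a,b,c\}$, and Montel's theorem gives normality of $S$ on $\Omega$. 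The elementary case is handled directly, since then $\Lambda^-(S)$ has at most two points and normality off a two-point set is trivial.

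For the perfect set assertion, assume $S$ is nonelementary and fix $x \in \Lambda^-(S)$. Nonelementarity together with the presence of hyperbolic elements supplies two hyperbolic transformations in $S$ with disjoint fixed point sets, so $\Lambda^-(S)$ has at least two distinct points and (by the argument to come) in fact three. Suppose $x$ is isolated in $\Lambda^-(S)$; then by \emphref{zfa} it must itself be $\beta_h$ for some hyperbolic $h \in S$. Pick any $y \in \Lambda^-(S) \setminus \{x, \alpha_h\}$, which exists by nonelementarity. Backward invariance gives $h^{-n}(y) \in \Lambda^-(S)$ for every $n$, and $h^{-n}(y) \to x$ with $h^{-n}(y) \neq x$, contradicting isolation. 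If no hyperbolic $h \in S$ fixes $x$, then the proof of \emphref{zfa} already realises $x$ as a limit of repelling fixed points distinct from $x$. Hence $\Lambda^-(S)$ has no isolated points; being a nonempty perfect subset of $\overline{\mathbb{R}}$, it is uncountable.
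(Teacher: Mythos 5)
First, a point of reference: the paper does not prove this theorem at all --- it is quoted from Fried, Marotta, and Stankewitz \cite{FrMaSt2012} (their Theorem~2.4, Proposition~2.6 and Remark~2.20) --- so your argument can only be judged on its own terms. The overall strategy is the standard one (source--sink dynamics of escaping sequences for~\emphref{zfa}, forward invariance of the complement plus Montel for~\emphref{zfb}, backward invariance under a hyperbolic element fixing a putative isolated point for the perfect-set claim), and most of it is sound.

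There are, however, two concrete problems, both traceable to the same misconception about elementary semigroups. In part~\emphref{zfb} you assert that in the elementary case $\Lambda^-(S)$ has at most two points. This is false: $S=\langle 2z,\,z+1\rangle$ is elementary (every element fixes $\infty$), yet $\Lambda^-(S)=[-\infty,0]$, which is uncountable. The damage there is limited, since the Montel step only needs three points of $\Lambda^-(S)$ rather than nonelementarity, and your ``trivial'' fallback is really the source--sink argument again, which in fact proves normality off $\Lambda^-(S)$ for \emph{every} semigroup with no case split; but the case division as written is wrong. More seriously, part~\emphref{zfa} is left incomplete in the residual case where every hyperbolic element of $S$ fixes $x$ with $x$ attracting: you correctly note this forces $S$ to be elementary and then stop, but the theorem asserts~\emphref{zfa} for all semigroups containing hyperbolic elements, and the case genuinely occurs. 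For the same $S=\langle 2z,\,z+1\rangle$ and $x=\infty$ one has $\infty\in\Lambda^-(S)$, every hyperbolic element has $\infty$ as its attracting fixed point, and the interval-contraction criterion you rely on fails at $x$: the repelling fixed points $-2n$ of the maps $2z+2n$ do tend to $\infty$, but $(2z+2n)^{-1}$ does not map any chordal neighbourhood of $\infty$ into itself. One must argue directly, e.g.\ normalise so that $x=\infty$ and every element has the form $az+b$ with $a\geq 1$; then $g_n^{-1}(w)\to\infty$ forces $|b_n|/a_n\to\infty$, and (after composing with a fixed hyperbolic element of $S$ if the $g_n$ are parabolic) the repelling fixed points $b_n/(1-a_n)$ have modulus at least $|b_n|/a_n$, hence tend to $\infty$. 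A smaller blemish: in the perfect-set argument the existence of $y\in\Lambda^-(S)\setminus\{x,\alpha_h\}$ is justified by appeal to ``the argument to come'', which is circular as written; it follows instead from backward invariance applied to the repelling fixed point of a second hyperbolic element with disjoint fixed-point set.
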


We include statement~{\ref{zfb}} to justify the terminology \emph{Julia set} for $\Lambda^-(S)$; we do not use this observation again.  An immediate consequence of statement~{\ref{zfa}} and the final assertion of the theorem is that $\Lambda^+(S)$ is the smallest closed set containing the attracting fixed points of hyperbolic elements of $S$, and it is uncountable. 

Next we define some important subsets of the forward and backward limit sets, which again have familiar counterparts in the theory of Fuchsian groups. Let $x\in\overline{\mathbb{R}}$. Choose any hyperbolic geodesic segment $\gamma$ that has  one end point $x$ and the other end point in $\mathbb{H}$, and let $w\in\mathbb{H}$. Then $x$ is a \emph{forward conical limit point} of $S$ if there is sequence $(g_n)$ in $S$, and a positive constant $k$, such that  
\begin{enumerate}
\item[(i)] $g_n(w)\to x$ as $n\to\infty$ (in the chordal metric)
\item[(ii)] $\rho(\gamma, g_n(w))<k$.
\end{enumerate}
This definition does not depend on the choice of $\gamma$ or $w$. The \emph{forward conical limit set} $\Lambda_c^+(S)$ of $S$ is the collection of all forward conical limit points of $S$. A \emph{backward conical limit point} of $S$ is a forward conical limit set of $S^{-1}$, and the \emph{backward conical limit set} $\Lambda_c^-(S)$ of $S$ is  $\Lambda_c^+(S^{-1})$. 

We also need another class of limit sets. Before we introduce this new class, we recall that a \emph{horocycle} is a circle in $\overline{\mathbb{C}}$ that is contained almost entirely in $\mathbb{H}$ except for one point, which lies on the ideal boundary $\overline{\mathbb{R}}$. We say that the horocycle is \emph{based} at the point on the ideal boundary. For example, given any positive number $t$, the set $\{z\in\mathbb{H}\,:\, \Im z=t\}\cup\{\infty\}$ is a horocycle based at $\infty$. A \emph{horodisc} is the `inside' of a horocycle: the component of the complement of a horocycle that lies wholly within $\mathbb{H}$.  

We define the \emph{forward horocyclic limit set} $\Lambda_h^+(S)$ of $S$ to consist of those points $x$ in $\Lambda^+(S)$ such that, for any point $w$ in $\mathbb{H}$, the orbit $S(w)$ meets every horodisc based at $x$. And of course the \emph{backward horocyclic limit set} $\Lambda_h^-(S)$ of $S$ is defined to be $\Lambda_h^+(S^{-1})$. Clearly we have the inclusions
\[
\Lambda_c^+(S)\subset \Lambda_h^+(S)\subset \Lambda^+(S)\quad\text{and}\quad \Lambda_c^-(S)\subset \Lambda_h^-(S)\subset \Lambda^-(S).
\]
Conical and horocyclic limit sets have important roles in the theory of Fuchsian groups, so it is no surprise that the sets introduced here play an important part in the study of semigroups.

Suppose now that $(F_n)$ is any sequence of M\"obius transformations. We can define $\Lambda^+(F_n)$ and $\Lambda^-(F_n)$, the forward and backward limit sets of $(F_n)$, in a similar way to how we have defined these limit sets for semigroups. Theorem~\ref{aiu}{\ref{zfa}} fails when we use sequences instead of semigroups, but Theorem~\ref{aiu}{\ref{zfb}} remains true. We can also define $\Lambda_c^+(F_n)$ and $\Lambda_c^-(F_n)$, the forward and backward conical limit sets of $(F_n)$, in much the same way as before. The backward conical limit set features in the theory of continued fractions because of the following theorem of Aebischer \cite[Theorem~5.2]{Ae1990}.

\begin{theorem}\label{uun}
Let $(G_n)$ be an escaping sequence, and let $w\in \mathbb{H}$. Then, for each point $x$ in $\overline{\mathbb{R}}$, we have $\chi(G_n(w),G_n(x))\to 0$ as $n\to\infty$ if and only if $x\notin \Lambda_c^-(G_n)$. 
\end{theorem}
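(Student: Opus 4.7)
The strategy is to translate the condition $\chi(G_n(w),G_n(x))\to 0$ into a geometric statement about how closely the geodesic joining $G_n(w)$ and $G_n(x)$ passes to the basepoint $w$, and then use the isometric action of $G_n$ to re-express this as a conical condition on $G_n^{-1}(w)$. The key observation is that if $\gamma$ denotes the hyperbolic geodesic from $w$ to $x$, then $G_n(\gamma)$ is the geodesic from $G_n(w)$ to $G_n(x)$, so $\rho(w,G_n(\gamma))=\rho(G_n^{-1}(w),\gamma)$ by isometry. Since the definition of $\Lambda_c^-$ is independent of the choice of geodesic, taking $\gamma$ to be the one from $w$ to $x$ loses no generality.

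I would then record two elementary facts of plane hyperbolic geometry, each proved by a direct computation in the half-plane model. \textbf{(A)} If a geodesic $\sigma$ in $\mathbb{H}$ with endpoints in $\overline{\mathbb{H}}$ passes within hyperbolic distance $k$ of $w$, then the chordal distance between its two endpoints is bounded below by some $\delta(k,w)>0$; equivalently, if the endpoints are chordally close then the geodesic stays hyperbolically far from $w$. \textbf{(B)} If $p_n\to p$ and $q_n\to q$ chordally in $\overline{\mathbb{H}}$ with $p\neq q$, then the geodesic $\sigma_n$ from $p_n$ to $q_n$ satisfies $\rho(w,\sigma_n)\to\rho(w,\sigma_\infty)<\infty$, where $\sigma_\infty$ is the geodesic from $p$ to $q$.

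For the direction $x\in\Lambda_c^-(G_n)\Rightarrow \chi(G_n(w),G_n(x))\not\to 0$, I would pick a subsequence $(G_{n_k}^{-1}(w))$ converging chordally to $x$ and satisfying $\rho(G_{n_k}^{-1}(w),\gamma)<k$. Isometry yields $\rho(w,G_{n_k}(\gamma))<k$, and Fact~(A) applied to $G_{n_k}(\gamma)$ gives $\chi(G_{n_k}(w),G_{n_k}(x))\geq\delta(k,w)$. For the converse, suppose $\chi(G_n(w),G_n(x))\geq\epsilon>0$ along some subsequence. Since $(G_n)$ is escaping, chordal compactness lets me pass to a further subsequence along which $G_n(w)\to p$ and $G_n(x)\to q$ with $p,q\in\overline{\mathbb{R}}$ and $\chi(p,q)\geq\epsilon$. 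Fact~(B) then provides a uniform bound $\rho(w,G_n(\gamma))\leq k$, hence $\rho(G_n^{-1}(w),\gamma)\leq k$. The sequence $(G_n^{-1}(w))$ is itself escaping, since $\rho(G_n^{-1}(w),w)=\rho(w,G_n(w))\to\infty$, and a bounded hyperbolic tube around the ray $\gamma$ has $x$ as its sole ideal accumulation point; therefore $G_n^{-1}(w)\to x$ chordally, giving $x\in\Lambda_c^-(G_n)$.

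The main subtlety lies in Fact~(B) when $G_n(w)$ itself approaches the ideal boundary: then $G_n(\gamma)$ is only a geodesic ray, not the full bi-infinite geodesic, and I must verify that the foot of perpendicular from $w$ lies on this ray, so that the hyperbolic distance really matches the distance to the limiting full geodesic $\sigma_\infty$. This holds for large $n$ because the foot of perpendicular from $w$ onto $\sigma_\infty$ lies at an interior point of $\sigma_\infty$ bounded away from $p$, while $G_n(\gamma)$ exhausts $\sigma_\infty$ outside a shrinking chordal neighborhood of $p$.
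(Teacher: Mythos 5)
Your proof is correct and is essentially the standard argument: the paper itself gives no proof of this result, instead citing Aebischer and \cite{CrSh2007}*{Proposition~3.3}, and your route -- translating chordal separation of $G_n(w)$ and $G_n(x)$ into the hyperbolic distance from $w$ to the ray $G_n(\gamma)$, and then using the isometry identity $\rho(w,G_n(\gamma))=\rho(G_n^{-1}(w),\gamma)$ together with the escaping property and the fact that a bounded tube about $\gamma$ accumulates only at $x$ -- is exactly the argument those references carry out. One small imprecision: Fact~(A) as stated is false for geodesic segments with both endpoints in $\mathbb{H}$ (a very short segment through $w$ has chordally close endpoints yet passes through $w$), so it should be restricted to geodesics with at least one ideal endpoint; that is the only case you use, since $G_n(\gamma)$ joins $G_n(w)\in\mathbb{H}$ to $G_n(x)\in\overline{\mathbb{R}}$, and in that case the fact holds (for instance because every point $z$ of the ray from $p$ to a finite ideal point $q$ satisfies $|z-q|\leq|p-q|$, so chordally close endpoints force the whole ray into a small chordal neighbourhood of $q$, hence far from $w$).
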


It is straightforward to prove this theorem using hyperbolic geometry; see, for example, \cite[Proposition~3.3]{CrSh2007}. We also have the following lemma, proven in \cite[Lemma~3.4]{CrSh2007}.

\begin{lemma}\label{kka}
Suppose that $x$ is a backward conical limit point of an escaping sequence $(G_n)$. Then there is a sequence of positive integers $n_1,n_2,\dotsc$ and two distinct points $p$ and $q$ in $\overline{\mathbb{R}}$ such that $G_{n_i}(x)\to p$ and $G_{n_i}(z)\to q$ for every point $z$ in $\overline{\mathbb{R}}$ other than $x$. 
\end{lemma}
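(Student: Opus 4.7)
The plan is to combine the standard dichotomy for limits of M\"obius transformations with the geometric content of the conical limit hypothesis. First, I would use the classical fact that every sequence in $\mathcal{M}$ has a subsequence converging either uniformly on $\overline{\mathbb{C}}$ to a M\"obius transformation, or else uniformly on compact subsets of $\overline{\mathbb{C}}\setminus\{a\}$ to a constant $q$, for some points $a,q\in\overline{\mathbb{C}}$. Since $(G_n)$ is escaping, Lemma~\ref{cii} rules out the first alternative for every subsequence of $(G_n)$. Starting from the subsequence furnished by the backward conical hypothesis and passing to a further subsequence if necessary, I would obtain $(G_{n_i})$ with $G_{n_i}\to q$ uniformly on compact subsets of $\overline{\mathbb{C}}\setminus\{a\}$. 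Because each $G_n$ preserves $\overline{\mathbb{R}}$, evaluating at real points forces $a,q\in\overline{\mathbb{R}}$, and the same dichotomy applied to $(G_{n_i}^{-1})$, together with the identity $G_{n_i}^{-1}G_{n_i}=I$, yields $G_{n_i}^{-1}\to a$ uniformly on compact subsets of $\overline{\mathbb{C}}\setminus\{q\}$.

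Second, I would identify the source $a$ with $x$. Since $w\in\mathbb{H}$ and $q\in\overline{\mathbb{R}}$ we have $w\neq q$, so $G_{n_i}^{-1}(w)\to a$; but the backward conical hypothesis gives $G_{n_i}^{-1}(w)\to x$ in the chordal metric, and hence $a=x$. Extracting one final subsequence so that $G_{n_i}(x)\to p$ for some $p\in\overline{\mathbb{R}}$, we then have $G_{n_i}(z)\to q$ for every $z\in\overline{\mathbb{R}}\setminus\{x\}$ automatically, and the only remaining task is to prove $p\neq q$.

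This last separation is the main obstacle, and I would handle it using hyperbolic geometry. Let $\gamma$ be the complete hyperbolic geodesic extending the segment appearing in the definition of a conical limit point; its ideal end points are $x$ and some $y\in\overline{\mathbb{R}}\setminus\{x\}$, and the conical hypothesis gives $\rho(\gamma,G_{n_i}^{-1}(w))<k$ for a fixed constant $k$. Applying the isometry $G_{n_i}$, the image geodesic $G_{n_i}(\gamma)$, whose ideal end points are $G_{n_i}(x)$ and $G_{n_i}(y)$, meets the closed hyperbolic ball of radius $k$ about $w$. A standard hyperbolic-geometry estimate says that the ideal end points of any geodesic meeting this ball are separated in the chordal metric by at least some positive constant $c=c(k,w)$. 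Since $y\neq x$ we have $G_{n_i}(y)\to q$, and this forces $\chi(G_{n_i}(x),q)\geq c$ for all sufficiently large $i$, so $p\neq q$, completing the argument.
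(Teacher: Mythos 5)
Your argument is correct, but note that the paper does not actually prove Lemma~\ref{kka}: it simply cites \cite{CrSh2007}*{Lemma~3.4}. Your proof --- the escaping/degeneration dichotomy (via Lemma~\ref{cii}) to produce a sink $q$ and source $a$, the identification $a=x$ from the conical hypothesis, and the chordal separation of the ideal endpoints of a geodesic meeting a fixed hyperbolic ball to force $p\neq q$ --- is essentially the standard argument given there, so it fills the gap correctly. One small quantitative slip: from $\chi\bigl(G_{n_i}(x),G_{n_i}(y)\bigr)\geq c$ and $G_{n_i}(y)\to q$ you may only conclude $\chi\bigl(G_{n_i}(x),q\bigr)\geq c/2$ for large $i$, not $\geq c$; this does not affect the conclusion.
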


The remainder of this section is concerned with proving that if the forward horocyclic limit set of a semidiscrete semigroup is the whole of $\overline{\mathbb{R}}$, then every Dirichlet region of $S$ is bounded.

Given $x$ in $\overline{\mathbb{R}}$ and $w$ in $\mathbb{H}$, let $H_x(w)$ denote the horodisc that is based at $x$ and whose boundary passes through $w$.  So, for example, $H_\infty(w)=\{z\in\mathbb{H}\,:\,\Im z>\Im w\}$, where $\Im z$ and $\Im w$ denote the imaginary parts of $z$ and $w$, respectively. Let $\gamma_z=\{z+it\,:t\geq 0\}$, the vertical geodesic segment from $z$ to $\infty$. Then $\overline{\gamma_z}=\gamma_z\cup\{\infty\}$. Given distinct points $u$ and $v$ in $\mathbb{H}$, we define 
\[
K(u,v)=\{z\in\mathbb{H}\,:\, \rho(z,u)\leq\rho(z,v)\}.
\]
 Note that $\overline{K(u,v)}$ is the closure of $K(u,v)$ in $\overline{\mathbb{C}}$ (it is not the same as $K(u,v)$).

The next lemma is an elementary exercise in hyperbolic geometry.

\begin{lemma}\label{iwg}
Suppose that $u$ and $v$ are distinct points in $\mathbb{H}$ and $\Im v\leq \Im u$. Then $\gamma_u\subset K(u,v)$. If  $\Im v<\Im u$, then $\infty\notin \overline{K(v,u)}$.
\end{lemma}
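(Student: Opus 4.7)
The plan is to convert both assertions into elementary Euclidean statements using the classical identity
\[
\cosh\rho(z,w)=1+\frac{|z-w|^2}{2\,\textnormal{Im}(z)\,\textnormal{Im}(w)},
\]
which shows that $\rho(z,u)\leq\rho(z,v)$ is equivalent to
\[
\textnormal{Im}(v)\,|z-u|^2\leq\textnormal{Im}(u)\,|z-v|^2.
\]
Write $u=a+ib$ and $v=c+id$ with $d\leq b$, and keep this reformulation as the working definition of $K(u,v)$.

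For the first assertion, I would substitute $z=u+it$ with $t\geq 0$ and need to verify
\[
d\,t^2\leq b\bigl[(a-c)^2+(b+t-d)^2\bigr].
\]
Expanding the right-hand side and subtracting $d t^2$ produces $b(a-c)^2+(b-d)\bigl[b(b-d)+2bt+t^2\bigr]$, which is manifestly non-negative because $b\geq d$, $b>0$, and $t\geq 0$. This gives $\gamma_u\subset K(u,v)$.

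For the second assertion, I would instead interpret the inequality defining $K(v,u)$, namely $b\,|z-v|^2\leq d\,|z-u|^2$, as a quadratic inequality in the Euclidean coordinates $z=x+iy$. The quadratic terms are $(b-d)(x^2+y^2)$, with positive coefficient since $b>d$ strictly, so after completing the square in $x$ and $y$ the inequality describes a (possibly empty) closed Euclidean disc in $\mathbb{C}$. In particular $K(v,u)$ is Euclidean-bounded, hence its closure in $\overline{\mathbb{C}}$ misses $\infty$.

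There is no real obstacle here beyond careful bookkeeping of the algebra; the only point worth flagging is that the strict inequality $d<b$ is needed exactly to ensure the quadratic $(b-d)(x^2+y^2)$ dominates, so that $K(v,u)$ is a disc rather than a Euclidean half-plane. A more geometric alternative, which I would mention in passing, is that $K(u,v)$ and $K(v,u)$ are the two hyperbolic half-planes bounded by the perpendicular bisector of $[u,v]$: this bisector is a vertical Euclidean line when $\textnormal{Im}(u)=\textnormal{Im}(v)$ and a Euclidean semicircle otherwise, and in the semicircle case the half-plane on the side of the point with smaller imaginary part is the one bounded away from $\infty$.
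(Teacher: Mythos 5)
Your proof is correct. The paper offers no argument for this lemma (it is stated as ``an elementary exercise in hyperbolic geometry'' with the proof omitted), so there is nothing to compare against; your verification via the identity $\cosh\rho(z,w)=1+|z-w|^2/(2\,\textnormal{Im}(z)\,\textnormal{Im}(w))$ is a standard and sound way to carry out that exercise, with both the algebraic factorisation $b(a-c)^2+(b-d)\bigl[b(b-d)+2bt+t^2\bigr]\geq 0$ for the first claim and the observation that $K(v,u)$ is a Euclidean disc (positive coefficient $b-d$ on $x^2+y^2$) for the second checking out.
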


We use the lemma to prove the following theorem, which helps us relate Dirichlet regions to horocyclic limit sets.

\begin{theorem}\label{aaw}
Let $S$ be a semidiscrete semigroup, and let $x\in \overline{\mathbb{R}}$ and $w\in\mathbb{H}$, where $w$ is not fixed by any nonidentity element of $S$. Then $x\in \overline{D_w(S)}$ if and only if the $S$-orbit of $w$ does not meet $H_x(w)$.
\end{theorem}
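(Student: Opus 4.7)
The plan is to conjugate so that the distinguished boundary point $x$ becomes $\infty$, and then read off the equivalence from the characterisation of the Dirichlet region as an intersection of half-planes $K(w,g(w))$ together with Lemma~\ref{iwg}. More precisely, pick an isometry $\phi\in\mathcal{M}$ with $\phi(x)=\infty$; since $D_{\phi(w)}(\phi S\phi^{-1})=\phi(D_w(S))$ and $\phi(H_x(w))=H_\infty(\phi(w))$, it suffices to prove the statement under the assumption $x=\infty$. In that setting, $H_\infty(w)=\{z\in\mathbb{H}\,:\,\textnormal{Im}(z)>\textnormal{Im}(w)\}$, and the orbit $S(w)$ meets $H_\infty(w)$ precisely when some $g\in S\setminus\{I\}$ satisfies $\textnormal{Im}(g(w))>\textnormal{Im}(w)$ (note that $g(w)\neq w$ by the hypothesis on $w$).

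For the direction assuming the orbit misses $H_x(w)$, I would observe that $\textnormal{Im}(g(w))\leq \textnormal{Im}(w)$ for every $g\in S\setminus\{I\}$, so the first part of Lemma~\ref{iwg} (applied with $u=w$, $v=g(w)$) gives $\gamma_w\subset K(w,g(w))$. Intersecting over all such $g$ yields
\[
\gamma_w\subset\bigcap_{g\in S\setminus\{I\}}K(w,g(w))=D_w(S),
\]
and since $\infty\in\overline{\gamma_w}$, this delivers $\infty\in\overline{D_w(S)}$, as required.

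For the converse direction I would argue the contrapositive. Suppose some $g\in S$ satisfies $g(w)\in H_\infty(w)$, i.e.\ $\textnormal{Im}(w)<\textnormal{Im}(g(w))$. Then the second part of Lemma~\ref{iwg} (with $u=g(w)$, $v=w$) gives $\infty\notin\overline{K(w,g(w))}$. Because $D_w(S)\subset K(w,g(w))$ we have $\overline{D_w(S)}\subset\overline{K(w,g(w))}$, whence $\infty\notin\overline{D_w(S)}$, completing the proof.

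The only step that needs any thought is the reduction by conjugation, because one must check that $\phi$ sends the Dirichlet region and the horodisc in the expected way and preserves the hypothesis that the base point is not fixed by any nonidentity element; these are all routine, so I do not expect any genuine obstacle. Everything else is an immediate application of Lemma~\ref{iwg} to the two halves of the equivalence.
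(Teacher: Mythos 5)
Your proposal is correct and follows essentially the same route as the paper: conjugate so that $x=\infty$, then apply the two assertions of Lemma~\ref{iwg} to the half-planes $K(w,g(w))$ whose intersection is the Dirichlet region. The only cosmetic difference is that the paper also normalises $w=i$, which changes nothing of substance.
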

\begin{proof}
By conjugating, we can assume that $x=\infty$ and $w=i$. Suppose first that the $S$-orbit of $i$ meets $H_\infty(i)$. Then there is an element $g$ of $S$ such that $g(i)\in H_\infty(i)$, so $\Im g(i)>1$. By the second assertion of Lemma~\ref{iwg}, $\infty\notin\overline{K(i,g(i))}$. Since $\overline{D_i(S)}$ is contained in $\overline{K(i,g(i))}$, we see that $\infty\notin \overline{D_i(S)}$.

Conversely, suppose that $g(i)\notin H_\infty(i)$ for every nonidentity element $g$ of $S$. Then, for each map $g$, the first assertion of Lemma~\ref{iwg} tells us that  $\gamma_i\subset K(i,g(i))$. Therefore $\gamma_i\subset D_i(S)$, so $\infty\in \overline{D_i(S)}$.
\end{proof}

An immediate consequence of the theorem is the following important corollary.

\begin{corollary}\label{waa}
Let $S$ be a semidiscrete semigroup. If $\Lambda_h^+(S)=\overline{\mathbb{R}}$, then $S$ is a cocompact Fuchsian group.
\end{corollary}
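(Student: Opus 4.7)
The plan is to combine the two main results of the section: Theorem~\ref{aaw} characterises when an ideal point lies in the closure of a Dirichlet region, and Theorem~\ref{poq} says that a semidiscrete semigroup with a bounded covering region is a cocompact Fuchsian group. So I would aim to show that some Dirichlet region of $S$ has no ideal boundary points in its closure, which forces it to be bounded in the hyperbolic metric, and then invoke Theorem~\ref{poq}.

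First I would choose a basepoint $w \in \mathbb{H}$ that is not fixed by any nonidentity element of $S$. Such a point exists because, in a semidiscrete semigroup, the elliptic elements generate a Fuchsian group, which is countable, so its collection of fixed points in $\mathbb{H}$ is countable. With this choice, the Dirichlet region $D_w(S)$ is defined, and by the theorem immediately preceding this corollary, it is a covering region for $S$.

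Next I would show $\overline{D_w(S)} \cap \overline{\mathbb{R}} = \emptyset$. Take any $x \in \overline{\mathbb{R}}$. By hypothesis, $x \in \Lambda_h^+(S)$, so the $S$-orbit of $w$ meets every horodisc based at $x$; in particular it meets $H_x(w)$. Theorem~\ref{aaw} then gives $x \notin \overline{D_w(S)}$. Since this holds for all $x \in \overline{\mathbb{R}}$, the closure of $D_w(S)$ in $\overline{\mathbb{C}}$ lies entirely inside $\mathbb{H}$, so it is a compact subset of $\mathbb{H}$, and hence $D_w(S)$ is a bounded covering region for $S$. Theorem~\ref{poq} now delivers the conclusion that $S$ is a cocompact Fuchsian group.

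I do not expect any real obstacle: every ingredient has been prepared in the preceding results, and the argument is simply to chain Theorem~\ref{aaw} with Theorem~\ref{poq}. The only mildly delicate point is ensuring that a valid basepoint for the Dirichlet construction exists, which follows from the countability of the elliptic locus, and verifying that ``closure disjoint from $\overline{\mathbb{R}}$'' is genuinely equivalent to ``bounded in the hyperbolic metric'' via the compactness of $\overline{\mathbb{C}}$.
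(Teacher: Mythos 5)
Your proposal is correct and follows essentially the same route as the paper: apply Theorem~\ref{aaw} to show that $\overline{D_w(S)}$ misses $\overline{\mathbb{R}}$, conclude that the Dirichlet region is a bounded covering region, and invoke Theorem~\ref{poq}. The extra care you take over the existence of a valid basepoint and the compactness argument is sound, though these points are already handled in the paper's surrounding discussion.
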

\begin{proof}
Let $D_w(S)$ be a Dirichlet region for $S$. We are given that $\Lambda_h^+(S)=\overline{\mathbb{R}}$, so the orbit $S(w)$ meets every horodisc in $\mathbb{H}$. Theorem~\ref{aaw} now tells us that  $\overline{D_w(S)}\cap \overline{\mathbb{R}}=\varnothing$. Therefore $D_w(S)$  is bounded in $\mathbb{H}$, and we infer from Theorem~\ref{poq} that $S$ is a cocompact Fuchsian group. 
\end{proof}

%%%%%%%%%%%%%%%%%
\section{Schottky semigroups}\label{ooq}
%%%%%%%%%%%%%%%%%

We have now developed enough of the basic properties of semigroups to begin proving our main theorems. Before we do so, however, it is instructive to look at various examples of semidiscrete semigroups, to highlight some of the similarities and differences between discrete groups and semidiscrete semigroups. This task will occupy us for the next few sections.

Let us begin with a useful characterisation of finitely-generated semigroups that are semidiscrete and inverse free.

\begin{theorem}\label{kfa}
Suppose that $S$ is a semigroup generated by a finite collection $\mathcal{F}$ of M\"obius transformations. Then $S$ is semidiscrete and inverse free if and only if there is a nontrivial closed subset $X$ of $\overline{\mathbb{H}}$ that is mapped strictly inside itself by each member of $\mathcal{F}$.
\end{theorem}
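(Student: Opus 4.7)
My plan is to prove the two implications separately. The reverse direction follows from Theorem~\ref{acj} combined with a uniform compactness argument, while the forward direction relies on the horocyclic machinery packaged in Corollary~\ref{waa}.

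For the reverse direction, I would suppose such an $X$ exists and verify, via Theorem~\ref{acj}, that every composition sequence generated by $\mathcal{F}$ is escaping. First observe that the strict-inside condition together with the nontriviality of $X$ forces $X^\circ\neq\emptyset$ (otherwise $f(X)\subset X^\circ=\emptyset$ gives $X=\emptyset$) and hence $\partial X\neq\emptyset$. Since $\mathcal{F}$ is finite and each $f(X)$ is compact and contained in the open set $X^\circ$, compactness of $\overline{\mathbb{H}}$ supplies a uniform chordal gap $\epsilon>0$ with $\chi(f(X),\,\overline{\mathbb{H}}\setminus X^\circ)\geq\epsilon$ for every $f\in\mathcal{F}$. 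Any $g\in S$ factorises as $g=f\circ h$ with $f\in\mathcal{F}$ and $h\in S\cup\{I\}$; since $h(X)\subset X$, one has $g(X)\subset f(X)$, and so the same bound $\chi(g(X),\,\overline{\mathbb{H}}\setminus X^\circ)\geq\epsilon$ propagates to every $g\in S$. Now if some composition sequence $(F_n)$ were not escaping, Lemma~\ref{cii} would furnish a uniformly convergent subsequence $(F_{n_k})$, and the right-invariance trick from the proof of Theorem~\ref{acj} would produce $g_k=F_{n_{k-1}}^{-1}F_{n_k}\in S$ with $g_k\to I$ uniformly. Fixing any $x\in\partial X$ then gives $g_k(x)\in g_k(X)\subset X^\circ$, so $\chi(g_k(x),x)\geq\epsilon$, contradicting $g_k(x)\to x$.

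For the forward direction, suppose $S$ is semidiscrete and inverse free. Since $I\notin S$, the semigroup $S$ is not a Fuchsian group, in particular not cocompact Fuchsian, so Corollary~\ref{waa} forces $\Lambda_h^+(S)\neq\overline{\mathbb{R}}$. I would choose $p\in\overline{\mathbb{R}}\setminus\Lambda_h^+(S)$ together with an open horodisc $H$ based at $p$ disjoint from $S(w_0)$ for some $w_0\in\mathbb{H}$ (whose existence is precisely the failure of $p$ to lie in $\Lambda_h^+(S)$). The target is a closed set of the form $X=\overline{\mathbb{H}}\setminus U$, where $U=\bigcup_{q\in O}H_q$ is a union of open horodiscs whose base-point set $O$ lies in $\overline{\mathbb{R}}\setminus\Lambda_h^+(S)$ and is forward-invariant under $\mathcal{F}^{-1}=\{f^{-1}:f\in\mathcal{F}\}$. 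Unpacking the requirement $f(X)\subset X^\circ$ (using that $f$ is a homeomorphism of $\overline{\mathbb{H}}$) gives the recursive constraint $f^{-1}(\overline{H_q})\subset H_{f^{-1}(q)}$ for every $f\in\mathcal{F}$ and every $q\in O$. I would take $O=S^{-1}(p)\cup\{p\}$ and prescribe horodisc radii that decrease along the branches of $S^{-1}$ so as to satisfy this self-consistent system of nested inclusions.

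The main obstacle lies in the forward direction: choosing the horodisc sizes coherently across $O$. When $O$ is finite (as happens in the elementary case) the construction is straightforward, but in general $S^{-1}(p)$ is infinite and accumulates on $\Lambda^-(S)$, so one must simultaneously (a) ensure the horodiscs shrink rapidly enough along each branch of $S^{-1}$ for the recursive inclusions to hold, and (b) prevent the resulting union $U$ from swallowing all of $\overline{\mathbb{H}}$. Controlling these competing demands uses the fact that $p\notin\Lambda_h^+(S)$ leaves room for simultaneous shrinking, together with the semidiscreteness of $S^{-1}$. This is the semigroup analogue of the classical Schottky-disc construction for discrete groups and parallels the treatment of uniformly hyperbolic cocycles in~\cite{AvBoYo2010}.
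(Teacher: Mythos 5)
The central difficulty is your reading of ``mapped strictly inside itself.'' In this paper the phrase means proper containment, $f(X)\subsetneq X$, not $f(X)\subset X^{\circ}$: see the definition of antiparallel maps and the proof of Theorem~\ref{acu}, where the parabolic $z\mapsto z+2$ is said to map $[0,+\infty]$ strictly inside itself even though it fixes the endpoint $\infty$. Under your stronger reading the forward direction is in fact false. Take $\mathcal{F}=\{z\mapsto z+1\}$, which generates a semidiscrete inverse-free semigroup. If $X=\overline{\mathbb{H}}\setminus U$ were nontrivial, closed, and satisfied $f(X)\subset X^{\circ}$, then $\overline{U}\subset U+1$, hence $U-n\subset U$ for all $n\geq 1$; taking any $u\in U$ gives $u-n\to\infty$ chordally, so $\infty\in\overline{U}\subset U+1$ and thus $\infty\in U$, and since $U$ then contains a chordal neighbourhood of $\infty$ one gets $z\in U-n\subset U$ for every $z$, i.e.\ $U=\overline{\mathbb{H}}$ and $X=\emptyset$. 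This is precisely why your horodisc construction meets the ``main obstacle'' you describe: that obstacle cannot be overcome, and in any case you leave it unresolved, so the forward implication is not established. Your reverse direction is internally sound, but because it rests on the uniform chordal gap between the compact set $f(X)$ and $\overline{\mathbb{H}}\setminus X^{\circ}$, it only proves the implication under the extra hypothesis $f(X)\subset X^{\circ}$; it gives nothing when, say, some $f\in\mathcal{F}$ is parabolic and fixes a boundary point of $X$, which is a case the theorem must cover.

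With the intended reading both directions are short and need none of this machinery, and this is what the paper does. For the ``if'' direction: for each $f\in\mathcal{F}$ pick $y\in X\setminus f(X)$ and set $z_f=f^{-1}(y)$, so $z_f\notin X$ but $f(z_f)\in X$; writing any $g\in S$ as $f_1\dotsb f_n$ shows $g(z_{f_n})\in X$, and since the finite set $\{z_f\}$ is disjoint from the closed set $X$ it lies at positive chordal distance from $X$, so no sequence in $S$ can converge uniformly to $I$ (and $I\notin S$). For the ``only if'' direction: take $X=\{w\}\cup\overline{S(w)}$ for $w\in\mathbb{H}$; strong discontinuity of the action gives $w\notin\overline{S(w)}$, so each $f\in\mathcal{F}$ maps $X$ into $\overline{S(w)}=X\setminus\{w\}$, a proper subset. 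You should replace both halves of your argument with constructions of this kind.
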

\begin{proof}
Suppose first that there is a nontrivial closed subset $X$ of $\overline{\mathbb{H}}$ such that $f(X)\subset X$ and $f(X)\neq X$, for $f\in\mathcal{F}$. We define $z_f$ to be  a point in $\overline{\mathbb{H}}\setminus X$ such that $f(z_f)\in X$, for each $f\in\mathcal{F}$. Let $Y=\{z_f\,:\,f\in\mathcal{F}\}$, a finite set. Now, given any element $g$ of $S$ we can see, by writing $g$ as a word in $\mathcal{F}$, that $g(Y)\cap X\neq\varnothing$. It follows that the identity map $I$ is not contained in $\overline{S}$, so $S$ is semidiscrete and inverse free.

Conversely, suppose that $S$ is semidiscrete and inverse free. Choose a point $w$ in $\mathbb{H}$, and define $X$ to be the nontrivial closed set $\{w\}\cup \overline{S(w)}$, where, as usual, the closure is taken within $\overline{\mathbb{C}}$ (so $X$ is a subset of $\overline{\mathbb{H}}$). Since $S$ is semidiscrete and inverse free, the action of $S$ on $\mathbb{H}$ is strongly discontinuous, so there is a neighbourhood $U$ of $w$ such that $g(U)\cap U=\varnothing$, for every element $g$ of $S$. It follows that $w\notin f(X)$, for  $f\in\mathcal{F}$, so each member of $\mathcal{F}$ maps $X$ strictly inside itself. 
\end{proof}

Next we wish to introduce a large class of finitely-generated inverse-free semidiscrete semigroups that are somewhat related to Schottky groups. We should first specify exactly what we consider to be Schottky groups, as our definition differs slightly from others (and the definition in the introduction was for Schottky groups of rank two only).

Let $A_k$ and $B_k$, for $k=1,\dots,n$, be two collections of disjoint open intervals in $\overline{\mathbb{R}}$. For each index $k$, let $f_k$ be a M\"obius transformation that maps the complement of $\overline{A_k}$ to $B_k$. Each map $f_k$ is either parabolic or hyperbolic. A \emph{Schottky group} $G$ is a group generated as a group by such a collection of transformations. The \emph{rank} of the Schottky group is the integer $n$, and the collection $\{f_1,\dots,f_n\}$ is called a \emph{standard group-generating set} for $G$ (not every set of $n$ maps that generate $G$ as a group have the same sort of interval-mapping properties as the maps $f_k$). Schottky groups, as we have described them, are sometimes called \emph{classical Schottky groups}, and in some sources the \emph{closures} of the intervals are required to be pairwise disjoint, which prevents any of the generators from being parabolic. Schottky groups are discrete and free. 

Suppose now that $X$ is the union of a finite collection of disjoint closed intervals in $\overline{\mathbb{R}}$ (no singletons), and let $\mathcal{F}$ be a finite subset of $\mathcal{M}$ made up of transformations that map $X$ strictly within itself. A \emph{Schottky semigroup} is a semigroup generated by such a set $\mathcal{F}$. We use this terminology because Schottky groups contain many Schottky semigroups; for example, if $\{f,g\}$ is a standard group-generating set for a Schottky group of rank two, then $f$ and $g$ generate as a semigroup a Schottky semigroup.

We now give an example of a finitely-generated inverse-free semidiscrete semigroup that is not a Schottky semigroup. In this example, as usual, we denote the attracting and repelling fixed points of a hyperbolic element $g$ of $\mathcal{M}$ by $\alpha_g$ and $\beta_g$, respectively. Let $\{f,g,h\}$ be a set of hyperbolic maps that is a standard group-generating set for a rank three Schottky group. Choose these maps in such a way that $\alpha_f$ and $\alpha_h$ lie in different components of $\overline{\mathbb{R}}\setminus\{\alpha_g,\beta_g\}$. Let $k=fg^{-1}f^{-1}$, and define $S=\langle f,g,h,k\rangle$.  This semigroup lies in a discrete group, so it is semidiscrete. To see that $S$ is inverse free, suppose, in order to reach a contradiction, that $w_1\dotsb w_n=I$, where $w_i$ is a positive power of $f$, $g$, $h$, or $k$, for $i=1,\dots,n$. By thinking of $w_1\dotsb w_n$ as a word in $f$, $g$, and $h$, we see that $w_i$ cannot be a power of $f$ or $h$ (because the sum of the powers of each of $f$, $g$, and $h$ in this word must be 0, as those three maps generate as a group a free group). Therefore each map $w_i$ is equal to either $g^m$ or $fg^{-m}f^{-1}$, for some positive integer $m$, so clearly it is not possible for $w_1\dotsb w_n$ to equal the identity map after all.

It remains to prove that there is not a finite collection of disjoint, closed intervals in $\overline{\mathbb{R}}$ whose union $X$ is mapped strictly within itself by each element of $S$. Suppose there is such a set $X$. Then it must contain $\Lambda^+(S)$ (see Theorem~\ref{aiu} and the comments after that theorem), so in particular it contains $\alpha_g$. Furthermore, $X$ contains $g^n(\alpha_f)$ and $g^n(\alpha_h)$ for each positive integer $n$. These points accumulate on either side of $\alpha_g$ (our initial choice of $f$, $g$, and $h$ ensures that this is so). It follows that $\alpha_g$ is an interior point of $X$. Now, $k=fg^{-1}f^{-1}$, so $\beta_k =f(\alpha_g)$, which implies that $\beta_k$ is also an interior point of $X$. However, this is impossible because $k(X)\subset X$. Therefore $S$ is not a Schottky semigroup.

An important feature of this example is that it shows that the generating set for a finitely-generated inverse-free semidiscrete semigroup need not be unique, because $S=\langle fg,g,h,k\rangle$, as one can easily verify (in fact, one can remove the map $h$ from both generating sets to give a simpler example still).

%%%%%%%%%%%%%%%%%%
\section{Generating new semigroups from old}
%%%%%%%%%%%%%%%%%%

This section presents a simple combination theorem for generating semidiscrete semigroups. It is similar to combination theorems for discrete groups of Klein and Maskit \cite[Chapter~VII]{Ma1988}.

Given distinct points $u$ and $v$ in $\mathbb{H}$, recall that $K(u,v)=\{z\in\mathbb{H}\,:\, \rho(z,u)\leq\rho(z,v)\}$. Notice that $g(K(u,v))=K(g(u),g(v))$, for any M\"obius transformation $g$. If $S$ is a semidiscrete semigroup, and $w$ is a point of $\mathbb{H}$ that is not fixed by any nonidentity element of $S$, then we can express the Dirichlet region for $S$ centred at $w$ as 
\[
D_w(S)=\bigcap_{g\in S\setminus\{I\}} K(w,g(w)).
\]
Then
\[
\mathbb{H}\setminus D_w(S)=\bigcup_{g\in S\setminus\{I\}} K(g(w),w)^\circ,
\]
where $K(g(w),w)^\circ=\{z\in\mathbb{H}\,:\,\rho(z,g(w))<\rho(z,w)\}$, the interior of $K(g(w),w)$.

\begin{theorem}\label{ols}
Suppose that $S_1$ and $S_2$ are semidiscrete semigroups and $w$ is a point in $\mathbb{H}$ that is not fixed by any nonidentity element of $S_1$ or $S_2$. Suppose that $\mathbb{H}\setminus D_w(S_2^{-1})\subset D_w(S_1)$ and $\mathbb{H}\setminus D_w(S_1^{-1})\subset D_w(S_2)$. Then the semigroup $T$ generated by $S_1\cup S_2$ is semidiscrete. Furthermore, if $S_1$ and $S_2$ are inverse free, then so is~$T$. 
\end{theorem}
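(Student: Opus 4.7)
The plan is a ping-pong argument in the spirit of Klein's combination theorem. Write $A_j = D_w(S_j)$ and $B_j = D_w(S_j^{-1})$ for $j=1,2$. The hypotheses rearrange to $\mathbb{H}\setminus A_1\subset B_2$ and $\mathbb{H}\setminus A_2\subset B_1$, and since the left-hand sides are open, actually $\mathbb{H}\setminus A_1\subset B_2^\circ$ and $\mathbb{H}\setminus A_2\subset B_1^\circ$. The first step is a short geometric lemma: for every non-identity $g\in S_j$,
\[
g(B_j^\circ)\subset \mathbb{H}\setminus A_j.
\]
This is because $B_j^\circ$ is cut out by the strict inequalities $\rho(z,w)<\rho(z,h(w))$ for every $h\in S_j^{-1}\setminus\{I\}$; taking $h=g^{-1}$ and applying the isometry $g$ gives $\rho(g(z),g(w))<\rho(g(z),w)$, which puts $g(z)$ strictly on the far side of the perpendicular bisector of $w$ and $g(w)$, hence outside the half-plane $K(w,g(w))$, which contains $A_j$. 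Combining this with the reformulated hypothesis gives the ping-pong inclusions $g(B_j^\circ)\subset B_{3-j}^\circ$ for every non-identity $g\in S_j$, for $j=1,2$.

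Next I apply these inclusions to alternating words. Because $w$ is not fixed by any non-identity element of $S_1\cup S_2$, and hence of $S_1^{-1}\cup S_2^{-1}$, the point $w$ lies both in $A_1^\circ\cap A_2^\circ$ and in $B_1^\circ\cap B_2^\circ$. Given $g\in T\setminus\{I\}$, I would write $g=g_1g_2\cdots g_n$ as an \emph{alternating non-identity word}, that is, $g_i\in S_{j_i}\setminus\{I\}$ with $j_{i-1}\neq j_i$; this is always possible since any word for $g$ in $S_1\cup S_2$ can be simplified by merging adjacent same-semigroup letters and deleting any identity letters that arise, and this reduction cannot terminate at the empty word unless $g=I$. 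Applying the ping-pong inclusions from right to left, one verifies inductively that $g_i g_{i+1}\cdots g_n(w)\in B_{j_{i-1}}^\circ$ for each $i\geq 2$, so in particular $g_2\cdots g_n(w)\in B_{j_1}^\circ$. The geometric lemma then shows that $g(w)=g_1(g_2\cdots g_n(w))\in \mathbb{H}\setminus A_{j_1}$, and in particular $g(w)\notin A_1\cap A_2$.

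Together these facts give the theorem. The set $A_1^\circ\cap A_2^\circ$ is an open neighbourhood of $w$ that meets the $T$-orbit of $w$ only at $w$ itself, so Theorem~\ref{lla} tells us that $T$ is semidiscrete. If in addition both $S_1$ and $S_2$ are inverse free, then $I\notin S_1\cup S_2$; if $I$ belonged to $T$ it would admit a non-empty alternating non-identity word representation, and the argument above would give $I(w)=w\notin A_1\cap A_2$, contradicting $w\in A_1\cap A_2$. Hence $I\notin T$ and $T$ is inverse free.

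The main obstacle I expect is purely bookkeeping: making sure that the word-reduction step in the alternating-word decomposition never collapses a genuine non-identity element to the empty word, and keeping track of which Dirichlet regions are being taken closed versus open. The ping-pong, once set up, is clean, but the interior-versus-closure distinctions — passing from $\mathbb{H}\setminus A_j\subset B_{3-j}$ to $\mathbb{H}\setminus A_j\subset B_{3-j}^\circ$, and from $w\in A_j$ to $w\in A_j^\circ$ using the unfixed-point hypothesis — must be handled throughout in order to produce an open neighbourhood of $w$ at the end.
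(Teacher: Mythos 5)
Your argument is correct and is essentially the paper's own proof: the same ping-pong with alternating non-identity words between the half-planes $K(w,g(w))$ that bound the Dirichlet regions, concluding that the orbit of $w$ under $T\setminus\{I\}$ avoids the open neighbourhood $D_w(S_1)^\circ\cap D_w(S_2)^\circ$ of $w$. The only cosmetic difference is that you use the sets $D_w(S_j^{-1})^\circ$ as the ping-pong tables where the paper uses $\mathbb{H}\setminus D_w(S_j)$; note only that $w\in D_w(S_j)^\circ$ relies on the semidiscreteness of $S_j$ (to bound $\rho(w,g(w))$ away from zero uniformly), not merely on $w$ being unfixed.
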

\begin{proof}
Observe that, for $g\in S_1\setminus\{I\}$ and $h\in S_2\setminus\{I\}$, we have 
\[
K(g(w),w)^\circ \subset \mathbb{H}\setminus D_w(S_1) \subset D_w(S_2^{-1})\subset K(w,h^{-1}(w)), 
\]
which implies that $K(g(w),w)^\circ\subset K(w,h^{-1}(w))^\circ$. Hence $h(K(g(w),w)^\circ)\subset K(h(w),w)^\circ$, and similarly $g(K(h(w),w)^\circ)\subset K(g(w),w)^\circ$. It follows that $h$ maps $\mathbb{H}\setminus D_w(S_1)$ into $\mathbb{H}\setminus D_w(S_2)$, and $g$ maps $\mathbb{H}\setminus D_w(S_2)$ into $\mathbb{H}\setminus D_w(S_1)$. 

Next, observe that $g(w)\in\mathbb{H}\setminus D_w(S_1)$ and $h(w)\in\mathbb{H}\setminus D_w(S_2)$. Choose $f\in T\setminus\{I\}$. By writing $f$ as a word with letters alternating between $S_1\setminus\{I\}$ and $S_2\setminus\{I\}$, we see that $f(w)$ belongs to either  $\mathbb{H}\setminus D_w(S_1)$ or $\mathbb{H}\setminus D_w(S_2)$. However, $S_1$ and $S_2$ are semidiscrete, so $w$ is contained in the interior of both $D_w(S_1)$ and $D_w(S_2)$. It follows that $T$ is semidiscrete.

Furthermore, if $S_1$ and $S_2$ are inverse free, then the same argument shows that the identity map does not belong to $T$. Hence $T$ is inverse free also.
\end{proof}

Theorem~\ref{ols} can be used as a tool for modifying known semidiscrete semigroups to generate new semidiscrete semigroups, as the following example illustrates. Let $U$ and $V$ be two closed hyperbolic half-planes that are separated by a large hyperbolic distance. Let $\mathcal{F}$ be a finite collection of M\"obius transformations that map the complement of $U$ into $V$, and let $S$ be the semigroup generated by $\mathcal{F}$. Choose $w$ to be the midpoint of the hyperbolic line segment between $U$ and $V$ that is orthogonal to the boundaries of $U$ and $V$ (it is not necessary to choose precisely this point, but this choice is a convenient one). Providing $U$ and $V$ are chosen to be sufficiently distant from one another (distance greater than $\log (3+2\sqrt{2}))$, we can find a closed interval $J$ in $\overline{\mathbb{R}}$ with the property that if $x\in J$, then the horodisc $H_x(w)$ intersects neither $U$ nor $V$. As a consequence, the $S$-orbit and $S^{-1}$-orbit of $w$ do not meet $H_x(w)$, so $x\in \overline{D_w(S)}\cap \overline{D_w(S^{-1})}$, by Theorem~\ref{aaw}. It follows that the closed hyperbolic half-plane $W$ with ideal boundary $J$ is contained in $D_w(S)\cap D_w(S^{-1})$.

Next, choose any Fuchsian group $G$ for which $D_w(G)$ contains a hyperbolic half-plane $K$. After conjugating $G$, we can assume that $\mathbb{H}\setminus K\subset W$. Then
\[
\mathbb{H}\setminus D_w(G) \subset \mathbb{H}\setminus K\subset W\subset D_w(S)\quad\text{and}\quad \mathbb{H}\setminus D_w(S^{-1}) \subset \mathbb{H}\setminus W\subset K\subset D_w(G).
\]
The theorem now tells us that the semigroup generated by $S\cup G$ is semidiscrete.

%%%%%%%%%%%%%%%%%%
\section{Exceptional semigroups}
%%%%%%%%%%%%%%%%%%
 
We recall that $\mathcal{M}(J)$ denotes the semigroup of M\"obius transformations that map a closed interval $J$ inside itself (recall also our convention that a closed interval is not a singleton or $\overline{\mathbb{R}}$). In this section we study the finitely-generated semidiscrete semigroups that lie within $\mathcal{M}(J)$. We include in this study a discussion of exceptional semigroups, which are not semidiscrete. 

It is helpful to define $\mathcal{M}_0(J)$ to be the group part of $\mathcal{M}(J)$, which comprises those M\"obius transformations that fix $J$ as a set.  This group is the one-parameter family of hyperbolic M\"obius transformations whose fixed points are the end points of $J$. If $J=[0,+\infty]$, then $\mathcal{M}_0(J)$ consists of all maps of the form $a z$, where $a>0$. To understand the finitely-generated semigroups in $\mathcal{M}(J)$, we first need to understand the finitely-generated semigroups in $\mathcal{M}_0(J)$. This is achieved in the following pair of results.
 
First, a lemma, which classifies the finitely-generated semigroups that are contained in the group $\{z+a\,:\,a\in\mathbb{R}\}$. Really this is the same as classifying additive semigroups of real numbers, so this lemma no doubt appears elsewhere in the literature in some form already.

\begin{lemma}\label{zps}
Let $S$ be a semigroup generated by maps $z+b_i$, where $b_i\in\mathbb{R}$, for $i=1,\dots,n$. Then exactly one of the following statements is true. 
\begin{enumerate}
\item\label{eea} Either $b_i \geq 0$ for $i=1,\dots,n$ or $b_i\leq 0$ for $i=1,\dots,n$, in which case the semigroup $S\setminus\{I\}$ is semidiscrete and inverse free.
\item\label{eeb} For every pair of indices $i$ and $j$, we have either $b_j=0$ or else $b_i/b_j \in \mathbb{Q}$ and one of the quotients $b_i/b_j$ is negative. In this case, $S$ is a discrete group.
\item\label{eec} Otherwise, there are two maps $z+b_i$ and $z+b_j$ that generate a dense subset of $\{z+b\,:\,b\in\mathbb{R}\}$.\qedhere
\end{enumerate}
\end{lemma}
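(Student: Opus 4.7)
The plan is to transfer everything to the additive semigroup $T=\{k_1b_1+\dotsb+k_nb_n : k_i\in\mathbb{Z}_{\geq 0}\}\subset\mathbb{R}$, so that $S=\{z+t : t\in T\}$ and the three cases become statements about $T$ as a subset of $\mathbb{R}$. Convergence in $\mathcal{M}$ of translations is just convergence of the translation parameters, so semidiscreteness/discreteness/density of $S$ corresponds to the analogous property of $T$.

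First I would dispatch the two easy cases. For \emph{(i)}, every element of $T$ has the same sign, and $\min\{|b_i| : b_i\neq 0\}$ is a positive lower bound on $|t|$ for every nonzero $t\in T$; hence $0$ is not an accumulation point of $T$ and $0\notin T\setminus\{0\}$, so $S\setminus\{I\}$ is semidiscrete and inverse free. For \emph{(ii)}, since every nonzero ratio $b_i/b_j$ is rational, all nonzero $b_i$ are integer multiples of a common $d>0$, say $b_i=n_id$ with $n_i\in\mathbb{Z}$; since some ratio is negative, both signs occur among the $n_i$. A standard Bézout-type argument (combining enough copies of a positive and a negative generator) shows that the additive semigroup generated by such a set of integers equals the additive subgroup they generate, namely $m\mathbb{Z}$ for $m=\gcd_i n_i$. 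Hence $T=md\mathbb{Z}$ is a discrete additive group and $S$ is a discrete Fuchsian group.

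For case \emph{(iii)} I would first verify that the three alternatives are mutually exclusive and exhaustive. If \emph{(i)} fails there exist both positive and negative $b_i$, so at least one cross-sign ratio is negative; if additionally \emph{(ii)} fails, some nonzero ratio must be irrational. A short argument upgrades this to a \emph{cross-sign} irrational ratio: were every pair $(b_p,b_n)$ with $b_p>0$, $b_n<0$ to have rational ratio, then fixing any such $b_n$ would force all positive $b_p$'s to be rational multiples of $b_n$, and similarly for negative $b_i$'s, making every ratio rational and contradicting the failure of \emph{(ii)}. Hence we may fix indices $i,j$ with $b_i>0$, $b_j<0$, and $b_i/b_j$ irrational.

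The main obstacle is then to show that these two generators alone already give a dense semigroup. Setting $\alpha=b_i>0$ and $\beta=-b_j>0$ with $\alpha/\beta$ irrational, we must prove that
\[
A=\{m\alpha-n\beta : m,n\in\mathbb{Z}_{\geq 0}\}
\]
is dense in $\mathbb{R}$. By Weyl's equidistribution theorem, the sequence $(m\alpha/\beta \bmod 1)_{m\in\mathbb{N}}$ is equidistributed in $[0,1)$, so for any $r\in\mathbb{R}$ and $\varepsilon>0$ there exist arbitrarily large $m\in\mathbb{N}$ and integers $k_m$ with $|m\alpha-k_m\beta-r|<\varepsilon$. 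Since $m\alpha\to\infty$, for all sufficiently large such $m$ the integer $k_m=(m\alpha-r+O(\varepsilon))/\beta$ is non-negative, giving the required approximation of $r$ by an element of $A$. Translating back to $\mathcal{M}$, the semigroup $\langle z+b_i,z+b_j\rangle$ is dense in $\{z+b : b\in\mathbb{R}\}$, completing case \emph{(iii)}.
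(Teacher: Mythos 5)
Your proof is correct, and its overall architecture (reduce to the additive semigroup $T$ of translation lengths, then treat the three cases) is the same as the paper's; cases \emph{(i)} and \emph{(ii)} are handled essentially identically, with the paper writing out the B\'ezout computation that you invoke as standard. The one genuine difference is in case \emph{(iii)}: the paper takes continued-fraction convergents $u_n/v_n$ of $-b_i/b_j$ to produce elements $z+v_nb_i+u_nb_j$ with $v_nb_i+u_nb_j\to 0$, and explicitly concludes only that $\langle z+b_i,z+b_j\rangle$ is not semidiscrete (density, as asserted in the statement, is left implicit -- it follows because the convergent errors alternate in sign, so one obtains arbitrarily small nonzero translations of both signs). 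You instead use equidistribution (density of the irrational rotation orbit would suffice; Weyl is more than needed) of $m\alpha/\beta \bmod 1$ to hit an arbitrary target $r$ directly, checking that the integer $k_m$ is eventually non-negative so the approximant genuinely lies in the semigroup. Your route proves the density claim of the statement head-on, at the cost of citing a slightly heavier tool; the paper's route is more elementary and self-contained but, as written, establishes a formally weaker conclusion. Your observation that a cross-sign irrational ratio can always be found matches the paper's parenthetical remark and is argued correctly.
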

\begin{proof}
In case {\ref{eea}}, it is clear that $S\setminus\{I\}$ is semidiscrete and inverse free.

In case {\ref{eeb}}, choose any of the numbers $b_i$ (other than $0$), and let $b_j$ be of opposite sign to $b_i$. Then $mb_i=-nb_j$ for some coprime positive integers $m$ and $n$. Choose positive integers $u$ and $v$ such that $vn-um=\pm1$. Let $\alpha=b_i/n=-b_j/m$. Then $vb_i+ub_j=\pm\alpha$. It follows that $S$ contains one of the maps $z+\alpha$ or $z-\alpha$. But $S$ also contains $z+n\alpha$ and $z-m\alpha$. So $S$ must contains \emph{both} $z+\alpha$ and $z-\alpha$, and in particular it must contain $z-b_i$. Therefore $S$ is a group, and one can use a short, standard argument from the theory of discrete groups to prove that $S$ is discrete.

In case {\ref{eec}}, we can choose a pair of nonzero numbers $b_i$ and $b_j$ such that $b_i/b_j\notin \mathbb{Q}$. We can assume that $b_i$ and $b_j$ have opposite signs: if at first they do not, then replace one of them by another number $b_k$ of the opposite sign (if you are careful about which of $b_i$ and $b_j$ you replace, then you can be sure that the quotient of the two numbers you end up with is irrational). Now let $(u_n/v_n)$ be a sequence of rational numbers, where $u_n,v_n\in\mathbb{N}$, that satisfies 
\[
\left| \frac{b_i}{b_j}+\frac{u_n}{v_n}\right| < \frac{1}{2v_n^2}.
\]
Then $|v_nb_i+u_nb_j|\to 0$ as $n\to\infty$, and we know that $v_nb_i+u_nb_j\neq 0$. From this we see that the maps $z+v_nb_i+u_nb_j$ accumulate at the identity map, so $\langle z+b_i,z+b_j\rangle$ is not semidiscrete.
\end{proof}

Next we state a comparable result to Lemma~\ref{zps} for maps of the form $az$, where $a>0$. It is proved by observing that $\log x$ is an isometry from the positive real axis equipped with the one-dimensional hyperbolic metric to $(\mathbb{R},+)$.

\begin{corollary}\label{nps}
Let $S$ be a semigroup generated by maps $a_iz$, where $a_i>0$, for $i=1,\dots,n$. Then exactly one of the following statements is true. 
\begin{enumerate}
\item\label{wea} Either $a_i \geq 1$ for $i=1,\dots,n$ or $a_i\leq 1$ for $i=1,\dots,n$, in which case the semigroup $S\setminus\{I\}$ is semidiscrete and inverse free.
\item\label{web} For every pair of indices $i$ and $j$, we have either $a_j=1$ or else $\log a_i/\log a_j \in \mathbb{Q}$ and one of the quotients $\log a_i/\log a_j$ is negative. In this case, $S$ is a discrete group.
\item\label{wec} Otherwise, there are two maps $a_iz$ and $a_jz$ that generate a dense subset of $\{az\,:\,a>0\}$.\qedhere
\end{enumerate}
\end{corollary}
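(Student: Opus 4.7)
The plan is to reduce Corollary~\ref{nps} directly to Lemma~\ref{zps} via the natural isomorphism between the additive and multiplicative one-parameter subgroups of $\mathcal{M}$. Define a map $\Theta\colon \{z+b : b\in\mathbb{R}\} \to \{az : a>0\}$ by $\Theta(z+b) = e^b z$. Since $\exp$ and $\log$ are mutually inverse continuous isomorphisms between $(\mathbb{R},+)$ and $((0,\infty),\cdot)$, the map $\Theta$ is simultaneously a group isomorphism and a homeomorphism, and $\Theta$ fixes $I$. This is exactly the content of the remark that $\log x$ is an isometry from the positive real axis with its one-dimensional hyperbolic metric onto $(\mathbb{R},+)$.

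Setting $b_i = \log a_i$, the semigroup $S$ generated by the maps $a_i z$ is the image under $\Theta$ of the semigroup $T$ generated by the maps $z + b_i$, to which Lemma~\ref{zps} applies. Because $\Theta$ is a topological group isomorphism fixing $I$, it preserves all the properties appearing in the statement: semidiscreteness (since $I$ is an accumulation point of $S$ if and only if it is an accumulation point of $T$), inverse freeness, being a discrete group, and density within the ambient one-parameter subgroup. First I would verify this preservation explicitly, and then translate each of the three cases of Lemma~\ref{zps} via the substitution $b_i = \log a_i$: the sign conditions $b_i \geq 0$ and $b_i \leq 0$ become $a_i \geq 1$ and $a_i \leq 1$; the triviality condition $b_j = 0$ becomes $a_j = 1$; and the quotients $b_i/b_j$ become $\log a_i / \log a_j$, so the rationality and sign hypotheses transfer verbatim. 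Cases~\emphref{eea}, \emphref{eeb}, \emphref{eec} of Lemma~\ref{zps} then yield cases~\emphref{wea}, \emphref{web}, \emphref{wec} of Corollary~\ref{nps} respectively.

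There is no substantial obstacle; the argument is essentially bookkeeping once the isomorphism $\Theta$ is in hand. The only points worth a moment's care are that density in case~\emphref{wec} is density within the one-parameter subgroup $\{az : a > 0\}$ rather than within all of $\mathcal{M}$ (matching Lemma~\ref{zps}\emphref{eec}), and that the mutual exclusivity of the three cases in the additive setting carries over unchanged to the multiplicative setting under $\Theta$.
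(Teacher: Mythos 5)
Your proposal is correct and is essentially the paper's own argument: the paper proves Corollary~\ref{nps} precisely by the observation that $\log x$ is an isometry from the positive real axis with its one-dimensional hyperbolic metric to $(\mathbb{R},+)$, which is exactly your isomorphism $\Theta(z+b)=e^bz$ reducing the statement to Lemma~\ref{zps} via $b_i=\log a_i$. You merely spell out the bookkeeping (preservation of semidiscreteness, inverse freeness, discreteness, and density under the topological group isomorphism) that the paper leaves implicit.
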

 
Suppose now that $S$ is a finitely-generated semidiscrete semigroup contained in $\mathcal{M}(J)$. Then $S\cap \mathcal{M}_0(J)$ is either empty or else falls into one of the classes {\ref{wea}} or {\ref{web}} from Corollary~\ref{nps}. We examine these two possibilities, in turn.

\begin{theorem}\label{jjr}
Suppose that $S$ is a finitely-generated semigroup that lies in $\mathcal{M}(J)$, for some closed interval $J$, such that the elements of $S\setminus\{I\}$ that fix $J$ as a set form a semigroup that is semidiscrete and inverse free. Then $S\setminus\{I\}$ is also a semigroup that is semidiscrete and inverse free.
\end{theorem}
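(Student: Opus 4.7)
The plan is to conjugate so that $J = [0, +\infty]$, identifying $\mathcal{M}(J)$ with $\mathrm{SL}(2,\mathbb{R})^{+}/\pm I$, the semigroup of matrices in $\mathrm{SL}(2,\mathbb{R})$ with non-negative entries; under this identification $\mathcal{M}_0(J)$ is the diagonal subgroup of dilations $z\mapsto az$. Using the hypothesis together with Corollary~\ref{nps} (and, if necessary, conjugating by $z\mapsto 1/z$ to swap the endpoints of $J$), I may assume every element of $\mathcal{F}_0 := \mathcal{F}\cap\mathcal{M}_0(J)$ has multiplier strictly greater than $1$ (discarding $I$ from $\mathcal{F}$ if present). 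Inverse-freeness of $S\setminus\{I\}$ is immediate: if $g\in S\setminus\{I\}$ and $g^{-1}\in S$, then $g$ and $g^{-1}$ both map $J$ into $J$, forcing $g(J)=J$; this places $g,g^{-1}\in S\cap\mathcal{M}_0(J)\setminus\{I\}$, contradicting the hypothesis. In particular $S\setminus\{I\}$ is closed under composition.

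The key combinatorial observation is that any word $h_1\cdots h_k$ in $\mathcal{F}$ fixing $\infty$ must have each letter fixing $\infty$: each $h\in\mathcal{F}$ has $h^{-1}(\infty)\in\{\infty\}\cup(-\infty,0)$, so the right-to-left orbit of $\infty$ cannot return to $\infty$ once it enters $[0,\infty)$; an analogous statement holds for $0$. For semidiscreteness, suppose $g_n\in S\setminus\{I\}$ with $g_n\to I$, and, using the hypothesis, pass to a subsequence with $g_n\notin\mathcal{M}_0(J)$. If $g_n$ fixes $\infty$, then every letter of $g_n$'s word fixes $\infty$, whence $g_n(z)=\alpha_n z+\beta_n$ with $\beta_n>0$. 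The affine composition law gives $\beta_n = \sum_k \bigl(\prod_{j<k}\alpha_{i_j}\bigr)\beta_{i_k}$, summed over positions $k$ whose letter is an element of $\mathcal{F}_1$ fixing $\infty$. At the first such position $k_0$, every preceding letter lies in $\mathcal{F}_0$ with multiplier $>1$, so the prefix product is $\geq 1$ and $\beta_n\geq\min\{\beta_f : f\in\mathcal{F}_1,\,f\text{ fixes }\infty\}>0$, contradicting $\beta_n\to 0$. The case $g_n$ fixes $0$ is symmetric.

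The delicate remaining sub-case is when $g_n$ fixes neither endpoint. Decompose $g_n = H_n^{(0)}f^*_1 g_n^{(1)}$, where $H_n^{(0)}$ is the leading $\mathcal{F}_0$-block (so $H_n^{(0)}(z)=c_n^{(0)}z$ with $c_n^{(0)}\geq 1$) and $f^*_1\in\mathcal{F}_1$ is the first $\mathcal{F}_1$-letter, fixed along a subsequence. Writing $f^*_1(J) = [p_1,q_1]$, the containment $g_n(J)\subseteq[c_n^{(0)}p_1,\,c_n^{(0)}q_1]$ together with $g_n(J)\to[0,\infty]$ forces $p_1=0$ (so $f^*_1$ fixes $0$), together with $q_1<\infty$ and $c_n^{(0)}\to\infty$. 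From $g_n(0)=c_n^{(0)}f^*_1(g_n^{(1)}(0))$, the inequality $g_n(0)>0$, and $f^*_1(0)=0$, one deduces $g_n^{(1)}(0)>0$; meanwhile $f^*_1(g_n^{(1)}(0)) = g_n(0)/c_n^{(0)}\to 0$ gives $g_n^{(1)}(0)\to 0$. Now $g_n^{(1)}$ cannot fix $\infty$ (else the preceding argument applied to $g_n^{(1)}$ yields $g_n^{(1)}(0)=\beta^{(1)}_n\geq\min>0$, contradicting $g_n^{(1)}(0)\to 0$) and cannot fix $0$ (since $g_n^{(1)}(0)>0$), so it again falls in the present sub-case and we decompose it as $g_n^{(1)}=H_n^{(1)}f^*_2 g_n^{(2)}$. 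Iterating, each stage strictly shortens the word, so the recursion terminates at some $m$ with $g_n^{(m)} = I$; at this point $g_n^{(m-1)} = H_n^{(m-1)}f^*_m$ fixes $0$ (both factors do), contradicting the derived $g_n^{(m-1)}(0)>0$. The main obstacle is administrative---each $f^*_k$ and the termination level depend on $n$, so the nested subsequences must be coordinated by a diagonal extraction using the finiteness of $\mathcal{F}_1$; once this is in place, the argument closes cleanly.
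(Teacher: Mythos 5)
Your setup, your proof of inverse-freeness, and your treatment of the two cases in which $g_n$ fixes $\infty$ or fixes $0$ are all correct, and this affine/word-combinatorial route is genuinely different from the paper's (the paper exhibits an explicit closed set $X\subset\overline{\mathbb{H}}$ --- a Euclidean wedge in the quadrant over $J$, cut off by a line through a point $u<0$ and a point $iv$ --- that every generator maps strictly inside itself, and then invokes Theorem~\ref{kfa}). The problem is the final sub-case, and it is not administrative. Your recursion establishes, \emph{for each fixed level} $k$, that for all $n\geq n_k$ the $k$-th non-$\mathcal{F}_0$ letter fixes $0$ and $g_n^{(k)}(0)\in(0,\varepsilon_k(n))$ with $\varepsilon_k(n)\to 0$. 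But the contradiction is only extracted at level $m=m(n)$, where the word of $g_n$ runs out of $\mathcal{F}_1$-letters (or first meets an $\mathcal{F}_1$-letter not fixing $0$), and $m(n)$ may tend to infinity with $n$. To conclude you need some $n$ with $n\geq n_{m(n)}$, and neither a diagonal extraction nor any rearrangement of subsequences produces one: the thresholds $n_k$ genuinely must grow with $k$, because the only control you have on $g_n^{(k+1)}(0)$ is $g_n^{(k+1)}(0)\leq\phi_{k+1}^{-1}\bigl(g_n^{(k)}(0)\bigr)$, and for a letter $\phi(z)=az/(cz+1)$ with $a<1$ the inverse branch expands near $0$ by a factor of roughly $1/a>1$ per level. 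So the smallness of $g_n^{(k)}(0)$ can be lost after about $\log(1/g_n(0))$ levels, while the number of $\mathcal{F}_1$-letters in the word is unconstrained. At the first letter for which you cannot certify ``fixes $0$'', the chain of deductions $g_n^{(k)}(0)>0$ stops, and no contradiction is reached. Concretely: all the constraints you derive are consistent with a hypothetical sequence in which the prefix of $g_n$ up to the first letter moving $0$ is a map $z\mapsto A_nz/(C_nz+1)$ with $A_n\to0$, $C_n\to0$, $A_n/C_n\to\infty$; nothing in your argument excludes this.

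What is missing is a single estimate that is \emph{uniform in the length of the word}, and that is exactly what the paper's construction supplies: once one has a closed set $X$ with $f(X)$ strictly inside $X$ for every generator $f$, the proof of Theorem~\ref{kfa} picks one point $z_f\notin X$ with $f(z_f)\in X$ per generator, so that every $g\in S$ moves some point of the finite set $Y=\{z_f\}$ into $X$; since $\mathrm{dist}(Y,X)>0$ is a fixed constant independent of word length, $g$ cannot be close to the identity. (For $J=[0,+\infty]$ one should normalise so the $\mathcal{M}_0(J)$-generators have multiplier at most $1$; then the wedge $\{x+iy: x,y\geq0,\ y\leq v(1+x/|u|)\}$ together with its ideal boundary works, with $v$ chosen above the images of the quadrant under the generators not fixing $\infty$ and $u$ chosen between $0$ and the repelling fixed points of the generators $az+b$ with $a>1$, $b>0$.) To salvage your approach you would need to manufacture an analogous uniform quantity from the affine coordinates --- in effect rediscovering such an invariant wedge --- rather than iterating asymptotic statements level by level.
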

\begin{proof}
By conjugation, we can assume that $J=[0,+\infty]$. Let $\mathcal{F}$ be a finite generating set for $S$. Let $a_iz$, where $a_i>0$, for $i=1,\dots,n$, be the members of $\mathcal{F}$ that fix $J$ as a set. By Corollary~\ref{nps}, $a_i \geq 1$ for all $i$, or $a_i\leq 1$ for all $i$; let us assume the latter, as the other case is similar (and in fact one can move from one case to the other by conjugating $S$ by the map $1/z$).

We will now construct a nontrivial closed subset $X$ of $\overline{\mathbb{H}}$ that is mapped strictly inside itself by each member of $\mathcal{F}\setminus\{I\}$. Theorem~\ref{kfa} can then be applied to see that $S\setminus\{I\}$ is semidiscrete and inverse free.

Let $D$ denote the closed top-right quadrant of the extended complex plane. This is the closure in $\overline{\mathbb{H}}$ of the hyperbolic half-plane with boundary $[0,+\infty]$. If $f\in\mathcal{F}$ and $f(\infty)\neq \infty$, then $f(D)$ is the closure in $\overline{\mathbb{H}}$ of the hyperbolic half-plane with boundary $[f(0),f(\infty)]$. Since $\mathcal{F}$ is finite, we can choose $v>0$ such that $f(D)\subset \{z\in D\,: \, \Im z<v\}$ for all such maps $f$.

Suppose now that $f$ is a hyperbolic element of $\mathcal{F}$ with $\alpha_f=\infty$; then $\beta_f<0$. We choose $u<0$ such that $\beta_f<u$ for all such hyperbolic maps $f$.

Let $\ell$  be the Euclidean line through $u$ and $iv$. We define $X$ to be the closed subset of $\overline{\mathbb{H}}$ composed of those points of $D$ that lie on or below the line $\ell$; see Figure~\ref{kbr}.

\begin{figure}[ht]
	\centering
	\begin{tikzpicture}
	\fill[fill=col1] (0,0) -- (0,2) -- (1,4) -- (4,4) -- (4,0) -- cycle;
		\draw (-4,0) -- (4,0);
	\draw (0,0) -- (0,4);
	\draw (-1,0) -- (1,4);
	\node[label=below:$u$,draw=none,fill=black,minimum size=3pt] at (-1,0){};
	\node[label=left:$iv$,draw=none,fill=black,minimum size=3pt] at (0,2){};
	\node[draw=none,fill=none] at (2,2){$X$};
	\node[draw=none,fill=none] at (-2,3){$\mathbb{H}$};
	\node[draw=none,fill=none] at (0.5,3.4){$\ell$};
	\end{tikzpicture}
\caption{The closed subset $X$ of $\overline{\mathbb{H}}$}
\label{kbr}
\end{figure}

It remains to show that each member $f$ of $\mathcal{F}\setminus\{I\}$ maps $X$ strictly inside itself. This is certainly true if $f(\infty)\neq \infty$ because
\[
f(X)\subset f(D) \subset\{z\in D\,: \, \Im z<v\} \subset X.
\]
Next, if $f$ is a hyperbolic map with $\alpha_f=\infty$, then $f(z)=\lambda(z-\beta_f)+\beta_f$, where $\lambda>1$ and $\beta_f<u$, and again we can see that $f(X)$ is strictly contained in $X$. The remaining two possibilities are that 
 $f$ is parabolic with fixed point $\infty$ (of the form $z+t$, for $t>0$) or $f$ is hyperbolic with $\beta_f=\infty$, in which case $f(z)=\lambda(z-\alpha_f)+\alpha_f$, where $\lambda<1$ and $\alpha_f\geq 0$ (which includes the possibility that $f$ is one of the maps $a_iz$). In both cases $f$ maps $X$ strictly inside itself. 
\end{proof}

Theorem~\ref{jjr} concludes that $S\setminus\{I\}$ is a semigroup that is semidiscrete and inverse free. As a consequence, $S$ is semidiscrete, but of course it is not inverse free if it happens to contain the identity map.

The next theorem considers those finitely-generated semidiscrete semigroups in $\mathcal{M}(J)$ whose group parts are nontrivial discrete subgroups of $\mathcal{M}_0(J)$. Before the theorem, we need a lemma about generating sets.

\begin{lemma}\label{zko}
Suppose that $S$ is a semigroup contained in $\mathcal{M}(J)$, for some closed interval $J$, that is generated by a set $\mathcal{F}$. Then  $S$ has an element that fixes exactly one of the end points of $J$ if and only if $\mathcal{F}$ contains such an element.
\end{lemma}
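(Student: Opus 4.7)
The forward direction is immediate since $\mathcal{F} \subseteq S$. For the converse I would argue by contrapositive: assuming no element of $\mathcal{F}$ fixes exactly one endpoint of $J$, show that no element of $S$ does either. Denote the endpoints of $J$ by $p$ and $q$.

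I would first record two geometric observations about an arbitrary $f \in \mathcal{M}(J)$. Because every real Möbius transformation preserves cyclic orientation of $\overline{\mathbb{R}}$, $f$ cannot swap $p$ and $q$: if it did, the arc $J$ (from $p$ to $q$ in the positive cyclic direction) would be sent to the complementary arc, contradicting $f(J) \subseteq J$. Consequently each $f \in \mathcal{M}(J)$ either fixes both, exactly one, or neither of the endpoints. Moreover, if $f$ fixes neither endpoint then $f(J) \subseteq J^\circ$: the image $f(J)$ is a closed sub-arc of $J$ whose boundary points $f(p)$ and $f(q)$ lie in $J^\circ$, and if $p$ (say) were in $f(J)$ it would be an interior point of $f(J)$, forcing $f(J)$ to contain a neighbourhood of $p$ and hence to meet $J^c$—a contradiction. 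Combining with the case $f \in \mathcal{M}_0(J)$ (where $f(J) = J$), we conclude that every $f \in \mathcal{M}(J)$ satisfies $f(J^\circ) \subseteq J^\circ$.

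With these facts in hand I would take $g \in S$, write $g = f_1 \cdots f_n$ with $f_i \in \mathcal{F}$, and suppose for contradiction that $g$ fixes exactly one endpoint, say $g(p) = p$ and $g(q) \neq q$. Track the orbit of $p$ under successive applications of $f_n, f_{n-1}, \dotsc, f_1$. Under our hypothesis each $f_i$ fixes both endpoints or fixes neither. As long as every $f_i$ encountered fixes $p$ (hence fixes both), the orbit remains at $p$; the first time some $f_i$ fails to fix $p$, that $f_i$ must fix neither endpoint, so the orbit enters $J^\circ$, and by the second observation every subsequent application keeps it inside $J^\circ$. Since $g(p) = p \notin J^\circ$, every $f_i$ must fix $p$, whence each $f_i \in \mathcal{M}_0(J)$; this forces $g \in \mathcal{M}_0(J)$ and in particular $g(q) = q$, contradicting our standing assumption. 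The case $g(q) = q$ and $g(p) \neq p$ is symmetric.

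The main obstacle is verifying the two geometric assertions above—that a map in $\mathcal{M}(J)$ cannot swap the endpoints of $J$, and that a map in $\mathcal{M}(J)$ fixing neither endpoint actually sends $J$ into its open interior. Both rest on the interplay between cyclic-orientation preservation in $\overline{\mathbb{R}}$ and the constraint $f(J) \subseteq J$; once they are in hand, the inductive tracking of the orbit of $p$ (or $q$) through the word $f_1 \cdots f_n$ is routine bookkeeping.
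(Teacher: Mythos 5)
Your proof is correct and is essentially the paper's argument run in the opposite direction: the paper peels generators off the left of the word $f_1\dotsb f_n$, showing successively that each $f_i$ fixes the endpoint in question and hence satisfies $f_i(J)=J$, whereas you track the orbit of that endpoint through the word from the right; the underlying geometric input (orientation preservation prevents an element of $\mathcal{M}(J)$ from carrying one endpoint of $J$ onto the other) is the same in both. One small point to tighten: your no-swap observation only excludes $f(p)=q$ and $f(q)=p$ holding \emph{simultaneously}, whereas your claim that a map fixing neither endpoint sends $f(p)$ and $f(q)$ into $J^\circ$ also needs each of $f(p)=q$ and $f(q)=p$ excluded individually --- the same orientation argument supplies this, since $f(J)$ is a nondegenerate positively oriented sub-arc of $J$ running from $f(p)$ to $f(q)$, and no such sub-arc can begin at $q$ or end at $p$.
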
 
\begin{proof}
If $\mathcal{F}$ has an element that fixes exactly one of the end points of $J$, then certainly $S$ does too, as $\mathcal{F}\subset S$. Conversely, suppose that no element of $\mathcal{F}$ fixes exactly one of the end points of $J$. Let $f$ be a map in $S$ that fixes one of the end points $a$ of $J$; we will show that in fact $f(J)=J$, so $f$ fixes both end points of $J$. 

Choose maps $f_i$ in $\mathcal{F}$ such that $f=f_1\dotsb f_n$.  Observe that $f(J)\subset f_1(J)$, so $a\in f_1(J)$, which implies that $f_1(a)=a$. Therefore the element $f_2\dotsb f_n$ of $S$ fixes $a$ as well, and, by our assumption, $f_1(J)=J$. Repeating this argument we see that $f_i(J)=J$, for $i=1,\dots,n$, so $f(J)=J$, as required.
\end{proof}

\begin{theorem}\label{ioe}
Suppose that $S$ is a finitely-generated semigroup that lies in $\mathcal{M}(J)$, for some closed interval $J$, such that the elements of $S$ that fix $J$ as a set form a nontrivial discrete group. Then $S$ is semidiscrete if and only if no other element of $S$ outside this discrete group fixes either of the end points of $J$.
\end{theorem}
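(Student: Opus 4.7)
The plan is to conjugate so that $J=[0,+\infty]$ and the group part is $G=\{\lambda^n z:n\in\mathbb{Z}\}$ for some $\lambda>1$; since an element of $S$ that fixes both endpoints of $J$ must preserve $J$ setwise and hence lie in $G$, Lemma~\ref{zko} tells us that the endpoint-fixing hypothesis transfers freely between $S\setminus G$ and any generating set $\mathcal{F}$. For the forward direction I would argue the contrapositive: suppose some $h\in S\setminus G$ fixes an endpoint of $J$, and exhibit elements of $S$ converging to $I$ but distinct from $I$. After possibly conjugating $S$ by $-1/z$, we may assume $h$ fixes $\infty$, so $h(z)=cz+d$ with $c>0$ and $d\neq 0$ (nonzero because $h\notin G$). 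Writing $g(z)=\lambda z$, a direct computation gives $s_n:=g^{-n}hg^n(z)=cz+d\lambda^{-n}$, and its iterates satisfy $s_n^p(z)=c^pz+d\lambda^{-n}(c^p-1)/(c-1)$ when $c\neq 1$ (with $s_n\to I$ already in the case $c=1$). The composition $g^{-q}s_n^p\in S$ then has linear part $\lambda^{-q}c^p$; when $\log c/\log\lambda$ is rational one can choose $p\geq 1$ and $q$ with $\lambda^{-q}c^p=1$ exactly, and when it is irrational Kronecker's theorem makes $\lambda^{-q}c^p$ arbitrarily close to $1$. A diagonal argument then produces a sequence in $S$ converging to $I$ without equalling $I$, contradicting semidiscreteness.

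For the converse direction, assume no element of $S\setminus G$ fixes an endpoint of $J$. By Lemma~\ref{zko} each $f\in\mathcal{F}\setminus G$ maps $[0,+\infty]$ into $[A_f,B_f]$ with $0<A_f<B_f<+\infty$; set $\alpha=\min_f A_f$ and $\beta=\max_f B_f$. Suppose for a contradiction that $(s_n)$ is a sequence in $S\setminus\{I\}$ with $s_n\to I$; since $G$ is discrete, after passing to a subsequence we may assume $s_n\in S\setminus G$ for every $n$. Write each $s_n$ as a word in $\mathcal{F}$ and split it at its leftmost letter from $\mathcal{F}\setminus G$: $s_n=g_n f_n t_n$ with $g_n=\lambda^{m_n}z\in G\cup\{I\}$, $f_n\in\mathcal{F}\setminus G$, and $t_n\in S\cup\{I\}\subset\mathcal{M}(J)\cup\{I\}$. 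Since $t_n$ maps $[0,+\infty]$ into itself and $f_n$ maps $[0,+\infty]$ into $[\alpha,\beta]$, both $s_n(0)$ and $s_n(+\infty)$ lie in $\lambda^{m_n}[\alpha,\beta]$, so their ratio is trapped in $[\alpha/\beta,\beta/\alpha]$. But $s_n\to I$ in the chordal metric forces $s_n(0)\to 0$ and $s_n(+\infty)\to+\infty$, hence the ratio tends to $0$---a contradiction.

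The main obstacle is the arithmetic case analysis in the forward direction when the multiplier $c$ of $h$ is incommensurable with $\lambda$; this is where Kronecker's theorem is essential for accumulating elements of $S$ near $I$. The converse direction, by contrast, reduces to a clean leftmost-letter word decomposition that exploits the positive gap between $\{0,+\infty\}$ and the images of $[0,+\infty]$ under the moving generators.
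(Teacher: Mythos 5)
Your proposal is correct and follows essentially the same route as the paper's proof: the semidiscrete direction uses the same decomposition of a word at its first letter outside the group part (your ratio $s_n(0)/s_n(+\infty)$ bounded in $[\alpha/\beta,\beta/\alpha]$ replaces the paper's case split on the sign of the exponent $m_n$, to the same effect), and the other direction uses the same conjugation $g^{-n}hg^{n}$ to shrink the translation part of the endpoint-fixing element. The only real difference is one of detail: where the paper compresses the last step into ``by composing $h(z)=az$ with the maps $g$ and $g^{-1}$ we see that the identity is an accumulation point,'' you carry out the rational/irrational (Kronecker) analysis of $\lambda^{-q}c^{p}$ explicitly, which is a legitimate and slightly more careful filling-in of that step rather than a different argument.
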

\begin{proof}
By conjugation, we can assume that $J=[0,+\infty]$. Let $\mathcal{F}$ be a finite generating set for $S$. Let $\mathcal{F}_0$ denote those elements of $\mathcal{F}$ that fix $[0,+\infty]$ as a set. We are assuming that $\mathcal{F}_0$ generates a semigroup $\langle \lambda z, z/\lambda \rangle$, for some constant $\lambda>1$. By Lemma~\ref{zko}, it suffices to prove that $S$ is semidiscrete if and only if no element of $\mathcal{F}\setminus\mathcal{F}_0$ fixes $0$ or $\infty$.

Suppose, then, that no element of $\mathcal{F}\setminus\mathcal{F}_0$ fixes $0$ or $\infty$. Then there is an interval $K=[s,t]$, where $0<s<t<+\infty$, such that if $f\in\mathcal{F}\setminus\mathcal{F}_0$, then $f(J)\subset K$. Suppose now that $(F_n)$ is a sequence of distinct elements of $S$. We wish to show that $(F_n)$ cannot converge uniformly to the identity map, because doing so will demonstrate that $S$ is semidiscrete. If $F_n\in \langle \lambda z, z/\lambda \rangle$ for infinitely many integers $n$, then certainly $(F_n)$ cannot converge uniformly to the identity map because $\langle \lambda z, z/\lambda \rangle$ is discrete. Otherwise, when $n$ is sufficiently large, we can write $F_n=G_nf_nH_n$, where $G_n \in \langle \lambda z, z/\lambda\rangle$, $f_n\in\mathcal{F}\setminus\mathcal{F}_0$, and $H_n\in S$. Notice that
\[
f_nH_n(J) \subset f_n(J)\subset K.
\]
Let $G_n(z)=\lambda^{k_n}z$, where $k_n\in\mathbb{Z}$. Either $k_n\geq 0$, in which case $F_n(0) \geq f_nH_n(0)\geq s$, or $k_n< 0$, in which case $0<F_n(\infty)< f_nH_n(\infty)\leq t$. So $(F_n)$ does not converge uniformly to the identity map.

Suppose now that there is an element $f$ of $\mathcal{F}\setminus\mathcal{F}_0$ that fixes $0$ or $\infty$. Let us assume that $f$ fixes $\infty$ (the other case can be deduced from this case by conjugating $S$ by the map $1/z$). Then $f(z)=az+b$, where $a,b>0$. Let $g(z)=\lambda z$. Observe that
\[
g^{-n} f g^{n}(z) = az + \lambda^{-n}b.
\]
Therefore the map $h(z)=az$ is an accumulation point of $S$ in $\mathcal{M}$. By composing powers of $h$ with the maps $g$ and $g^{-1}$ we see that the identity map is an accumulation point of $S$ too, so $S$ is not semidiscrete.
\end{proof}
 
We recall from the introduction that an exceptional semigroup $S$ is a semigroup that lies in $\mathcal{M}(J)$, for some closed interval $J$, such that the elements of $S$ that fix $J$ as a set form a nontrivial discrete group, and such that there is another element of $S$ outside this discrete group that fixes one of the end points of $J$. Theorem~\ref{ioe} tells us that exceptional semigroups are not semidiscrete.

Let us finish here by explaining how to determine whether a finitely-generated semigroup $S$ is exceptional by examining any generating set $\mathcal{F}$ of $S$ alone. First look for a collection of two or more hyperbolic transformations in $\mathcal{F}$ with the same axes that together generate a discrete group. Corollary~\ref{nps} tells us how to test whether a set of hyperbolic transformations with the same axes generate such a group. If there is no collection of this type, or more than one collection, then $S$ is not exceptional. Let us suppose that there is indeed one such collection, with axis $\gamma$, and this collection does generate a discrete group. In order for $S$ to be exceptional, every element of $\mathcal{F}$ must lie in $\mathcal{M}(J)$, where $J$ is one of the two intervals in $\overline{\mathbb{R}}$ with the same end points as $\gamma$. If this is the case, then Lemma~\ref{zko} tells us that $S$ is exceptional if and only if $\mathcal{F}$ contains an element that fixes exactly one of the end points of $J$.

%%%%%%%%%%%%%%%%%
\section{Elementary semigroups}\label{oiw}
%%%%%%%%%%%%%%%%%

A group of M\"obius transformations is said to be elementary if it has a finite orbit in $\overline{\mathbb{H}}$. Accordingly, we define a semigroup $S$ to be elementary if $S$ has a finite orbit in   $\overline{\mathbb{H}}$. This definition is in conflict with that of \cite{FrMaSt2012}, where a semigroup is considered to be elementary if $|\Lambda^-(S)|\leq 2$. This alternative definition is weaker than the definition we use because the condition $|\Lambda^-(S)|\leq 2$ implies that $S$ has a finite orbit in $\overline{\mathbb{H}}$ (see \cite[Theorem 2.11]{FrMaSt2012}).

The alternative definition of an elementary semigroup has two undesirable consequences for our purposes: first, with the alternative definition,  it is possible for $S$ to be elementary and $S^{-1}$ not elementary; second, with the alternative definition and when $S$ happens to be a group, it is possible for $S$ to be an elementary group but not an elementary semigroup. However, the definition of \cite{FrMaSt2012} is more suitable for the purposes of that paper, in which semigroups of rational maps are considered.

The elementary semigroups are classified in the following theorem.

\begin{theorem}\label{ndo}
Let $S$ be an elementary semigroup. Then one of the following holds: 
\begin{enumerate}
\item\label{pja} there is a point in $\mathbb{H}$ fixed by all elements of $S$
\item\label{pjb} there is a point in $\overline{\mathbb{R}}$ fixed by all elements of $S$, or
\item\label{pjc} there is a pair of points in $\overline{\mathbb{R}}$ fixed as a set by all elements of $S$.\qedhere
\end{enumerate}
\end{theorem}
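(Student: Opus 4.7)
The plan is to unpack the definition of elementary. Pick any $w \in \overline{\mathbb{H}}$ whose $S$-orbit $F = \{g(w) : g \in S\}$ is finite. Since every element of $\mathcal{M}$ preserves $\mathbb{H}$ and $\overline{\mathbb{R}}$ setwise, either $F \subset \mathbb{H}$ or $F \subset \overline{\mathbb{R}}$. In either situation, composition shows that each $g \in S$ maps $F$ into $F$, and because $g$ is injective on $\overline{\mathbb{C}}$ while $F$ is finite, $g$ must in fact act on $F$ as a bijection. This reduction to a permutation action is the engine driving the whole argument.

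Next I would dispatch the easy cases. If $F \subset \mathbb{H}$, then the hyperbolic centroid of $F$---taken for instance as the unique minimiser of the sum of squared hyperbolic distances to the points of $F$---is preserved by any isometry of $\mathbb{H}$ that permutes $F$, so it is fixed by every element of $S$, yielding conclusion \emphref{pja}. If instead $F \subset \overline{\mathbb{R}}$ and $|F| = 1$, then every element of $S$ fixes the single point of $F$, giving \emphref{pjb}; while $|F| = 2$ means every element of $S$ preserves the pair $F$, giving \emphref{pjc}.

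The only remaining possibility is $F \subset \overline{\mathbb{R}}$ with $|F| \geq 3$, and this is the main obstacle, because at first sight such an $F$ does not obviously fit any of the three conclusions. The resolution is to show that this case secretly forces $S$ to collapse to a finite group. Since a M\"obius transformation is determined by its values at any three points of $\overline{\mathbb{R}}$, the restriction map $S \to \mathrm{Sym}(F)$ is injective, so $S$ is finite. Then for each $g \in S$ the sequence $g, g^2, g^3, \dots$ must repeat, giving $g^{n+k} = g^n$ for some $n, k \geq 1$; invertibility of M\"obius transformations yields $g^k = I$, whence $I \in S$ and $g^{-1} = g^{k-1} \in S$. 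So $S$ is a finite group. Applying the centroid argument from the previous paragraph to the (finite) orbit $Sz$ of any point $z \in \mathbb{H}$ now produces a common fixed point of $S$ in $\mathbb{H}$, landing us once again in \emphref{pja} and completing the proof.
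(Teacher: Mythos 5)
Your proof is correct, and it follows the same skeleton as the paper's (finite orbit, permutation action, the observation that a large orbit forces collapse), but the details diverge in three ways worth noting. First, you split at the outset according to whether the finite orbit lies in $\mathbb{H}$ or in $\overline{\mathbb{R}}$, and in the former case you produce the fixed point of \emphref{pja} directly via a barycentre, with no need to show first that $S$ is a group; the paper instead lumps this case together with the large-orbit case on $\overline{\mathbb{R}}$. Second, in the case of an orbit of three or more boundary points, the paper argues element by element: for each $g\in S$ some power $g^m$ fixes the whole orbit pointwise, hence equals $I$, so each $g$ is elliptic of finite order or the identity and $S$ is a group. You instead observe that the restriction $S\to\mathrm{Sym}(F)$ is injective (a M\"obius map is determined by its values at three points), so $S$ is finite all at once, and then finiteness forces it to be a group. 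Both hinge on the same rigidity fact, but your version is arguably cleaner, since it avoids the bookkeeping with the product of the exponents $m_x$. Third, where the paper cites as ``well known'' the fact that a group of finite-order elliptics has a common fixed point in $\mathbb{H}$, you actually supply a proof via the uniqueness of the minimiser of the sum of squared hyperbolic distances, which is a standard and valid circumcentre argument in a Hadamard space. One incidental benefit of your case division: the paper's remaining-cases sentence silently omits the (trivial) possibility of a one-point orbit in $\mathbb{H}$, whereas your split handles it automatically.
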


In case~{\ref{pja}} we can, after conjugating, obtain a semigroup with common fixed point $i$. In that case, all maps in $S$ are of the form $(az-b)/(bz+a)$, where $a^2+b^2=1$. In case~{\ref{pjb}}, we can conjugate so that the fixed point is $\infty$. Then all maps in $S$ are of the form $az+b$, where $a>0$ and $b\in\mathbb{R}$. In case~{\ref{pjc}}, we can conjugate so that the pair of fixed points is $\{0,\infty\}$. Then each map of $S$ has the form $az$ or $a/z$, where $a>0$.

\begin{proof}[P\textbf{roof of Theorem~\ref{ndo}}]
Since $S$ is elementary, it has a finite orbit, $X$ say. If $g\in S$ and $x\in X$, then the sequence $x,g(x),g^2(x),\dotsc$, which lies in $X$, must have two terms that are equal. It follows that there is a positive integer $m_x$ for which $g^{m_x}(x)=x$. Let $m$ denote the product of all the integers $m_x$, for $x\in X$. Then $g^m(x)=x$ for any element $x$ of $X$. If $X$ has three or more elements -- or if it has two elements, at least one of which lies in $\mathbb{H}$ -- then $g^m$ must be the identity map $I$. It follows that $g$ itself is either elliptic of finite order or else equal to $I$. Therefore $S$ is a group. It is well known, and easy to show, that all elements of such a group have a common fixed point in $\mathbb{H}$, which is case~{\ref{pja}}.

The remaining possibilities are that either $X$ consists of a single point in $\overline{\mathbb{R}}$, which is case~{\ref{pjb}}, or $X$ consists of a pair of points in $\overline{\mathbb{R}}$, which is case~{\ref{pjc}}.
\end{proof}

The next observation follows immediately from the theorem.

\begin{corollary}
The semigroup $S$ is elementary if and only if $S^{-1}$ is elementary.
\end{corollary}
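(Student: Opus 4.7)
The plan is to reduce the statement to a symmetry argument using Theorem~\ref{ndo}. Because $(S^{-1})^{-1}=S$, it suffices to show one implication: if $S$ is elementary, then $S^{-1}$ is elementary. So assume $S$ has a finite orbit in $\overline{\mathbb{H}}$, and apply Theorem~\ref{ndo} to split into its three cases.

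In case~\emphref{pja} there is a point $p\in\mathbb{H}$ fixed by every element of $S$. Then $g^{-1}(p)=p$ for every $g\in S$, so every element of $S^{-1}$ also fixes $p$, and $\{p\}$ is a finite $S^{-1}$-orbit, giving that $S^{-1}$ is elementary. Case~\emphref{pjb} is identical, with the fixed point lying in $\overline{\mathbb{R}}$ instead of $\mathbb{H}$. In case~\emphref{pjc} there is a pair $\{p,q\}\subset\overline{\mathbb{R}}$ which is fixed as a set by every element of $S$; each $g\in S$ therefore permutes the two-element set $\{p,q\}$, and since any permutation of a two-element set equals its own inverse, $g^{-1}$ also fixes $\{p,q\}$ as a set. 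Hence $\{p,q\}$ is a finite $S^{-1}$-invariant set, and $S^{-1}$ is elementary.

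There is no substantive obstacle: Theorem~\ref{ndo} packages all the structure that is needed, and the observation that inverses preserve pointwise fixed points (and fixed pairs, trivially) does the rest. If one preferred to avoid invoking Theorem~\ref{ndo}, one could instead argue directly that any finite $S$-invariant subset $X$ of $\overline{\mathbb{H}}$ forces $S$ to act on $X$ by injective maps (hence permutations, since $X$ is finite), so every $g\in S$ restricts to a bijection $X\to X$; then $g^{-1}$ also restricts to a bijection of $X$, making $X$ an $S^{-1}$-invariant finite set. This alternative route is perhaps cleaner, but either way the proof is short.
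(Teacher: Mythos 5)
Your proof is correct and is exactly the argument the paper has in mind: the paper offers no proof beyond the remark that the corollary ``follows immediately from the theorem,'' and your three-case deduction from Theorem~\ref{ndo} (a common fixed point of $g$ is fixed by $g^{-1}$, and a pair fixed as a set by $g$ is fixed as a set by $g^{-1}$) is precisely that immediate deduction. The alternative you sketch at the end --- any finite forward-invariant set is permuted by each element of $S$, since M\"obius maps are injective, and is therefore also $S^{-1}$-invariant --- is equally valid and has the mild advantage of not invoking the classification at all.
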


The elementary semigroups that are of most interest to us  are those that are finitely generated and semidiscrete, which are classified in the theorem that follows.  To appreciate this result, it helps to observe that $b/(1-a)$ is the finite fixed point of the map $az+b$, where $a\neq 1$. This fixed point is attracting if $a<1$, and repelling if $a>1$.

\begin{theorem}\label{mme}
Let $S$ be a finitely-generated elementary  semidiscrete semigroup. Then one of $S$ or $S^{-1}$ is conjugate in $\mathcal{M}$ to a semigroup of one of the following types:
\begin{enumerate}
\item\label{cla} a finite cyclic group generated by an elliptic transformation
\item\label{clb} one of the groups $\langle az,z/a\rangle$ or $\langle az,z/a,-1/z\rangle$, where $a>1$
\item\label{clg} $\langle a_1z+b_1,\dots,a_mz+b_m, z+1,z-1\rangle$, where $a_i>1$ and $b_i\in\mathbb{R}$ for all $i$
\item\label{cle} $\langle a_iz+b_i,c_jz+d_j,z+t_k\,:\,i=1,\dots,m,j=1,\dots,n,k=1,\dots,r\rangle$, where $a_i>1$, $0<c_j<1$, $b_i,d_j\in\mathbb{R}$, $t_k\geq 0$, and $b_i/(1-a_i)<d_j/(1-c_j)$ for all $i,j,k$. 
\end{enumerate}
In classes~\ref{clg} and \ref{cle}, $m$, $n$, and $r$ are nonnegative integers, not all zero in class~\ref{cle}.
\end{theorem}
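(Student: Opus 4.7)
The plan is to invoke Theorem~\ref{ndo} and dispose of its three cases using Lemma~\ref{zps}, Corollary~\ref{nps}, and Theorem~\ref{kfa}. In case~\emphref{pja}, conjugate the common fixed point to $i$; then $S$ sits inside the compact rotation stabiliser of $i$, and semidiscreteness forces $S$ to be a discrete, hence finite, subset of this compact group. Since any finite subsemigroup of a group is a group (by the pigeonhole argument applied to powers), $S$ is a finite cyclic rotation group, yielding class~\ref{cla}.

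In case~\emphref{pjc} with genuine swapping, conjugate the invariant pair to $\{0,\infty\}$ and write $S = S_0 \cup S_1$, where $S_0 \subseteq \{az : a > 0\}$ collects the fixers and $S_1$ the swappers $z \mapsto b/z$ (necessarily $b < 0$ for orientation preservation). Each swapper is an involution, and the identity $(b/z) \circ (az) \circ (b/z) = z/a$ shows that $S_0$ is closed under inverses whenever $S_1$ is nonempty. Thus $S_0$ is a finitely-generated subgroup of $\{az : a > 0\}$, which by Corollary~\ref{nps}\emphref{web} is either trivial or of the form $\langle az, z/a\rangle$ for some $a > 1$; a single remaining swapper can be normalised by dilation to $-1/z$, placing $S$ in class~\ref{clb} (with the trivial-$S_0$ subcase absorbed into class~\ref{cla} as a cyclic group of order two).

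For case~\emphref{pjb}, conjugate the fixed point to $\infty$ and write every element as $az + b$. If all generators are translations, Lemma~\ref{zps} gives the classification: case~\emphref{eec} is excluded by semidiscreteness; case~\emphref{eeb} yields class~\ref{clg} with $m = 0$ after rescaling to $\langle z+1, z-1\rangle$; case~\emphref{eea}, after exchanging $S$ with $S^{-1}$ to make shifts nonnegative, yields class~\ref{cle} with $m = n = 0$. When some multiplier is not $1$, partition the generators into expanding ($a_i > 1$, with repelling fixed points $\beta_i = b_i/(1-a_i)$), contracting ($0 < c_j < 1$, with attracting fixed points $\alpha_j = d_j/(1-c_j)$), and translations. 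A direct calculation shows that a ray $[K, +\infty]$ is mapped strictly inside itself by every generator precisely when $\max_i \beta_i < K < \min_j \alpha_j$ and every $t_k \geq 0$; when such $K$ exists, Theorem~\ref{kfa} confirms semidiscreteness and $S$ is of class~\ref{cle}. If the fixed-point ordering is reversed, pass to $S^{-1}$, whose generators have the same fixed points with the expanding and contracting roles swapped, so $S^{-1}$ is of class~\ref{cle}. The subcase where the translations in $S$ instead form a discrete group populates class~\ref{clg}.

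The hard part will be the converse direction in case~\emphref{pjb}: forcing the inequality $\max \beta_i < \min \alpha_j$ and the one-sidedness of translations from semidiscreteness alone. When $\beta_i > \alpha_j$ strictly for some pair, the idea is to interleave the corresponding expanding and contracting generators, producing compositions whose multiplier tends to $1$; an explicit formula shows the translation part has leading term proportional to $(\alpha_j - \beta_i)$ times a power of the multiplier, and tuning the exponents drives the translation part to $0$, giving an accumulation at $I$ that contradicts semidiscreteness. An analogous construction handles translations failing to be one-sided (case~\emphref{eec} of Lemma~\ref{zps}). The borderline equality $\max \beta_i = \min \alpha_j$ creates an additional common fixed point on $\overline{\mathbb{R}}$, which routes the analysis back through case~\emphref{pjc} after a further conjugation.
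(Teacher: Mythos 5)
Your overall strategy coincides with the paper's: Theorem~\ref{ndo} supplies the trichotomy, Lemma~\ref{zps} and Corollary~\ref{nps} handle the abelian pieces, and in case~\emphref{pjb} you interleave expanding and contracting generators to accumulate at translations, which is precisely the paper's Lemma~\ref{kqw}. Cases~\emphref{pja} and~\emphref{pjc} are dealt with correctly, and the easy direction of case~\emphref{pjb} (a ray $[K,+\infty]$ mapped strictly inside itself, then Theorem~\ref{kfa}) is the same as the paper's Schottky-semigroup remark. The problem is that your endgame for case~\emphref{pjb} leaves two subcases open, and one of them is misdescribed.

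First, when the translation generators form a discrete group (after rescaling, $S$ contains both $z+1$ and $z-1$), you assert that $S$ lands in class~\emphref{clg}; but class~\emphref{clg} admits only expanding affine generators, so you must rule out the coexistence of expanding generators, contracting generators with strictly larger fixed points, \emph{and} the two-sided translation group. This does follow from your own interleaving construction -- it places every $z+t$ with $t\geq t_0$ in $\overline{S}$, and composing with $z-1$ then yields every translation, contradicting semidiscreteness -- but that step is absent, and without it the claim that this subcase has $m=0$ or $n=0$ (after possibly passing to $S^{-1}$) is unjustified. Second, the borderline case $\max_i\beta_i=\min_j\alpha_j$ does not, as you state, ``create an additional common fixed point'' for $S$: it produces a common fixed point for \emph{one} expander--contractor pair only, so the analysis cannot simply be rerouted through case~\emphref{pjc}. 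To close this case you need (a) that \emph{all} the hyperbolic affine generators share the same finite fixed point, which is the content of the paper's Corollary~\ref{oyy} (two expanders and a contractor whose fixed points are weakly ordered with at most one equality already force $I\in\overline{S}$), and (b) that no nontrivial translation generators remain, which is the exceptional-semigroup mechanism of Theorem~\ref{ioe}: a nontrivial discrete group of hyperbolics fixing $\{p,\infty\}$ together with a translation fixing $\infty$ but not $p$ yields conjugates $g^{-n}fg^{n}(z)=z+\lambda^{-n}t$ accumulating at the identity. Both (a) and (b) are missing from your sketch, and the classification genuinely fails without them.
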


Semigroups of types~{\ref{cla}} and~{\ref{clb}} are discrete groups. If $S$ is a semigroup of type~{\ref{cle}} (other than the trivial semigroup, $\{I\}$), then $S\setminus\{I\}$ is a Schottky semigroup (which implies that $S$ is semidiscrete) because all elements of $S\setminus\{I\}$ map the closed interval $[s,+\infty]$ strictly inside itself, where $\max_i b_i/(1-a_i) < s < \min_j d_j/(1-c_j)$.  Semigroups of type~{\ref{clg}} have nonempty inverse-free part (unless $m=0$) and nonempty group part. One can easily check that such semigroups are semidiscrete.

We need the following lemma and corollary to prove Theorem~\ref{mme}.

\begin{lemma}\label{kqw}
Consider the semigroup $S$ generated by $f(z)=a z+b$ and $g(z)=cz+d$, where $a>1$ and $0<c<1$, and $b/(1-a)<d/(1-c)$. The closure $\overline{S}$ of this semigroup in $\mathcal{M}$ contains all maps of the form $z+t$, where $t\geq d/(1-c) - b/(1-a)$.
\end{lemma}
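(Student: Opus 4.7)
The plan is to conjugate so that $f$ has its fixed point at $0$, and then realise each translation $z+t$ with $t\ge\ell$ as a limit in $\mathcal{M}$ of finite products of elements $g^nf^m\in S$ whose linear coefficient $c^na^m$ is close to $1$; the integers $n$ and $m$ are selected by a Diophantine approximation.

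First I would conjugate by $z\mapsto z-b/(1-a)$, which reduces to $f(z)=az$ and $g(z)=cz+(1-c)\ell$, where $\ell=d/(1-c)-b/(1-a)>0$. An easy induction then yields the identity
\[
g^nf^m(z)=c^na^mz+(1-c^n)\ell,
\]
and, setting $F_i=g^{n_i}f^{m_i}$,
\[
F_1\dotsb F_r(z)=\Bigl(\prod_{i=1}^r c^{n_i}a^{m_i}\Bigr)z+\ell\sum_{i=1}^r(1-c^{n_i})\prod_{j<i}c^{n_j}a^{m_j}.
\]
To handle the base case $t=\ell$, I would find integers $m_N,n_N\ge 0$ with $n_N\to\infty$ and $c^{n_N}a^{m_N}\to 1$: Dirichlet's theorem supplies such a sequence when $\log a/\log(1/c)$ is irrational, while scaling up any solution of $a^pc^q=1$ works in the rational case. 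Either way $(1-c^{n_N})\ell\to\ell$, so the one-factor formula gives $g^{n_N}f^{m_N}\to z+\ell$.

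For a general $t>\ell$, I would build $r$-factor products with $n_i\to\infty$ and $c^{n_i}a^{m_i}\to K_i$ for positive reals $K_i$ satisfying $\prod_iK_i=1$. Passing to the limit above (so each $1-c^{n_i}\to 1$), the composition tends to $z+\ell(1+K_1+K_1K_2+\dotsb+K_1\dotsb K_{r-1})$. Writing $P_j=K_1\dotsb K_{j-1}$, so that $P_1=P_{r+1}=1$ and $K_i=P_{i+1}/P_i$, the task reduces to realising positive numbers $P_2,\dotsc,P_r$ with $\sum_{j=1}^rP_j=t/\ell$ such that each ratio $P_{i+1}/P_i$ is an achievable limit of $c^{n_i}a^{m_i}$. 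In the irrational case every positive $K_i$ is achievable by Weyl equidistribution, so $r=2$ with $K_1=t/\ell-1$ and $K_2=1/K_1$ suffices immediately. The main obstacle will be the rational case, where the achievable $K_i$ lie in a discrete geometric progression $\{e^{k\tau}:k\in\mathbb{Z}\}$; to establish density of the achievable sums $\sum_je^{s_j\tau}$ in $[1,\infty)$ (over integer walks $s_1=0,s_2,\dotsc,s_{r+1}=0$), I would construct walks that spend many steps idling at a very negative level $-K$, so that each idle step contributes the tiny amount $e^{-K\tau}$ and the total idle contribution can be tuned to any prescribed positive excess over $1$.
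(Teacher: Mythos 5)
Your argument is correct, and its first stage coincides with the paper's: both begin from the computation of $g^nf^m$ (the paper works with the unconjugated maps, so its formula carries the extra term $\tfrac{b}{1-a}(1-a^mc^n)$, but this is cosmetic) and both extract the translation $z+t_0$, $t_0=d/(1-c)-b/(1-a)$, as a limit of $g^{n}f^{m}$ with $n\to\infty$ and $a^mc^n\to 1$. Where you genuinely diverge is in promoting $z+t_0$ to $z+t_0+s$ for arbitrary $s>0$. The paper exploits the fact that $\overline{S}$ is itself a semigroup containing $h(z)=z+t_0$, and considers $g^nh^kf^m$: conjugating the large translation $h^k$ by $g^n$ contributes the term $kt_0c^n$ to the translation length, and since $c^n$ can be made tiny with $a^mc^n$ still near $1$, the integer $k$ can then be chosen to tune $kt_0c^n$ to within $\varepsilon$ of any prescribed $s$. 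This gives the whole ray in three lines and needs no rational/irrational dichotomy at that stage. You instead insist on honest words $g^{n_1}f^{m_1}\dotsb g^{n_r}f^{m_r}$ in $S$, which forces you to control the achievable limits $K_i$ of $c^{n_i}a^{m_i}$ and leads to the case split: in the irrational case a two-factor product already realises every $t>\ell$, while in the rational case you need the closed-walk density argument (idling $M$ steps at level $-K$ to contribute $Me^{-K\tau}$), which you only sketch but which does close up as described. What your route buys is the slightly stronger conclusion that the translations are limits of elements of $S$ itself rather than of $\overline{S}$; what it costs is the Diophantine case analysis and the combinatorial density argument, both of which the paper's $g^nh^kf^m$ device renders unnecessary.
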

\begin{proof}
Observe that 
\[
g^nf^m(z)=a^mc^nz+bc^n\left(\frac{1-a^m}{1-a}\right)+d\left(\frac{1-c^n}{1-c}\right)=a^mc^nz+t_0(1-c^n)+\frac{b}{1-a}(1-a^mc^n),
\]
where $m,n\in\mathbb{N}$ and $t_0=d/(1-c) - b/(1-a)$. Given $\varepsilon>0$, we can choose $m$ and $n$ such that $c^n<\varepsilon$ and $|a^mc^n-1|<\varepsilon$. Therefore $h(z)=z+t_0\in\overline{S}$. Next, observe that
\[
g^nh^kf^m(z) =a^mc^nz+t_0(1-c^n)+\frac{b}{1-a}(1-a^mc^n) +kt_0 c^n,
\]
where $m,k,n\in\mathbb{N}$. Let $s>0$. Given $\varepsilon>0$, with $\varepsilon<s$, we can choose $m$ and $n$ such that $c^n<\varepsilon$, $t_0c^n<\varepsilon$, and $|a^mc^n-1|<\varepsilon$. We can then choose $k$ such that $|kt_0c^n-s|<\varepsilon$. Therefore $z+t_0+s\in\overline{S}$, and the result follows.
\end{proof}

\begin{corollary}\label{oyy}
Consider the semigroup $S$ generated by $f_i(z)=a_iz+b_i$, for $i=1,2,3$, where $a_1,a_3>1$, $0<a_2<1$, $b_1,b_2,b_3\in\mathbb{R}$, and
\[
\frac{b_1}{1-a_1}\leq \frac{b_2}{1-a_2}\leq \frac{b_3}{1-a_3},
\]
with equality in at most \textbf{one} of these two inequalities. Then $S$ is not semidiscrete.
\end{corollary}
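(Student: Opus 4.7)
Write $p_i = b_i/(1 - a_i)$ for the finite fixed point of $f_i$. Then $f_1,f_3$ are hyperbolic with repelling fixed points $p_1,p_3$, while $f_2$ has attracting fixed point $p_2$; all three maps fix $\infty$. The plan is to produce nontrivial translations in $\overline{S}$ that tend to $I$. Since $\overline{S}$ is a subsemigroup of $\mathcal{M}$ (by continuity of composition) and every element of $\overline{S}$ is approximable by elements of $S$, this will show that $I$ is an accumulation point of $S$, so $S$ is not semidiscrete.

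The main tool is Lemma~\ref{kqw}. Applied directly to $(f_1,f_2)$ whenever $p_1<p_2$, it gives $\{z+t : t\geq p_2-p_1\}\subset\overline{S}$. Translations of the opposite sign come from conjugating by the involution $\phi(z)=-z$: the map $\phi f_i\phi$ has scale $a_i$ and fixed point $-p_i$, so when $p_2<p_3$ the pair $(\phi f_3\phi,\phi f_2\phi)$ satisfies the hypotheses of Lemma~\ref{kqw} and yields $z+t$ in the closure of the conjugated semigroup for every $t\geq p_3-p_2$; conjugating back gives $\{z-t : t\geq p_3-p_2\}\subset\overline{S}$.

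If both $p_1<p_2$ and $p_2<p_3$ are strict, composing a positive and a negative translation produces $(z+t_1)\circ(z-t_2)=z+(t_1-t_2)\in\overline{S}$ for any $t_1\geq p_2-p_1$ and $t_2\geq p_3-p_2$, and $t_1-t_2$ ranges over all of $\mathbb{R}$; in particular $\overline{S}$ contains translations arbitrarily close to $I$. By the symmetry of the hypothesis under the substitution $(f_1,f_2,f_3)\mapsto(\phi f_3\phi,\phi f_2\phi,\phi f_1\phi)$, it remains to handle only the case $p_1=p_2<p_3$. Here Lemma~\ref{kqw} delivers only the negative translations $\{z-t : t\geq p_3-p_2\}$, because $f_1$ and $f_2$ share a fixed point. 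However, after conjugating so that this shared fixed point is $0$, the pair $(f_1,f_2)$ becomes two scalings $a_1z, a_2z$ with $a_1>1>a_2>0$, so Corollary~\ref{nps} forces $\langle f_1,f_2\rangle$ to be either dense in $\{\lambda z:\lambda>0\}$ (immediately giving $I$ as an accumulation point of $\langle f_1,f_2\rangle\subset S$) or a discrete cyclic group $\{g_k : k\in\mathbb{Z}\}$ with $g_k(z)=\mu_0^k(z-p_1)+p_1$ for some $\mu_0>1$. In the discrete case both $g_k$ and $g_{-k}$ lie in $S$, and a direct calculation gives $g_{-k}\circ(z+s)\circ g_k=z+\mu_0^{-k}s$; taking $s=p_2-p_3$ and letting $k\to+\infty$ produces the required sequence of nontrivial translations in $\overline{S}$ tending to $I$.

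The hard part is the degenerate case just above: Lemma~\ref{kqw} alone gives translations of only one sign, so composing them cannot drive their magnitude to zero. The resolution is to use Corollary~\ref{nps} to extract scalings of arbitrarily large ratio from the subsemigroup generated by the two maps with coincident fixed points, and then to conjugate a translation of the wrong size by such a scaling in order to shrink it.
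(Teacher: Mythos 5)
Your argument is correct, and in the strict case $p_1<p_2<p_3$ it is essentially the paper's: the paper also applies Lemma~\ref{kqw} to $\langle f_1,f_2\rangle$ to get the positive translations, and obtains the negative ones by applying the lemma to $\langle f_2^{-1},f_3^{-1}\rangle$ and passing to inverses, which is the same device as your conjugation by $\phi(z)=-z$. Where you genuinely diverge is the degenerate case $p_1=p_2<p_3$ (and its mirror image). The paper disposes of it in one line by observing that if $\langle f_1,f_2\rangle$ were semidiscrete it would, by Corollary~\ref{nps}, be a nontrivial discrete group, making $S$ an exceptional semigroup, and then citing Theorem~\ref{ioe} to conclude that exceptional semigroups are never semidiscrete. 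You instead inline the relevant mechanism: you keep the one-sided translations $z+s$, $s\leq p_2-p_3$, supplied by Lemma~\ref{kqw}, and shrink them by conjugating with the contractions $g_k$ available from the discrete group $\langle f_1,f_2\rangle$, producing nontrivial translations $z+\mu_0^{-k}s$ in $\overline{S}$ tending to $I$. This is exactly the trick used inside the proof of Theorem~\ref{ioe} (there in the form $g^{-n}fg^{n}(z)=az+\lambda^{-n}b$), so your version is self-contained and avoids invoking the exceptional-semigroup machinery, at the cost of repeating a computation the paper has already packaged; the paper's route is shorter given that Theorem~\ref{ioe} is already in hand. Both treatments of the remaining subcases (density in case (iii) of Corollary~\ref{nps}, and the reduction of $p_1<p_2=p_3$ to $p_1=p_2<p_3$ by symmetry) are sound.
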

\begin{proof}
Let $p_i=b_i/(1-a_i)$, for $i=1,2,3$. Suppose that $p_1<p_2<p_3$. By applying Lemma~\ref{kqw} to the semigroups $\langle f_1,f_2\rangle$ and $\langle f_2^{-1},f_3^{-1}\rangle$, we see that $\overline{S}$ contains all maps of the form $z+t$, where $t\geq p_2-p_1$ or $t\leq p_2-p_3$. Therefore $\overline{S}$  contains all maps of the form $z+t$, where $t\in\mathbb{R}$, so $S$ is not semidiscrete.

Suppose now that $p_1=p_2<p_3$ (the other case with $p_1<p_2=p_3$ is similar). By Corollary~\ref{nps}, if the semigroup generated by $f_1$ and $f_2$ is semidiscrete, then it must in fact be a nontrivial discrete group. But then $S$ is an exceptional semigroup, so it is not semidiscrete, by Theorem~\ref{ioe}.
\end{proof}

Let us now prove Theorem~\ref{mme}.

\begin{proof}[P\textbf{roof of Theorem~\ref{mme}}]
If $S$ is trivial (equal to $\{I\}$), then it is accounted for in class~{\ref{cle}}, so let us assume henceforth that $S$ is not trivial.

Following Theorem~\ref{ndo}{\ref{pja}}, let us suppose first that there is a point $p$ in $\mathbb{H}$ fixed by all elements of $S$. Then each nonidentity map in $S$ is an elliptic rotation about $p$. If one of these maps has infinite order, then $S$ is not semidiscrete. Otherwise, each map has finite order, so $S$ is a group, and it must be a cyclic group, which falls into class~{\ref{cla}}.

Suppose now that we have the circumstances of Theorem~\ref{ndo}{\ref{pjc}}, so there is a pair of points in $\overline{\mathbb{R}}$ that are fixed as a set by each element of $S$. As we have observed already, we can, after conjugating $S$, assume that these points are $\{0,\infty\}$, in which case each generator of $S$ has the form $az$ or $-a/z$, where $a>0$. Corollary~\ref{nps} tells us that if there are no generators of the latter type, then one of $S$ or $S^{-1}$ is either of class~{\ref{clb}}, or else of class~{\ref{cle}} (with $m>0$, $n=0$, all $b_i=0$, and either $r=0$, or $r=1$ and $t_1=0$).

Suppose instead that there is a generator of $S$ of the form $-a/z$. After conjugating we can assume that the map is $h(z)=-1/z$. Observe that if $f(z)=az$, then $hfh(z)=f^{-1}(z)$. It follows that $S$ is a discrete group, so it must be of class~{\ref{clb}}.

It remains to consider the situation of Theorem~\ref{ndo}{\ref{pjb}}, in which there is a point in $\overline{\mathbb{R}}$ fixed by all elements of $S$. We assume by conjugation that this point is $\infty$, so  each element of $S$ has the form $az+b$, where $a>0$ and $b\in\mathbb{R}$. 
Let $\{a_iz+b_i,c_jz+d_j,z+t_k\,:\,i=1,\dots,m,j=1,\dots,n,k=1,\dots,r\rangle$ be a generating set for $S$, where $a_i>1$, $0<c_j<1$,  and $b_i,d_j,t_k\in\mathbb{R}$ for all $i,j,k$, and $m$, $n$, and $r$ are nonnegative integers, not all zero. 

Let $S_0$ be the semigroup generated by the maps $z+t_j$, which is also semidiscrete. From Lemma~\ref{zps} we see that, after conjugating $S$, we can assume that either (a) $t_k\geq 0$ for all $k$, or $t_k\leq 0$ for all $k$, or (b) $S_0=\langle z+1,z-1\rangle$.

Suppose first that at least one of $m$ or $n$ is $0$. If $S_0$ is of type~(a), then one of $S$ or $S^{-1}$ is of class~{\ref{cle}}. If $S_0$ is of type~(b), then one of $S$ or $S^{-1}$ is of class~{\ref{clg}}.

We can assume now that $m$ and $n$ are positive. After replacing $S$ with $S^{-1}$ if need be, we can also assume that $\min_i b_i/(1-a_i)\leq \min_j d_j/(1-c_j)$.

Suppose in addition that $b_i/(1-a_i)<d_j/(1-c_j)$ for all $i,j$. Then, by Lemma~\ref{kqw}, $\overline{S}$ contains all maps of the form $z+t$, where $t$ is greater than some constant $t_0$. If $t_k<0$ for some $k$, then $\overline{S}$ contains all maps of the form $z+t$, $t\in\mathbb{R}$, so $S$ is not semidiscrete. We deduce that $t_k\geq 0$ for all $k$, so $S$ is of class~{\ref{cle}}.

Suppose finally that $d_j/(1-c_j)\leq b_i/(1-a_i)$ for some $i,j$. Corollary~\ref{oyy} tells us that, because $S$ is semidiscrete, we must have $b_i/(1-a_i)=d_j/(1-c_j)$ for \emph{all} $i,j$ -- all the hyperbolic maps have the same fixed points, which we can assume, after conjugating $S$ by a transformation that fixes $\infty$, are $0$ and $\infty$. The semigroup generated by these hyperbolic maps must be a nontrivial discrete group. Since $S$ is semidiscrete, and it contains a nontrivial discrete group of transformations that fixes each of $[-\infty,0]$ and $[0,+\infty]$ as a set, we see from Theorem~\ref{ioe} that $S$ cannot contain a parabolic map that fixes $\infty$. Hence $r=0$, and $S$ is of class~{\ref{clb}}.
\end{proof}

%%%%%%%%%%%%%%%%%
\section{Proof of Theorem~\ref{cof}}\label{mnm}
%%%%%%%%%%%%%%%%%

Here we prove our first main theorem, Theorem~\ref{cof}. We begin with a basic lemma on hyperbolic geometry. In this lemma we refer to a \emph{Stolz region} based at a point $p$ in $\overline{\mathbb{R}}$, which is a set of the form
\(
\{z\in \mathbb{H} : \rho(z,\gamma) < k\},
\)
where $\gamma$ is a hyperbolic line with one endpoint at $p$, and $k$ is a positive constant.

\begin{lemma}\label{zji}
Suppose that $S$ is a semigroup generated by a finite set $\mathcal{F}$ of M\"obius transformations, and suppose that $(F_n)$ is a composition sequence generated by $\mathcal{F}$ that is an escaping sequence. Let $L=\Lambda_c^+(F_n)$. Then $F_m^{-1}(L) \subset \Lambda_c^+(S)$, for any positive integer~$m$.
\end{lemma}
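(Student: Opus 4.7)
The plan is to exhibit $F_m^{-1}(x)$, for each $x\in L$, as a forward conical limit point of $S$ by exploiting the fact that a tail of the composition sequence $(F_n)$, after multiplying on the left by $F_m^{-1}$, is itself a sequence in $S$.

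First, fix $x\in L=\Lambda_c^+(F_n)$ and a positive integer $m$. By the definition of the forward conical limit set of the sequence $(F_n)$, there exist a point $w\in\mathbb{H}$, a hyperbolic geodesic segment $\gamma$ with one end point at $x$ and the other in $\mathbb{H}$, a constant $k>0$, and an increasing sequence of positive integers $(n_i)$ with $n_i>m$ for all $i$ such that $F_{n_i}(w)\to x$ in the chordal metric and $\rho(\gamma,F_{n_i}(w))<k$ for all $i$.

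Next, for each such $n_i$, set
\[
g_i = F_m^{-1}F_{n_i} = f_{m+1}f_{m+2}\dotsb f_{n_i},
\]
which is an element of $S$ because $f_{m+1},\dots,f_{n_i}\in\mathcal{F}$. Then I would verify the two conditions for $F_m^{-1}(x)$ to be a forward conical limit point of $S$ using the same base point $w$. For convergence in the chordal metric, $F_m^{-1}$ acts continuously on $\overline{\mathbb{C}}$, so $g_i(w)=F_m^{-1}(F_{n_i}(w))\to F_m^{-1}(x)$. For the conical condition, let $\gamma'=F_m^{-1}(\gamma)$; this is a geodesic segment with one end point $F_m^{-1}(x)$ and the other end point $F_m^{-1}$ applied to a point of $\mathbb{H}$, hence in $\mathbb{H}$. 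Since $F_m^{-1}$ is a hyperbolic isometry,
\[
\rho(\gamma',g_i(w)) = \rho(F_m^{-1}(\gamma), F_m^{-1}(F_{n_i}(w))) = \rho(\gamma,F_{n_i}(w)) < k.
\]
Thus $F_m^{-1}(x)\in\Lambda_c^+(S)$, as required.

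There is no real obstacle here: the argument is a direct application of the definitions, the only care needed being to restrict attention to indices $n_i>m$ so that the tail $g_i=f_{m+1}\dotsb f_{n_i}$ actually lies in $S$, and to use the isometric action of $F_m^{-1}$ to transport the bounded-distance condition from $\gamma$ to the geodesic segment $F_m^{-1}(\gamma)$ ending at $F_m^{-1}(x)$.
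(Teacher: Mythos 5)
Your proof is correct and follows essentially the same route as the paper's: both pull the tail $F_m^{-1}F_{n_i}=f_{m+1}\dotsb f_{n_i}\in S$ (for $n_i>m$) and use the fact that $F_m^{-1}$ is an isometry of $\mathbb{H}$ to transport the conical approach to $x$ into a conical approach to $F_m^{-1}(x)$. The only cosmetic difference is that the paper phrases the conical condition via Stolz regions while you unwind the geodesic-segment definition explicitly; the content is identical.
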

\begin{proof}
Let $p\in L$. Choose $w\in \mathbb{H}$. Then there is a sequence of positive integers $n_1, n_2,\dotsc$ such that the sequence $(F_{n_i}(w))$ converges to $p$ (in the chordal metric) within some Stolz region based at $p$. Therefore, for any positive integer $m$, the sequence $(F_m^{-1}F_{n_i}(w))$ converges to $F_m^{-1}(p)$ in some Stolz region based at $F_m^{-1}(p)$. Now, if $n_i>m$, then $F_m^{-1}F_{n_i}\in S$. Therefore $F_m^{-1}(p)\in \Lambda_c^+(S)$, as required.
\end{proof}

The lemma remains true if we replace the forward \emph{conical limit sets} with forward \emph{limit sets} (however, we do not need this alternative statement).

\begin{theorem}\label{cja}
Suppose that $S$ is a semigroup generated by a finite set $\mathcal{F}$ of M\"obius transformations. Suppose also that there is a composition sequence $(F_n)$ generated by $\mathcal{F}$ that is an escaping sequence that does not converge ideally. Then $\Lambda^+_c(S)=\overline{\mathbb{R}}$, unless   $\Lambda^-(S)=\{q\}$ for some point $q$, in which case $\Lambda^+_c(S)$ might equal $\overline{\mathbb{R}}\setminus\{q\}$ instead.\end{theorem}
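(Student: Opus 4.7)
My plan is to extract structural limit data from the non-ideally-convergent escaping sequence $(F_n)$, use Lemma~\ref{zji} to pull this data back through the composition sequence and obtain abundantly many forward conical limit points of $S$, and finally to show that these exhaust $\overline{\mathbb{R}}$ up to at most one point.

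First, by compactness of the space of continuous self-maps of $\overline{\mathbb{C}}$ under the chordal metric, and the standard dichotomy that a sub-subsequence of any escaping sequence of M\"obius transformations converges uniformly on compact subsets of $\overline{\mathbb{C}}\setminus\{x\}$ to a constant $p\in\overline{\mathbb{R}}$ with $F_{n_i}(x)\to y\in\overline{\mathbb{R}}$, I would pass to a subsequence producing a triple $(p_1,x_1,y_1)$. Since $(F_n)$ does not converge ideally, a second subsequence produces a triple $(p_2,x_2,y_2)$ with $p_2\neq p_1$. I would then verify $p_1,p_2\in\Lambda_c^+(F_n)$. In the hyperbolic-type case $y_j\neq p_j$, the $F_{n_i}$-images of the hyperbolic geodesic from $w$ to $x_1$ are the geodesics from $F_{n_i}(w)$ to $F_{n_i}(x_1)$, and these converge to the geodesic in $\overline{\mathbb{H}}$ joining $p_1$ and $y_1$; hence $F_{n_i}(w)$ stays within bounded hyperbolic distance of a fixed geodesic ending at $p_1$, which is the conical condition. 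The degenerate case $y_j=p_j$ is more delicate: I would handle it by combining with the other subsequence, using that $F_k^{-1}F_{n_i}\in S$ whenever $n_i>k$ to splice the hyperbolic-type subsequence into a parabolic-type one and recover a conical approach. Lemma~\ref{zji} then yields $\{F_k^{-1}(p_1),F_k^{-1}(p_2):k\geq 0\}\subseteq\Lambda_c^+(S)$.

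The next task is to upgrade this countable pullback set, together with the forward $S$-action on it, to fill up $\overline{\mathbb{R}}$ up to at most one point. Passing to a further subsequence of $(F_k)$, one has $F_{k_\ell}\to\tilde q$ uniformly off an exceptional point $\tilde x$, so $F_{k_\ell}^{-1}\to\tilde x$ uniformly off $\tilde q$; because $p_1\neq p_2$, at most one of them equals $\tilde q$, so at least one of $F_{k_\ell}^{-1}(p_j)\to\tilde x$. Varying the subsequence, the closure of the pullback set contains every backward accumulation point of $(F_n)$, hence contains $\Lambda^-(S)$ by Theorem~\ref{aiu}. The complement $\overline{\mathbb{R}}\setminus\Lambda^-(S)$ is then covered using that $\Lambda_c^+(S)$ is forward $S$-invariant (postcomposition with elements of $S$ preserves both membership in $S$ and conical approach), combined with a case split: if $S$ is semidiscrete then the non-convergence hypothesis forces a cocompact-type dynamic via Corollary~\ref{waa}, while if $S$ is not semidiscrete then the identity is an accumulation point of $S$, which permits fine-tuning any approximate conical limit into an exact one. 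The exceptional possibility $\Lambda_c^+(S)=\overline{\mathbb{R}}\setminus\{q\}$ arises precisely when $|\Lambda^-(S)|=1$, since then $q$ is a common backward sink for every pullback and the elementary-like structure of $S$ prevents any conical $S$-sequence from approaching $q$.

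The main obstacle is the density-to-equality upgrade in the last step: showing that every $\xi\in\overline{\mathbb{R}}\setminus\{q\}$ is \emph{literally} a forward conical limit, rather than merely in the closure of $\Lambda_c^+(S)$. This combines forward $S$-invariance of $\Lambda_c^+(S)$ with the rich action of $S$ on $\overline{\mathbb{R}}\setminus\Lambda^-(S)$ and requires the case split on semidiscreteness. A secondary difficulty is the parabolic-type splicing in the middle step, where one must verify that the combined sequence is still a legitimate composition sequence in $S$ and that the limit retains its conical character despite one of $y_j=p_j$.
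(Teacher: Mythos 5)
The central gap is your claim that in the ``hyperbolic-type case'' $y_j\neq p_j$ the limit $p_j$ is automatically a forward conical limit point of $(F_n)$. This is false: take $F_n(z)=\epsilon_n+\epsilon_n^2z$ with $\epsilon_n\to0$. Then $F_n\to 0$ locally uniformly off $\infty$ and $F_n(\infty)=\infty\neq 0$, yet $F_n(i)=\epsilon_n+\mathrm{i}\epsilon_n^2$ approaches $0$ tangentially: its hyperbolic distance to the imaginary axis is $\operatorname{arcsinh}(1/\epsilon_n)\to\infty$, and every geodesic ending at $0$ is asymptotic to that axis, so the approach is not conical. The flaw in your geodesic argument is that convergence of the endpoints of the image geodesics $F_{n_i}([w,x_1])$ to $p_1$ and $y_1$ controls those geodesics only on compact subsets of $\mathbb{H}$; near the moving endpoint $F_{n_i}(w)$ it gives no bound on the hyperbolic distance to the limit geodesic. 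Note also that you never use the finiteness of $\mathcal{F}$, whereas the paper's proof hinges on it: the bounded-step estimate $\rho(F_{n-1}(w),F_n(w))=\rho(w,f_n(w))\leq k:=\max_{f\in\mathcal{F}}\rho(w,f(w))$ forces the orbit, which oscillates between the two accumulation points $a$ and $b$, to enter the $k$-neighbourhood of every geodesic $\gamma_c$ with $c\in(a,b)$ infinitely often (after placing a non-conical point $d$ at $\infty$), whence every point of the open interval $(a,b)$ is a forward conical limit point of $(F_n)$.

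That open interval is not a luxury but the engine of the second half, and its absence is your second gap. The paper applies Lemma~\ref{kka} to a single $y\in(a,b)$ to get $F_{n_i}^{-1}\to q$ uniformly on the compact set $\overline{\mathbb{R}}\setminus(a,b)$, so that $F_{n_i}^{-1}\big((a,b)\big)$ eventually contains any prescribed $u\neq q$; Lemma~\ref{zji} then gives $u\in\Lambda_c^+(S)$ outright, with no density step. With only the countable set $\{F_k^{-1}(p_1),F_k^{-1}(p_2)\}$ you can at best conclude that $\Lambda_c^+(S)$ is dense, and since conical limit sets are not closed, the ``density-to-equality upgrade'' you defer is a genuine obstruction rather than a technicality; neither forward invariance nor Corollary~\ref{waa} (whose hypothesis $\Lambda_h^+(S)=\overline{\mathbb{R}}$ is essentially the conclusion you are trying to reach) closes it. The treatment of the exceptional point $q$ should be the paper's short endgame: if $\Lambda^-(S)\neq\{q\}$, then either some $f\in S$ moves $q$, and forward invariance of $\Lambda_c^+(S)$ applied to $q=f(v)$ with $v\neq q$ gives $q\in\Lambda_c^+(S)$, or else every element of $S$ fixes $q$ and some hyperbolic element has attracting fixed point $q$.
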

\begin{proof}
Choose any point $w$ in $\mathbb{H}$. Let $k=\max\{\rho(w,f(w))\,:\,f\in\mathcal{F}\}$. We are given that the sequence $(F_n(w))$ accumulates, in the chordal metric, at two distinct points $a$ and $b$ in $\overline{\mathbb{R}}$. Suppose there is a point $d$ other than $a$ and $b$ that is \emph{not} a forward conical limit point of $(F_n)$. By conjugating $(F_n)$ if need be we can assume that $d=\infty$ and $a<b$. Let $\gamma_c$ be the hyperbolic geodesic with one end point $c$ inside $(a,b)$ and the other at $\infty$. Let $\Gamma_c=\{z\in\mathbb{H}\,:\, \rho(z,\gamma_c)< k\}$. Notice that 
\[
\rho(F_{n-1}(w),F_n(w)) = \rho(w,f_n(w))\leq k.
\]
It follows that infinitely many terms from the sequence $(F_n(w))$ lie in $\Gamma_c$. This infinite set inside $\Gamma_c$ cannot accumulate at $\infty$ because $\infty$ is not a forward conical limit point of $(F_n)$, so it must accumulate at~$c$. Hence~$c$ is a forward conical limit point of $(F_n)$.

We have shown that $(a,b)\subset \Lambda_c^+(F_n)$. Choose a point $y$ in $(a,b)$. Since $\Lambda_c^-(F_n^{-1})=\Lambda_c^+(F_n)$, we can apply Lemma~\ref{kka} to deduce the existence of a sequence of positive integers $n_1,n_2,\dotsc$ and two distinct points $p$ and $q$ in $\overline{\mathbb{R}}$ such that $F_{n_i}^{-1}(y)\to p$ and $F_{n_i}^{-1}(x)\to q$ for $x\in\overline{\mathbb{R}}\setminus\{y\}$. It follows that any point $u$ in $\overline{\mathbb{R}}\setminus\{q\}$ is contained in $F_{n_i}^{-1}\big((a,b)\big)$, provided $n_i$ is sufficiently large. Lemma~\ref{zji} tells us that $F_{n_i}^{-1}\big((a,b)\big) \subset \Lambda_c^+(S)$, so we see that $\overline{\mathbb{R}}\setminus\{q\}\subset \Lambda_c^+(S)$.

To finish, let us suppose that $\Lambda^-(S)\neq \{q\}$. There are now two cases to consider: either there is an element $f$ of $S$ that does not fix $q$, or there is no such element. In the first case, we have $q=f(v)$ for some point $v$ in $\overline{\mathbb{R}}\setminus\{q\}$. As $\Lambda_c^+(S)$ is forward invariant under $S$, we see that $q$ is an element of $\Lambda_c^+(S)$, so $\Lambda_c^+(S)=\overline{\mathbb{R}}$. In the second case, all elements of $S$ fix $q$, and since $\Lambda^-(S)\neq \{q\}$, there must be a hyperbolic element with attracting fixed point $q$. Hence $q\in\Lambda_c^+(S)$, so again $\Lambda_c^+(S)=\overline{\mathbb{R}}$.
\end{proof}

The exceptional case in which $\Lambda^-(S)=\{q\}$ and $\Lambda_c^+(S)=\overline{\mathbb{R}}\setminus\{q\}$ certainly can arise. For example, let $S=\left\langle \tfrac12 z, z+1,z-1\right\rangle$. Clearly $\Lambda^-(S)=\{\infty\}$, as all hyperbolic elements of $S$ have repelling fixed point $\infty$. One can easily construct a composition sequence generated by $\{\tfrac12 z, z+1,z-1\}$ that is an escaping sequence that does not converge ideally, and one can check that $\Lambda_c^+(S)$ contains $\mathbb{R}$. However, $\infty\notin \Lambda_{c}^+(S)$ because all elements of $S$ have the form $az+b$, where $a\leq 1$, so even though the orbit of a point in $\mathbb{H}$ under $S$ does accumulate at $\infty$ in the chordal metric, it does not do so within a Stolz region based at $\infty$.

 We can now prove one of the most significant results of this paper, from which we will deduce Theorem~\ref{cof}.

\begin{theorem}\label{kkl}
Let $S$ be a semidiscrete semigroup generated by a finite  collection $\mathcal{F}$ of M\"obius transformations. Suppose that $|\Lambda^-(S)|\neq 1$.  Then the following statements are equivalent:
\begin{enumerate}
\item every composition sequence generated by $\mathcal{F}$ that is an escaping sequence converges ideally
\item $S$ is not a cocompact Fuchsian group.\qedhere
\end{enumerate}
\end{theorem}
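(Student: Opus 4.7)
The plan is to prove both implications by contraposition, so it suffices to show that some escaping composition sequence generated by $\mathcal{F}$ fails to converge ideally if and only if $S$ is a cocompact Fuchsian group.

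For the forward direction, assume a composition sequence $(F_n)$ generated by $\mathcal{F}$ is an escaping sequence that does not converge ideally. Applying Theorem~\ref{cja} together with the hypothesis $|\Lambda^-(S)|\neq 1$ yields $\Lambda_c^+(S) = \overline{\mathbb{R}}$, and the inclusion $\Lambda_c^+(S)\subset\Lambda_h^+(S)$ upgrades this to $\Lambda_h^+(S)=\overline{\mathbb{R}}$. By Corollary~\ref{waa}, once one knows that $S$ is semidiscrete, it follows that $S$ is a cocompact Fuchsian group. Semidiscreteness itself can be verified via the classification Theorem~\ref{ppd}: the elementary case and the $S\subset\mathcal{M}(J)$ case are both ruled out because they would force $\Lambda^+(S)$ to be finite or contained in a proper subinterval, contradicting $\Lambda_c^+(S)=\overline{\mathbb{R}}$; the remaining possibility that $S$ is dense in $\mathcal{M}$ must be eliminated separately using the bounded-step property of $(F_n)$.

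For the reverse direction, suppose $S$ is cocompact Fuchsian. Being nonelementary, $S$ contains two hyperbolic elements whose attracting fixed points $p_1$ and $p_2$ are distinct, and each such element is a positive word in $\mathcal{F}$ since $\mathcal{F}$ generates $S$ as a semigroup. I would build $(F_n)$ inductively in blocks: at the $k$-th block, append enough letters so that $F_{n_k}(w)$ lies within chordal distance $1/k$ of the current target, alternating the target between $p_1$ and $p_2$. The orbit then accumulates at both $p_1$ and $p_2$, so it cannot converge ideally. Because $S$ is a discrete group, the orbit $S(w)$ is locally finite in $\mathbb{H}$, so once $w$ is chosen to lie outside the fixed-point set of every non-identity element of $S$, the values $F_n(w)$ are pairwise distinct and must leave every compact subset of $\mathbb{H}$; hence $(F_n)$ is escaping.

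The main obstacle is excluding the dense case in the forward direction. My intended route is a compactness argument: choose small disjoint chordal neighborhoods $U_a$ and $U_b$ of two boundary accumulation points of $(F_n(w))$, so that $V=\overline{\mathbb{H}}\setminus(U_a\cup U_b)$ is compact in $\overline{\mathbb{C}}$; the bounded-step property forces each transit of the orbit between $U_a$ and $U_b$ to pass through $V$, yielding infinitely many orbit values in $V$. This produces either an accumulation of $(F_n(w))$ in $\mathbb{H}$, contradicting the escaping hypothesis, or else a fresh boundary accumulation point not in $U_a \cup U_b$. Iterating this dichotomy, and combining it with the fact that density of $S$ in $\mathcal{M}$ provides orbit points near arbitrary targets, should ultimately force $(F_n(w))$ to have an accumulation point in $\mathbb{H}$, closing the argument.
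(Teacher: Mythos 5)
Your overall strategy is the paper's: argue both implications contrapositively, using Theorem~\ref{cja} and Corollary~\ref{waa} for one direction and an explicit construction for the other. Both halves, however, have genuine gaps. In the reverse direction, the claim that ``the values $F_n(w)$ are pairwise distinct'' does not follow from choosing $w$ off the fixed-point locus: distinctness of the $F_n(w)$ is equivalent to distinctness of the group elements $F_n$, and a cocompact Fuchsian group is never free, so a subword $f_{m+1}\dotsb f_n$ of your sequence can represent the identity. More seriously, your block construction controls the orbit only at the checkpoint times $n_k$; between checkpoints you append an arbitrary positive word representing some connecting element of $S$, and nothing prevents its prefixes from returning to $w$ (or to a fixed compact set) in every block, in which case $(F_n)$ is not escaping. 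The paper sidesteps this by building the sequence as a chain of \emph{adjacent} compact fundamental polygons $D_n=G_n(D)$: every orbit point is then located, consecutive points are a bounded hyperbolic distance apart (the letters come from a fixed finite set $\mathcal{G}$), and the chain can be routed to leave every compact set while accumulating at two ideal points. Your construction needs comparable control of all intermediate terms, not just a subsequence.

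In the forward direction you correctly reproduce the paper's two steps ($\Lambda_c^+(S)=\overline{\mathbb{R}}$ via Theorem~\ref{cja}, then Corollary~\ref{waa}), and you rightly notice that Corollary~\ref{waa} carries a semidiscreteness hypothesis that must be supplied --- a point the paper's own proof passes over in silence (note that the introduction states this result for finitely-generated \emph{nonelementary semidiscrete} semigroups). But your proposed verification of semidiscreteness cannot be completed. The elementary case is not ruled out by finiteness of $\Lambda^+(S)$: the semigroup $S=\langle 2z,\ z/3,\ z+1,\ z-\pi\rangle$ is elementary (every element fixes $\infty$), is not semidiscrete (since $\log 2/\log 3$ is irrational), satisfies $|\Lambda^-(S)|\neq 1$ (both $\infty$ and the repelling fixed point $-2$ of $2z+2$ lie in $\Lambda^-(S)$), has $\Lambda_c^+(S)=\overline{\mathbb{R}}$, and yet admits an escaping composition sequence with two boundary accumulation points: using only the letters $z/3$, $z+1$, $z-\pi$, drive the coefficient of $z$ to $0$ while greedily steering the constant term back and forth between $0$ and $1$. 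Since such an $S$ is certainly not a cocompact Fuchsian group, the non-semidiscrete cases cannot all be excluded from the hypotheses as stated, and in particular the ``iterating this dichotomy'' sketch for the dense case has no hope of closing the argument. The clean resolution is to assume semidiscreteness (as the introduction's formulation does) rather than to attempt to derive it.
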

\begin{proof}
Suppose first that $S$ is a cocompact Fuchsian group. Then it has a finite-sided fundamental polygon $D$, compact in $\mathbb{H}$. The images of $D$ under $S$ tessellate $\mathbb{H}$. Let $\{g(D)\,:\, g\in\mathcal{G}\}$ be the polygons in this tessellation that are adjacent to $D$, where $\mathcal{G}$ is some finite subset of $S$. Next, we define $(D_n)$ to be a sequence of polygons in the tessellation such that $D_0=D$, and $D_{n-1}$ is adjacent to $D_n$, for each $n$. Let $G_n$ be the unique element of $S$ such that $G_n(D)=D_n$, including $G_0=I$. We also define $g_n=G_{n-1}^{-1}G_n$,  so $G_n=g_1\dotsb g_n$. Now, $D_{n-1}$ and $D_n$ are adjacent polygons in the tessellation, so $G_{n-1}^{-1}(D_{n-1})=D$ and $G_{n-1}^{-1}(D_n)=g_n(D)$ are also adjacent polygons. Therefore $g_n\in\mathcal{G}$.

Next we choose a point $z$ in the interior of $D$, and define $z_n=G_n(z)$. We can assume that our polygons $(D_n)$ are chosen such that $(z_n)$ accumulates in the chordal metric at two points in $\overline{\mathbb{R}}$, but does not accumulate in $\mathbb{H}$. It follows that $(G_n)$ is a composition sequence generated by $\mathcal{G}$ that is an escaping sequence that does not converge ideally. If we express each element of $\mathcal{G}$ as a finite composition of elements of $\mathcal{F}$, then we obtain a composition sequence generated by $\mathcal{F}$  that is an escaping sequence that does not converge ideally.

Suppose, conversely, that there is a composition sequence generated by $\mathcal{F}$ that is an escaping sequence that does not converge ideally. By Theorem~\ref{cja}, we see that  $\Lambda^+_c(S)=\overline{\mathbb{R}}$. Corollary~\ref{waa} then tells us that $S$ is a cocompact Fuchsian group.
\end{proof}

Theorem~\ref{kkl} fails if we remove the hypothesis $|\Lambda^-(S)|\neq 1$; we have seen an example of this already, just before the statement of the theorem.

\begin{corollary}\label{axo}
Let $S$ be a semidiscrete semigroup generated by a finite collection $\mathcal{F}$ of M\"obius transformations. Suppose that $S$ is not a cocompact Fuchsian group. If $F_n=f_1\dotsb f_n$ is a composition sequence generated by $\mathcal{F}$, then either $(F_n)$ converges ideally, or $f_n$ lies in the group part of $S$ for all sufficiently large positive integers~$n$.
\end{corollary}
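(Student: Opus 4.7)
The plan is to establish the contrapositive: assuming $(F_n)$ does not converge ideally, deduce that $f_n$ lies in the group part of $S$ for all sufficiently large $n$. The argument combines Theorem~\ref{kkl} with Lemma~\ref{cii}, the semidiscreteness hypothesis, and Lemma~\ref{kbi}; conceptually, non-ideal convergence will force a partial product to collapse to the identity, and semidiscreteness then forces it to do so exactly, after which Lemma~\ref{kbi} confines all the letters involved to the group part.

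First I would invoke Theorem~\ref{kkl}: since $S$ is not a cocompact Fuchsian group, every escaping composition sequence generated by $\mathcal{F}$ converges ideally. Therefore, if $(F_n)$ does not converge ideally, then $(F_n)$ is not an escaping sequence, and Lemma~\ref{cii} supplies a subsequence $(F_{n_i})$ that converges uniformly to some M\"obius transformation $G$.

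Next, following the last step of the proof of Theorem~\ref{acj}, I would set
\[
G_i \;:=\; F_{n_{i-1}}^{-1}F_{n_i} \;=\; f_{n_{i-1}+1}\dotsb f_{n_i}.
\]
Each $G_i$ belongs to $S$, and by continuity of the group operations in $(\mathcal{M},\chi_0)$ we have $G_i \to G^{-1}G = I$ uniformly. Since $S$ is semidiscrete, $I$ is not an accumulation point of $S$, so there is some $I_0$ with $G_i = I$ for all $i \geq I_0$. In particular $I \in S \cap S^{-1}$, so the group part is nonempty. A straightforward induction on word length, based on the contrapositive of Lemma~\ref{kbi} (if a product lies in $S \cap S^{-1}$, then every factor lies in $S \cap S^{-1}$), then shows that every $f_n$ with $n_{i-1} < n \leq n_i$ belongs to the group part. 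Taking $N := n_{I_0-1}$, we conclude that $f_n \in S \cap S^{-1}$ for all $n > N$, as required.

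The delicate step is the appeal to Theorem~\ref{kkl}, which is proved under the side hypothesis $|\Lambda^-(S)| \neq 1$; the edge case $|\Lambda^-(S)| \leq 1$ forces $S$ (via Theorem~\ref{aiu}) either to contain no hyperbolic elements or to be elementary, and one should dispatch it separately by inspecting the classification of finitely-generated elementary semidiscrete semigroups in Theorem~\ref{mme}. Once this edge case is absorbed, the remainder of the argument is clean: semidiscreteness converts a uniform limit $G_i \to I$ into exact equality $G_i = I$ for large $i$, and Lemma~\ref{kbi} then propagates membership in the group part down to the individual letters.
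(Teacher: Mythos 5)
Your main argument coincides with the paper's proof of Corollary~\ref{axo}: Theorem~\ref{kkl} shows that $(F_n)$ is not an escaping sequence, Lemma~\ref{cii} extracts a subsequence $(F_{n_i})$ converging uniformly to a M\"obius map, semidiscreteness upgrades $F_{n_{i-1}}^{-1}F_{n_i}\to I$ to $F_{n_{i-1}}^{-1}F_{n_i}=I$ for all large $i$, and Lemma~\ref{kbi} pushes membership of $S\cap S^{-1}$ down to the individual letters $f_n$. That part is correct and is essentially verbatim the paper's argument.

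The one place you go beyond the paper is also the one genuine loose end. You are right that Theorem~\ref{kkl} carries the side hypothesis $|\Lambda^-(S)|\neq 1$ (the paper's own proof of the corollary invokes Theorem~\ref{kkl} without checking it), but your plan to ``dispatch'' the case $|\Lambda^-(S)|=1$ by inspecting Theorem~\ref{mme} cannot be completed, because the conclusion of the corollary actually fails in that case. Take the paper's own example $S=\langle \tfrac12 z, z+1, z-1\rangle$, introduced just before Theorem~\ref{kkl}: it is finitely generated, semidiscrete, and not a cocompact Fuchsian group, and its group part is $\langle z+1,z-1\rangle$ (since $2z\notin S$, the generator $\tfrac12 z$ is not in $S\cap S^{-1}$). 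Interleaving single applications of $\tfrac12 z$ with successively longer runs of $z+1$ and then of $z-1$ produces a composition sequence $F_n(z)=2^{-k_n}z+b_n$ with $k_n\to\infty$ and $b_n$ oscillating, so $(F_n)$ is an escaping sequence that does not converge ideally, yet $f_n=\tfrac12 z$ infinitely often. So the edge case cannot be absorbed; the correct repair is to add the hypothesis $|\Lambda^-(S)|\neq 1$ (or nonelementarity) to the statement of the corollary, which is harmless since it is only applied later under that hypothesis. In short: your proof of the main case is the paper's proof, and your instinct about the missing hypothesis was sound, but the proposed patch for the exceptional case is not available.
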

\begin{proof}
If $(F_n)$ does not converge ideally, then, since $S$ is not a cocompact Fuchsian group, $(F_n)$ is not an escaping sequence, by Theorem~\ref{kkl}. We deduce from Lemma~\ref{cii} that there is a subsequence $(F_{n_k})$, where $n_1<n_2<\dotsb$, that converges uniformly to a M\"obius transformation $F$. Hence $F_{n_{k}}^{-1}F_{n_{k+1}}\to I$ as $k\to\infty$. Since $F_{n_{k}}^{-1}F_{n_{k+1}}=f_{n_{k}+1}\dotsb f_{n_{k+1}}\in S$, and $S$ is semidiscrete, we see that $f_{n_{k}+1}\dotsb f_{n_{k+1}}=I$, for $k>k_0$, where $k_0$ is some positive integer. So $f_n$ lies in the group part of $S$ whenever $n>n_{k_0}$. 
\end{proof}

Let us now use Theorem~\ref{kkl}  to prove Theorem~\ref{cof}.

\begin{proof}[P\textbf{roof of Theorem~\ref{cof}}]
The equivalence of~{\ref{aaa}} and~{\ref{aac}} was established already in Theorem~\ref{acj}. That~{\ref{aab}} implies~{\ref{aaa}} is immediate. The most substantial part of Theorem~\ref{cof} is the implication of~{\ref{aab}} from~{\ref{aac}}.

Suppose then that $S$ is semidiscrete and inverse free. We must prove that every composition sequence generated by $\mathcal{F}$ converges ideally. By the equivalence of~{\ref{aaa}} and~{\ref{aac}}, we know that every composition sequence generated by $\mathcal{F}$ is an escaping sequence. Theorem~\ref{kkl} then tells us that every such sequence converges ideally, provided $|\Lambda^-(S)|\neq 1$. 

When $|\Lambda^-(S)|=1$, the semigroup $S$ is elementary, and by applying Theorem~\ref{mme} (case~\ref{cle}) we can assume (after conjugating) that each element of $\mathcal{F}$ has the form $az+b$, where $0<a<1$, or $z+c$, where all such numbers $c$ have the same sign (positive or negative). Then a direct calculation shows that any composition sequence generated by $\mathcal{F}$ converges ideally; specifically, the composition sequence $F_n=f_1\dotsb f_n$, where $f_i(z)=u_iz+v_i$ belongs to  $\mathcal{F}$, converges ideally to the point $\sum_{i=1}^\infty u_1\dotsb u_{i-1}v_i$.
\end{proof}

%%%%%%%%%%%%%%%%%
\section{Two-generator semigroups}\label{ibj}\label{oor}
%%%%%%%%%%%%%%%%%

In this section we prove Theorems~\ref{piv} and~\ref{kad}. Our methods are close in spirit to those of Gilman and Maskit \cite{Gi1988,Gi1991,Gi1995,GiMa1991}. However, a difference between the two approaches is that whereas Gilman and Maskit make good use of J{\o}rgensen's inequality in their classification of two-generator discrete groups, we work the other way round: we use our classification of two-generator semidiscrete semigroups to prove a version of J{\o}rgensen's inequality for semigroups, Theorem~\ref{fff}. 

Let us begin by considering \emph{elementary} two-generator semigroups. Theorems~\ref{piv} and~\ref{kad} are not concerned with such semigroups, so we restrict ourselves to a concise summary of two-generator elementary semigroups, without proof, in the following theorem.

\begin{theorem}\label{qhw}
The semigroup $\langle f,g\rangle$ generated by two nonidentity M\"obius transformations $f$ and $g$ is elementary if and only if one of the following holds:
\begin{enumerate}
\item\label{jja} $f$ and $g$ are elliptic, and either both are of order two, or else they share a fixed point
\item\label{jjb} one of $f$ or $g$ is elliptic of order two and the other is hyperbolic, and the elliptic interchanges the fixed points of the hyperbolic
\item\label{jjc} $f$ and $g$ are parabolic or hyperbolic with at least one fixed point in common.\qedhere
\end{enumerate}
\end{theorem}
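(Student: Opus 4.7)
The plan is to leverage Theorem~\ref{ndo}, which already tells us that any elementary semigroup has either a fixed point in $\mathbb{H}$, a fixed point in $\overline{\mathbb{R}}$, or an invariant pair in $\overline{\mathbb{R}}$. Once we have this trichotomy, the characterisation reduces to a careful enumeration of which types of M\"obius transformations can appear in each case.

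For the ``if'' direction I would check each configuration in turn. In~\emphref{jja}, if $f$ and $g$ share a fixed point, that point is a common fixed point of the semigroup; if both are elliptic of order two, then $f$ and $g$ are half-turns about points $p,q\in\mathbb{H}$, and the pair of ideal end points of the hyperbolic geodesic through $p$ and $q$ is preserved as a set. In~\emphref{jjb}, the elliptic of order two fixes a point on the axis of the hyperbolic and interchanges the two ideal fixed points of the hyperbolic, so that pair of ideal points is setwise invariant. In~\emphref{jjc}, a common fixed point on $\overline{\mathbb{R}}$ yields a single-point orbit. In each case the semigroup has a finite orbit in $\overline{\mathbb{H}}$, so is elementary.

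For the ``only if'' direction, assume $\langle f,g\rangle$ is elementary and apply Theorem~\ref{ndo}. If there is a common fixed point $p\in\mathbb{H}$, both $f$ and $g$ must be elliptic fixing $p$, giving the shared-fixed-point subcase of~\emphref{jja}. If there is a common fixed point $p\in\overline{\mathbb{R}}$, then $f$ and $g$ each fix $p$ and are therefore parabolic or hyperbolic, giving~\emphref{jjc}. In the remaining case, a pair $\{\alpha,\beta\}\subset\overline{\mathbb{R}}$ is preserved setwise but neither point is a common fixed point; since each of $f,g$ permutes $\{\alpha,\beta\}$ as a bijection, each either fixes both points (forcing it to be hyperbolic with axis $\alpha\beta$, or the identity, which is excluded) or swaps them (forcing it to be an elliptic of order two, as its square then fixes both $\alpha$ and $\beta$ and a point in $\mathbb{H}$ on the perpendicular bisector). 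Enumerating the three resulting pairings of types recovers exactly the two-half-turn subcase of~\emphref{jja}, the configuration of~\emphref{jjb}, and the common-axis subcase of~\emphref{jjc}.

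The main obstacle, such as there is one, is the bookkeeping in the third case: one must verify that a parabolic cannot preserve a two-point set in $\overline{\mathbb{R}}$ (since the swap of two points squares to an element fixing both, hence not parabolic), and that the swap case for an elliptic forces order exactly two (since the common fixed point in $\mathbb{H}$ lies on the perpendicular bisector of the axis and the element squares to the identity). Apart from this, the argument is a direct consequence of Theorem~\ref{ndo} and the classification of M\"obius transformations by fixed-point data.
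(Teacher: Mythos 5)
Your proposal is correct and follows essentially the same route as the paper's (suppressed) proof: both directions reduce to Theorem~\ref{ndo} together with the classification of M\"obius transformations by their fixed-point data, with the only difference being that you organise the converse by the three cases of Theorem~\ref{ndo} while the paper splits by the elliptic/non-elliptic types of the generators. The key observations you isolate --- that a map swapping two boundary points is an order-two elliptic, and that a parabolic cannot preserve a two-point subset of $\overline{\mathbb{R}}$ --- are exactly the points the paper's argument also turns on.
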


We omit the straightforward proof of this result. Using this theorem, one can determine which two-generator  elementary  semigroups are semidiscrete and which are semidiscrete and inverse free. For example, in case~\ref{jja} the semigroup $S=\langle f,g\rangle$ is semidiscrete if and only if $f$ and $g$ are both of finite order, in which case $S$ is a group, so it can never be both semidiscrete and inverse free. In case~\ref{jjb}, $S$ is again a discrete group, so it is semidiscrete but not inverse free. In case~\ref{jjc}, if $f$ and $g$ are parabolic with a common fixed point, or if $f$ and $g$ are hyperbolic with a pair of common fixed points, then $S$ may be semidiscrete and inverse free, or it may be a discrete group, or it may not be semidiscrete (see Lemma~\ref{zps} and Corollary~\ref{nps}). The remaining semigroups in case~\ref{jjc} are all semidiscrete and inverse free. 

This completes our discussion of two-generator elementary semigroups. Let us now set about proving Theorem~\ref{piv}.

Given a hyperbolic M\"obius transformation $f$, we define, as usual, $\alpha_f$ and $\beta_f$ to be the attracting and repelling fixed points of $f$, respectively. If $f$ is parabolic, then we define $\alpha_f$ and $\beta_f$ to both be the fixed point of $f$. Suppose now that $f$ and $g$ are two M\"obius transformations, each either parabolic or hyperbolic, with no fixed points in common. We say that $f$ and $g$ are \emph{antiparallel} if neither one of the two closed intervals with end points $\alpha_f$ and $\alpha_g$ is mapped strictly inside itself by both $f$ and $g$. If $f$ and $g$ are parabolic or hyperbolic and \emph{not} antiparallel, then $\langle f,g\rangle$  is a Schottky semigroup, so it is semidiscrete and inverse free.

Figure~\ref{fgc} shows configurations of antiparallel M\"obius transformations $f$ and $g$ in the disc model of the hyperbolic plane. Each hyperbolic transformation is represented by a directed hyperbolic line, corresponding to its directed axis, and each parabolic transformation is represented by a directed horocycle, corresponding to an invariant directed horocycle of the transformation based at its fixed point. The illustration on the far right, of two hyperbolic transformations, explains the terminology `antiparallel'.

\begin{figure}[ht]
\centering
\begin{tikzpicture}
\begin{scope}[xshift=-10\rad]
\pgfmathsetmacro{\t}{90-25}
\draw[black] (0,0) circle (\rad);
\hgline[black,directed](0,0)(-360+180+\t:180-\t:\rad);
\draw[black] (0.6*\rad,0) circle (0.4*\rad);	
\node[draw=none,fill=none] at (0.4*\rad,0*\rad){{\footnotesize  $g$}};
\node[draw=none,fill=none] at (-0.4*\rad,0*\rad){{\footnotesize $f$}};
\draw[directed,draw=none] (0.21*\rad,-0.1) --++ (280:0.1);
\end{scope}
\begin{scope}
\draw[black] (0,0) circle (\rad);
\draw[black] (0.6*\rad,0) circle (0.4*\rad);
\draw[black] (-0.6*\rad,0) circle (0.4*\rad);	
\node[draw=none,fill=none] at (0.4*\rad,0*\rad){{\footnotesize  $g$}};
\node[draw=none,fill=none] at (-0.4*\rad,0*\rad){{\footnotesize $f$}};
%\draw[directed,draw=none] (-0.2*\rad,0) --++ (90:0.1);
%\draw[directed,draw=none] (0.2*\rad,0.00) --++ (270:0.1);
\draw[directed,draw=none] (-0.200*\rad,0.01) --++ (90:0.1);
\draw[directed,draw=none] (0.203*\rad,-0.01) --++ (270:0.1);
\end{scope}
\begin{scope}[xshift=10\rad]
\pgfmathsetmacro{\t}{90-25}
\draw[black] (0,0) circle (\rad);
\hgline[black,directed](0,0)(-360+180+\t:180-\t:\rad);
\hgline[black,directed](0,0)(\t:360-\t:\rad);	
\node[draw=none,fill=none] at (0.45*\rad,0*\rad){{\footnotesize  $g$}};
\node[draw=none,fill=none] at (-0.45*\rad,0*\rad){{\footnotesize $f$}};
\end{scope}
\end{tikzpicture}
\caption{Antiparallel M\"obius transformations}
\label{fgc}
\end{figure}

Figure~\ref{fgd} shows configurations of parabolic or hyperbolic M\"obius transformations $f$ and $g$ that have no common fixed points and that are not antiparallel.

\begin{figure}[ht]
\centering
\begin{tikzpicture}
\begin{scope}[xshift=-15\rad]
\pgfmathsetmacro{\t}{90-25}
\draw[black] (0,0) circle (\rad);
\hgline[black,directed](0,0)(180-\t:180+\t:\rad);
\hgline[black,directed](0,0)(\t:360-\t:\rad);	
\node[draw=none,fill=none] at (0.45*\rad,0*\rad){{\footnotesize  $g$}};
\node[draw=none,fill=none] at (-0.45*\rad,0*\rad){{\footnotesize $f$}};
\end{scope}
\begin{scope}[xshift=-5\rad]
\pgfmathsetmacro{\t}{25}
\draw[black] (0,0) circle (\rad);
\hgline[black,pdirected](0,0)(90+\t:270+\t:\rad);
\hgline[black,pdirected](0,0)(90-\t:270-\t:\rad);	
\node[draw=none,fill=none] at (0.40*\rad,-0.40*\rad){{\footnotesize  $g$}};
\node[draw=none,fill=none] at (-0.40*\rad,-0.40*\rad){{\footnotesize $f$}};
\end{scope}
\begin{scope}[xshift=5\rad]
\pgfmathsetmacro{\t}{90-25}
\draw[black] (0,0) circle (\rad);
\hgline[black,directed](0,0)(180-\t:180+\t:\rad);
\draw[black] (0.6*\rad,0) circle (0.4*\rad);	
\node[draw=none,fill=none] at (0.4*\rad,0*\rad){{\footnotesize  $g$}};
\node[draw=none,fill=none] at (-0.4*\rad,0*\rad){{\footnotesize $f$}};
\draw[directed,draw=none] (0.203*\rad,-0.03) --++ (270:0.1);
\end{scope}
\begin{scope}[xshift=15\rad]
\draw[black] (0,0) circle (\rad);
\draw[black] (0.6*\rad,0) circle (0.4*\rad);
\draw[black] (-0.6*\rad,0) circle (0.4*\rad);	
\node[draw=none,fill=none] at (0.4*\rad,0*\rad){{\footnotesize  $g$}};
\node[draw=none,fill=none] at (-0.4*\rad,0*\rad){{\footnotesize $f$}};
\draw[directed,draw=none] (-0.203*\rad,-0.03) --++ (270:0.1);
\draw[directed,draw=none] (0.203*\rad,-0.03) --++ (270:0.1);
\end{scope}
\end{tikzpicture}
\caption{Parabolic and hyperbolic transformations without common fixed points that are not antiparallel}
\label{fgd}
\end{figure}

Recall from the introduction the cross ratio
\[
C(f,g) = \frac{(\alpha_f-\alpha_g)(\beta_f-\beta_g)}{(\alpha_f-\beta_g)(\beta_f-\alpha_g)},
\]
where $f$ and $g$ are hyperbolic. The following lemma is elementary; we omit the proof.

\begin{lemma}\label{xxu}
Let $f$ and $g$ be hyperbolic M\"obius transformations with no common fixed points. The following statements are equivalent:
\begin{enumerate}
\item\label{cha} $f$ and $g$ are antiparallel
\item\label{chb} $\beta_f$ and $\beta_g$ lie in distinct components of $\overline{\mathbb{R}}\setminus\{\alpha_f,\alpha_g\}$
\item\label{chc} $C(f,g)>1$.\qedhere
\end{enumerate}
\end{lemma}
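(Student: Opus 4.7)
The plan is to reduce everything to a convenient normal form and then verify the two equivalences \emphref{cha}$\Leftrightarrow$\emphref{chb} and \emphref{chb}$\Leftrightarrow$\emphref{chc} separately. Since both the defining property of antiparallelism and the cross ratio $C(f,g)$ are preserved by simultaneous conjugation of $f$ and $g$, and since we are free to permute and rescale points of $\overline{\mathbb{R}}$ by a M\"obius transformation, I would start by conjugating so that $\alpha_f=0$ and $\alpha_g=\infty$. The two closed intervals $J_1$ and $J_2$ in $\overline{\mathbb{R}}$ with endpoints $\alpha_f$ and $\alpha_g$ then become $[0,+\infty]$ and $[-\infty,0]$, and the hypothesis that $f$ and $g$ share no fixed point guarantees that $\beta_f,\beta_g\in\mathbb{R}\setminus\{0\}$.

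For \emphref{cha}$\Leftrightarrow$\emphref{chb} the key elementary fact I would first record is that if $h$ is a hyperbolic M\"obius transformation and $J$ is a closed interval in $\overline{\mathbb{R}}$ with one endpoint equal to $\alpha_h$ and the other endpoint distinct from $\beta_h$, then $h(J)\subsetneq J$ if and only if $\beta_h\notin J^\circ$; this is immediate from the dynamics of $h$ on the two complementary arcs of $\overline{\mathbb{R}}\setminus\{\alpha_h,\beta_h\}$. Applying this with $h=f$ and with $h=g$ to each of $J_1$ and $J_2$ shows that $J_i$ is mapped strictly inside itself by \emph{both} $f$ and $g$ if and only if neither $\beta_f$ nor $\beta_g$ lies in $J_i^\circ$. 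Hence $\langle f,g\rangle$ fails to be antiparallel exactly when some $J_i^\circ$ contains both $\beta_f$ and $\beta_g$, i.e.\ exactly when $\beta_f$ and $\beta_g$ lie in the same component of $\overline{\mathbb{R}}\setminus\{\alpha_f,\alpha_g\}$. Negating, this is the content of \emphref{chb}.

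For \emphref{chb}$\Leftrightarrow$\emphref{chc} I would compute the cross ratio in the normal form above. Taking the limit $\alpha_g\to\infty$ in
\[
C(f,g)=\frac{(\alpha_f-\alpha_g)(\beta_f-\beta_g)}{(\alpha_f-\beta_g)(\beta_f-\alpha_g)},
\]
with $\alpha_f=0$, gives
\[
C(f,g)=\frac{\beta_f-\beta_g}{-\beta_g}=1-\frac{\beta_f}{\beta_g}.
\]
Therefore $C(f,g)>1$ if and only if $\beta_f/\beta_g<0$, i.e.\ $\beta_f$ and $\beta_g$ have opposite signs, which is precisely the statement that they lie in different components of $\overline{\mathbb{R}}\setminus\{0,\infty\}=\overline{\mathbb{R}}\setminus\{\alpha_f,\alpha_g\}$.

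There is no real obstacle here; the only point that needs a moment's care is the first equivalence, where one must be careful that the closed interval with endpoint $\alpha_h$ is mapped into itself \emph{strictly} precisely when $\beta_h$ is outside its interior (the degenerate case $\beta_h$ on the boundary of the interval is ruled out by the no-common-fixed-points hypothesis, since $\beta_h\neq\alpha_f,\alpha_g$ when $h\in\{f,g\}$). Everything else is a direct computation invariant under our choice of normalisation.
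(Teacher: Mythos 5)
Your proof is correct and follows essentially the same route as the paper's: the equivalence of \emph{(i)} and \emph{(ii)} is handled by the (elementary) dynamics of a hyperbolic map on an interval anchored at its attracting fixed point, and the equivalence of \emph{(ii)} and \emph{(iii)} by normalising and computing the cross ratio. The only cosmetic difference is the choice of normalisation --- the paper sends $\alpha_f\mapsto\infty$, $\beta_f\mapsto 0$ and obtains $C(f,g)=\beta_g/\alpha_g$, whereas you send $\alpha_f\mapsto 0$, $\alpha_g\mapsto\infty$ and obtain $C(f,g)=1-\beta_f/\beta_g$; both computations are valid and lead to the same conclusion.
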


We wish to be able to determine when any two-generator semigroup $S$ is semidiscrete and inverse free. If one of the generators is the identity map or  elliptic, then $S$ is not semidiscrete and inverse free. On the other hand, if $f$ and $g$ are not the identity map or elliptic (they are parabolic or hyperbolic), and they are not antiparallel, then $S$ \emph{is} semidiscrete and inverse free. The remaining more difficult case is when $f$ and $g$ are antiparallel parabolic or hyperbolic maps. This case is handled in the following series of results. In these results we use the straightforward observation that if $\langle f,g\rangle$ is nonelementary and  is contained in a Schottky group of rank two, then it is semidiscrete and inverse free.

\begin{theorem}\label{acu}
Suppose that $f$ and $g$ are parabolic M\"obius transformations such that $\langle f,g\rangle$ is nonelementary. Then $\langle f,g\rangle$ is semidiscrete and inverse free if and only if $fg$ is not elliptic. Furthermore, if $f$ and $g$ are antiparallel, and $\langle f,g\rangle$  is semidiscrete and inverse free, then it is contained in a Schottky group of rank two. 
\end{theorem}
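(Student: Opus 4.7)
My plan is to normalize the generators, compute when $fg$ is elliptic, and then handle the backward implication by a case split on the sign of $\lambda\mu$. After conjugating within $\mathcal{M}$ I may assume that $f(z) = z+\lambda$ fixes $\infty$ and $g(z) = z/(\mu z+1)$ fixes $0$, with $\lambda,\mu\neq 0$---otherwise $f$ and $g$ share a fixed point and $\langle f,g\rangle$ is elementary by Theorem~\ref{qhw}. A direct matrix computation yields
\[
fg(z) = \frac{(1+\lambda\mu)z + \lambda}{\mu z + 1},
\]
normalized to determinant $1$, so $\tr(fg) = |2+\lambda\mu|$ and $fg$ is elliptic precisely when $-4 < \lambda\mu < 0$.

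The forward implication is easy by contrapositive. If $fg$ is elliptic of finite order $n$, then $(fg)^n = I \in \langle f,g\rangle$ shows $\langle f,g\rangle$ is not inverse free; if $fg$ is elliptic of infinite order, its iterates accumulate at $I$, so $\langle f,g\rangle$ is not semidiscrete. For the converse, if $\lambda\mu > 0$ then (after possibly conjugating by $z\mapsto -z$) I may take $\lambda,\mu > 0$, in which case the closed interval $X = [0,+\infty]$ is sent properly into itself by both $f$ and $g$, since $f(X) = [\lambda,+\infty]$ and $g(X) = [0,1/\mu]$. Theorem~\ref{kfa} then gives that $\langle f,g\rangle$ is a Schottky semigroup, hence semidiscrete and inverse free; and $f,g$ are not antiparallel in this case, so the ``furthermore'' assertion is vacuous.

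The substantive case is $\lambda\mu \leq -4$, the antiparallel configuration. Writing $\mu = -b$ with $\lambda,b > 0$ and $\lambda b \geq 4$, I would exhibit Schottky-generator arcs
\[
A_f = (-\infty,-\lambda/2),\quad B_f = (\lambda/2,+\infty),\quad A_g = (0,2/b),\quad B_g = (-2/b,0).
\]
The translation $f$ sends the complement of $\overline{A_f}$ bijectively onto $\overline{B_f}$. A standard isometric-circle computation for $g(z) = z/(1-bz)$---whose isometric circles have radius $1/b$ and centres $\pm 1/b$---shows that $g$ sends the complement of $\overline{A_g}$ onto $\overline{B_g}$. The four open arcs are pairwise disjoint: the only nontrivial verifications are $A_f\cap B_g = \emptyset$ and $B_f\cap A_g = \emptyset$, and both reduce to $\lambda/2 \geq 2/b$, which is exactly $\lambda b \geq 4$. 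Hence $f$ and $g$ generate, as a group, a Schottky group of rank two that contains $\langle f,g\rangle$; this simultaneously establishes the backward implication and the ``furthermore'' clause.

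The only subtlety lies in the borderline case $\lambda\mu = -4$: the closed arcs $\overline{A_f}$ and $\overline{B_g}$ become tangent at $-\lambda/2 = -2/b$ (and likewise $\overline{B_f}$ and $\overline{A_g}$), and $fg$ is parabolic rather than hyperbolic, so the resulting Schottky group is cusped. This tangency is permitted by the paper's definition of Schottky group in Section~\ref{ooq}, which requires disjointness only of the \emph{open} defining arcs, and is in fact exactly the latitude that allows parabolic Schottky generators.
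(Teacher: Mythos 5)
Your proof is correct, and its overall architecture matches the paper's: normalise so the two fixed points are $0$ and $\infty$, determine exactly when $fg$ is elliptic, and in the non-elliptic antiparallel case exhibit four Schottky intervals -- which, after rescaling to $\lambda=2$, are literally the intervals the paper uses ($a=2/b$, and your condition $\lambda b\geq 4$ is the paper's $a\leq 1$). Where you genuinely differ is in the verification device: the paper factors $f=\sigma_1\sigma_0$ and $g=\sigma_0\sigma$ as products of reflections sharing the common mirror $\ell_0$ (the geodesic joining the fixed points), so that $fg=\sigma_1\sigma$ is elliptic precisely when the two outer mirrors cross and the Schottky intervals are read off from those mirrors; you instead compute $\tr(fg)=|2+\lambda\mu|$ directly and extract the intervals from isometric circles. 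The reflection set-up is what the paper then reuses almost verbatim for the parabolic--hyperbolic case (Theorem~\ref{acd}) and the hyperbolic--hyperbolic case (Lemma~\ref{cac}), whereas your computation is more self-contained for this one statement. Two cosmetic points: $g$ maps the complement of $\overline{A_g}$ onto the \emph{open} interval $B_g$, not $\overline{B_g}$ (which is what the Schottky definition requires anyway); and your final step silently invokes the paper's standing observation that a nonelementary semigroup contained in a rank-two Schottky group is automatically semidiscrete and inverse free -- that observation is stated just before the theorem, so the reliance is legitimate, but it is the hinge that turns your Schottky configuration into the backward implication.
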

\begin{proof}
The theorem holds trivially if $f$ and $g$ are not antiparallel, so let us assume that they are antiparallel.

By conjugation, we can assume that $f(z)=z+2$ (with fixed point $\infty$) and $g$ has fixed point $0$. For $k=0,1$, let $\ell_k$ denote the vertical hyperbolic line between $k$ and $\infty$, and let $\sigma_k$ denote reflection in $\ell_k$. Observe that $f=\sigma_1\sigma_0$. We can write $g=\sigma_0\sigma$, where $\sigma$ is the reflection in a hyperbolic line $\ell$ that has endpoints $0$ and $a$, where $a$ is a nonzero real number. In fact, we must have $a>0$, because if $a<0$, then $f$ and $g$ both map $[0,+\infty]$ strictly inside itself, contrary to our assumption that they are antiparallel. 

Observe that $fg=\sigma_1\sigma$. If $a>1$, then $\ell_1$ and $\ell$ intersect, so $fg$ is elliptic, and $\langle f,g\rangle$ is not semidiscrete and inverse free. If $a\leq 1$, then $\ell_1$ and $\ell$ do not intersect, so $fg$ is not elliptic. Moreover, the group generated as a group by $f$ and $g$ is a Schottky group of rank two because $f$ maps the complement of the interval $[-\infty,-1]$ to $(1,+\infty)$, and $g$ maps the complement of the interval $[0,a]$ to $(-a,0)$. Therefore $\langle f,g\rangle$ is semidiscrete and inverse free.
\end{proof}

Recall that $\tr(f)=|a+d|$, where $f(z)=(az+b)/(cz+d)$ and $ad-bc=1$. Also, recall that the commutator of $f$ and $g$ is $[f,g]=fgf^{-1}g^{-1}$.

\begin{theorem}\label{acd}
Suppose that $f$ and $g$ are parabolic and hyperbolic M\"obius transformations, respectively,  such that $\langle f,g\rangle$ is nonelementary.  Then $\tr[f,g]>2$ and $\langle f,g\rangle$ is semidiscrete and inverse free if and only if $f^ng$ is not elliptic for $1\leq n \leq 2\tr(g)/\sqrt{\tr[f,g]-2}$. Furthermore, if $f$ and $g$ are antiparallel, and $\langle f,g\rangle$  is semidiscrete and inverse free, then it is contained in a Schottky group of rank two. 
\end{theorem}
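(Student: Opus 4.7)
After conjugation put $f(z) = z + \lambda$ with $\lambda > 0$, so the fixed point of $f$ is at $\infty$ and $\tr(f)=2$.  Write $g = \left(\begin{smallmatrix} a & b \\ c & d\end{smallmatrix}\right) \in \mathrm{SL}(2,\mathbb{R})$ with $a+d = \tr(g)>2$ (changing the sign of the lift if needed).  By Theorem~\ref{qhw} the element $g$ cannot fix $\infty$, so $c \neq 0$.  A direct multiplication gives
\[
\tr(f^n g) = \tr(g) + n\lambda c,
\]
and the Fricke identity $\tr[f,g] = \tr(f)^2+\tr(g)^2+\tr(fg)^2-\tr(f)\tr(g)\tr(fg)-2$ collapses, using $\tr(f)=2$, to
\[
\tr[f,g] = 2+(\tr(fg)-\tr(g))^2 = 2 + (\lambda c)^2 > 2,
\]
which is the first assertion.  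In the antiparallel situation (treated below) we will see that $\lambda c < 0$, so $\tr(f^n g)$ decreases strictly from $\tr(g)$ to $-\infty$; the positive reals $n$ for which $|\tr(f^n g)|<2$ form the open interval
\[
\Bigl(\tfrac{\tr(g)-2}{|\lambda c|},\ \tfrac{\tr(g)+2}{|\lambda c|}\Bigr),
\]
of length $4/|\lambda c| = 4/\sqrt{\tr[f,g]-2}$, entirely contained in $\bigl(0,\ 2\tr(g)/\sqrt{\tr[f,g]-2}\bigr)$.  Checking non-ellipticity for $n \leq 2\tr(g)/\sqrt{\tr[f,g]-2}$ therefore suffices.

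\textbf{Only-if direction and the non-antiparallel case.}  If $\langle f,g\rangle$ is semidiscrete and inverse free then it contains no elliptic element: a finite-order elliptic places $I$ into the semigroup, while an infinite-order one produces iterates accumulating at $I$.  In particular no $f^n g$ is elliptic.  For the converse, first suppose $f$ and $g$ are not antiparallel.  Then by definition there is a closed interval $J\subset\overline{\mathbb R}$ mapped strictly inside itself by both generators, so Theorem~\ref{kfa} says $\langle f,g\rangle$ is a Schottky semigroup and the maps $f^n g$ all contract $J$ strictly into itself, so each $f^n g$ is hyperbolic with attracting fixed point in $J$ and the hypothesis is automatic.

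\textbf{If-direction, antiparallel case.}  After a further conjugation I may take the axis of $g$ to be the unit upper half-circle ($\alpha_g=-1,\beta_g=1$), so that $\lambda c < 0$ and $g$ has multiplier $1/k$ at $\alpha_g$ with $k>1$.  Write $f,g$ as products of reflections sharing a common line,
\[
f = \sigma_{V_{\lambda/2}}\sigma_{V_{0}}, \qquad g = \sigma_{V_0}\sigma_{D},
\]
where $V_s$ is the vertical line at $x=s$ and $D$ is the hyperbolic line perpendicular to the axis of $g$, whose two endpoints are $u,1/u$ with $u = (\sqrt{k}-1)/(\sqrt{k}+1)\in(0,1)$.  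Then $f^n g = \sigma_{V_{n\lambda/2}}\sigma_D$ is elliptic if and only if $V_{n\lambda/2}$ crosses $D$ in $\mathbb{H}$, i.e.\ iff $n\lambda/2 \in (u,1/u)$.  The hypothesis ``no $f^n g$ elliptic'' is exactly that no positive integer multiple of $\lambda/2$ lies in $(u,1/u)$.  I now induct on the smallest positive integer $N$ with $\tr(f^N g) \leq -2$ (equivalently $N\lambda/2 \geq 1/u$).  In the base case $N=1$ the three lines $V_0, V_{\lambda/2}, D$ are pairwise disjoint in $\mathbb H$ (meeting only at $\overline{\mathbb R}$, where $V_0$ and $V_{\lambda/2}$ share the ideal vertex $\infty$); Poincar\'e's polygon theorem gives that the reflection group they generate is discrete, and its orientation-preserving index-two subgroup $\langle f,g\rangle_{\text{group}}$ is a discrete free group of rank two---that is, a rank-two Schottky group.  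For $N\geq 2$ I replace $(f,g)$ by $(f, fg)$: the pair remains antiparallel and nonelementary (since $fg$ still does not fix $\infty$), its flipping number is $N-1$, and $f^n(fg)=f^{n+1}g$ is never elliptic, so induction yields that $\langle f, fg\rangle_{\text{group}} = \langle f,g\rangle_{\text{group}}$ is rank-two Schottky.  In both cases $\langle f, g\rangle$ sits inside a rank-two Schottky group $G$; since $G$ is free of rank two on any pair of generators (Nielsen), $f$ and $g$ themselves form a basis, so no nontrivial positive word in $f,g$ reduces to the identity, giving both inverse-freeness and (being inside a discrete group) semidiscreteness.

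\textbf{Main obstacle.}  The delicate step is the base case $N=1$: one must confirm that three hyperbolic lines, two of them asymptotic at $\infty$ and all pairwise disjoint in $\mathbb H$, define a valid Poincar\'e polygon whose reflection group is discrete and whose orientation-preserving subgroup is free on $(f,g)$.  The geometric hypothesis that $V_{\lambda/2}$ does not cross $D$---equivalently the non-ellipticity of $fg$---is precisely what makes Poincar\'e's conditions hold.  The inductive step then rests on the observation that flipping number strictly decreases under $g \mapsto fg$ while $\langle f,\cdot\rangle_{\text{group}}$ is preserved, so the induction terminates at the base case.
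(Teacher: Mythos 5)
Your reduction is sound and runs parallel to the paper's: the same normalisation $f(z)=z+\lambda$, the same decomposition of $f$ and $g$ into reflections in $V_0$, $V_{\lambda/2}$ and a line $D$ orthogonal to the axis of $g$, the same translation of ``$f^ng$ elliptic'' into ``$V_{n\lambda/2}$ crosses $D$'', and a trace computation (via the Fricke identity rather than the paper's direct reflection computation) showing both that $\tr[f,g]=2+(\lambda c)^2>2$ and that the real parameters $n$ for which $f^ng$ could be elliptic all lie below $2\tr(g)/\sqrt{\tr[f,g]-2}$, so the stated finite check suffices. Where you diverge is the endgame: the paper observes directly that if no integer lies in $(u,1/u)$ then $u,1/u\in[n\lambda/2,(n+1)\lambda/2]$ for some $n\geq 0$ and writes down explicit paired intervals exhibiting $\{f,f^ng\}$ as a standard generating set of a rank-two Schottky group, whereas you run an induction on a ``flipping number'' and invoke Poincar\'e's polygon theorem in the base case. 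Both endgames work in principle, but yours leaves two real gaps.

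First, the base case concludes with ``a discrete free group of rank two---that is, a rank-two Schottky group.'' That identification is exactly the content of the theorem's final clause and is not automatic from discreteness plus freeness; it is a nontrivial structural fact about Fuchsian groups (and with parabolics present it needs the paper's extended Schottky definition). You should instead exhibit the four paired intervals directly: the sides $V_{-\lambda/2}$, $V_{\lambda/2}$, $D$, $\sigma_{V_0}(D)$ of the doubled fundamental domain cut off the arcs $[-\infty,-\lambda/2]$, $(\lambda/2,+\infty)$, $[u,1/u]$, $(-1/u,-u)$, which are pairwise disjoint precisely because $\lambda/2\geq 1/u$, and $f$, $g$ pair them in the required way. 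Second, the inductive step silently assumes the new pair $(f,fg)$ is again a parabolic--hyperbolic antiparallel pair: antiparallelness needs a (short) verification, and, more importantly, when $\lambda/2=u$ the element $fg$ is \emph{parabolic}, so the inductive hypothesis does not apply and you must fall back on Theorem~\ref{acu} for the pair $(f,fg)$; this case is consistent with the conclusion but is not covered by your induction as written. Minor further points: you assert rather than prove that antiparallelness forces $\alpha_g<\beta_g$ (hence $\lambda c<0$) in your normalisation, and in the non-antiparallel case the maps $f^ng$ need not be hyperbolic (they may be parabolic), though non-elliptic is all you need there.
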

\begin{proof}
Once again, we can assume that $f$ and $g$ are antiparallel, because the theorem is trivial otherwise.

For $k=0,1,\dotsc$, let $\ell_k$ denote the vertical hyperbolic line between $k$ and $\infty$, and let $\sigma_k$ denote reflection in $\ell_k$. By conjugation we can assume that $f(z)=z+2$ and $g=\sigma_0\sigma$, where $\sigma$ is the reflection in the hyperbolic line $\ell$ with end points $u$ and $v$, where $u<v$, and $u$ and $v$ have the same sign. In fact, $u$ and $v$ are positive, because if they were negative, then $f$ and $g$ would both map $[\alpha_g,+\infty]$ strictly inside itself, contrary to our assumption that they are antiparallel. 
 
Observe that $f=\sigma_k\sigma_{k-1}$ for any positive integer $k$, so $f^k=\sigma_k\sigma_0$ and $f^kg=\sigma_k\sigma$. Then $\sigma_0(z)=-\bar{z}$ and $\sigma(z) = ((u+v)\bar{z}-2uv)/(2\bar{z}-(u+v))$, so
\[
g(z)=\frac{-(u+v)z+2uv}{2z-(u+v)}.
\]
One can check that $\tr(g)=2(u+v)/(v-u)$ and $\tr [f,g]-2=16/(v-u)^2$. Hence $\tr[f,g]>2$ and
\[
\frac{2\tr(g)}{\sqrt{\tr[f,g]-2}}=u+v.
\]

Suppose that $f^ng$ is elliptic for some integer $n$ with $1\leq n \leq u+v$. Then $\langle f,g\rangle$ is not semidiscrete and inverse free. Conversely, suppose that $f^ng$ is not elliptic for $1\leq n \leq u+v$. Notice that if $u<n<v$ for some integer $n$, then certainly $n\leq u+v$, and furthermore $\ell_n$ and $\ell$ intersect, so $f^ng$ is elliptic, contrary to our assumption. Therefore $u,v\in[n,n+1]$, for some nonnegative integer $n$. In this case, the group $G$ generated as a group by $f$ and $g$ is a Schottky group. To see this, observe that $f$ maps the complement of the interval $[-\infty,n-1]$ to $(n+1,+\infty)$, and $f^{n}g$ maps the complement of the interval $[u,v]$ to $(2n-v,2n-u)$. Therefore the group generated by $f$ and $f^{n}g$ is a Schottky group of rank two, and this group coincides with $G$. We deduce that  $\langle f,g\rangle$ is semidiscrete (because $G$ is discrete) and $\langle f,g\rangle$ is inverse free (because $f$ and $g$ generate as a group a free group).  
\end{proof}

It remains only to consider semigroups generated by two antiparallel hyperbolic M\"obius transformations $f$ and $g$. In this case, define $\gamma_f$ and $\gamma_g$ to be the axes of $f$ and $g$, respectively. By Lemma~\ref{xxu}, they are disjoint. Let $\ell$ be the unique hyperbolic line that is orthogonal to $\gamma_f$ and $\gamma_g$, and let $\sigma$ denote reflection in $\ell$. Define $\sigma_f= f \sigma$ and $\sigma_g =  \sigma g$, both reflections, and let $\ell_f$ and $\ell_g$ be the lines of reflection of $\sigma_f$ and $\sigma_g$, respectively.

\begin{lemma}\label{cac}
Let $f$ and $g$ be antiparallel hyperbolic M\"obius transformations. Suppose that $\ell_f$ does not intersect $\gamma_g$ and $\ell_g$ does not intersect $\gamma_f$. Then $\langle f,g\rangle$ is semidiscrete and inverse free if and only if $fg$ is not elliptic. Furthermore, if $\langle f,g\rangle$  is semidiscrete and inverse free, then it is contained in a Schottky group of rank two. 
\end{lemma}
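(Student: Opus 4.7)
My plan starts from the identity $fg = \sigma_f\sigma_g$, which follows from $f = \sigma_f\sigma$, $g = \sigma\sigma_g$, and $\sigma^2 = I$. Since $\sigma_f$ and $\sigma_g$ are reflections in $\ell_f$ and $\ell_g$, the transformation $fg$ is elliptic, parabolic, or hyperbolic according to whether $\ell_f$ and $\ell_g$ meet in $\mathbb{H}$, share a single ideal point, or are disjoint in $\overline{\mathbb{H}}$. In particular, if $fg$ is elliptic then $\langle f,g\rangle$ contains a nontrivial elliptic element. That element is either of infinite order (so a subsequence of its positive powers accumulates at $I$, breaking semidiscreteness) or of finite order (so its inverse is one of its positive powers, breaking the inverse-free property); in neither case is $\langle f,g\rangle$ semidiscrete and inverse free.

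For the converse and the Schottky conclusion, I would conjugate into standard coordinates with $\ell$ the imaginary axis, $\gamma_f=\{|z|=1\}$ with $\alpha_f=1$, and $\gamma_g=\{|z|=r\}$ with $\alpha_g=-r$, for some $r>1$; the antiparallel convention then places $\alpha_f$ and $\beta_g$ on the positive real axis and $\beta_f$ and $\alpha_g$ on the negative real axis. A short computation identifies $\ell_f$ and $\ell_g$ as Euclidean semicircles with endpoints $\{1/t,t\}$ and $\{r/\nu,r\nu\}$ for some $t,\nu>1$ depending on the half-translation lengths of $f$ and $g$. In these coordinates the hypothesis $\ell_f\cap\gamma_g=\emptyset$ is exactly $t\leq r$, the hypothesis $\ell_g\cap\gamma_f=\emptyset$ is $\nu\leq r$, and the standard interlacing criterion for two upper half-plane geodesics shows that $fg$ is nonelliptic precisely when $r\geq t\nu$.

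Assuming $fg$ is nonelliptic, I would then exhibit the four open intervals
\[
A_f=(-t,-1/t),\quad B_f=(1/t,t),\quad A_g=(r/\nu,r\nu),\quad B_g=(-r\nu,-r/\nu).
\]
Using that $\sigma_f$ fixes the endpoints of $\ell_f$ pointwise and that $\sigma$ acts on $\overline{\mathbb{R}}$ by $x\mapsto -x$, the identity $f=\sigma_f\sigma$ yields $f(-1/t)=1/t$ and $f(-t)=t$; combined with $f$ preserving orientation on $\overline{\mathbb{R}}$ and $f(0)\in B_f$, this gives $f(\overline{\mathbb{R}}\setminus\overline{A_f})=B_f$, and the parallel computation for $g=\sigma\sigma_g$ gives $g(\overline{\mathbb{R}}\setminus\overline{A_g})=B_g$. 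Four of the six pairwise-disjointness conditions are immediate because the four intervals split two and two between the negative and positive real axes, and the remaining two conditions $A_f\cap B_g=\emptyset$ and $A_g\cap B_f=\emptyset$ both reduce to $r\geq t\nu$, which is exactly the nonellipticity of $fg$. Thus $\{f,g\}$ is a standard group-generating set for a rank-two Schottky group containing $\langle f,g\rangle$, so $\langle f,g\rangle$ is semidiscrete and inverse free. The main technical step is the inversive-geometry calculation matching the three conditions $\ell_f\cap\gamma_g=\emptyset$, $\ell_g\cap\gamma_f=\emptyset$, and $\ell_f\cap\ell_g=\emptyset$ in $\overline{\mathbb{H}}$ with the inequalities $t\leq r$, $\nu\leq r$, and $r\geq t\nu$; here the two standing hypotheses of the lemma do real work, since dropping them allows configurations where $fg$ is nonelliptic but one of the intervals $A_g$, $B_f$ contains the other, destroying the Schottky structure.
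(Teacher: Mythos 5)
Your argument is correct and follows the same route as the paper: decompose $fg=\sigma_f\sigma_g$ to read off ellipticity from whether $\ell_f$ and $\ell_g$ meet, and in the nonelliptic case exhibit the four intervals determined by $\ell_f$, $\ell_g$ and their reflections in $\ell$ as a Schottky configuration for the group generated by $f$ and $g$. The only difference is presentational: the paper defines the intervals synthetically as $J_f$, $\sigma(J_f)$, $J_g$, $\sigma(J_g)$ and asserts their disjointness, whereas you verify the same facts in normalized coordinates.
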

\begin{proof}
Observe that $fg=\sigma_f\sigma_g$. If $\ell_f$ and $\ell_g$ intersect, then $fg$ is elliptic, so $\langle f,g\rangle$ is not semidiscrete and inverse free. If $\ell_f$ and $\ell_g$ do not intersect, then $fg$ is not elliptic. In this case, let $J_f$ be the open interval in $\overline{\mathbb{R}}$ containing $\alpha_f$ that has the same endpoints as $\ell_f$. Likewise, let $J_g$ be the open interval containing $\beta_g$ that has the same endpoints as $\ell_g$. The intervals $J_f$, $\sigma(J_f)$, $J_g$, and $\sigma(J_g)$ are disjoint. Furthermore, $f$ maps the exterior of $\overline{\sigma(J_f)}$ to $J_f$, and $g$ maps the exterior of $\overline{J_g}$ to $\sigma(J_g)$. Therefore $f$ and $g$ generate as a group a Schottky group of rank two, so $\langle f,g\rangle$, which is contained in this Schottky group, is semidiscrete and inverse free. 
\end{proof}

The next two lemmas are heavily influenced by work of Gilman and Maskit \cite{Gi1988,Gi1991,GiMa1991}. 

Before we state these lemmas, we make some preliminary, elementary remarks about traces of matrices. Let $\Tr(A)$ denote the trace of a square matrix $A$. Each M\"obius transformation $f$ lifts to two matrices $\pm A$ in $\text{SL}(2,\mathbb{R})$.  If $f(z)=(az+b)/(cz+d)$, with $ad-bc=1$, then $\tr(f)=|a+d|$. It follows that $\tr(f)$ is equal to one of $\Tr(A)$ or $\Tr(-A)$, whichever is nonnegative.

\begin{lemma}\label{cuc}
Let $f$ and $g$ be antiparallel hyperbolic M\"obius transformations. Then $\tr[f,g]>2$, and  $\ell_f$ intersects $\gamma_g$ if and only if $\tr[f,g]<\tr(g)^2-2$. 

Furthermore, if $F$ and $G$ are lifts to $\textnormal{SL}(2,\mathbb{R})$ of $f$ and $g$, respectively, and both matrices have positive trace, then the condition $\tr[f,g]<\tr(g)^2-2$ implies that $FG$ has positive trace too.
\end{lemma}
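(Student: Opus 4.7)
The plan is to reduce to an explicit matrix computation in a convenient normalization. Since $\gamma_f$ and $\gamma_g$ are disjoint, I would conjugate $\langle f,g\rangle$ so that the common perpendicular $\ell$ is the imaginary axis (so $\sigma(z)=-\bar z$) and $\gamma_f$ is the unit semicircle; then $\gamma_g=\{|z|=r\}$ for some $r>0$ with $r\neq 1$. By Lemma~\ref{xxu}, antiparallel places $\alpha_f$ and $\alpha_g$ on opposite sides of $\ell$, and after conjugating once more by $\sigma$ if necessary we may take $\alpha_f=1$ and $\alpha_g=-r$. With translation lengths $t,s>0$, the positive-trace lifts are
\[
F=\begin{pmatrix}\cosh(t/2)&\sinh(t/2)\\\sinh(t/2)&\cosh(t/2)\end{pmatrix},\quad G=\begin{pmatrix}\cosh(s/2)&-r\sinh(s/2)\\-\sinh(s/2)/r&\cosh(s/2)\end{pmatrix}.
\]
Setting $T=\tr(f)=2\cosh(t/2)$, $S=\tr(g)=2\cosh(s/2)$ and $R=r+1/r$, direct multiplication yields $\Tr(FG)=\tfrac12 TS-\tfrac14 R\sqrt{(T^2-4)(S^2-4)}$.

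The Fricke identity $\Tr[F,G]=T^2+S^2+\Tr(FG)^2-TS\,\Tr(FG)-2$ then simplifies, on substituting this formula and using $T^2-4=4\sinh^2(t/2)$, to
\[
\Tr[F,G]=2+\frac{(T^2-4)(S^2-4)(R^2-4)}{16}.
\]
All three factors on the right are strictly positive: $T,S>2$ since $f,g$ are hyperbolic, and $R>2$ since $r\neq 1$. Therefore $\Tr[F,G]>2$, whence $\tr[f,g]=\Tr[F,G]>2$, establishing the first assertion. Dividing the inequality $\tr[f,g]<S^2-2$ through by $S^2-4>0$ and simplifying reveals that it is equivalent to $R<2\coth(t/2)$.

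To match the geometric side, solving $z=f(-\bar z)$ for $z\in\mathbb{H}$ and clearing denominators yields $\sinh(t/2)|z|^2-2\cosh(t/2)\operatorname{Re}(z)+\sinh(t/2)=0$, which is the semicircle centred at $\coth(t/2)$ of radius $1/\sinh(t/2)$; its real endpoints are $\tanh(t/4)$ and $\coth(t/4)$ (both positive, with product $1$). Hence $\ell_f$ meets $\gamma_g$ in $\mathbb{H}$ iff the pairs $\{\tanh(t/4),\coth(t/4)\}$ and $\{-r,r\}$ link on $\overline{\mathbb{R}}$, iff $r\in(\tanh(t/4),\coth(t/4))$. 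The map $r\mapsto r+1/r$ is strictly convex on $(0,\infty)$ with value $2\coth(t/2)$ at both $r=\tanh(t/4)$ and $r=\coth(t/4)$, so this interval coincides with $\{r>0:R(r)<2\coth(t/2)\}$. This matches the trace condition, proving the second assertion.

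For the final claim, the same formula for $\Tr(FG)$ gives $\Tr(FG)>0$ if and only if $R<2\coth(t/2)\coth(s/2)$. Since $\coth(s/2)>1$, the hypothesis $R<2\coth(t/2)$ implies $R<2\coth(t/2)\coth(s/2)$, so $\Tr(FG)>0$. The main technical care lies in the initial normalization: one must verify that the antiparallel condition really forces $\alpha_f$ and $\alpha_g$ onto opposite sides of $\ell$, and that the (possibly anti-M\"obius) conjugation by $\sigma$ preserves the traces $\tr(f)$, $\tr(g)$, $\tr[f,g]$ and the sign of $\Tr(FG)$ for the positive-trace lifts, so that the matrix computation genuinely proves the general claim.
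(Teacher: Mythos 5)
Your proposal is correct and follows essentially the same strategy as the paper's proof: normalise to a convenient position, compute the traces explicitly via the Fricke identity, and compare the resulting inequality with the linking condition on the endpoints of $\ell_f$ and $\gamma_g$. The only difference is cosmetic -- the paper places the axis of $f$ on the imaginary axis and parametrises by multipliers $\lambda,\mu$ and the fixed point $a$, whereas you place the common perpendicular $\ell$ there and parametrise by translation lengths and the radius $r$, yielding the equivalent (and pleasantly symmetric) identity $\Tr[F,G]=2+\tfrac{1}{16}(T^2-4)(S^2-4)(R^2-4)$; the one point you flag but do not fully write out, that antiparallelism forces $\alpha_f$ and $\alpha_g$ onto opposite sides of $\ell$, is a one-line check with Lemma~\ref{xxu} since $C(f,g)=(1+r)^2/(1-r)^2>1$ in that configuration and $(1-r)^2/(1+r)^2<1$ in the other.
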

\begin{proof}
By conjugation we can assume that $f(z)=\lambda z$, where $0<\lambda<1$, and $g$ has attracting fixed point $1$ and repelling fixed point $a$, where $0<a<1$. We can write $g$ in the form 
\[
g(z) = \frac{(\mu-a)z+(a-a\mu)}{(\mu-1)z+(1-a\mu)},
\]
where $\mu>1$, as one can easily check by verifying that $g(1)=1$, $g(a)=a$, $g(\infty)>1$, and $\tr(g)^2=2+\mu+\mu^{-1}$. One can also check that
\[
\tr [f,g] = 2 + \frac{(-2+\lambda+\lambda^{-1})(-2+\mu+\mu^{-1})}{-2+a+a^{-1}}.
\]
Hence $\tr[f,g]>2$. Also, we see that $\tr[f,g]<\tr(g)^2-2$ if and only if $-2+\lambda+\lambda^{-1}<-2+a+a^{-1}$, and this is so if and only if $\lambda>a$.

Now, a quick check shows that the end points of $\ell$ are $\pm \sqrt{a}$ and the end points of $\ell_f$ are $\pm \sqrt{\lambda a}$. Therefore $\ell_f$ intersects $\gamma_g$ if and only if $\sqrt{\lambda a}>a$, that is,  if and only if $\lambda>a$. 

It remains to prove the last part of the lemma. One can check that 
\[
\Tr(FG)=\frac{\lambda\mu+1-a(\lambda+\mu)}{\sqrt{\lambda\mu}(1-a)}.
\]
If $\tr[f,g]<\tr(g)^2-2$, then $\lambda>a$, so 
\[
\lambda\mu+1-a(\lambda+\mu)>\lambda\mu+1-\lambda(\lambda+\mu)=1-\lambda^2>0.
\]
Hence $\Tr(FG)>0$, as required.
\end{proof}

\begin{lemma}\label{hos}
Let $f$ and $g$ be antiparallel hyperbolic M\"obius transformations with $\tr(f)\leq \tr(g)$. Suppose that $\ell_f$ intersects $\gamma_g$, but $\ell_f$ and $\ell_g$ do not intersect. Then
\[
\tr(g)-\tr(fg) > \tr(f)-2\quad\text{and}\quad \tr(g)-\tr(f) > \tr(fg)-2.
\]
Furthermore, $f$ and $fg$ are antiparallel parabolic or hyperbolic transformations.
\end{lemma}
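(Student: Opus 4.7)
My plan is to work in the explicit parameterisation from the proof of Lemma~\ref{cuc}: conjugate so that $f(z)=\lambda z$ with $0<\lambda<1$ and
\[
g(z)=\frac{(\mu-a)z+(a-a\mu)}{(\mu-1)z+(1-a\mu)},
\]
where $\mu>1$, $0<a<1$, and set $L=\sqrt\lambda$, $M=\sqrt\mu$, $A=\sqrt a$. The hypotheses translate cleanly: $\tr(f)\leq\tr(g)$ is $LM\geq 1$; the condition $\ell_f\cap\gamma_g\neq\emptyset$ is $L>A$, by Lemma~\ref{cuc}; and $\ell_f$ and $\ell_g$ not meeting in $\mathbb H$ says precisely that $fg=\sigma_f\sigma_g$ is parabolic or hyperbolic, which proves the first half of the final assertion and is equivalent to $(LM-1)^2\geq A^2(L-M)^2$. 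Moreover, by the last part of Lemma~\ref{cuc} the matrix $FG$ has positive trace, so $\tr(fg)=\Tr(FG)$ is given by the explicit formula there.

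The two displayed inequalities are the same assertion, both rearranging to $\tr(g)+2>\tr(f)+\tr(fg)$. Writing $U=LM-1\geq 0$ and $V=M-L>0$, a short calculation gives $\tr(g)-\tr(f)=UV/(LM)$ and $\tr(fg)-2=(U^2-A^2V^2)/[LM(1-A^2)]$. Combining these over the common denominator $LM(1-A^2)$, I aim for the factorisation
\[
\tr(f)+\tr(fg)-\tr(g)-2=\frac{(U-V)(U+A^2V)}{LM(1-A^2)}.
\]
Since $U+A^2V>0$ and $LM(1-A^2)>0$, the sign of the right-hand side is carried by $U-V=(L-1)(M+1)$, which is negative because $L<1<M$. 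This yields the required inequality.

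For the antiparallel claim I exploit that $\alpha_f=0$ and $\beta_f=\infty$, so by Lemma~\ref{xxu} the pair $(f,fg)$ is antiparallel iff $\alpha_{fg}/\beta_{fg}>1$. The fixed points of $fg$ are the roots $z_1,z_2$ of $(\mu-1)z^2+(1-a\mu-\lambda\mu+\lambda a)z+\lambda a(\mu-1)=0$, with product $\lambda a>0$ and sum $(\mu(\lambda+a)-1-\lambda a)/(\mu-1)$. The sum is positive because $\mu\geq 1/\lambda$ gives $\mu(\lambda+a)\geq 1+a/\lambda>1+\lambda a$, so both roots are positive and in particular $f$ and $fg$ have no common fixed points. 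I then evaluate the quadratic at the pole $z^{*}=(a\mu-1)/(\mu-1)$ of $fg$, obtaining $\lambda\mu(1-a)^2/(\mu-1)>0$, so $z^{*}$ lies outside the interval spanned by the roots. Since (sum of roots)$-2z^{*}$ is a positive multiple of $\Tr(FG)$, which is positive, $z^{*}$ lies below both roots. Consequently $(\mu-1)z+(1-a\mu)>0$ at each root, and the multiplier $(fg)'(z)=\lambda\mu(1-a)^2/[(\mu-1)z+(1-a\mu)]^2$ is smaller at the larger root, identifying it as $\alpha_{fg}$. The parabolic boundary case $\tr(fg)=2$ is handled by taking the limit as $z_1\to z_2$, or by a direct check from the same quadratic.

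The main obstacle is producing the factorisation in the second paragraph. A priori $\tr(f)+\tr(fg)-\tr(g)-2$ is a complicated rational function of $\lambda,\mu,a$, and there is no obvious reason for it to factor with a sign-carrying factor as neat as $(L-1)(M+1)$. The crux is to recognise, after clearing denominators, the grouping $U(U-V)+A^2V(U-V)$; once this is spotted, the rest of the proof is a direct check.
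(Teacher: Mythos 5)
Your proof is correct, and I checked the key computations: the translations $\tr(f)\leq\tr(g)\Leftrightarrow LM\geq 1$ and $\ell_f\cap\gamma_g\neq\emptyset\Leftrightarrow L>A$ are right, the identities $\tr(g)-\tr(f)=UV/(LM)$ and $\tr(fg)-2=(U^2-A^2V^2)/[LM(1-A^2)]$ hold, the factorisation $(U-V)(U+A^2V)$ of the numerator with $U-V=(L-1)(M+1)<0$ is exact, and the fixed-point quadratic, the value $\lambda\mu(1-a)^2/(\mu-1)$ at the pole, and the multiplier computation all check out. However, you take a genuinely different route from the paper. The paper stays in the reflection-line coordinates: it normalises $f(z)=\lambda^2z$ with $\ell$ the upper unit semicircle, so that $\ell_f$ has endpoints $\pm\lambda$ and $\ell_g$ has endpoints $u<v$; it uses \emph{both} hypotheses ($\tr(f)\leq\tr(g)$ together with $\ell_f\cap\ell_g=\emptyset$) to pin down $-\lambda<u<v\leq\lambda$, computes $\tr(g)-\tr(fg)=2(1-\lambda)(uv+\lambda)/[\lambda(v-u)]$, and then bounds this below by $\tr(f)-2=(1-\lambda)^2/\lambda$ via a case split on the sign of $u$; the antiparallel claim is then read off geometrically from the positions of $\ell_f$ and $\ell_g$. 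Your version instead recycles the coordinates of Lemma~\ref{cuc}, collapses both displayed inequalities into the single statement $\tr(g)+2>\tr(f)+\tr(fg)$, and disposes of it with one sign-transparent factorisation and no case analysis; a small bonus is that your argument shows the trace inequality needs only $\tr(f)\leq\tr(g)$ and the positivity of $\Tr(FG)$ supplied by Lemma~\ref{cuc}, with the non-intersection of $\ell_f$ and $\ell_g$ used only for the \emph{furthermore} clause. Your treatment of the antiparallel claim is algebraic (locating both roots of the fixed-point quadratic above the pole and identifying $\alpha_{fg}$ by the multiplier) where the paper's is geometric, but both reach $C(f,fg)>1$ via Lemma~\ref{xxu}; note that you and the paper are equally brisk about the parabolic boundary case $\tr(fg)=2$, which strictly speaking requires a separate appeal to the interval definition of antiparallel rather than the cross-ratio criterion.
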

\begin{proof}
The second inequality in the first part of the lemma is a rearrangement of the first inequality, so we focus on the first.

By conjugation we can assume that $f(z)=\lambda^2z$, where $0<\lambda<1$. We can also assume that $\ell$ is the upper half of the unit circle, and, after conjugating by $z\mapsto -z$ if necessary, we can assume that the fixed points of $g$ are positive. The end points of $\ell_f$ are $\pm \lambda$. Let $u$ and $v$ be the end points of $\ell_g$, with $u<v$. Observe that $-\lambda< u<v\leq \lambda$ because $\tr(f)\leq \tr(g)$ (so the translation length of $f$ is less than or equal to  that of $g$) and $\ell_f$ and $\ell_g$ do not intersect.

As $g=\sigma\sigma_g$, where $\sigma(z)=1/\bar{z}$ and $\sigma_g(z)=((u+v)\bar{z}-2uv)/(2\bar{z}-(u+v))$, we have 
\[
g(z) = \frac{2z-(u+v)}{(u+v)z-2uv}.
\]
Furthermore, 
\[
\tr(f) = \lambda+\frac{1}{\lambda},\quad \tr(g) = \frac{2(1-uv)}{v-u},\quad\text{and}\quad \tr(fg)=\frac{2(\lambda^2-uv)}{\lambda(v-u)}.
\]
Hence 
\[
\tr(g)-\tr(fg) = \frac{2(1-\lambda)(uv+\lambda)}{\lambda(v-u)}.
\]

Now we split the argument into two cases. Suppose first that $u\geq 0$. Then 
\[
\tr(g)-\tr(fg) \geq \frac{2(1-\lambda)(uv+v)}{\lambda v}=\frac{2(1-\lambda)(u+1)}{\lambda}>\frac{(1-\lambda)^2}{\lambda}=\tr(f)-2.
\]
Suppose now that $u<0$. Then we must have $-\lambda<u<0<v\leq\lambda$. Therefore
\[
\tr(g)-\tr(fg) > \frac{2(1-\lambda)(\lambda-\lambda^2)}{\lambda (v-u)}>\frac{(1-\lambda)^2}{\lambda}=\tr(f)-2.
\]

Let us turn to the second part of the lemma. As $\ell_f$ and $\ell_g$ do not intersect, we see that $fg$ is not elliptic. It is parabolic if $v=\lambda$ and hyperbolic otherwise. In both cases one can easily check that $f$ and $fg$ are antiparallel; for example, in the second case, the repelling fixed point of $fg$ lies between $\tfrac12(u+v)$ and $v$ and the attracting fixed point is greater than $\lambda$, so $C(f,fg)=\alpha_{fg}/\beta_{fg}>1$.
\end{proof}

Suppose that $f$ and $g$ are antiparallel hyperbolic transformations with $\tr(f)\leq \tr(g)$ and $\tr[f,g]<\tr(g)^2-2$. Note that $\tr [f,g]=\tr[g,f]$. Let
\[
\Phi\big((f,g)\big)=
\begin{cases}
(f,fg), & \text{if $\tr(f)\leq\tr(fg)$},\\
(fg,f), & \text{if $\tr(fg)<\tr(f)$}.
\end{cases}
\]
We define a sequence of pairs of M\"obius transformations $(f_n,g_n)$, $n=0,1,\dotsc$, by $(f_0,g_0)=(f,g)$ and $(f_{n+1},g_{n+1})=\Phi\big((f_n,g_n)\big)$. The sequence terminates if we reach a pair $(f_n,g_n)$ for which $f_n$ and $g_n$ are  \emph{not} antiparallel hyperbolic transformations with $\tr[f_n,g_n]<\tr(g_n)^2-2$.

\begin{lemma}\label{zzo}
The sequence $(f_n,g_n)$, $n=0,1,\dotsc$, terminates, for any starting pair $(f,g)$.
\end{lemma}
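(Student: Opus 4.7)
The plan is to argue by contradiction: assume the sequence $(f_n, g_n)$ never terminates. Set $t_n = \tr(f_n)$, $s_n = \tr(g_n)$, and $u_n = \tr(f_n g_n)$. The first key point is that $\tau := \tr[f_n, g_n]$ is invariant under $\Phi$; this follows from the standard identities $\Tr([A, AB]) = \Tr([A, B])$ and $\Tr([AB, A]) = \Tr([A, B])$ in $\text{SL}(2,\mathbb{R})$, each obtained by a short cyclic-trace-plus-conjugation manipulation (and compatible with passing to $\tr = |\Tr|$). By Lemma~\ref{cuc}, $\tau > 2$.

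A direct inspection of the two branches of $\Phi$ shows $t_{n+1} + s_{n+1} = t_n + u_n$ in both cases. To invoke Lemma~\ref{hos} at step $n$, I need its three hypotheses: antiparallelism and hyperbolicity of $(f_n,g_n)$ (built into the recursion), $\ell_{f_n}$ meeting $\gamma_{g_n}$ (equivalent to $\tau < s_n^2 - 2$, the continuation criterion), and $\ell_{f_n}$ disjoint from $\ell_{g_n}$ (equivalent to $u_n \geq 2$). The last must hold along an infinite sequence, because otherwise $f_n g_n$ is parabolic or elliptic and so one of $f_{n+1}, g_{n+1}$ fails to be hyperbolic, ending the sequence. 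Lemma~\ref{hos} then yields
\[
(s_n + t_n) - (s_{n+1} + t_{n+1}) \;=\; s_n - u_n \;>\; t_n - 2 \;>\; 0.
\]
Hence $s_n + t_n$ is a strictly decreasing sequence bounded below by $4$, so it converges. This forces $t_n \to 2$ and $s_n \to s^*$ for some $s^* \geq \sqrt{\tau + 2} > 2$, using the continuation condition $s_n > \sqrt{\tau + 2}$.

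To close the argument, I would note that Case 2 of $\Phi$ must occur for infinitely many $n$: otherwise $t_n$ is eventually constant, and that constant would equal $\lim t_n = 2$, contradicting $t_n > 2$. But along any Case-2 subsequence $(n_k)$, the definition of $\Phi$ gives $s_{n_k + 1} = t_{n_k}$, so $s^* = \lim_k s_{n_k + 1} = \lim_k t_{n_k} = 2$, contradicting $s^* > 2$. The main obstacles will be (i) the sign and lift bookkeeping behind the invariance of $\tr[f_n, g_n]$ when shuttling between $\text{SL}(2,\mathbb{R})$ and $\text{PSL}(2,\mathbb{R})$, and (ii) certifying that Lemma~\ref{hos}'s hypotheses genuinely propagate along the recursion; everything else reduces to monotone convergence together with a clean subsequence extraction.
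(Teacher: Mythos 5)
Your proof is correct, and its endgame is genuinely different from the paper's. Both arguments share the same first half: the invariance of $\tr[f_n,g_n]$ under $\Phi$, the lower bound $\tr(g_n)>\sqrt{\tau+2}>2$ from the continuation criterion, and the use of Lemma~\ref{hos} to make a trace quantity strictly decrease by at least $\tr(f_{n+1})-2$ at each step (the paper tracks $\tr(g_n)$ directly, you track $\tr(f_n)+\tr(g_n)$; these are equivalent since $\{f_{n+1},g_{n+1}\}=\{f_n,f_ng_n\}$), whence $\tr(f_n)\to 2$ and $\tr(g_n)\to s^*>2$. Where you diverge is in extracting the contradiction. The paper invokes the Fricke identity $\Tr[F,G]=\Tr(F)^2+\Tr(G)^2+\Tr(FG)^2-\Tr(F)\Tr(G)\Tr(FG)-2$, which requires the sign analysis in the last part of Lemma~\ref{cuc} to pin down $\Tr(F_nG_n)>0$, and then computes that the right-hand side tends to $2$ while the left-hand side is $\pm\tau$ with $\tau>2$. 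You instead make a purely combinatorial observation about which branch of $\Phi$ fires: the second branch must occur infinitely often (else $\tr(f_n)$ is eventually constant yet tends to $2$, contradicting hyperbolicity), and along any such subsequence $\tr(g_{n_k+1})=\tr(f_{n_k})\to 2$, contradicting $s^*>2$. This buys you a shorter, more elementary finish with no trace identity and none of the lift/sign bookkeeping you were worried about (the commutator $[F,G]$ is independent of the sign choices of the lifts, so the invariance of $\tr[f_n,g_n]$ is unproblematic). Your verification that the hypotheses of Lemma~\ref{hos} propagate is also sound: non-termination forces each pair to be antiparallel hyperbolic with $\tr[f_n,g_n]<\tr(g_n)^2-2$ and forces $f_ng_n$ to be non-elliptic, and these are exactly the geometric hypotheses of that lemma by Lemma~\ref{cuc} and the identity $f_ng_n=\sigma_{f_n}\sigma_{g_n}$.
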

\begin{proof}
Suppose, on the contrary, that the sequence does not terminate. Observe that all the numbers $\tr[f_n,g_n]$ are equal to some constant $s$, and by Lemma~\ref{cuc} we know that $s>2$. Next, observe that $s<\tr(g_n)^2-2$ for all $n$, so $\tr(g_n)>t$ for all $n$, where $t=\sqrt{s+2}>2$. 

Lemma~\ref{hos} tells us that $\tr(g_n)-\tr(g_{n+1})>\tr(f_{n+1})-2$, for $n=0,1,\dotsc$. It follows that the sequence $(\tr(g_n))$ is decreasing, with limit $l$, say, where $l>2$. Also, we have $\tr(g_0)-\tr(g_n) > \sum_{j=1}^n (\tr(f_j)-2)$, from which we deduce that $\tr(f_n)\to 2$ as $n\to\infty$. 

Now, for each $n$, define $F_n$ and $G_n$ to be lifts of $f_n$ and $g_n$ to $\text{SL}(2,\mathbb{R})$, respectively, such that both matrices have positive trace. From a well-known trace identity (see, for example,  \cite[Lemma~1.5.6]{Ma2016}) we have
\[
\Tr(F_nG_nF_n^{-1}G_n^{-1}) = \Tr(F_n)^2+\Tr(G_n)^2+\Tr(F_nG_n)^2-\Tr(F_n)\Tr(G_n)\Tr(F_nG_n)-2.
\]
The left-hand side of this equation is equal to either $s$ or $-s$. Let us look at the right-hand side. By the last part of Lemma~\ref{cuc}, we know that $\Tr(F_nG_n)>0$. Therefore the right-hand side is 
\[
\tr(f_n)^2+\tr(g_n)^2+\tr(f_ng_n)^2-\tr(f_n)\tr(g_n)\tr(f_ng_n)-2.
\]
Since $\tr(f_n)\to 2$ and $\tr(g_n)\to l$, where $l>2$, we see that $g_{n+1}=f_ng_n$, for sufficiently large $n$ (because $\tr(f_{n+1})\leq \tr(g_{n+1}))$. Hence $\tr(f_ng_n)\to l$, also. It follows that the right-hand side of the equation converges to $4+l^2+l^2-2l^2-2=2$ as $n\to\infty$, which is a contradiction. Thus, contrary to our assumption, the sequence $(f_n,g_n)$, $n=0,1,\dotsc$, must terminate after all.
\end{proof}

Finally we can prove Theorem~\ref{piv}. 

\begin{proof}[P\textbf{roof of Theorem~\ref{piv}}]
We recall that $\tr(f)\leq\tr(g)$ and $\langle f,g\rangle$ is nonelementary. The theorem is trivially true if either $f$ or $g$ is elliptic, or if $f$ and $g$ are hyperbolic and not antiparallel (in the first case $\langle f,g\rangle$ is not semidiscrete and inverse free, and in the second case it is semidiscrete and inverse free).  When   $f$ is parabolic, the theorem, including the last part, follows from Theorems~\ref{acu} and~\ref{acd}. When $f$ and $g$ are antiparallel hyperbolic maps, and $\tr[f,g]\geq \tr(g)^2-2$, then the result follows from Lemmas~\ref{cac} and~\ref{cuc}.

The remaining case is that $f$ and $g$ are antiparallel hyperbolic maps and $\tr[f,g]<\tr(g)^2-2$. Let us form the sequence $(f_n,g_n)$, $n=0,1,\dotsc$, with $(f_0,g_0)=(f,g)$. Lemma~\ref{zzo} shows us that we must eventually reach a terminal pair $(f_m,g_m)$. By Lemma~\ref{hos}, $f_m$ and $g_m$ are antiparallel, so either one of them is parabolic, or otherwise they are both hyperbolic but $\tr[f_m,g_m]\geq \tr(g_m)^2-2$. Either way, we see from the earlier cases that either $\langle f_m,g_m\rangle$ is not semidiscrete and inverse free, or else it is contained in a Schottky group of rank two. If the former holds, then because $\langle f_m,g_m\rangle$ is contained in both $\langle f,g\rangle$ and $\langle f,fg\rangle$, we see that these two semigroups are not semidiscrete and inverse free either. If the latter holds, then because $f_k$ and $g_k$ generate as a group the same group no matter the index $k$, we see that $\langle f,g\rangle$ and $\langle f,fg\rangle$ are both contained in a Schottky group of rank two as well, so both are semidiscrete and inverse free. This completes the proof.
\end{proof}

So far in our study of two-generator nonelementary semigroups, we have ignored the possibility that one of the generators is elliptic, because if a semigroup has an elliptic generator, then it is not semidiscrete and inverse free. This gap in our study is filled by the theorem to follow shortly, after the next, handy lemma.

\begin{lemma}\label{qop}
Suppose that the two-generator semigroup $\langle f,g\rangle$ contains an element $w=w_1\dotsb w_n$ of finite order, where $w_j\in\{f,g\}$, for all $j$, and the maps $w_j$ are not all equal. Then $\langle f,g\rangle$ coincides with the group generated as a group by $f$ and $g$.
\end{lemma}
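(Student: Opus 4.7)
The plan is to exploit the fact that a finite-order element $w$ satisfies $w^k=I$ for some $k\geq 1$ to extract inverses of both generators from the semigroup. Since the letters $w_j$ are not all equal, we must have $n\geq 2$, and both $f$ and $g$ appear among $w_1,\dots,w_n$. Write $w^k = u_1 u_2 \dotsb u_N = I$ as a product of $N = kn \geq 2$ letters, each in $\{f,g\}$, in which both $f$ and $g$ occur.

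The key observation is that cyclic shifts of this word also equal the identity: for any index $i$,
\[
u_i u_{i+1} \dotsb u_N u_1 \dotsb u_{i-1} = u_{i-1}^{-1}\dotsb u_1^{-1}(u_1\dotsb u_N)u_1\dotsb u_{i-1} = I,
\]
since we are simply conjugating the identity. Consequently
\[
u_i^{-1} = u_{i+1}\dotsb u_N u_1 \dotsb u_{i-1},
\]
and because $N\geq 2$, the right-hand side is a nonempty product of letters from $\{f,g\}$, hence a genuine element of the semigroup $\langle f,g\rangle$. Applying this to an index $i$ with $u_i = f$, and again to an index with $u_i = g$, we conclude that both $f^{-1}$ and $g^{-1}$ belong to $\langle f,g\rangle$.

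Since $\langle f,g\rangle$ then contains $f$, $g$, $f^{-1}$, and $g^{-1}$ and is closed under composition, it contains the group generated as a group by $f$ and $g$. The reverse inclusion is automatic, so the two coincide. There is no real obstacle here beyond spotting the cyclic-shift trick; once one rotates the word $w^k$, the inverses of the letters appear for free as semigroup products of the remaining letters.
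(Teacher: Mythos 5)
Your proof is correct and is essentially the same as the paper's: replace $w$ by a power equal to the identity, cyclically permute the word to express each letter's inverse as a nonempty product of the remaining letters, and conclude that $f^{-1}$ and $g^{-1}$ lie in the semigroup. The only difference is that you spell out the conjugation justifying the cyclic shift, which the paper leaves implicit.
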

\begin{proof}
By replacing $w$ with a positive power of $w$, we can assume that $w=I$, the identity map. Then $w_k\dotsb w_nw_1\dotsb w_{k-1}=I$, for $k=1,\dotsc,n$, so we see that $w_k^{-1}\in\langle f,g\rangle$. In particular, both $f^{-1}$ and $g^{-1}$ belong to $\langle f,g\rangle$, so $\langle f,g\rangle$ is a group.
\end{proof}

Recall that $\text{ord}(f)$ denotes the order of an elliptic M\"obius transformation of finite order.

\begin{theorem}\label{cba}
Suppose that $f$ and $g$ are M\"obius transformations, neither the identity map, and $f$ is elliptic of finite order. Let $G$ be the group generated as a group by $f$ and $g$. Then $\langle f,g\rangle$ is semidiscrete if and only if exactly one of the following holds:
\begin{enumerate}
\item\label{ela} $g$ is elliptic of finite order, and $G$ is discrete 
\item\label{elb} $g$ is not elliptic and $f^ng$ has finite order for some positive integer $n$, and $G$ is discrete, or
\item\label{elc} $g$ is not elliptic and $f^ng$ is not elliptic for $n=1,\dots,\textnormal{ord}(f)-1$.
\end{enumerate}
In cases \ref{ela} and \ref{elb} we have $\langle f,g\rangle = G$, and in case \ref{elc} $\langle f,g\rangle$ has nonempty group part and nonempty inverse-free part, and it is contained in a discrete group. Furthermore, every semidiscrete semigroup generated by two nonidentity maps that has nonempty group part and nonempty inverse-free part is of type \ref{elc}.
\end{theorem}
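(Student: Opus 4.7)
The plan is to organise the proof around the three cases, dispose of~\emphref{ela} and~\emphref{elb} quickly by reducing them to the discreteness of $G$ via Lemma~\ref{qop}, and devote the geometric work to case~\emphref{elc}. Set $m=\textnormal{ord}(f)$. In~\emphref{ela}, both $f^{-1}=f^{m-1}$ and $g^{-1}=g^{\textnormal{ord}(g)-1}$ are already positive powers of the generators, so $\langle f,g\rangle=G$. In~\emphref{elb}, raising $f^{n}g$ to its order produces a word in $f$ and $g$ that equals the identity and involves both generators, and Lemma~\ref{qop} again gives $\langle f,g\rangle=G$. Consequently, in both cases the semidiscreteness of $\langle f,g\rangle$ is precisely the discreteness of $G$, which is exactly what the theorem asserts.

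For case~\emphref{elc}, I would prove that $f$ and $g$ generate, as a group, a discrete free product isomorphic to $\mathbb{Z}/m\mathbb{Z}\ast\mathbb{Z}$. One route is to invoke the Gilman--Maskit classification of two-generator Fuchsian groups \cite{GiMa1991}: when one generator is an elliptic of order $m$, their algorithm's only obstruction to discreteness is the appearance of an elliptic word of the form $f^{n}g$ with $1\leq n\leq m-1$, and this is forbidden here. A more self-contained route is to place the fixed point of $f$ at the centre of the Poincar\'e disc, take a wedge of angle $2\pi/m$ as a fundamental sector for $\langle f\rangle$, and apply Poincar\'e's polygon theorem to a side-paired polygon built from this sector together with a fundamental region for $\langle g\rangle$, using the non-ellipticity of each $f^{n}g$ to verify the vertex-cycle conditions. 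Either way, $G$ is discrete, so $\langle f,g\rangle\subset G$ is semidiscrete and contained in a discrete group. The freeness of the free product further implies that any word of the form $f^{k_{0}}gf^{k_{1}}g\cdots gf^{k_{l}}$ with at least one factor of $g$ and $0\leq k_{i}\leq m-1$ is not a power of $f$, so the group part of $\langle f,g\rangle$ is exactly $\{I,f,\dots,f^{m-1}\}$ while the inverse-free part contains $g$ and is therefore nonempty.

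For the converse, suppose $\langle f,g\rangle$ is semidiscrete. If $g$ is elliptic, it must have finite order (else $g^{n}\to I$), placing us in~\emphref{ela}; if $g$ is not elliptic but some $f^{n}g$ with $1\leq n\leq m-1$ is elliptic, then semidiscreteness forces finite order, placing us in~\emphref{elb}; otherwise we are in~\emphref{elc}. To establish the ``Furthermore'' claim, let $S=\langle f,g\rangle$ be semidiscrete with both parts nonempty. Lemma~\ref{can} says the group part is generated by a subset of $\{f,g\}$, and this subset cannot be $\{f,g\}$ itself (otherwise $S=G$ has empty inverse-free part). After relabelling, assume only $f$ lies in the group part; then $f^{-1}$ is a positive power of $f$, so $f$ is elliptic of finite order. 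The map $g$ cannot be elliptic (finite order would force $g^{-1}\in S$; infinite order would break semidiscreteness), and no $f^{n}g$ can be elliptic (finite order would place us in~\emphref{elb} and force $S=G$; infinite order would again violate semidiscreteness). Therefore $\{f,g\}$ is of type~\emphref{elc}.

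The main obstacle is establishing the discreteness of $G$ in case~\emphref{elc}: this rests on a Gilman--Maskit-style classification, or equivalently on a direct Poincar\'e polygon construction, with everything else reducing to careful bookkeeping using Lemma~\ref{qop} and the basic semidiscreteness criteria developed earlier in the paper.
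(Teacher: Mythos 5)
Your proposal is correct and its core follows the same route as the paper: cases~\emphref{ela} and~\emphref{elb} are dispatched by showing $\langle f,g\rangle=G$ (the paper, like you, uses Lemma~\ref{qop} for case~\emphref{elb}), and the substance lies in showing that in case~\emphref{elc} the group $G$ is discrete and contains $\langle f,g\rangle$ properly. The paper's version of your ``self-contained route'' is to write $f^k=\sigma_k\sigma_0$ and $g=\sigma_0\sigma$ for reflections in hyperbolic lines through the fixed point of $f$, observe that the non-ellipticity of each $f^ng$ forces the mirror $\ell$ of $\sigma$ into a closed sector between consecutive lines $\ell_k,\ell_{k+1}$, and then use the triangle bounded by $\ell,\ell_k,\ell_{k+1}$ as a fundamental region for the reflection group containing $G$; this is exactly your Poincar\'e-polygon argument in reflection-group form, so you need not lean on the Gilman--Maskit algorithm (and your one-line paraphrase of their obstruction is not an accurate summary of their algorithm, so the reflection/Poincar\'e route is the one to keep). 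The one genuinely different step is how you obtain the nonempty inverse-free part: you argue algebraically from the free-product structure $\mathbb{Z}/m\mathbb{Z}\ast\mathbb{Z}$ that no positive word involving $g$ can equal a power of $f$, and then invoke Lemma~\ref{can}; the paper instead builds an explicit closed set $X=Y\cup f(Y)\cup\dotsb\cup f^{m-1}(Y)$ on the circle with $f(X)=X$ and $g(X)\subset Y\subsetneq X$, which forces $\langle f,g\rangle\neq G$ without needing the presentation of $G$. Your algebraic route buys a sharper conclusion (the group part is exactly $\langle f\rangle$) at the cost of having to extract the free-product presentation, not merely discreteness, from the polygon theorem; the paper's invariant-set argument is softer and self-contained. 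Your treatment of the converse and of the final ``furthermore'' claim matches the paper's (and is in fact spelled out in more detail than the paper's two-sentence version).
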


The proof of Theorem~\ref{cba} uses the concept of an \emph{anticonformal M\"obius transformation}, which is a map of the form $z\mapsto (a\overline{z}+b)/(c\overline{z}+d)$, where $a,b,c,d\in\mathbb{R}$ and $ad-bc<0$. The collection of all such maps is the full collection $\mathcal{A}$ of anticonformal isometries of $\mathbb{H}$. (Actually, the proof uses the unit disc $\mathbb{D}$ model of the hyperbolic plane -- the collection of anticonformal M\"obius transformation of $\mathbb{D}$ is obtained by conjugating $\mathcal{A}$ by a complex M\"obius transformation that takes $\mathbb{H}$ to $\mathbb{D}$.)

\begin{proof}[P\textbf{roof of Theorem~\ref{cba}}]
Suppose first that $g$ is elliptic of finite order. Then $\langle f,g\rangle$ is equal to $G$, so $\langle f,g\rangle$ is semidiscrete if and only if $G$ is discrete. The theorem also holds when $g$ is an elliptic of infinite order, trivially.

Now suppose that $g$ is not elliptic. Suppose also that $f^ng$ has finite order for some positive integer $n$. It follows from Lemma~\ref{qop} that $\langle f,g\rangle$ is equal to $G$, so, again, $\langle f,g\rangle$ is semidiscrete if and only if $G$ is discrete.

Next we consider the case when $g$ is not elliptic and $f^ng$ has infinite order for every positive integer $n$. Since $f$ has finite order ($m$, say), we need only concern ourselves with those values of $n$ from $1$ to $m-1$. In fact, we can assume that $f^ng$ is not elliptic for $n=1,\dots,m-1$, as otherwise -- if one of the maps $f^ng$ is elliptic of infinite order -- the theorem holds, trivially. We will prove that $\langle f,g\rangle$ is semidiscrete. Consider the action of $f$ and $g$ on the unit disc $\mathbb{D}$, for a change.  By conjugation, we can assume that $0$ is the fixed point of $f$. We can also assume that if $g$ is hyperbolic, then its axis lies in the right half of $\mathbb{D}$ (or it is the line between $-i$ and $i$) and is symmetric about the real axis, and if $g$ is parabolic, then its fixed point is $1$. Furthermore, by replacing $f$ with $f^d$, where $d$ and $m$ are coprime, we can assume that $f(z)=e^{2\pi i/m}z$. 

Next, for $k=0,1,\dots,m-1$, let $\ell_k$ denote the hyperbolic line between $\pm e^{\pi ik/m}$, and let $\sigma_k$ denote reflection in $\ell_k$. Then $f^k=\sigma_k\sigma_0$. Let $\sigma=\sigma_0g$, so $g=\sigma_0\sigma$. Then $\sigma$ is a reflection in some hyperbolic line $\ell$. This line cannot intersect any of the lines $\ell_k$ because if it did, then $f^kg=\sigma_k\sigma$ would be elliptic. Therefore $\ell$ must lie in one of the sectors between consecutive lines $\ell_k$ and $\ell_{k+1}$ (or $\ell_{m-1}$ and $\ell_0$), possibly meeting one or both of these lines on the ideal boundary.

The region enclosed by $\ell$, $\ell_k$, and $\ell_{k+1}$ (shown in Figure~\ref{ihx}) is a fundamental region for the discrete group of conformal and anticonformal M\"obius transformations generated as a group by $\sigma$, $\sigma_k$, and $\sigma_{k+1}$. This group contains $\langle f,g\rangle$, so $\langle f,g\rangle$ is discrete, and hence semidiscrete.

\begin{figure}[ht]
	\centering
	\begin{tikzpicture}[scale=1]
	\draw[draw=none,fill=col1] (0,0) -- (240:\Rad) arc (240:253:\Rad) -- ($(0,0)+({\Rad*cos(253)},{\Rad*sin(253)})$) arc (163:23:{\Rad*tan(20)}) -- (293:\Rad) arc (293:300:\Rad) -- cycle;
	\draw[black] (0,0) circle (\Rad);
	\hgline[black](0,0)(60:240:\Rad);
	\hgline[black](0,0)(120:300:\Rad);		
	\hgline[black](0,0)(253:293:\Rad);
	\node[draw=none,fill=none] at (0.05*\Rad,-0.8*\Rad){{\footnotesize  $\ell$}};
	\node[draw=none,fill=none] at (-0.4*\Rad,-0.45*\Rad){{\footnotesize $\ell_k$}};
	\node[draw=none,fill=none] at (0.45*\Rad,-0.45*\Rad){{\footnotesize $\ell_{k+1}$}};	
	\node[draw=none,fill=none] at (0,-0.18*\Rad){{\footnotesize $\tfrac{\pi}{m}$}};
	\draw[black] (240:0.3*\Rad) arc (240:300:0.3*\Rad);	
	\end{tikzpicture}
	\caption{Fundamental region for the group generated as a group by $\sigma$, $\sigma_k$, and $\sigma_{k+1}$}
	\label{ihx}
\end{figure}

The semigroup  $\langle f,g\rangle$ clearly has nonempty group part. Let us prove that it has nonempty inverse-free part. To do this, consider the two closed intervals on the unit circle 
\[
\{e^{\pi i\theta/m}\,: k\leq \theta\leq k+1\}\quad\text{and}\quad\{e^{\pi i\theta/m}\,: -(k+1)\leq \theta\leq -k\},
\]
and define $Y$ to be whichever of the two intervals contains both end points of $\ell$. Let $X=Y\cup f(Y)\cup \dotsb \cup f^{m-1}(Y)$. Then $X$ is a nontrivial closed subset of the unit circle that satisfies $f(X)=X$ and $g^{-1}(X)\subset Y$, so each element of the semigroup $\langle f,g^{-1}\rangle$ maps $X$ inside itself. 

Observe that $g(\sigma_0(X)) =\sigma_0(\sigma(\sigma_0(X)))\subset \sigma_0(X)$. If $g\in \langle f,g^{-1}\rangle$, then $g(X)\subset X$, so $g$ maps the finite set $X\cap \sigma_0(X)$ inside itself, which is false. Thus, $g\notin \langle f,g^{-1}\rangle$ after all. Consequently $g^{-1}\notin\langle f,g\rangle$, so $\langle f, g\rangle$ has nonempty inverse-free part.

It remains to show that every semidiscrete semigroup generated by two nonidentity maps that has nonempty group part and nonempty inverse-free part is of type (iii). Suppose that $\langle f,g\rangle$ is such a semigroup. Lemma~\ref{qop} tells us that one of $f$ or $g$ must be elliptic of finite order -- say $f$. Then the earlier parts of the theorem tell us that $\langle f,g\rangle$ must be of type (iii).
\end{proof}

Theorem~\ref{kad} follows immediately from Theorem~\ref{cba}, because if $\langle f,g\rangle$ is a nonelementary semidiscrete semigroup that is not inverse free, and not a group, then it must have nonempty group part and nonempty inverse-free part, which implies that it must be of class~\ref{elc} from Theorem~\ref{cba}.

Let us finish this section by proving a version of J{\o}rgensen's inequality for semidiscrete semigroups.

\begin{theorem}\label{fff}
Suppose that $f$ and $g$ are M\"obius transformations such that $\langle f,g\rangle$ is nonelementary and semidiscrete. Then either $f$ and $g$ both map a closed interval strictly inside itself, or else
\[
\lvert \Tr(F)^2-4 \rvert+\lvert\Tr[F,G]-2\rvert\geq 1,
\]
where $F$ and $G$ are lifts of $f$ and $g$, respectively, to $\textnormal{SL}(2,\mathbb{R})$.
\end{theorem}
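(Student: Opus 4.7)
\noindent\emph{Proof plan.} The strategy is to reduce to the classical J{\o}rgensen inequality for discrete nonelementary groups, using the classification Theorems~\ref{kad} and~\ref{piv} to handle each type of semidiscrete semigroup in turn. Recall that the classical inequality states that whenever $F$ and $G$ are lifts to $\textnormal{SL}(2,\mathbb{R})$ of M\"obius transformations $f$ and $g$ that together generate a nonelementary discrete group, then $|\Tr(F)^2-4|+|\Tr[F,G]-2|\geq 1$. So for \emph{any} two-generator subsemigroup of a nonelementary discrete group the desired inequality already holds, and in particular the conclusion holds in any case where we can exhibit a nonelementary discrete group containing $\langle f,g\rangle$.

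First I would apply Theorem~\ref{kad} to $\langle f,g\rangle$, which (being nonelementary and semidiscrete) is either \emph{(i)} semidiscrete and inverse free, \emph{(ii)} a Fuchsian group, or \emph{(iii)} of the form in \emphref{ruc}: $f$ is elliptic of finite order and $f^ng$ is not elliptic for $n=0,1,\dots,\textnormal{ord}(f)-1$. Case~\emphref{rub} is immediate, since then $\langle f,g\rangle$ is itself a nonelementary Fuchsian group, so classical J{\o}rgensen gives the inequality directly. In case~\emphref{ruc}, Theorem~\ref{cba}\emphref{elc} tells us that $\langle f,g\rangle$ is contained in a discrete group $G$; since $G\supset\langle f,g\rangle$ and the latter is nonelementary, so is $G$, so once again classical J{\o}rgensen applies to the pair $f,g$.

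This leaves case~\emphref{rua}, where $\langle f,g\rangle$ is semidiscrete and inverse free. Here I would invoke the final sentence of Theorem~\ref{piv}, which provides exactly the dichotomy we need: either $f$ and $g$ both map a closed interval strictly inside itself, in which case we are in the first alternative of the statement and nothing more needs to be shown, or else $\langle f,g\rangle$ is contained in a Schottky group of rank two. A Schottky group of rank two is discrete and (being free of rank two) nonelementary, so classical J{\o}rgensen once more gives the inequality $|\Tr(F)^2-4|+|\Tr[F,G]-2|\geq 1$.

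Since these three cases exhaust the possibilities, the proof is complete. There is no real obstacle: the essential work has already been done in Theorems~\ref{piv},~\ref{kad}, and~\ref{cba}, which together force any nonelementary semidiscrete two-generator semigroup to either embed into a nonelementary discrete group (where the classical inequality is available) or fall into the interval-contracting exceptional class that the theorem explicitly carves out. The only mild subtlety is to note that in case~\emphref{ruc} we must check that the ambient discrete group given by Theorem~\ref{cba} is nonelementary, which follows automatically from the assumption that $\langle f,g\rangle$ itself is nonelementary.
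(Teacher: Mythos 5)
Your proposal is correct and follows essentially the same route as the paper: reduce via Theorem~\ref{kad} (and, in the inverse-free case, the final assertion of Theorem~\ref{piv}) to the situation where $\langle f,g\rangle$ is contained in a nonelementary discrete group, and then apply the classical J{\o}rgensen inequality. The paper's proof is just a terser version of the same case analysis.
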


Equality is achieved in the trace inequality for the maps $f(z)=z+1$ and $g(z)=-1/z$, which generate the modular group (even as a semigroup).

\begin{proof}
Suppose that $f$ and $g$ do not both map a closed interval strictly inside itself. By Theorem~\ref{kad}, we know that either $\langle f,g\rangle$ is semidiscrete and inverse free, or else it is contained in a discrete group. In the former case, by Theorem~\ref{piv}, $\langle f,g\rangle$ is contained in a Schottky group of rank two. Therefore in all cases $\langle f,g\rangle$ is contained in a discrete group, so J{\o}rgensen's inequality $\lvert\Tr(F)^2-4\rvert+\lvert\Tr[F,G]-2\rvert\geq 1$ applies from the theory of discrete groups. 
\end{proof}

%%%%%%%%%%%%%%%%%
\section{Classification of semigroups}\label{oaq}
%%%%%%%%%%%%%%%%%

In this section we prove Theorem~\ref{ppd} and various other results. Our first theorem has a well-known counterpart in the theory of Fuchsian groups. Recall that $\alpha_f$ and $\beta_f$ denote the attracting and repelling fixed points of a hyperbolic map $f$, respectively.

\begin{theorem}\label{cca}
Suppose that $S$ is a nonelementary semigroup. Then for every pair of open subsets $U$ and $V$ of $\overline{\mathbb{R}}$ such that $U$ meets $\Lambda^+(S)$ and $V$ meets $\Lambda^-(S)$, there is a hyperbolic element of $S$ with attracting fixed point in $U$ and repelling fixed point in $V$.
\end{theorem}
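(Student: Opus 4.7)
\noindent\emph{Plan.} The plan is a ping-pong/Schottky argument. First I would use Theorem~\ref{aiu}\emphref{zfa} applied to both $S$ and $S^{-1}$ (using $\Lambda^+(S)=\Lambda^-(S^{-1})$) to conclude that attracting fixed points of hyperbolic elements of $S$ are dense in $\Lambda^+(S)$, and likewise repelling fixed points are dense in $\Lambda^-(S)$. This lets me pick hyperbolic $f,g\in S$ with $\alpha_f\in U$ and $\beta_g\in V$; since $\Lambda^+(S)$ is perfect I may also ensure $\alpha_f\neq \beta_g$. The desired hyperbolic element will have the shape $h=f^n k g^m$ for a fixed $k\in S$ and $n,m$ chosen large.

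The key preparatory step is to locate a ``buffer'' $k\in S$ with $r:=k(\alpha_g)\neq \beta_f$. Such a $k$ must exist: otherwise every element of $S$ would send $\alpha_g$ to $\beta_f$, and then composing any two elements of $S$ would force $\beta_f$ itself to be fixed by every element of $S$, yielding a finite orbit $\{\beta_f\}$, which contradicts the nonelementarity of $S$.

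With such a $k$ in hand, I would fix disjoint closed arcs $\overline{A}\subset U$ around $\alpha_f$ and $\overline{B}\subset V$ around $\beta_g$, small enough that $\beta_f\notin \overline{A}$ and $\alpha_g\notin \overline{B}$, choose an open $W\ni r$ whose closure avoids $\beta_f$, and pick an open neighbourhood $V'$ of $\alpha_g$ with $V'\cap \overline{B}=\emptyset$ and $k(V')\subset W$. The north--south dynamics of $g$ then give $g^m(\overline{\mathbb{R}}\setminus B)\subset V'$ for all sufficiently large $m$, and those of $f$ give $f^n(\overline{W})\subset A$ for all sufficiently large $n$. Composing,
\[
h(\overline{\mathbb{R}}\setminus B)\subset A
\]
for all large $n,m$. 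Since $\overline{A}\subset \overline{\mathbb{R}}\setminus B$ and $\overline{B}\subset \overline{\mathbb{R}}\setminus A$, this gives both $h(\overline{A})\subset A$ and $h^{-1}(\overline{B})\subset B$. The intermediate value theorem on the arcs $\overline{A}$ and $\overline{B}$ then produces a fixed point $p\in \overline{A}$ of $h$ and a fixed point $q\in \overline{B}$ of $h^{-1}$. These two fixed points are distinct because the arcs are disjoint, so $h$ is hyperbolic with $\alpha_h=p\in A\subset U$ and $\beta_h=q\in B\subset V$, as required.

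\noindent\emph{Main obstacle.} The one step that is not entirely routine is the construction of the buffer $k$ to avoid the coincidence $\alpha_g=\beta_f$; this is the unique place where nonelementarity is genuinely needed. Everything else is standard north--south dynamics combined with a Brouwer/IVT fixed-point argument.
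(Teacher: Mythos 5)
Your proof is correct and follows essentially the same route as the paper's: both obtain hyperbolic $f,g$ with $\alpha_f\in U$, $\beta_g\in V$ from the density of fixed points (Theorem~\ref{aiu}), and then run a north--south/ping-pong argument on high powers to produce a map contracting an arc in $U$ whose inverse contracts an arc in $V$. The only difference is organisational: the paper splits into cases and inserts a buffer element $s$ (forming $f^n s g^n$) only when $\beta_f=\alpha_g$, whereas you insert a buffer $k$ uniformly, with the weaker requirement $k(\alpha_g)\neq\beta_f$ whose existence you justify correctly from nonelementarity.
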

\begin{proof}
By Theorem~\ref{aiu}, there are hyperbolic maps $f$ and $g$ in $S$ such that $\alpha_f\in U$ and $\beta_g\in V$, and since $\Lambda^+(S)$ is perfect we can assume that $\alpha_f\neq \beta_g$. If either $\alpha_g\in U$ or $\beta_f\in V$, then we have found a hyperbolic map of the required type. Suppose instead that $\alpha_g\notin U$ and $\beta_f\notin V$. For the moment, let us assume also that $\beta_f\neq \alpha_g$ (so no two of $\alpha_f$, $\beta_f$, $\alpha_g$, and $\beta_g$ are equal). We can choose pairwise disjoint open intervals $A_f$, $B_f$, $A_g$, and $B_g$ such that $\alpha_f\in A_f$, $\beta_f\in B_f$, $\alpha_g\in A_g$, and $\beta_g\in B_g$, and also $A_f\subset U$ and $B_g\subset V$. Now let $n$ be a sufficiently large positive integer that $f^n$ maps the complement of $B_f$ into $A_f$, and $f^{-n}$ maps the complement of $A_f$ into $B_f$. Suppose also that $n$ is large enough that $g^n$ maps the complement of $B_g$ into $A_g$, and $g^{-n}$ maps the complement of $A_g$ into $B_g$. One can now check that the map $h=f^ng^n$ satisfies $h(A_f)\subset A_f$ and $h^{-1}(B_g)\subset B_g$. Hence $h$ is a hyperbolic element of $S$ with $\alpha_h\in U$ and $\beta_h\in V$, as required.

It remains to consider the case when $\beta_f=\alpha_g$ (and, as before, $\alpha_f$, $\beta_f$, and $\beta_g$ are pairwise distinct). Since $S$ is nonelementary, there is an element $s$ of $S$ that maps $\beta_f$ to a point outside the set $\{\alpha_f, \beta_f,\beta_g\}$. Also, by replacing $s$ with either $sg$ or $sf$ if necessary, we can assume that $s^{-1}(\beta_f)$ lies outside the set $\{\alpha_f,\beta_f,\beta_g\}$. We can now choose intervals $A_f$, $B_f$, $A_g$, and $B_g$ as before, but this time choose them such that $B_f=A_g$, and such that the larger collection of intervals $\{A_f, B_f,B_g,s(B_f),s^{-1}(B_f)\}$ is pairwise disjoint. Arguing in a similar way to before, we see that if  $n$ is chosen to be sufficiently large, the map $h=f^nsg^n$ is hyperbolic with $\alpha_h\in U$ and $\beta_h\in V$. 
\end{proof}

Let us now draw the reader's attention to three special families of M\"obius transformations, namely
\begin{enumerate}
\item\label{uua} $(a z -b)/(b z+a)$, $a^2+b^2=1$
\item\label{uub} $\lambda z$, $\lambda\geq 1$
\item\label{uuc} $z+\mu$, $\mu\geq 0$.
\end{enumerate}

Each of these is a one-parameter semigroup. The first consists of all elliptic rotations about the point $i$, the second consists of hyperbolic transformations with attracting fixed point $\infty$ and repelling fixed point $0$, and the third is a collection of parabolic transformations that fix $\infty$ (and the identity map is contained in each family too). We will prove that, up to conjugacy, any closed semigroup that is not semidiscrete contains one of these families. However, there is a caveat here: we must allow conjugacy by conformal \emph{or anticonformal} M\"obius transformations. For us, a \emph{conformal} M\"obius transformation is a map of the form $z\mapsto (az+b)/(cz+d)$, where $a,b,c,d\in\mathbb{R}$ and $ad-bc>0$, and, as we saw in the previous section, an \emph{anticonformal} M\"obius transformation is a map of the  form $z\mapsto (a\overline{z}+b)/(c\overline{z}+d)$, where $a,b,c,d\in\mathbb{R}$ and $ad-bc<0$. Thus far we have referred to conformal M\"obius transformations simply as M\"obius transformations, and we will continue to do so; we only introduce the term `conformal' now to emphasise the distinction with anticonformal M\"obius transformations. Of course, when you conjugate a semigroup by an anticonformal M\"obius transformation you obtain another semigroup. The reason we need anticonformal M\"obius transformations is to simplify the treatment of case~\ref{uuc}; after all, the two semigroups $\{z\mapsto z+\mu\,:\,\mu\geq 0\}$ and $\{z\mapsto z+\mu\,:\,\mu\leq 0\}$ are conjugate by $z\mapsto -\overline{z}$, but they are not conjugate in $\mathcal{M}$. In fact, the two semigroups are conjugate by the simpler map $z\mapsto -z$, because the transformation $z\mapsto \overline{z}$ commutes with real M\"obius transformations, so we may as well use this simpler conjugating map instead (as we have done already, in earlier sections).

For the next lemma, and later in this section, we write $\overline{S}$ to mean the closure of $S$ in $\mathcal{M}$.

\begin{lemma}\label{ljq}
Let $S$ be a closed semigroup that is not semidiscrete. Then $S$ is conjugate by a conformal or an anticonformal M\"obius transformation to a semigroup that contains one of the families {(i)}, {(ii)} or {(iii)}.
\end{lemma}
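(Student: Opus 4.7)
The plan is to extract a nontrivial one-parameter subsemigroup from the accumulation of $S$ at $I$, then classify it. Since $S$ is closed and not semidiscrete, $I \in S$ and $I$ is an accumulation point, so there exists a sequence $(g_n)$ in $S$ with $g_n \neq I$ and $g_n \to I$. The M\"obius group $\mathcal{M}$ is a Lie group and the exponential map $\exp \colon \mathfrak{sl}_2(\mathbb{R}) \to \mathcal{M}$ is a local diffeomorphism near $0 \leftrightarrow I$, so for all sufficiently large $n$ we may write $g_n = \exp(X_n)$ with $X_n \neq 0$ and $X_n \to 0$ in $\mathfrak{sl}_2(\mathbb{R})$.

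Fix any norm on $\mathfrak{sl}_2(\mathbb{R})$ and set $Y_n = X_n / \|X_n\|$. Passing to a subsequence, assume $Y_n \to Y$ with $\|Y\| = 1$, so $Y \neq 0$. For each real $t \geq 0$, let $k_n = \lfloor t/\|X_n\| \rfloor$, so that $k_n X_n \to tY$, and therefore
\[
g_n^{k_n} = \exp(k_n X_n) \longrightarrow \exp(tY).
\]
Since $g_n^{k_n} \in S$ and $S$ is closed, $\exp(tY) \in S$ for every $t \geq 0$. Thus the one-parameter subsemigroup $H = \{\exp(tY) : t \geq 0\}$ is entirely contained in $S$.

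It remains to conjugate $H$ into one of the standard forms, according to the classification of nonzero elements of $\mathfrak{sl}_2(\mathbb{R})$ by the sign of $\det(Y)$:
\begin{itemize}
\item If $\det(Y) > 0$ then each $\exp(tY)$ is elliptic and they share a common fixed point $z_0 \in \mathbb{H}$; because the one-parameter group they generate is periodic, $H$ already contains every rotation about $z_0$. Conjugate $S$ by an element of $\mathcal{M}$ sending $z_0$ to $i$ to obtain family~\textit{(i)}.
\item If $\det(Y) < 0$ then $H$ consists of hyperbolic transformations with a common axis; after a conformal conjugation sending this axis to the imaginary half-line, $H = \{e^{ct} z : t \geq 0\}$ for some $c \neq 0$. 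If $c < 0$, further conjugate by $z \mapsto -1/z$ (which is conformal and preserves $\mathbb{H}$) to interchange the fixed points $0$ and $\infty$, yielding family~\textit{(ii)}.
\item If $\det(Y) = 0$ then $H$ is a semigroup of parabolics with a common fixed point; conjugate so that this point is $\infty$, giving $H = \{z + ct : t \geq 0\}$ for some $c \neq 0$. If $c < 0$, apply the anticonformal conjugation $z \mapsto -z$ to reach family~\textit{(iii)}.
\end{itemize}

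The only real work is establishing that $\exp(tY) \in S$, and for that the crucial point is that $S$ is \emph{closed}; the powers $g_n^{k_n}$ automatically lie in $S$ because $S$ is a semigroup. The three-case analysis is then routine linear algebra in $\mathfrak{sl}_2(\mathbb{R})$, and the parabolic case is exactly where we need to allow anticonformal conjugation, because the conformal normalizer of $\{z \mapsto z + \mu\}$ acts on the parameter $\mu$ by positive scaling only.
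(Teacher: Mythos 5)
Your proof is correct, and it takes a genuinely different route from the paper's. The paper argues entirely by hand: it splits the sequence $g_n\to I$ into cases according to whether infinitely many terms are hyperbolic, parabolic, or elliptic, normalises the (varying) fixed points by auxiliary conjugations $h_n\to I$, chooses integer exponents $t_n$ so that $g_n^{t_n}$ converges to a prescribed target map in the relevant family, and in the awkward subcase of finite-order elliptics with distinct fixed points passes to commutators to manufacture hyperbolic elements and reduce to the earlier cases. Your argument replaces all of this with the standard closed-subgroup-theorem device: write $g_n=\exp(X_n)$ with $X_n\to 0$, rescale to get a limiting direction $Y\in\mathfrak{sl}_2(\mathbb{R})$ with $\|Y\|=1$, and observe that $g_n^{\lfloor t/\|X_n\|\rfloor}=\exp\bigl(\lfloor t/\|X_n\|\rfloor X_n\bigr)\to\exp(tY)$, so closedness of $S$ forces the whole one-parameter semigroup $\{\exp(tY):t\geq 0\}$ into $S$; the trichotomy on $\operatorname{sign}(\det Y)$ then lands exactly on families (i)--(iii), with the anticonformal conjugation needed precisely in the nilpotent case, as you note. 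What your approach buys is uniformity -- the type of the individual $g_n$ is irrelevant, only the limiting direction $Y$ matters, so the paper's delicate subcases (coincident versus distinct limiting fixed points, elliptics of finite order, the commutator trick) all disappear. What the paper's approach buys is self-containedness within elementary hyperbolic geometry, avoiding any appeal to the Lie-theoretic machinery of $\exp$ being a local diffeomorphism; it also makes visible exactly which powers of which group elements approximate the limiting family, which is in keeping with the explicit style of the rest of the paper. Both proofs are complete; the one small point worth making explicit in yours is that $\lfloor t/\|X_n\|\rfloor\geq 1$ for large $n$ when $t>0$ (so the powers really are positive powers and hence lie in the semigroup), and that $t=0$ is covered because $I\in S$ by closedness.
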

\begin{proof}
Since $S$ is not semidiscrete, there is a sequence $(g_n)$ in $S\setminus\{I\}$ that converges uniformly to $I$, the identity map. Suppose first that this sequence contains infinitely many hyperbolic elements. By passing to a subsequence, we can assume that every map $g_n$ is hyperbolic. Let $\alpha_n$ and $\beta_n$ be the attracting and repelling fixed points of $g_n$, respectively. By passing to a further subsequence of $(g_n)$, we can assume that the sequences $\alpha_n$ and $\beta_n$ both converge. Suppose for now that they converge to distinct values, which, after conjugating $S$ if need be, we can assume are $\infty$ and $0$, respectively. Let $(h_n)$ be any sequence of M\"obius transformations that satisfies $h_n(\infty)=\alpha_n$, $h_n(0)=\beta_n$, and $h_n\to I$ as $n\to\infty$. Define $k_n=h_n^{-1}g_nh_n$; this hyperbolic map has repelling fixed point $0$ and attracting fixed point $\infty$, so $k_n(z)=\lambda_n z$, where $\lambda_n> 1$. Furthermore, $k_n\to I$ as $n\to\infty$ (so $\lambda_n\to 1$). Now select any number $\lambda>1$, and define $f(z)=\lambda z$. By passing to yet another subsequence of $(g_n)$ if necessary,  we can assume that $\lambda_n<\lambda$, for $n=1,2,\dotsc$. For each positive integer $n$, the sequence $\lambda_n, \lambda_n^2,\lambda_n^3,\dotsc$ is strictly increasing with limit $\infty$. Define $t_n$ to be the unique positive integer such that $\lambda \in[\lambda_n^{t_n},\lambda_n^{t_n+1})$. Let $f_n=k_n^{t_n}$. Notice that $f_n(1)=\lambda_n^{t_n}$ and $f(1)=\lambda$. Also,
\[
\chi(f_n(1),f(1))\leq \chi(k_n^{t_n}(1),k_n^{t_n+1}(1))\leq \chi_0(k_n^{t_n},k_n^{t_n+1})=\chi_0(I,k_n).
\] 
Since $f_n$ fixes $0$ and $\infty$ for each $n$, we see that $f_n(1)\to f(1)$, $f_n(0)\to f(0)$, and $f_n(\infty)\to f(\infty)$ as $n\to\infty$, so $f_n\to f$. As a consequence, $g_n^{t_n}=h_nf_nh_n^{-1}\to f$ as $n\to\infty$. Therefore, in this case, the family of type~\ref{uub} is contained in our semigroup.

Near the start of the preceding argument we assumed that the sequences $(\alpha_n)$ and $(\beta_n)$ converged to \emph{distinct} values. Let us resume the argument from that point, but this time assume that $(\alpha_n)$ and $(\beta_n)$ converge to the same value, which, after conjugating $S$ if need be, we can assume is $\infty$. Let $(h_n)$ be any sequence of M\"obius transformations that satisfies $h_n(\infty)=\alpha_n$ and $h_n\to I$ as $n\to\infty$. Define $k_n=h_n^{-1}g_nh_n$; this hyperbolic map has attracting fixed point $\infty$. Let $\delta_n$ be the repelling fixed point of $k_n$. Then $\delta_n\to\infty$ as $n\to\infty$. By passing to a subsequence of $(g_n)$, we can assume that the numbers $\delta_n$ all have the same sign, which, after conjugating by $z\mapsto -z$ if need be, we can assume  is negative. We can write $k_n(z)=\lambda_n(z-\delta_n)+\delta_n$, where $\lambda_n>1$. As before, $k_n\to I$ as $n\to\infty$ (so $\lambda_n\to 1$). Now select any number $\mu>0$, and define $f(z)=z+\mu$. By passing to yet another subsequence of $(g_n)$,  we can assume that $k_n(0)=\delta_n(1-\lambda_n)<\mu$, for $n=1,2,\dotsc$. For each positive integer $n$, the sequence $k_n(0), k_n^2(0),k_n^3(0),\dotsc$ is strictly increasing with limit $\infty$. Define $t_n$ to be the unique positive integer such that $\mu\in[k_n^{t_n}(0),k_n^{t_n+1}(0))$. Let $f_n=k_n^{t_n}$. Then
\[
\chi(f_n(0),f(0))\leq \chi(k_n^{t_n}(0),k_n^{t_n+1}(0))\leq \chi_0(I,k_n).
\] 
Hence $f_n(0)\to f(0)$; that is, $\delta_n(1-\lambda_n^{t_n})\to \mu$. Since $\delta_n\to\infty$, we see that $\lambda_n^{t_n}\to 1$. Hence, for any real number~$x$, 
\[
f_n(x) = \lambda_n^{t_n}x +\delta_n(1-\lambda_n^{t_n}) \to x+\mu\quad\text{as}\quad n\to\infty.
\]
So $f_n\to f$, and, as a consequence, $g_n^{t_n}=h_nf_nh_n^{-1}\to f$ too. Therefore, in this case, the family of type~\ref{uuc} is contained in our semigroup.

We began this proof by choosing a sequence $(g_n)$ in $S\setminus\{I\}$ such that $g_n\to I$, and supposing that there were infinitely many hyperbolic maps in this sequence. If there are infinitely many parabolic maps in the sequence, then we can carry out an argument similar to those we have given already to show that a conjugate of $S$ contains the family of type~\ref{uuc}. The remaining possibility is that almost all the maps $g_n$ are elliptic, in which case we may as well assume that they are all elliptic. If any one of them is elliptic of infinite order, then we obtain the family of type~\ref{uua} (up to conjugacy) by considering the closure of the semigroup generated by that map alone. If infinitely many of the maps $g_n$ share a fixed point, then it is straightforward to see that $\overline{S}$ is conjugate to a semigroup that contains the family of type~\ref{uua}. The only other possibility is that the maps $g_n$ have infinitely many different fixed points. By passing to a subsequence of $(g_n)$, we can assume that the fixed points of the maps $g_n$ are pairwise distinct. Let $h_n=g_ng_{n+1}g_n^{-1}g_{n+1}^{-1}$. Since the maps $g_n$ are of finite order, $h_n$ is an element of $S$, and $h_n\to I$ as $n\to\infty$. Furthermore, one can easily check that $h_n$ is hyperbolic, so by the earlier arguments we see that the closure of our semigroup contains one of the families of types~\ref{uub} or~\ref{uuc}.
\end{proof}

\begin{lemma}\label{mkw}
Suppose that $g$ is a hyperbolic M\"obius transformation with $0<\alpha_g<\beta_g<+\infty$. Then in each of the families~\ref{uua}, ~\ref{uub}, and~\ref{uuc} there is a map $f$ such that $fg$ is elliptic. 
\end{lemma}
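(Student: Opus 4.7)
The plan is to parametrize each of the three families by a single real parameter, compute $\tr(fg)$ as a continuous function of that parameter, and verify that the function achieves a value in $[0,2)$ for some admissible parameter value. Since an element of $\mathcal{M}$ is elliptic exactly when its trace is less than $2$, producing such a value gives the required $f$.

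The starting point is a uniform normalization. Setting $k = g'(\alpha_g) \in (0,1)$, so that $\tr(g) = (1+k)/\sqrt k > 2$, one writes $g$ in $\mathrm{SL}(2,\mathbb{R})$ as
\[
G = \frac{1}{\sqrt k\,(\beta_g - \alpha_g)} \begin{pmatrix} \alpha_g - k\beta_g & \alpha_g\beta_g(k-1) \\ 1-k & k\alpha_g - \beta_g \end{pmatrix},
\]
which reduces the required traces to direct matrix arithmetic. For family \emphref{uuc}, with $f_\mu(z) = z + \mu$, a one-line computation yields
\[
\tr(f_\mu g) = \Bigl|\frac{\mu(1-k)}{\sqrt k\,(\beta_g-\alpha_g)} - \tr(g)\Bigr|,
\]
which vanishes at the strictly positive value $\mu^\ast = \tr(g)\sqrt k\,(\beta_g-\alpha_g)/(1-k)$, so $f_{\mu^\ast}g$ is elliptic.

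For family \emphref{uua}, with $f_\theta(z) = (\cos\theta\cdot z - \sin\theta)/(\sin\theta\cdot z + \cos\theta)$, the analogous multiplication gives
\[
\tr(f_\theta g) = \frac{|P\cos\theta + Q\sin\theta|}{\sqrt k\,(\beta_g-\alpha_g)},\qquad P = (1+k)(\beta_g-\alpha_g),\quad Q = (1-k)(1+\alpha_g\beta_g).
\]
The hypothesis $\alpha_g\beta_g > 0$ forces $Q > 0$, hence $(P,Q)\neq (0,0)$, and choosing $\theta$ with $\tan\theta = -P/Q$ makes the trace vanish, so $f_\theta g$ is elliptic.

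The main work, and the chief technical obstacle, is family \emphref{uub}. With $f_\lambda(z) = \lambda z$ and $t = \sqrt\lambda \geq 1$ one finds $\tr(f_\lambda g) = |tA + B/t|/[\sqrt k\,(\beta_g-\alpha_g)]$, where $A = \alpha_g - k\beta_g$ and $B = k\alpha_g - \beta_g < 0$. The analysis splits on the sign of $A$. If $A > 0$, the numerator has a zero at $t^\ast = \sqrt{-B/A}$, and $t^\ast \geq 1$ reduces via cross-multiplication to $\beta_g \geq \alpha_g$. If $A \leq 0$, the numerator equals $-At + (-B)/t$, whose minimum over $t > 0$ is $2\sqrt{AB}$ (by AM--GM), attained at $t^\ast = \sqrt{B/A}$, and again $t^\ast \geq 1$ follows from $\beta_g \geq \alpha_g$; the ellipticity condition $2\sqrt{AB} < 2\sqrt k\,(\beta_g-\alpha_g)$ expands, using $\alpha_g\beta_g > 0$, to $\alpha_g\beta_g(1-k)^2 > 0$, which holds. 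In either sub-case, $\lambda^\ast = (t^\ast)^2 \geq 1$ gives an elliptic $f_{\lambda^\ast}g$, completing the proof. The role of the assumption $0 < \alpha_g < \beta_g$ is thus concentrated in families \emphref{uua} and \emphref{uub}: in \emphref{uua} it secures $Q \neq 0$, and in \emphref{uub} it controls the sign of $AB$ and the admissibility $t^\ast \geq 1$.
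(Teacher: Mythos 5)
Your proof is correct, but it is not the proof the paper gives: the paper explicitly remarks that ``there is a simple algebraic proof of this lemma, but the geometric proof we offer is more illuminating,'' and yours is precisely that algebraic proof. The paper's argument factors $g=\gamma\beta$ as a product of reflections, choosing $\gamma$ to be reflection in the line orthogonal to the axis of $g$ through $i$ (for family~\emphref{uua}) or orthogonal to the positive imaginary axis (for family~\emphref{uub}), so that $f=\alpha\gamma$ lies in the required family and $fg=\alpha\beta$ is elliptic once $\ell_\alpha$ is chosen to cut $\ell_\beta$; the hypothesis $0<\alpha_g<\beta_g$ enters there as the geometric fact that $\ell_\beta$ sits where such an $\ell_\alpha$ exists within the family. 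Your trace computation buys something the geometric proof does not: an explicit parameter value in each family (indeed one with $\Tr(FG)=0$ in families~\emphref{uua} and~\emphref{uuc} and in the sub-case $A>0$ of~\emphref{uub}), and a precise accounting of where the hypothesis is used, namely in $Q>0$, in $t^\ast\geq 1$, and in the inequality $AB<k(\beta_g-\alpha_g)^2$, whose defect is exactly $(1-k)^2\alpha_g\beta_g$. The computations check out (I verified the normalised matrix $G$, the identity $\tr(g)=(1+k)/\sqrt{k}$, and all three trace formulas); the only loose end is the degenerate sub-case $A=0$ in family~\emphref{uub}, where your formula $t^\ast=\sqrt{B/A}$ is undefined, but there the trace is $|B|/(t\sqrt{k}(\beta_g-\alpha_g))\to 0$ as $t\to\infty$, so any sufficiently large $\lambda$ works and nothing is lost.
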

\begin{proof}
There is a simple algebraic proof of this lemma, but the geometric proof we offer is more illuminating. We consider cases~\ref{uua},~\ref{uub}, and~\ref{uuc} in turn; first case~\ref{uua}. Let $\gamma$ denote the reflection in the hyperbolic line $\ell_\gamma$ that passes through $i$ and is orthogonal to the axis of $g$. Then $g=\gamma\beta$, where $\beta$ is the reflection in another line $\ell_\beta$ that is parallel to $\ell_\gamma$. Choose any line $\ell_\alpha$ that passes through $i$ and intersects $\ell_\beta$, and let $\alpha$ be the reflection in $\ell_\alpha$. Then $f=\alpha\gamma$ is an elliptic map that fixes $i$, so it is of type~\ref{uua}. Moreover, $fg=\alpha\beta$ is also elliptic, because $\ell_\alpha$ and $\ell_\beta$ intersect in a point.

Now for case~\ref{uub}. This time let $\ell_\gamma$ be the line that is orthogonal to the axis of $g$ and to the vertical hyperbolic line $L$ between $0$ and $\infty$ (the positive imaginary axis). In Euclidean terms, $\ell_\gamma$ is a Euclidean semicircle centred on the origin that is symmetric about $L$. Outside this semicircle is the hyperbolic line $\ell_\beta$, where, as before, $g=\gamma\beta$. Let $\ell_\alpha$ be any hyperbolic line that is orthogonal to $L$ and cuts $\ell_\beta$. Then $f=\alpha\gamma$ is hyperbolic with attracting fixed point $\infty$ and repelling fixed point $0$, so it is of type~\ref{uub}. And again, $fg$ is elliptic.

Case~\ref{uuc} is similar to case~\ref{uub}, and we omit the argument. 
\end{proof}

We now set about proving Theorem~\ref{ppd}. We will need the following result of B{\'a}r{\'a}ny, Beardon, and Carne \cite[Theorem 3]{BaBeCa1996}, mentioned in the introduction.

\begin{theorem}\label{qab}
Suppose that $f$ and $g$ are noncommuting elements of $\mathcal{M}$, and one of them is an elliptic element of infinite order. Then $\langle f,g\rangle$ is dense in $\mathcal{M}$. 
\end{theorem}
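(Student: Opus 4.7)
The plan is to reduce the density problem to $KAK$ geometry on $\mathcal{M}\cong\text{PSL}(2,\mathbb{R})$. Without loss of generality $f$ is the elliptic of infinite order; by conjugating both $f$ and $g$ we may take the fixed point of $f$ to be $i$, and we write $K$ for the stabiliser of $i$ in $\mathcal{M}$ --- a compact one-parameter subgroup consisting of all rotations about $i$. Because $f$ has infinite order its rotation angle is an irrational multiple of $\pi$, so $\{f^n:n\geq 1\}$ is dense in $K$. The centraliser of $f$ in $\mathcal{M}$ equals $K$, so the hypothesis that $f$ and $g$ do not commute translates to $g\notin K$, i.e.\ $g(i)\neq i$.

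\emph{Step 1.} Let $S=\overline{\langle f,g\rangle}$. Approximating each $k\in K$ by a suitable subsequence of $(f^n)$ shows that $S$ contains $K$, and hence $S$ contains every element of the form
\[
k_0\,g\,k_1\,g\cdots g\,k_r,\qquad k_i\in K,\ r\geq 0.
\]
Let $T$ denote the set of all such elements. Then $T$ is a subsemigroup of $\mathcal{M}$ contained in $S$, and clearly $KT=TK=T$, so $T$ is a union of $K$-double cosets.

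\emph{Step 2.} By the Cartan ($KAK$) decomposition, the double cosets $K\backslash\mathcal{M}/K$ are parametrised by $[0,\infty)$ via $KhK\mapsto\rho(i,h(i))$. To prove $T=\mathcal{M}$, and hence $S=\mathcal{M}$, it therefore suffices to verify
\[
\bigl\{\rho(i,h(i)):h\in T\bigr\}=[0,\infty).
\]
Set $t_0=\rho(i,g(i))>0$. As $k$ ranges over $K$, the point $kg(i)$ traces the hyperbolic circle $C$ of radius $t_0$ about $i$, and $g^{-1}(i)$ is itself a point of $C$; by the intermediate value theorem on the connected set $C$, the continuous map
\[
k\longmapsto\rho(g^{-1}(i),kg(i))=\rho(i,gkg(i))
\]
surjects onto $[0,2t_0]$. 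Hence the two-$g$ words $gkg\in T$ already realise every distance in $[0,2t_0]$. An induction on $r$, applying the same circle argument once per additional copy of $g$, shows that the words in $T$ with $r$ copies of $g$ realise every distance in $[0,rt_0]$; taking the union over $r$ yields $[0,\infty)$.

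The main obstacle is Step~2: one has to recognise the situation as a statement about $KAK$ double cosets and verify that $K$-biinvariance of $T$ together with its intersecting every coset forces $T=\mathcal{M}$. Once this framework is in place, the rest is a standard continuity/induction argument on hyperbolic circles, using that $K$ acts transitively on each hyperbolic circle centred at $i$.
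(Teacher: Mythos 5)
Your argument is correct, but note that the paper does not actually prove this statement: Theorem~\ref{qab} is quoted verbatim from B\'ar\'any, Beardon, and Carne \cite{BaBeCa1996}*{Theorem 3} and used as a black box, so there is no in-paper proof to compare against. As a self-contained proof your proposal works. The reduction is sound: since $f$ is an infinite-order rotation about $i$, its positive powers are dense in the stabiliser $K$ of $i$, so the closed semigroup $S=\overline{\langle f,g\rangle}$ contains $K$ and hence the $K$-biinvariant set $T=\bigcup_{r\geq 0}K(gK)^r$; the noncommuting hypothesis is exactly $g\notin K$ (the centraliser of $f$ is $K$), giving $t_0=\rho(i,g(i))>0$; and the Cartan parametrisation of $K\backslash\mathcal{M}/K$ by $\rho(i,h(i))$ correctly reduces everything to realising all distances. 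Two small points of hygiene: the assertion that words with $r$ copies of $g$ realise all of $[0,rt_0]$ is false for $r=1$ (one copy of $g$ realises only $\{t_0\}$), so the induction must start at $r=2$, which is exactly your $gkg$ computation; and the inductive step deserves one explicit line, namely that a point $p=h^{-1}(i)$ at distance $s$ from $i$ sees the circle of radius $t_0$ about $i$ at all distances in $[\,|s-t_0|,\,s+t_0\,]$, and $\bigcup_{s\in[0,rt_0]}[\,|s-t_0|,\,s+t_0\,]=[0,(r+1)t_0]$. With those details filled in, the proof is complete, and it is arguably more transparent than invoking the original reference.
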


We also need the following lemma, taken from \cite[Theorem~8.4.1]{Be1995}.

\begin{lemma}\label{qui}
Any nonelementary nondiscrete group of M\"obius transformations contains an elliptic element of infinite order.
\end{lemma}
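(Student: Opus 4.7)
The plan is to combine the two hypotheses in stages. First, nondiscreteness yields a sequence $(g_m)$ in $G\setminus\{I\}$ with $g_m\to I$, and nonelementariness yields a hyperbolic $f\in G$ (via, for example, Theorem~\ref{aiu} applied with $G$ in place of $S$, since a nonelementary group must contain hyperbolic elements). After conjugation I may assume $f$ fixes $0$ and $\infty$.

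Next, I would produce small elliptic elements of $G$ using the commutator trace identity
\[
\Tr[F,H]-2 = \Tr(F)^{2}+\Tr(H)^{2}+\Tr(FH)^{2}-\Tr(F)\Tr(H)\Tr(FH)-4
\]
for $\text{SL}(2,\mathbb{R})$ lifts $F,H$. Expanding near $H=I$ with $H=G_m$ gives
\[
\Tr[F,G_m]-2 = \bigl(\Tr(G_m)-2\bigr)\bigl(4-\Tr(F)^{2}\bigr) + o\bigl(\chi_0(g_m,I)\bigr),
\]
and since $\Tr(F)^{2}>4$, the leading term is negative whenever $g_m$ is hyperbolic. By replacing $g_m$ with $fg_m$ if necessary, which is still in $G$ and has trace near $\Tr(f)>2$ for large $m$, I can assume $[f,g_m]$ is elliptic for all sufficiently large $m$. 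Setting $h_m=[f,g_m]$, this produces a sequence of small elliptic elements in $G$ with $h_m\to I$ and rotation angles $\theta_m\to 0$.

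If some $h_m$ has infinite order, we are done. Otherwise each $h_m$ has finite order $k_m$ and $\theta_m=2\pi p_m/k_m$ with $k_m\to\infty$. To finish, I would fix a large $m$ and study the two-generator subgroup $\langle h_m, f\rangle$ of $G$. This subgroup is nonelementary by Theorem~\ref{qhw}\emphref{jjb} provided the fixed point of $h_m$ is not fixed by $f$ and $h_m$ does not have order $2$ with axis interchanging the fixed points of $f$, which holds for all sufficiently large $m$. The subgroup contains elements such as $f^{j}h_mf^{-j}h_m^{\ell}$, whose traces depend on the integers $j,\ell$, on $\theta_m$, and on the geometric configuration of $h_m$ relative to the axis of $f$. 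One verifies by trace computation that, as $j,\ell$ and $m$ range over their available values, suitable products within $\langle h_m, f\rangle$ yield elliptic elements of $G$ whose rotation angles cannot all lie in the countable set $\{2\pi p/q : p,q\in\mathbb{Z}\}$, producing the desired infinite-order elliptic.

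The main obstacle is precisely this last step: extracting an infinite-order elliptic from the countable collection of small elliptic elements of $G$, subject to the hypothetical constraint that every elliptic of $G$ has finite order. The delicate point is that the finite-order constraint on every elliptic of $G$ imposes algebraic relations on the underlying trace parameters, and a careful case analysis, combined with the freedom afforded by letting $m$ range through the sequence furnished by nondiscreteness and by varying the integer exponents $j$ and $\ell$ in the constructed words, must show that no such relations can hold consistently across all choices, yielding the required contradiction.
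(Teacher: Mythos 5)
The paper does not actually prove this lemma; it is quoted verbatim from \cite{Be1995}*{Theorem~8.4.1}, so your proposal must stand as a self-contained proof of that classical result, and it does not. The first problem is the step that manufactures small elliptic elements. With $F$ diagonalised, say $F=\mathrm{diag}(u,u^{-1})$, and $H$ having entries $a,b,c,d$ with $ad-bc=1$, the exact identity is $\Tr[F,H]-2=-bc\,\bigl(\Tr(F)^2-4\bigr)$, and $-bc=-(\Tr(H)-2)-(a-1)(d-1)$. Both $\Tr(H)-2$ and $(a-1)(d-1)$ are second order in the distance from $H$ to $I$, so your error term $o(\chi_0(g_m,I))$ swallows the ``leading'' term, and the sign of $\Tr[F,G_m]-2$ is governed by the sign of $bc$, which is not determined by $g_m$ being hyperbolic (hyperbolicity only says $(a-d)^2+4bc>0$). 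Concretely, if $g_m$ is a small hyperbolic whose axis neither meets the axis of $f$ nor separates its endpoints, then $bc<0$ and $[f,g_m]$ is hyperbolic; if $g_m$ shares the axis of $f$ then $[f,g_m]=I$, and replacing $g_m$ by $fg_m$ cures neither case. So the sequence of elliptics $h_m\to I$ has not been produced.

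The second and decisive gap is that the final step, which is the entire content of the lemma, is not carried out. Having arrived at the genuinely hard configuration --- elliptic elements $h_m\to I$ of finite orders $k_m\to\infty$ --- you assert that ``one verifies by trace computation'' and that ``a careful case analysis \dots must show'' that some word in $\langle h_m,f\rangle$ is elliptic of irrational rotation angle. No such computation or case analysis is given, and it is not routine: for example, the natural candidates $h_m\cdot(gh_mg^{-1})$ are elliptic of angle $\psi$ with $\cos(\psi/2)=\cos^2(\pi/k_m)-\sin^2(\pi/k_m)\cosh d\bigl(w_m,g(w_m)\bigr)$, and the hypothesis that every elliptic of $G$ has finite order merely forces the countably many achievable values of $\cosh d$ into a countable set, which yields no contradiction by cardinality alone. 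Ruling out the scenario in which all elliptics have finite order is precisely what the classical proof (via J{\o}rgensen's inequality, or via the identity component of $\overline{G}$ being normal and hence forcing density) is designed to do. As written, the proposal correctly locates the difficulty but does not resolve it, so the lemma is not proved.
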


The next theorem is a stronger version of Theorem~\ref{ppd} (we will need this stronger statement later). Recall that $\mathcal{M}(J)$ denotes the semigroup of those M\"obius transformations that map a closed interval $J$ (which, according to our conventions, is nontrivial and not a singleton interval) within itself.

\begin{theorem}\label{lwk}
Suppose that $S$ is not elementary, semidiscrete, or contained in $\mathcal{M}(J)$, for any  closed interval $J$. Then there is a two-generator semigroup within $S$ that is dense in $\mathcal{M}$.
\end{theorem}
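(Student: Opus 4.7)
My plan is to reduce the problem to Theorem~\ref{qab} by producing an element $f \in S$ that is elliptic of infinite order together with an element $h \in S$ satisfying $fh \ne hf$. The starting point is Lemma~\ref{ljq}, which I would apply to $\overline{S}$: the closure inherits the non-semidiscreteness of $S$ and is closed, so the lemma's hypotheses are satisfied. After conjugating $S$ by a conformal or anticonformal M\"obius transformation---an operation that preserves every hypothesis of the theorem as well as the conclusion, since denseness in $\mathcal{M}$ is conjugation-invariant---we may therefore assume that $\overline{S}$ contains one of the one-parameter families~\emph{(i)},~\emph{(ii)}, or~\emph{(iii)} listed at the start of Section~\ref{oaq}.

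Next I would use the assumptions that $S$ is nonelementary and not contained in any $\mathcal{M}(J)$ to pick a hyperbolic $g \in S$ whose axis is in general position relative to the one-parameter family. In Cases~\emph{(ii)} and~\emph{(iii)} the family preserves the specific interval $[0,+\infty]$, and the hypothesis ``$S$ is not contained in $\mathcal{M}(J)$ for any $J$'' furnishes an element of $S$ not preserving this interval, from which a suitable hyperbolic $g$ can be extracted; Case~\emph{(i)} needs no special treatment, since any nonelementary $S$ contains a hyperbolic element whose axis avoids the common fixed point of the rotation family. With such $g$ in hand, Lemma~\ref{mkw}, together with continuity of the trace in the family parameter, shows that the set of $\alpha$ in the family with $\alpha g$ elliptic forms an open interval of parameters; removing the countable collection of parameters producing the exceptional traces $2\cos(\pi p/q)$, uncountably many $\alpha$ remain for which $\alpha g \in \overline{S}$ is elliptic of infinite order.

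The crux---and the main obstacle---is that these $\alpha$'s lie in $\overline{S}$, not necessarily in $S$. To pass to $S$, I would approximate each good $\alpha$ by a sequence $\alpha_n \in S$, so that $\alpha_n g \in S$ converges to $\alpha g$ and is elliptic for all large $n$; the danger is that the trace of $\alpha_n g$ could, a priori, remain in the countable exceptional set of finite-order traces for every $n$. I expect this to be ruled out by combining two sources of flexibility: the parameter $\alpha$ may be chosen throughout a continuum of good values so the achievable limit traces cover a whole subinterval of $(-2,2)$, and $S$ accumulates on every point of the family by Lemma~\ref{ljq}, so traces of elements of $S$ near the family are dense in this subinterval. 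A careful density argument, or an appeal to Baire category in an appropriate parameter space, should then produce $\alpha^* \in S$ for which $f := \alpha^* g \in S$ is elliptic of infinite order.

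Finally, since $f$ is elliptic with a unique fixed point $p \in \mathbb{H}$ and $S$ is nonelementary, there is $h \in S$ not fixing $p$; this $h$ cannot commute with $f$, because the stabiliser of $p$ in $\mathcal{M}$ is abelian, so commutation with $f$ would force $h$ to fix $p$. Applying Theorem~\ref{qab} to the non-commuting pair $(f, h)$ yields that $\langle f, h\rangle$ is dense in $\mathcal{M}$, and as $f, h \in S$ this two-generator semigroup is contained in $S$, completing the proof.
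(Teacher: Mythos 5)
Your overall strategy matches the paper's: apply Lemma~\ref{ljq} to put one of the one-parameter families inside $\overline{S}$, use the nonelementary and non-$\mathcal{M}(J)$ hypotheses together with Theorem~\ref{cca} to find a hyperbolic $g\in S$ positioned so that Lemma~\ref{mkw} applies, and then aim to invoke Theorem~\ref{qab}. You also correctly identify the crux: the elliptic products live in $\overline{S}$, and you must manufacture an elliptic element of \emph{infinite} order inside $S$ itself. The problem is that your proposed resolution of this crux does not work. The set $E=\{2\cos(\pi p/q)\}$ of traces of finite-order elliptics is countable but \emph{dense} in $[-2,2]$, so a sequence of traces lying entirely in $E$ can converge to any target value; knowing that the traces of $\alpha_n g$ (for $\alpha_n\in S$ approaching the family) are dense in a subinterval of $(-2,2)$ is perfectly consistent with every single one of them lying in $E$. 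A Baire category argument is also unavailable in general, since $S$ is an arbitrary semigroup (possibly countable) and is not a complete metric space in which the ``bad'' elements could be shown meagre. So the step producing $\alpha^*\in S$ with $\alpha^* g$ elliptic of infinite order is a genuine gap, not a routine verification.

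The paper closes this gap by a different mechanism, which you would need to import. Having chosen $f$ in the family with $fg$ elliptic and not of order two, it takes compositional roots $f^{1/n}$ and picks $n$ so large that $h=f^{1/n}$ and $g$ violate J{\o}rgensen's inequality; it then perturbs to $h_0\in S$ close to $h$ so that $h_0^n g$ is still elliptic not of order two and the violation persists. If $h_0^ng$ has infinite order, Theorem~\ref{qab} applies directly. If instead $h_0^ng$ has finite order --- precisely the case your density argument cannot exclude --- Lemma~\ref{qop} shows $\langle h_0,g\rangle$ is a \emph{group}; it is nonelementary (it contains a hyperbolic and an elliptic not of order two that does not swap the hyperbolic's fixed points), hence by the J{\o}rgensen violation it is not discrete, hence by Lemma~\ref{qui} it contains an elliptic of infinite order, and Theorem~\ref{qab} finishes. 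In other words, the finite-order case is not avoided but absorbed, and this is the missing idea in your proposal.
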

\begin{proof}
By Lemma~\ref{ljq}, we can assume, after conjugating $S$  by a conformal or anticonformal M\"obius transformation, that its closure $\overline{S}$ contains one of the families of maps~\ref{uua},~\ref{uub}, or~\ref{uuc}. In order to apply Lemma~\ref{mkw}, we will prove that there is a hyperbolic map $g$ in $S$ with $0<\alpha_g<\beta_g<+\infty$ (possibly after conjugating $S$ again). To this end, suppose first that $\overline{S}$ contains the family of type (i). The sets $\Lambda^-(S)$ and $\Lambda^+(S)$ are perfect, so, using Theorem~\ref{cca}, we can choose a hyperbolic map $g$ in $S$ such that $\chi(\alpha_g,\beta_g)$ is less than $\chi(0,\infty)=2$, the maximum value of $\chi$. After conjugating $S$ by elliptic rotations about $i$, and possibly the map $z\mapsto -z$ (which fixes the family of type~\ref{uua}), we can assume that $0<\alpha_g<\beta_g<+\infty$.

Suppose now that $\overline{S}$ contains a family of type~\ref{uub}.  Let us also suppose, for the moment, that we can write $\overline{\mathbb{R}}$ as the union of two closed intervals $J$ and $K$ that have disjoint interiors, but common end points $u$ and $v$ (so $\overline{\mathbb{R}}=J\cup K$ and $|J\cap K|=\{u,v\}$), with $\Lambda^+(S)\subset J$ and $\Lambda^-(S)\subset K$. By shrinking $J$ we can assume that $u,v \in \Lambda^+(S)$, because $\Lambda^+(S)$ is closed and uncountable. Choose an element $f$ of $S$.  If $f$ is hyperbolic, then $\alpha_f\in J$ and $\beta_f\in K$, so $f(J)\subset J$. If $f$ is parabolic, then its fixed point must lie in $J\cap K$, and, because $\Lambda^+(S)$ is forward invariant under $S$, we must have $f(J)\subset J$. If $f$ is elliptic or the identity map, then, because $f(\Lambda^+(S))\subset \Lambda^+(S)$ and $\Lambda^+(S)\subset J$, $f$ must be of finite order. But if it is of finite order, then $f$ fixes both $\Lambda^+(S)$ and $\Lambda^-(S)$ as sets. It follows that $f$ must be the identity map. We have deduced that $S\subset \mathcal{M}(J)$, contrary to one of the hypotheses of the theorem.

We now see that there are no such intervals $J$ and $K$. Since $0\in\Lambda^-(S)$ and $\infty\in \Lambda^+(S)$, there must be points $p$ in $\Lambda^-(S)$ and $q$ in $\Lambda^+(S)$ with either $-\infty<p<q<0$ or $0<q<p<+\infty$. After conjugating $S$ by the map $z\mapsto -z$ (which fixes the family of type~\ref{uub}), we can assume that the points $p$ and $q$  satisfy $0<q<p<+\infty$. We can now apply Theorem~\ref{cca} to deduce the existence of a hyperbolic map $g$ in $S$ with $0<\alpha_g<\beta_g<+\infty$.

The remaining case is that $\overline{S}$ contains a family of type (iii). For this family, we can argue as before that there are points $p$ in $\Lambda^-(S)$ and $q$ in $\Lambda^+(S)$ such that $-\infty<q<p<+\infty$. After conjugating $S$ by a translation we can assume that $q>0$. Once again, we obtain a hyperbolic map $g$ with $0<\alpha_g<\beta_g<+\infty$.

We are now in a position to apply Lemma~\ref{mkw}, which gives an element $f$ of one of the families~\ref{uua},~\ref{uub}, or~\ref{uuc}, whichever one we are dealing with (the one that lies in $\overline{S}$), such that $fg$ is elliptic. By adjusting $f$ slightly we can assume that $fg$ is not of order two (because the set of elliptic maps not of order two is open in $\mathcal{M}$).

 Let $f^{1/n}$, for $n\in\mathbb{N}$, denote the $n$th compositional root of $f$ in the same family. Then $f^{1/n}\to I$ as $n\to\infty$. Let us choose $n$ to be sufficiently large that $h=f^{1/n}$ and $g$ fail to satisfy J{\o}rgensen's inequality, in the sense that if $H$ and $G$ are lifts to $\text{SL}(2,\mathbb{C})$ of $h$ and $g$, respectively, then
\[
\lvert\Tr(H)^2-4\rvert+\lvert\Tr [H,G] - 2\rvert<1.
\]
Now choose an element $h_0$ of $S$ sufficiently close to $h$ that $h_0^ng$ is elliptic not of order two, and $h_0$ and $g$ also fail to satisfy J{\o}rgensen's inequality. It follows that the group generated as a group by $h_0$ and $g$ is either elementary or not discrete. 

Suppose that $h_0^ng$ has infinite order. Then, because $h_0^ng$ and $g$ do not commute, we can apply Theorem~\ref{qab} to see that $\langle h_0^ng,g\rangle$ is dense in $S$. Otherwise, $h_0^ng$ has finite order, in which case $\langle h_0,g\rangle$ is a group, by Lemma~\ref{qop}. It is not an elementary group because it contains an elliptic map $h_0^ng$ and a hyperbolic map $g$, and the elliptic map does not interchange the fixed points of the hyperbolic map, as it is not of order two. Hence $\langle h_0,g\rangle$ is not discrete. Therefore it contains an elliptic element of infinite order, by Lemma~\ref{qui}, so Theorem~\ref{qab} tells us that $\langle h_0,g\rangle$ is dense in $\mathcal{M}$.
\end{proof}

%Our next objective is to obtain a version of Theorem~\ref{ppd} for finitely-generated semigroups, as promised in the introduction. However, before we do that, we must take a detour to study some anomalous semigroups that are exceptional cases in the version of Theorem~\ref{ppd} for finitely-generated semigroups and in Theorem~\ref{abd}.

%%%%%%%%%%%%%%%%%
\section{Classification of finitely-generated semigroups}\label{gaq}
%%%%%%%%%%%%%%%%% 
 
In this section we present a version of Theorem~\ref{ppd} for finitely-generated semigroups. We use the notation $\mathcal{M}_0(J)$, introduced earlier, for the group part of $\mathcal{M}(J)$.

\begin{theorem}\label{iab}
Let $S$ be a finitely-generated semigroup. Then $S$ is 
\begin{enumerate}
\item elementary
\item semidiscrete
\item contained in $\mathcal{M}(J)$, for some interval $J$, and is either exceptional or dense in $\mathcal{M}_0(J)$, or
\item dense in $\mathcal{M}$.\qedhere
\end{enumerate}
\end{theorem}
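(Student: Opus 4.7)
The plan is to bootstrap from Theorem~\ref{ppd}, which already partitions every semigroup into the four classes \emph{elementary, semidiscrete, contained in $\mathcal{M}(J)$, dense in $\mathcal{M}$}. The only extra work needed for the finitely-generated version is to show that if $S$ is finitely generated, lies in some $\mathcal{M}(J)$, and is none of elementary, semidiscrete, or dense in $\mathcal{M}$, then $S$ is either exceptional or dense in $\mathcal{M}_0(J)$. After conjugation I may take $J=[0,+\infty]$, so $\mathcal{M}_0(J)=\{az:a>0\}$. Let $\mathcal{F}$ be a finite generating set and split it as $\mathcal{F}=\mathcal{F}_0\cup\mathcal{F}_1$, where $\mathcal{F}_0\subset\mathcal{M}_0(J)$ and each element of $\mathcal{F}_1$ maps $J$ strictly inside itself. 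A trivial induction on word length shows that any product involving at least one letter from $\mathcal{F}_1$ also maps $J$ strictly inside itself; therefore $S\cap\mathcal{M}_0(J)=S_0$, where $S_0$ is the subsemigroup generated by $\mathcal{F}_0$.

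If $\mathcal{F}_0=\emptyset$ then every generator maps $J$ strictly inside itself, and Theorem~\ref{kfa} forces $S$ to be semidiscrete (and inverse free), contradicting our standing assumption. Hence $\mathcal{F}_0\neq\emptyset$, and Corollary~\ref{nps} applies to $S_0$, giving three mutually exclusive possibilities. In case~\emphref{wea} the semigroup $S_0\setminus\{I\}$ is semidiscrete and inverse free; since $S_0\setminus\{I\}$ is exactly the set of elements of $S\setminus\{I\}$ that fix $J$, Theorem~\ref{jjr} promotes this to saying $S\setminus\{I\}$ is semidiscrete and inverse free, so $S$ is semidiscrete --- contradiction. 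In case~\emphref{web} the semigroup $S_0$ is a nontrivial discrete group, which is exactly the group of elements of $S$ fixing $J$ setwise; Theorem~\ref{ioe} then says $S$ is semidiscrete if and only if no element of $S\setminus S_0$ fixes an endpoint of $J$. Since $S$ is not semidiscrete, some element outside $S_0$ must fix an endpoint of $J$, and this is precisely the definition of an exceptional semigroup.

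The remaining case~\emphref{wec} produces two generators inside $\mathcal{F}_0$ whose compositions are dense in $\mathcal{M}_0(J)$; then $S\supset S_0$ is dense in $\mathcal{M}_0(J)$, which is the final alternative in the statement. All four outcomes of the theorem are therefore accounted for, and the classes are mutually exclusive by the remarks above (in case~\emphref{wec}, $S_0$ accumulates at $I$, so $S$ is neither semidiscrete nor exceptional). The genuine mathematical content lives in the earlier machinery (Theorems~\ref{ppd},~\ref{kfa},~\ref{jjr},~\ref{ioe} and Corollary~\ref{nps}); the main obstacle here is just the bookkeeping, especially the observation that $S\cap\mathcal{M}_0(J)=S_0$, which is what allows Theorems~\ref{jjr} and~\ref{ioe} to be applied directly to the hypothesis produced by Corollary~\ref{nps}.
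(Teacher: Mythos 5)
Your proof is correct and follows essentially the same route as the paper's: reduce via Theorem~\ref{ppd} to $S\subset\mathcal{M}(J)$, normalise $J=[0,+\infty]$, and then run the trichotomy of Corollary~\ref{nps} on the part of $S$ lying in $\mathcal{M}_0(J)$, invoking Theorem~\ref{jjr} in case~\emphref{wea} and Theorem~\ref{ioe} in case~\emphref{web}. The paper's own proof is a terser version of exactly this argument; your explicit verification that $S\cap\mathcal{M}_0(J)$ is the subsemigroup generated by $\mathcal{F}_0$ (and your separate disposal of the case $\mathcal{F}_0=\emptyset$ via Theorem~\ref{kfa}) merely fills in bookkeeping the paper leaves implicit.
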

\begin{proof}
Suppose $S$ is not elementary, semidiscrete, or dense in $\mathcal{M}$. Theorem~\ref{ppd} tells us that $S$ is contained in $\mathcal{M}(J)$, for some closed interval $J$. By conjugation, we can assume that $J=[0,+\infty]$. Let us now examine the three possible types of semigroups in  $\mathcal{M}_0(J)$ according to Corollary~\ref{nps}. In case~\ref{wea} of Corollary~\ref{nps}, $S$ is semidiscrete, by Theorem~\ref{jjr}. In case~\ref{web}, $S$ is either exceptional or semidiscrete, by Theorem~\ref{ioe}. In case~\ref{wec}, $S$ is dense in $\mathcal{M}_0(J)$. This completes our classification of finitely-generated semigroups.
\end{proof}

Our next task is to prove Theorem~\ref{abd}. This theorem is the counterpart for semigroups of the well-known result for Fuchsian groups that a nonelementary group of M\"obius transformations is discrete if and only if every two-generator subgroup is discrete. Here is a version of that result for semigroups, which, unlike Theorem~\ref{abd}, does not assume that the semigroup is finitely generated.

\begin{theorem}
Let $S$ be a nonelementary semigroup that is not contained in $\mathcal{M}(J)$, for any nontrivial closed interval $J$. Then $S$ is semidiscrete if and only if every two-generator semigroup contained in $S$ is semidiscrete.
\end{theorem}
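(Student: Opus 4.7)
The forward direction is immediate: any subsemigroup of a semidiscrete semigroup is semidiscrete, since semidiscreteness is hereditary (the identity cannot accumulate in the subsemigroup if it does not accumulate in the larger semigroup). So the entire content of the theorem lies in the converse.

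For the converse, the plan is to argue the contrapositive. Assume that $S$ is nonelementary, not contained in $\mathcal{M}(J)$ for any closed interval $J$, and \emph{not} semidiscrete. Under these three hypotheses, Theorem~\ref{lwk} applies directly and produces a two-generator subsemigroup $\langle f,g\rangle\subset S$ that is dense in $\mathcal{M}$. I would then observe that a dense subset of the topological group $\mathcal{M}$ must have the identity $I$ as an accumulation point: every neighbourhood of $I$ in $\mathcal{M}$ meets $\langle f,g\rangle$, and since $\mathcal{M}$ has no isolated points this intersection is automatically infinite. Hence $\langle f,g\rangle$ is a two-generator subsemigroup of $S$ that fails to be semidiscrete, contradicting the hypothesis. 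This yields the theorem.

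There is essentially no obstacle here, since Theorem~\ref{lwk} already does all the heavy lifting; the proof is one invocation of that result plus the elementary observation about dense subsets of a Hausdorff topological group without isolated points. The only minor point worth being careful about is confirming that the hypothesis ``not contained in $\mathcal{M}(J)$'' in the present theorem matches the hypothesis of Theorem~\ref{lwk}, and that nonelementarity is preserved in the way the two statements phrase it; both checks are immediate from the definitions. Thus the full argument is just a short application of Theorem~\ref{lwk}.
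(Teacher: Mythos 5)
Your proof is correct and follows exactly the route the paper intends: the paper dispatches this theorem in one line as ``an immediate corollary of Theorem~\ref{lwk}'', which is precisely your contrapositive argument, and your observation that a dense subsemigroup of $\mathcal{M}$ cannot be semidiscrete (since $\mathcal{M}$ has no isolated points) correctly fills in the only detail the paper leaves implicit.
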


This theorem is an immediate corollary of Theorem~\ref{lwk}. The assumption that $S$ is not contained in $\mathcal{M}(J)$ cannot be removed. To see why this is so, consider, for example, the semigroup $S$ generated by the maps $(1-1/n)z$, $n=2,3,\dotsc$. Clearly $S$ is not semidiscrete, but every two-generator semigroup within $S$ \emph{is} semidiscrete. This particular semigroup is elementary, but it is easy to adjust it to give a nonelementary example: simply adjoin to $S$ any element of $\mathcal{M}([0,+\infty])$ that fixes neither $0$ nor $\infty$.

The theorem also fails if the assumption that $S$ is not elementary is removed. For example, one can easily check that the semigroup $S=\{2^nz+b\,:\,n\in\mathbb{Z}, b\in\mathbb{Q}\}$ is not semidiscrete, but every two-generator semigroup contained in $S$ is semidiscrete.

Let us now turn to Theorem~\ref{abd}, which we restate for convenience.

\begin{reptheorem}{abd}
Any finitely-generated  nonelementary nonexceptional semigroup $S$ is semidiscrete if and only if every two-generator semigroup contained in $S$ is semidiscrete.
\end{reptheorem}
\begin{proof}
If $S$ is semidiscrete, then certainly any two-generator semigroup in $S$ is semidiscrete. Conversely, suppose that every two-generator semigroup contained in $S$ is semidiscrete. By Theorem~\ref{lwk}, $S$ is either  semidiscrete or contained in $\mathcal{M}(J)$, for some closed interval $J$. If $S$ is not semidiscrete, then Theorem~\ref{iab} tells us that $S$ is dense in $\mathcal{M}_0(J)$. However, Corollary~\ref{nps}\ref{wec} demonstrates that this can only be so if $S$ contains a two-generator semigroup that is not semidiscrete. Therefore, on the contrary, $S$ is semidiscrete.
\end{proof}

%%%%%%%%%%%%%%%%%
\section{Intersecting limit sets}
%%%%%%%%%%%%%%%%%

In this section we prove Theorem~\ref{zkj}, which says that if $S$ is a finitely-generated semidiscrete semigroup and $|\Lambda^-(S)|\neq 1$, then $\Lambda^+(S)=\Lambda^-(S)$ if and only if $S$ is a group. The next lemma is an important step in establishing this result.

\begin{lemma}\label{aif} 
Let $S$ be a nonelementary semigroup that satisfies $\Lambda^-(S)\subset \Lambda^+(S)$. Let  $f\in S$, let $p\in \Lambda^-(S)$, and let $U$ be a nontrivial open interval containing $p$. Then there exists an element $g$ of $S$ such that $fg$ is hyperbolic with attracting fixed point in $U$.
\end{lemma}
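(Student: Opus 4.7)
The plan is to set $g = h^n$ for a large integer $n$, where $h$ is a hyperbolic element of $S$ whose attracting fixed point lies in $f^{-1}(U)$. For such a choice, one expects that $fh^n$ will be hyperbolic with attracting fixed point converging to $f(\alpha_h) \in U$ as $n \to \infty$, and this will produce the desired $g$.

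The first step is to exhibit a point of $\Lambda^+(S)$ inside $f^{-1}(U)$. Since $f \in S$ and $p \in \Lambda^-(S)$, the backward invariance $f^{-1}(\Lambda^-(S)) \subset \Lambda^-(S)$ gives $f^{-1}(p) \in \Lambda^-(S)$, and the hypothesis $\Lambda^-(S) \subset \Lambda^+(S)$ then places $f^{-1}(p)$ in $\Lambda^+(S)$. Thus $f^{-1}(U)$ is an open set meeting $\Lambda^+(S)$.

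Next I apply Theorem~\ref{cca} to find a suitable $h$. Since $S$ is nonelementary, $\Lambda^-(S)$ is perfect and uncountable by Theorem~\ref{aiu}, so it contains a point $q \neq p$. Choose a small open interval $U_0 \subset U$ containing $p$ and an open neighborhood $V$ of $q$ with $\overline{U_0} \cap \overline{V} = \emptyset$. By Theorem~\ref{cca} there is a hyperbolic element $h \in S$ with $\alpha_h \in f^{-1}(U_0)$ and $\beta_h \in V$. Then $f(\alpha_h) \in U_0$ while $\beta_h \in V$ avoids $\overline{U_0}$, so in particular $f(\alpha_h) \neq \beta_h$.

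Finally, set $g = h^n$. Since $h^n \to \alpha_h$ in the chordal metric uniformly on compact subsets of $\overline{\mathbb{C}} \setminus \{\beta_h\}$, and $h^{-n} \to \beta_h$ uniformly on compact subsets of $\overline{\mathbb{C}} \setminus \{\alpha_h\}$, the two fixed points of $fh^n$ converge to $f(\alpha_h)$ and $\beta_h$ respectively, the former attracting and the latter repelling. Because these limits are distinct, $fh^n$ is hyperbolic for all sufficiently large $n$, and its attracting fixed point lies in $U_0 \subset U$ once $n$ is large enough. The main subtlety in the argument is the need to rule out the degenerate coincidence $f(\alpha_h) = \beta_h$, which is what forces one to use the perfectness of $\Lambda^-(S)$ and the two-parameter freedom of Theorem~\ref{cca} when selecting $h$.
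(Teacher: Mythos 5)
Your proof is correct, but it takes a genuinely shorter route than the paper's. The paper applies Theorem~\ref{cca} twice: it first picks $h_1$ with $\beta_{h_1}\in U$ and $\alpha_{h_1}$ outside $f^{-1}(\overline{U})$, so that $(fh_1^n)^{-1}(\overline{U})\subset U$ and $fg_1$ (with $g_1=h_1^n$) is hyperbolic with \emph{repelling} fixed point in $U$; only then does the hypothesis $\Lambda^-(S)\subset\Lambda^+(S)$ enter, via the observation that $\beta_{fg_1}\in\Lambda^-(S)\subset\Lambda^+(S)$, which supplies a second hyperbolic $h_2$ whose high power $g_2$ drags $\overline{U}$ into the contracting neighbourhood of $fg_1$, giving $fg_1g_2(\overline{U})\subset U$ and hence the attracting fixed point in $U$. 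You instead feed the hypothesis in at the very start: backward invariance gives $f^{-1}(p)\in\Lambda^-(S)\subset\Lambda^+(S)$, so a single application of Theorem~\ref{cca} yields a hyperbolic $h$ with $\alpha_h\in f^{-1}(U_0)$ and $\beta_h$ bounded away from $\overline{U_0}$, and $g=h^n$ already works. This saves a whole step and one invocation of Theorem~\ref{cca}, at no cost in generality. One presentational caveat: you speak of ``the two fixed points of $fh^n$'' converging, and of which is attracting, before you have established that $fh^n$ is hyperbolic at all; the clean way to finish (and the one the paper itself uses) is to note that for large $n$ the map $fh^n$ sends the complement of a small neighbourhood of $\beta_h$ into a small closed interval about $f(\alpha_h)$ contained in $U_0$, so it maps a closed interval into its own interior and is therefore hyperbolic with attracting fixed point in that interval. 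Your insistence on arranging $f(\alpha_h)\neq\beta_h$ via the disjoint closures $\overline{U_0}$ and $\overline{V}$ is exactly the right precaution for this argument to go through.
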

\begin{proof}
By shrinking $U$ if need be, we can assume that $\Lambda^-(S)$ intersects $\overline{\mathbb{R}}\setminus (\overline{U}\cup f^{-1}(\overline{U}))$. As a consequence, we see that $\Lambda^+(S)$ intersects  $\overline{\mathbb{R}}\setminus f^{-1}(\overline{U})$, and $\Lambda^-(S)$ intersects  $\overline{\mathbb{R}}\setminus \overline{U}$.

By Theorem~\ref{cca}, we can choose a hyperbolic element $h_1$ of $S$ such that $\beta_{h_1}\in U$ and $\alpha_{h_1}\in \overline{\mathbb{R}}\setminus f^{-1}(\overline{U})$. Let $n$ be a sufficiently large positive integer that $h_1^{-n}f^{-1}(\overline{U})$ is contained in $U$. Define $g_1=h_1^n$. Then $fg_1$ is hyperbolic (because it maps a closed interval into the interior of itself) and $\beta_{fg_1}\in U$. 

Let $V$ be an open interval containing $\beta_{fg_1}$ such that $fg_1(V)\subset U$. This interval intersects $\Lambda^+(S)$.  Using Theorem~\ref{cca} again, we can choose a hyperbolic element $h_2$ of $S$ such that $\alpha_{h_2}\in V$ and $\beta_{h_2}\in\overline{\mathbb{R}}\setminus \overline{U}$. Then $h_2^n(\overline{U})\subset V$ for a sufficiently large positive integer $n$. Let $g_2=h_2^n$. Then $fg_1g_2(\overline{U})\subset fg_1(V)\subset U$, so $fg_1g_2$ is hyperbolic and $\alpha_{fg_1g_2}\in U$. The lemma now follows on choosing $g=g_1g_2$. 
\end{proof}

Theorem~\ref{zkj} is an immediate consequence of the next, more general, theorem.

\begin{theorem} 
Let $S$ be a finitely-generated semidiscrete semigroup such that $|\Lambda^-(S)|\neq 1$. If $\Lambda^-(S)\subset \Lambda^+(S)$, then $S$ is a group.
\end{theorem}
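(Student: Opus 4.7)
My plan is to split the argument on whether $S$ is elementary. For the elementary case I would apply the classification of finitely-generated elementary semidiscrete semigroups in Theorem~\ref{mme}. Classes~\emphref{cla} and~\emphref{clb} are already groups, so the conclusion is immediate. For class~\emphref{clg} with $m\geq 1$ the hyperbolic generators $a_iz+b_i$ drive every $S$-orbit to $\infty$, so $\Lambda^+(S)=\{\infty\}$, while the inverse contractions $(a_iz+b_i)^{-1}$ place the finite points $b_i/(1-a_i)$ into $\Lambda^-(S)$, breaking $\Lambda^-(S)\subset\Lambda^+(S)$; the case $m=0$ collapses to the integer translation group. For class~\emphref{cle}, containment in $\mathcal{M}([s,+\infty])$ for the separating $s$ forces $\Lambda^+(S)\cap\Lambda^-(S)\subset\{s,\infty\}$, and a direct check shows that $\Lambda^-(S)\subset\Lambda^+(S)$ together with $|\Lambda^-(S)|\neq 1$ is incompatible with any nontrivial semigroup of this class.

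For the nonelementary case I would first eliminate $S\subset\mathcal{M}(J)$ for any closed interval $J$. Indeed, if $g(J)\subset J$ for every $g\in S$ then $g^{-1}(\overline{J^c})\subset\overline{J^c}$, so $\Lambda^+(S)\subset J$ and $\Lambda^-(S)\subset\overline{J^c}$, which together with $\Lambda^-(S)\subset\Lambda^+(S)$ puts $\Lambda^-(S)$ into the two-point set $\partial J$. This contradicts Theorem~\ref{aiu}, by which a nonelementary $S$ (which does contain hyperbolic elements) has $\Lambda^-(S)$ perfect and uncountable. Combined with Theorem~\ref{iab} and the incompatibility of density in $\mathcal{M}$ with semidiscreteness, $S$ must lie in the purely semidiscrete, not-in-$\mathcal{M}(J)$ regime.

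Assume for contradiction that $S$ is not a group and choose $f\in S\setminus S^{-1}$. Lemma~\ref{aif} combined with Lemma~\ref{kbi} yields, for every $p\in\Lambda^-(S)$ and every open neighborhood $U$ of $p$, a hyperbolic element $h=fg$ in the inverse-free part $T=S\setminus S^{-1}$ with $\alpha_h\in U$. Since $h\in S$, also $\beta_h\in\Lambda^-(S)$, so $T$ is populated by hyperbolic elements both of whose fixed points lie in $\Lambda^-(S)\subset\Lambda^+(S)$, with the attracting fixed points dense in $\Lambda^-(S)$.

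The main obstacle is converting this rich supply of hyperbolic elements of $T$ into a concrete contradiction. One route, which works when $S$ ultimately turns out to be cocompact, is to show $D_w(S)$ is bounded and invoke Theorem~\ref{poq}: each hyperbolic $h\in T$ with $\alpha_h$ near $p$ supplies orbit points $h^n(w)$ inside any prescribed horodisc at $p$, by matching the contraction exponent $n$ to the Euclidean scale of the horodisc, so $\Lambda^-(S)\subset\Lambda_h^+(S)$; forward-invariance of $\Lambda_h^+(S)$ combined with the non-containment in any $\mathcal{M}(J)$ should then push $\Lambda_h^+(S)$ up to $\overline{\mathbb{R}}$, whereupon Corollary~\ref{waa} forces $S$ cocompact Fuchsian, contradicting $T\neq\emptyset$. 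For non-cocompact admissible $S$ (e.g.\ a Schottky group, where $\Lambda^+(S)\subsetneq\overline{\mathbb{R}}$) this extension step fails, and one must instead iterate Lemma~\ref{aif}—applying it to the hyperbolic elements just produced and then to $f$ composed with the results—to build a composition sequence in $T$ converging in $\mathcal{M}$ to $I$, directly violating semidiscreteness of $S$. This careful iteration is where I expect the real work of the proof to lie.
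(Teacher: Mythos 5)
Your elementary case and your use of Lemma~\ref{aif} to manufacture hyperbolic elements with attracting fixed points in prescribed neighbourhoods of points of $\Lambda^-(S)$ both match the paper's argument. But converting that supply of hyperbolic elements into the conclusion is exactly the step you leave open, and it is where the proof actually lives. Your Route A (force $\Lambda_h^+(S)=\overline{\mathbb{R}}$ and invoke Corollary~\ref{waa}) cannot work in general, as you yourself observe: a rank-two Schottky group satisfies every hypothesis and is a group without being cocompact, so no argument can end by proving cocompactness. Your Route B --- build a composition sequence in $T=S\setminus S^{-1}$ whose partial products converge, and contradict semidiscreteness --- runs into a structural obstruction you do not resolve: the only tools for showing a composition sequence fails to escape (Theorem~\ref{kkl}, Corollary~\ref{axo}) apply to composition sequences over a \emph{finite} generating set $\mathcal{F}$, whereas the auxiliary maps $g$ produced by Lemma~\ref{aif} are long words in $\mathcal{F}$. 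Once you rewrite the sequence over $\mathcal{F}$ you lose control over whether the letters, or the consecutive quotients $F_{n_k}^{-1}F_{n_{k+1}}$, lie in the inverse-free part; and if a quotient is permitted to equal $I$, there is no contradiction with semidiscreteness at all.

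The paper's resolution is the missing idea. Fix a generator $h\in\mathcal{F}$ and two distinct points $u,v\in\Lambda^-(S)$, and use Lemma~\ref{aif} recursively to build a composition sequence $F_n=f_1\dotsb f_n$ over $S$ in which $f_{2n-1}=h$ for every $n$ and $F_{2n}(i)$ lies in a half-plane shrinking alternately onto $u$ and onto $v$; here Lemma~\ref{aif} is applied with $f$ equal to the partial product $F_{2n+1}$ rather than to a fixed element of $S\setminus S^{-1}$, and the hypothesis $\Lambda^-(S)\subset\Lambda^+(S)$ is precisely what lets the orbit keep returning. Rewriting over $\mathcal{F}$ gives a composition sequence accumulating at both $u$ and $v$, hence not converging ideally; if $S$ is not already a cocompact Fuchsian group (in which case it is trivially a group), Corollary~\ref{axo} forces every letter, in particular $h$, eventually into the group part, so $h^{-1}\in S$. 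Since $h$ was an arbitrary generator, $S$ is a group. The conclusion is thus reached by showing each generator is invertible in $S$, not by deriving a failure of semidiscreteness; your insistence on contradicting semidiscreteness directly is what makes your Route B hard to close.
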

\begin{proof}
By examining the cases in Theorem~\ref{mme}, we see that the theorem is true if $S$ is elementary (with the restriction $|\Lambda^-(S)|\neq 1$). Suppose instead that $S$ is nonelementary.

Let $\mathcal{F}$ be a finite generating set for $S$. Let $h\in \mathcal{F}$. Choose two distinct points $u$ and $v$ in $\Lambda^-(S)$. For each positive integer $n$, define $U_n$ to be the open interval of chordal radius $1/n$ that is centred on $u$, if $n$ is odd, and centred on $v$, if $n$ is even. Let $\Pi_n$ denote the closed hyperbolic half-plane with ideal boundary $U_n$. We will define a composition sequence $F_n=f_1\dotsb f_n$ generated by $S$ in a recursive fashion such that, for each $n\in\mathbb{N}$, we have $f_{2n-1}=h$ and $F_{2n}(i)\in \Pi_n$. 

Define $f_1=h$. By Lemma~\ref{aif}, there is an element $g$ of $S$ such that $f_1g$ is hyperbolic and $\alpha_{f_1g}\in U_1$. Therefore $(f_1g)^n(i)\in \Pi_1$, for a suitably large positive integer $n$. Define $f_2=g(f_1g)^{n-1}$. Then $F_2(i)\in \Pi_1$. 

Suppose now that we have constructed maps $f_1,\dots,f_{2n}$ in $S$ such that $f_{2k-1}=h$ and $F_{2k}(i)\in \Pi_k$, for $k=1,\dots,n$. Define $f_{2n+1}=h$. By Lemma~\ref{aif}, there is an element $g$ of $S$ such that $F_{2n+1}g$ is hyperbolic and $\alpha_{F_{2n+1}g}\in U_{n+1}$. Therefore $(F_{2n+1}g)^n(i)\in \Pi_{n+1}$, for large $n$. Define $f_{2n+2}=g(F_{2n+1}g)^{n-1}$. Then $F_{2n+2}(i)\in \Pi_n$. This completes the construction of the sequence $(F_n)$.

By writing each map $f_k$, for $k$ even, in terms of the generating set $\mathcal{F}$, we obtain a composition sequence $G_n=g_1\dotsb g_n$ generated by $\mathcal{F}$ that accumulates at $u$ and $v$ and is such that $g_k=h$ whenever $k$ is odd. If $S$ is not a cocompact Fuchsian group, then Corollary~\ref{axo} tells us that the maps $g_n$ lie in the group part of $S$, for all sufficiently large $n$. Therefore $h$ lies in the group part of $S$, so $h^{-1}\in S$. As we chose $h$ arbitrarily from $\mathcal{F}$, we conclude that $S$ must be a group.
\end{proof}

%%%%%%%%%%%%%%%%%
\section{Concluding remarks}\label{conclusion}
%%%%%%%%%%%%%%%%%

There are a wealth of open questions on semigroups, perhaps the most intriguing of which come from the theory of semigroups of complex M\"obius transformations. Some work has been carried out in this field already, in \cite{FrMaSt2012} (see also \cite{DaSiUr2015}). In a forthcoming paper, building on \cite{FrMaSt2012}, the first author proves that finitely-generated semigroups of complex M\"obius transformations with loxodromic generators and disjoint limit sets behave agreeably in several respects. For example, it will be shown that a small perturbation of a semigroup of this type gives rise to a small perturbation of the corresponding limit sets. Beyond this, the semigroups that are least well understood are those for which the forward and backward limit sets intersect; an example is illustrated in Figure~\ref{hxf}. 

\begin{figure}[ht]
\centering
\includegraphics[scale=0.08]{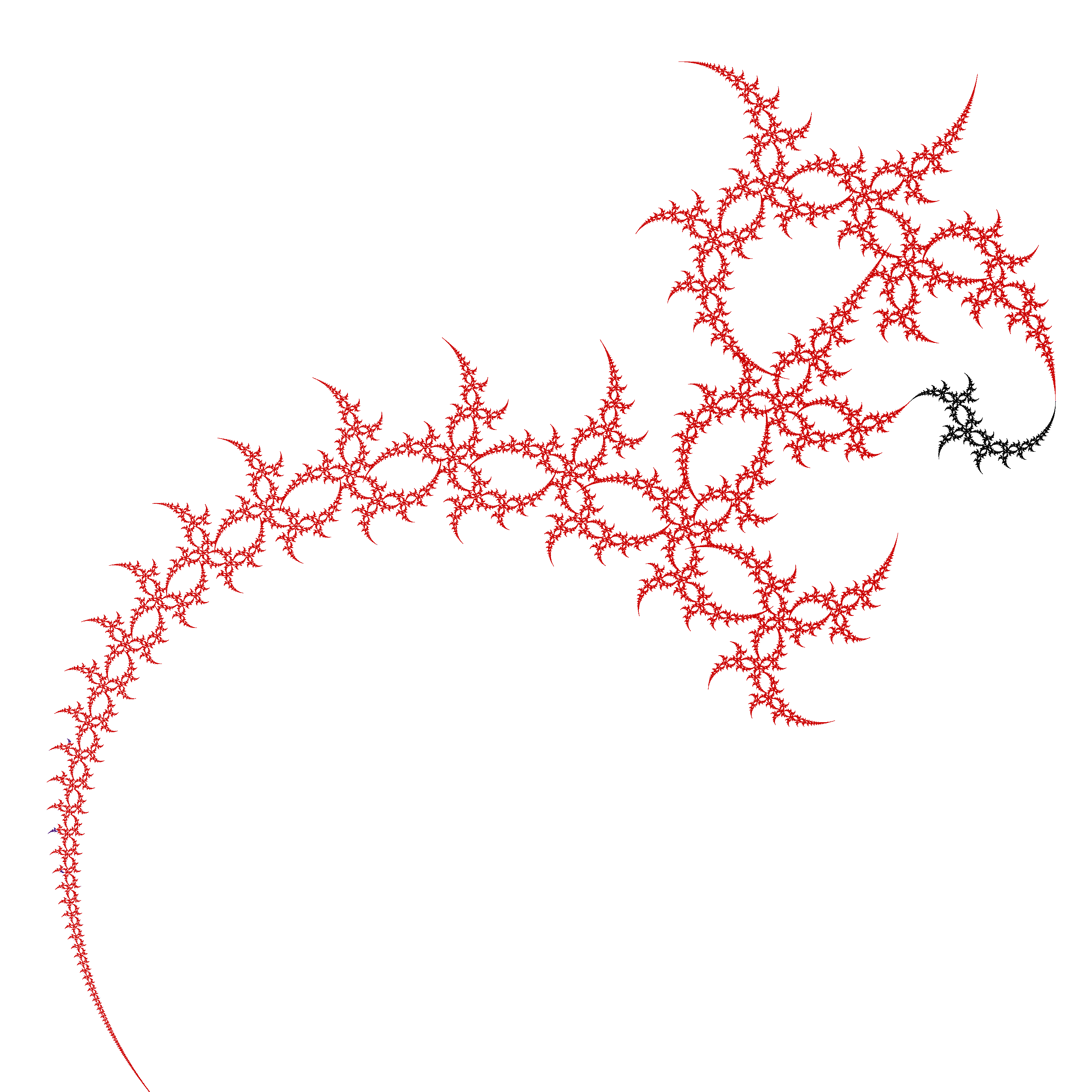}
\caption{Forward limit set (small, black shape) and backward limit set (large, lighter-coloured shape) of a conjugate of the semigroup $\langle z+a,z/(bz+1)\rangle$, where $a=0.5$ and $b=-1.1-1.1i$}
\label{hxf}
\end{figure}

We finish by supplying an example that answers Question~\ref{ccw} in the negative when $N=4$. We use the notation for the hyperbolic locus $\mathcal{H}$ and elliptic locus $\mathcal{E}$ defined in the introduction.

Let $F$ and $H$ be a pair of hyperbolic matrices in $\text{SL}(2,\mathbb{R})$ that are a standard group-generating set for a rank two Schottky group. Let $G=F^{-1}$ and $K=H^{-1}$. Then $(F,G,H,K)\notin \mathcal{E}$. We will prove that $(F,G,H,K)\notin \overline{\mathcal{H}}$. Suppose on the contrary that there is a sequence $(F_n,G_n,H_n,K_n)$ in $\mathcal{H}$ that converges to $(F,G,H,K)$. Observe that the matrix $F_nG_n$ is hyperbolic, for each $n$. By restricting to a subsequence if necessary, we can assume that there are points $a$ and $b$ in $\overline{\mathbb{R}}$ for which the attracting and repelling fixed points of $F_nG_n$ satisfy $\alpha_{F_nG_n}\to a$ and $\beta_{F_nG_n}\to b$ as $n\to\infty$.

Now choose a hyperbolic element $Q$ of the Schottky group $\langle F,G,H,K\rangle$ such that the axis of $Q$ meets neither $a$ nor $b$, and nor does it intersect the hyperbolic line from $a$ to $b$ (in the case $a\neq b$). We can choose $Q$ in such a way that $Q$ and $F_nG_n$ are antiparallel for all sufficiently large values of $n$. Since $Q\in\langle F,G,H,K\rangle$, we can find matrices $Q_n\in\langle F_n,G_n,H_n,K_n\rangle$ such that $Q_n\to Q$ as $n\to\infty$. Hence $Q_n$ and $F_nG_n$ are antiparallel for sufficiently large $n$. It follows from J{\o}rgensen's inequality for semigroups, Theorem~\ref{fff}, that
\[
\lvert\Tr(F_nG_n)^2-4\rvert+\lvert\Tr[F_nG_n,Q_n]-2\rvert\geq 1
\]
for all sufficiently large $n$. However, $(F_nG_n)$ converges to the identity matrix as $n\to\infty$, so the left-hand side of this inequality converges to $0$, a contradiction. Thus, contrary to our assumption, there is no such sequence $(F_n,G_n,H_n,K_n)$, and $\overline{\mathcal{H}}\neq \mathcal{E}^c$. Bochi pointed out to us that the inequality of \cite[Lemma~4.8]{AvBoYo2010} can be used instead of J{\o}rgensen's inequality in the last step of the argument; the two inequalities are closely related.

Another example that provides a negative answer to Question~\ref{ccw} has been offered by Argyrios Christodoulou (to appear). This example uses generators of an elementary semigroup that contains neither elliptic elements nor the identity map but contains the identity map in its closure. It remains to be seen how to modify the (incorrect) assertion that $\overline{\mathcal{H}}= \mathcal{E}^c$ to take account of these two types of examples. 

Interested readers should  consult \cite{AvBoYo2010,Yo2004} for a number of related questions.                                                           

\subsection*{Acknowledgements}

The authors thank Jairo Bochi for bringing to our attention Question~\ref{ccw} (from \cite{AvBoYo2010,Yo2004}). We also thank Edward Crane for many helpful comments.


\begin{thebibliography}{1}

\bibitem{Ae1990}
Aebischer, B., The limiting behavior of sequences of M\"{o}bius transformations, \emph{Math. Z.} {\bf 205} (1990), no.~1, 49--59.

\bibitem{Av2011}
Avila, A., Density of positive Lyapunov exponents for ${\rm SL}(2,\Bbb{R})$-cocycles, \emph{J. Amer. Math. Soc.} {\bf 24} (2011), no.~4, 999--1014.

\bibitem{AvBo2007}
Avila, A., and\ J. Bochi, A uniform dichotomy for generic ${\rm SL}(2,\Bbb R)$ cocycles over a minimal base, \emph{Bull. Soc. Math. France} {\bf 135} (2007), no.~3, 407--417.

\bibitem{AvBoYo2010}
Avila, A., J. Bochi\ and\ J.-C. Yoccoz, Uniformly hyperbolic finite-valued ${\rm SL}(2,\Bbb R)$-cocycles, \emph{Comment. Math. Helv.} {\bf 85} (2010), no.~4, 813--884.

\bibitem{BaBeCa1996}
B\'{a}r\'{a}ny, I., A. F. Beardon\ and\ T. K. Carne, Barycentric subdivision of triangles and semigroups of M\"{o}bius maps, \emph{Mathematika} {\bf 43} (1996), no.~1, 165--171.

\bibitem{BaKaMo2019}
B{\'a}r{\'a}ny, B., A. K{\"a}enm{\"a}ki\ and\ I. D. Morris, Domination, almost additivity, and thermodynamical formalism for planar matrix cocycles, \emph{Israel J. Math.}, to appear.


\bibitem{Be1995}
Beardon, A. F., {\it The geometry of discrete groups}, corrected reprint of the 1983 original, Graduate Texts in Mathematics, 91, Springer-Verlag, New York, 1995.


\bibitem{Be1997}
Bell, C. R., {\it Non-discrete M\"obius groups}, Master's thesis, University of Cambridge, 1997.

\bibitem{BoNa2014}
Bochi, J., and\ A. Navas, Almost reduction and perturbation of matrix cocycles, \emph{Ann. Inst. H. Poincar\'{e} Anal. Non Lin\'{e}aire} {\bf 31} (2014), no.~6, 1101--1107. 

\bibitem{BoNa2015}
Bochi, J., and\ A. Navas, A geometric path from zero Lyapunov exponents to rotation cocycles, \emph{Ergodic Theory Dynam. Systems} {\bf 35} (2015), no.~2, 374--402.

\bibitem{CrSh2007}
Crane, E., and\ I. Short, Conical limit sets and continued fractions, \emph{Conform. Geom. Dyn.} {\bf 11} (2007), 224--249.


\bibitem{DaSiUr2015}
Das, T., D. Simmons\ and\ M. Urba\'{n}ski, {\it Geometry and dynamics in Gromov hyperbolic metric spaces}, Mathematical Surveys and Monographs, 218, American Mathematical Society, Providence, RI, 2017.


\bibitem{FrMaSt2012}
Fried, D., S. M. Marotta\ and\ R. Stankewitz, Complex dynamics of M\"{o}bius semigroups, \emph{Ergodic Theory Dynam. Systems} {\bf 32} (2012), no.~6, 1889--1929.

\bibitem{Gi1988}
Gilman, J., Inequalities in discrete subgroups of ${\rm PSL}(2,{\bf R})$, \emph{Canad. J. Math.} {\bf 40} (1988), no.~1, 115--130.

\bibitem{Gi1991}
Gilman, J., A geometric approach to J\o rgensen's inequality, \emph{Adv. Math.} {\bf 85} (1991), no.~2, 193--197.


\bibitem{Gi1995}
Gilman, J., Two-generator discrete subgroups of ${\rm PSL}(2,{\bf R})$, \emph{Mem. Amer. Math. Soc.} {\bf 117} (1995), no.~561, {\rm x}+204 pp.


\bibitem{GiMa1991}
Gilman, J., and\ B. Maskit, An algorithm for $2$-generator Fuchsian groups, \emph{Michigan Math. J.} {\bf 38} (1991), no.~1, 13--32.


\bibitem{HiMa1996}
Hinkkanen, A., and\ G. J. Martin, The dynamics of semigroups of rational functions. I, \emph{Proc. London Math. Soc. (3)} {\bf 73} (1996), no.~2, 358--384.

\bibitem{Ka1992}
Katok, S., {\it Fuchsian groups}, Chicago Lectures in Mathematics, University of Chicago Press, Chicago, IL, 1992. 


\bibitem{Ma2016}
Marden, A., {\it Hyperbolic manifolds}, Cambridge University Press, Cambridge, 2016.

\bibitem{Ma1988}
Maskit, B., {\it Kleinian groups}, Grundlehren der Mathematischen Wissenschaften, 287, Springer-Verlag, Berlin, 1988.

\bibitem{Yo2004}
Yoccoz, J.-C., Some questions and remarks about ${\rm SL}(2,\mathbf{R})$ cocycles, in {\it Modern dynamical systems and applications}, 447--458, Cambridge Univ. Press, Cambridge, 2004.

\end{thebibliography}
\end{document}